\documentclass[a4paper,12pt]{article}

\synctex=1

\usepackage{amsfonts,amssymb,amsmath,dsfont,relsize,accents}
\usepackage{amsthm}
\usepackage[english]{babel}
\selectlanguage{english}
\usepackage[small,labelfont=it,margin=1em]{caption}
\usepackage{color}
\usepackage[utf8]{inputenc}
\usepackage{latexsym}
\usepackage{mathrsfs}
\usepackage{upgreek}
\usepackage{graphicx}
\usepackage{hyperref}
\usepackage[normalem]{ulem}
\usepackage{float}

\DeclareMathAlphabet{\mathdutchcal}{U}{dutchcal}{m}{n}
\SetMathAlphabet{\mathdutchcal}{bold}{U}{dutchcal}{b}{n}
\DeclareMathAlphabet{\mathdutchbcal}{U}{dutchcal}{b}{n}

% \DeclareMathAlphabet{\mathcal}{OMS}{cmsy}{m}{n}

\newtheorem{definition}{Definition}[section]
\newtheorem{remark}{Remark}[section]
\newtheorem{theorem}{Theorem}[section]
\newtheorem*{theorem*}{Theorem}
\newtheorem{lemma}{Lemma}[section]
\newtheorem{proposition}[lemma]{Proposition}
\newtheorem{corollary}[lemma]{Corollary}
\theoremstyle{definition}
\newtheorem*{definition*}{Definition}

\newcommand*{\E}{\mathbb{E}}

\newcommand*{\N}{\mathbb{N}}
\renewcommand{\P}{\mathbb{P}}

\newcommand*{\R}{\mathbb{R}}
\newcommand*{\T}{\mathbb{T}}

\renewcommand{\leq}{\leqslant}
\renewcommand{\le}{\leqslant}

\renewcommand{\ge}{\geqslant}
\renewcommand{\subset}{\subseteq}

\setlength{\marginparwidth}{30mm}
\setlength{\marginparsep}{4mm}

\hypersetup{colorlinks=true,urlcolor=blue,linkcolor=black,citecolor=black}

\newcommand*{\doi}[1]{\href{http://dx.doi.org/\detokenize{#1}}{doi}}

\setlength{\parindent}{0pt}
\setlength{\parskip}{.6em}
\renewcommand{\baselinestretch}{1.15}

\makeatletter
\newcommand{\doublewidetilde}[1]{{%
  \mathpalette\double@widetilde{#1}%
}}
\newcommand{\double@widetilde}[2]{%
  \sbox\z@{$\m@th#1\widetilde{#2}$}%
  \ht\z@=.9\ht\z@
  \widetilde{\box\z@}%
}
\makeatother

\begin{document}

\title{The contact process on dynamic regular graphs: monotonicity and subcritical phase}
\author{Bruno Schapira\footnote{Aix-Marseille Universit\'e, \url{bruno.schapira@univ-amu.fr}},
{}
Daniel Valesin\footnote{University of Warwick, \url{daniel.valesin@warwick.ac.uk}}
}
% \date{February 13, 2017}
\maketitle

\begin{abstract}
	We study the contact process on a dynamic random~$d$-regular graph with an edge-switching mechanism, as well as an interacting particle system that arises from the local description of this process, called the herds process. Both these processes were introduced in~\cite{da2021contact}; there it was shown that the herds process  has a phase transition with respect to the infectivity parameter~$\lambda$, depending on the parameter~$\mathsf{v}$ that governs the edge dynamics. Improving on a result of~\cite{da2021contact}, we prove that the critical value of~$\lambda$ is strictly decreasing with~$\mathsf{v}$. We also prove that in the subcritical regime, the extinction time of the herds process started from a single individual has an exponential tail. Finally, we apply these results to study the subcritical regime of the contact process on the dynamic $d$-regular graph. We show that, starting from all vertices infected, the infection goes extinct in a time that is logarithmic in the number of vertices of the graph, with high probability.
\end{abstract}
{\footnotesize Keywords: contact process, dynamic graphs}

\section{Introduction}
This paper is a follow-up to~\cite{da2021contact}, which studied the contact process on a dynamic random $d$-regular graph with an edge-flip mechanism introduced in~\cite{CDG07}. The work~\cite{da2021contact} mainly focused on proving the existence of a supercritical regime, where the extinction time of the process 
grows exponentially with the number of vertices of the graph. Here, we show that there is a phase transition between two regimes, where 
the order of magnitude of the extinction time switches abruptly from logarithmic to exponential, as the infection parameter crosses a critical value. The highlight of our analysis is that it allows us to establish that  this critical value of the infection parameter is a strictly monotone function of the rate of the edge-flip mechanism.

\subsection{Contact process on static finite graphs}
The \textbf{contact process} on a graph $G$ is an interacting particle system in which the vertices of the graph can be either  healthy or infected. Healthy vertices get infected at rate $\lambda$ times the number of infected neighbors, where $\lambda>0$ is a fixed parameter of the model, while infected vertices become healthy at rate $1$, independently of each other.
When the graph $G$ is infinite, a quantity of interest is the \textbf{critical rate} $\lambda_c(G)$, defined as the supremum of the values of~$\lambda$ for which the process started from any finite infected set dies out (reaches the all-healthy configuration) almost surely.

In the case when $G$ is finite, the all-healthy configuration is always reached almost surely (regardless of~$\lambda$). The \textbf{extinction time} $\tau_G$ is the hitting time of the all-healthy configuration, for the process started from all infected.
It has been observed in several cases that when $(G_n)_{n\ge 1}$ is a sequence of finite graphs which converges locally to some (rooted) 
infinite graph $G_\infty$, typically the extinction time $\tau_{G_n}$ grows logarithmically with $n$ when $\lambda$ is smaller than $\lambda_c(G_\infty)$ and grows exponentially with $n$ when $\lambda$ is larger than $\lambda_c(G_\infty)$. 
For instance, this has been shown when $G_n$ is a $d$-dimensional cube~$\{0,\dots,n\}^d$~\cite{CGOV84,  DL88, DS88, M93, S85}, a $d$-regular tree up to height $n$~\cite{CMMV14, S01}, or in the case which interests us more here, when $G_n$ is a random $d$-regular graph with $n$ vertices~\cite{LS17, MV16}, in which cases $G_\infty$ is respectively $\mathbb Z^d$, the canopy tree, and the $d$-regular tree $\mathbb T^d$. 

\subsection{Contact process on a dynamical random $d$-regular graph}

We present now the dynamical version of the random $d$-regular graph first introduced and studied 
in~\cite{CDG07}. Throughout the paper, we fix the degree~$d \ge 3$, and whenever we talk about a~$d$-regular graph with~$n$ vertices, we assume that~$nd$ is even. We allow our graphs to contain loops (edges involving the same vertex twice) and parallel edges (multiple edges between the same two vertices), but we will keep writing `graph' instead of some other terminology such as `multi-graph'.

Let~$G$ be a $d$-regular graph with $n$ vertices, and~$e, e'$ be two of its edges; let~$u,v$ be the vertices of~$e$ and~$u',v'$ the vertices of~$e'$. We can define two possible \textbf{switches} of these edges by replacing 
them with either (1) edges with vertices~$\{u,u'\}$ and~$\{v,v'\}$, or (2) edges with vertices~$\{u,v'\}$ and~$\{v,u'\}$. 
We then define a continuous-time Markov chain $(G_t)_{t\ge 0}$ on the space of $d$-regular graphs on a fixed set of $n$ vertices as follows. The initial graph 
$G_0$ is distributed according to the uniform distribution on the set of $d$-regular graphs. 
Then, given the state $G_t$ at time $t$, we prescribe that any of the~$2 \cdot {|E| \choose 2}$ possible edge switches occurs on this graph with rate~$\frac{\mathsf{v}}{nd}$, where~$\mathsf{v}>0$ is a positive parameter. It is readily 
seen that the uniform distribution on random $d$-regular graphs is stationary with respect to this dynamics, 
and moreover, that any fixed edge is involved in a switch at a rate which converges to $\mathsf v$, as $n\to \infty$.

We next consider the process $(G_t,\xi_t)_{t\ge 0}$ where $(G_t)_{t\ge 0}$ is as above, and $(\xi_t)_{t\ge 0}$ is a contact process evolving on the dynamic graph. As previously mentioned, the process starts from the configuration where all vertices are infected, and our main interest is in the time $\tau_{(G_t)}$ 
when the process reaches the all-healthy configuration. The following result was proved in~\cite{da2021contact}.   In both this theorem and in Theorem~\ref{theo.1} below, the probability measure~$\mathbb{P}$ includes the randomness of both the random dynamic graph and the contact process.
\begin{theorem}[\cite{da2021contact}] 
For each $\mathsf v>0$, there exists $\bar \lambda(\mathsf v) \in (0,\lambda_c(\mathbb T_d))$, such that the following holds. 
For any $\lambda>\bar \lambda(\mathsf v)$, there exists $c>0$ such that 
$$\mathbb P\big(\tau_{(G_t)}> \exp\{cn\}\big) \xrightarrow{n\to \infty} 1. $$ 
\end{theorem}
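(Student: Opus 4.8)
The plan is to treat the herds process of~\cite{da2021contact} as the infinite-volume counterpart of the contact process on $(G_t)$, to identify $\bar\lambda(\mathsf v)$ as its survival threshold, and then to transfer survival to the finite dynamic graph by a two-scale argument exploiting that a uniform random $d$-regular graph is locally tree-like. \textbf{Step 1: the herds process is supercritical for $\lambda>\bar\lambda(\mathsf v)$.} In the herds process the infection is carried by a family of \emph{herds}, each a contact process on a growing tree, and every edge-switch meeting a herd detaches a subtree that becomes a fresh independent herd; this is precisely the local picture around an infected vertex of $(G_t)$ until an exploration of its neighbourhood closes a cycle. I would take $\bar\lambda(\mathsf v)$ to be the infimum of the $\lambda$ for which the herds process from a single infected individual survives with positive probability (that it is positive is immediate, since for $\lambda$ small a single individual dies before infecting anyone, so the herds process dies out). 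That $\bar\lambda(\mathsf v)<\infty$ follows by comparison with a supercritical branching process: for $\lambda$ large a herd is flooded with infection before it has time to split, so each split detaches an essentially fully infected subtree and the genealogy of herds dominates a branching process of mean $>1$. The more delicate assertion $\bar\lambda(\mathsf v)<\lambda_c(\T_d)$ uses that a switch can only help the infection --- it turns an edge wasted inside the infected cluster into a frontier edge reaching untouched territory --- together with a perturbative estimate: for $\lambda$ just below $\lambda_c(\T_d)$ the contact process on $\T_d$ is only barely subcritical, sustaining a large infected set over a long time before extinction, so that even an arbitrarily small switching rate, compounded over that long lifetime, pushes the per-herd offspring mean above $1$.

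\textbf{Step 2: a local survival estimate on $G_n$.} Over a window of bounded length $T$ the infected cluster, together with the vertices it can reach, has size $O_T(1)$ with exponential tails, and only $O_T(1)$ edge-switches touch it; since $G_t$ is a uniform random $d$-regular graph at every time, the ball of radius $c\log n$ around a fixed vertex is a tree with probability $1-o(1)$ and a bounded exploration closes a cycle with probability $O(1/n)$. Hence the process restricted to such a window and started from one infected vertex couples with the herds process up to an $o(1)$ error, with the extra feature that whenever a herd splits the new herd lands in an essentially uniformly chosen --- hence, with high probability, distant and tree-like --- region of $G_n$. Combining this with Step~1 and the exponential growth of the number of herds on the event of survival, I obtain: for $\lambda>\bar\lambda(\mathsf v)$ and every $k$ there are $T_0<\infty$, a scale $L<\infty$ and $\rho>0$, with $k\to\infty$ as $T_0\to\infty$ and $\rho$ bounded below uniformly in $T_0$ by the herds survival probability, such that for $n$ large the process started from one infected vertex has, with probability at least $\rho$, at least $k$ infected vertices at time $T_0$ spread over that many mutually distant tree-like regions of $G_n$.

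\textbf{Step 3: a density bootstrap.} Call a configuration \emph{good} if it carries at least $\varepsilon n$ infected vertices that are suitably spread out; the all-infected configuration is good, and by attractiveness of the contact process it suffices to bound below the extinction time from a good configuration. Starting good at time $t$ and running for time $T_0$, the trials initiated at the $\ge\varepsilon n$ spread-out infected vertices are coupled, by Step~2, to independent copies of the herds process run for time $T_0$, except that a given trial can be disturbed by at most $O_{T_0}(1)$ others --- those geometrically close to it, or joined to it by an edge-switch during the window --- so the trials form a dependency graph of bounded degree. Each trial has probability $\ge\rho$ of producing $\ge k$ infected vertices in mutually distant regions, so a standard concentration bound for sums of Bernoullis with a bounded-degree dependency structure yields that, with probability $\ge 1-e^{-cn}$, the window ends with at least $\tfrac12 k\rho\varepsilon n$ infected vertices in distinct regions; choosing the scales so that this number dominates the geometric cost of re-establishing the spread-out structure --- possible because $k$ grows with $T_0$ --- the resulting configuration is again good. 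Thus goodness is preserved over one window with probability $\ge 1-e^{-cn}$, and a union bound over $e^{cn/2}$ consecutive windows gives $\P(\tau_{(G_t)}\le e^{cn/2})\le e^{-cn/2}=o(1)$, which is the claim with $c/2$ in place of $c$.

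\textbf{The main obstacle} is Step~3 in the genuinely dynamic setting. A single window carries $\Theta(n)$ edge-switches over the whole graph, and one must quantify how little these correlate the $\Theta(n)$ local trials --- each switch joins two specific edges among $\Theta(n^2)$, so a fixed pair of seeds is merged only with probability $O(1/n)$ per window --- and then turn the resulting bounded-degree dependency structure, together with the control on where split-off herds land, into a clean $e^{-cn}$ deviation bound; one has at the same time to choose the spatial and temporal scales so that the supercritical growth of the herds outpaces the geometric cost of maintaining a spread-out infected set, and to rule out the scenario where the infection contracts into a small region and then dies out. Internal to Step~1, the remaining difficulty is making the perturbative comparison behind $\bar\lambda(\mathsf v)<\lambda_c(\T_d)$ rigorous.
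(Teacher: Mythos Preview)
The paper does not prove this theorem: it is stated as a result of~\cite{da2021contact} and quoted without proof. The present paper only proves the complementary subcritical statement (part~(ii) of Theorem~\ref{theo.1}); for part~(i) it simply writes ``the first part of the theorem was already proved in~\cite{da2021contact}, so we only prove the second part here''. So there is no proof in this paper to compare your proposal against.

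That said, your outline is broadly aligned with the strategy of~\cite{da2021contact} as summarized in this paper's introduction: the herds process is introduced precisely as the local limiting object, $\bar\lambda(\mathsf v)$ is defined as its survival threshold (Definition~\ref{def_critical_value} here), and the supercritical regime on the finite dynamic graph is obtained by transferring the supercriticality of the herds process via a local coupling and a density-maintenance argument. Your Step~2 coupling idea is the same mechanism used in the subcritical direction in Section~\ref{ss_exploration} of this paper (the exploration process), and Step~3 is the standard metastability/bootstrap scheme for finite graphs. You correctly flag the two genuinely hard points: controlling the dependence among the $\Theta(n)$ local trials through the switching dynamics, and the strict inequality $\bar\lambda(\mathsf v)<\lambda_c(\T_d)$. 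Neither is resolved by your sketch, and both require substantial work in~\cite{da2021contact}; but as a roadmap your proposal is sound.
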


Note in particular the interesting feature that~$\bar{\lambda}(\mathsf v)$ is strictly smaller than $\lambda_c(\mathbb T_d)$, which means that 
the dynamics of the graph helps the contact process to survive for a longer time than in the static model.

\subsection{Main results}
In this paper, we complete the picture by proving the following result. 
\begin{theorem}\label{theo.1}
For each $\mathsf v>0$, there exists $\bar \lambda(\mathsf v) \in (0,\lambda_c(\mathbb T_d))$ such that the following holds. 
\begin{itemize}
\item[(i)] For any $\lambda>\bar \lambda(\mathsf v)$, there exists $c>0$ such that 
$$\mathbb P\big(\tau_{(G_t)}> \exp\{cn\}\big) \xrightarrow{n\to \infty} 1. $$ 
\item[(ii)] For any $\lambda<\bar \lambda(\mathsf v)$, there exists $C>0$ such that 
$$\mathbb P\big(\tau_{(G_t)}> C\log n\big) \xrightarrow{n\to \infty} 0. $$
\end{itemize} 
\end{theorem}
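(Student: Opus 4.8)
The plan is to treat the two parts separately, with part (i) being essentially the result of \cite{da2021contact} recalled above (so we take the same function $\bar\lambda(\mathsf v)$), and to focus the work on part (ii), the subcritical case. The natural strategy for (ii) is a comparison between the contact process on the dynamic finite graph $(G_t)$ and the herds process — the infinite, tree-like local limit object introduced in \cite{da2021contact}, whose extinction-time tail we control by the second main result announced in the abstract (the exponential tail of the extinction time of the herds process started from a single individual, in the subcritical regime $\lambda < \bar\lambda(\mathsf v)$). First I would set up the coupling: because the dynamic $d$-regular graph looks locally like the dynamic $d$-regular tree, a single infection path in $(G_t)$, traced together with the switches it encounters, can be dominated for a long time (up to a time that is at least polynomial in $n$, using that short cycles are rare in the random $d$-regular graph) by the herds process on the tree. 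Thus the total progeny / space-time extent of the infection started from one vertex is stochastically bounded by that of the herds process, which by the subcritical estimate has exponential tails.

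The main steps, in order, would be: (1) State the coupling lemma precisely — on a time interval $[0, n^{\alpha}]$ for a suitable small $\alpha>0$, the cluster of space-time points reachable by the infection from any fixed vertex in $(G_t, \xi_t)$ is dominated by the range of a herds process, with an error probability $o(1/n)$ coming from the appearance of a short cycle near the explored region or from the explored region being too large. (2) Apply the exponential-tail bound for the subcritical herds process to conclude that, with probability $1 - o(1/n)$, the infection emanating from any single vertex dies out within time $C_0 \log n$ and never infects more than $O(\log n)$ vertices, for a constant $C_0 = C_0(\lambda,\mathsf v)$. (3) Take a union bound over the $n$ vertices: with high probability, \emph{every} single-source infection, if it were run in isolation, would go extinct by time $C_0 \log n$. (4) Decouple the full process (started from all infected) into single-source contributions: by the graphical-construction/additivity of the contact process, $\xi_t = \bigcup_{x} \xi_t^{\{x\}}$, where $\xi^{\{x\}}$ is the process started from $x$ alone, all built from the same Poisson data and the same graph dynamics $(G_t)$. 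Hence $\tau_{(G_t)} = \max_x \tau^{\{x\}}_{(G_t)} \le C_0 \log n$ on the good event, which gives (ii) with $C = C_0$.

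The delicate point — and the step I expect to be the main obstacle — is step (4) combined with step (1): the single-source processes $\xi^{\{x\}}$ are \emph{not} independent (they share the graph dynamics and the Poisson clocks), and, more seriously, the domination by the herds process in step (1) is only valid as long as the explored region stays tree-like, which can fail once the infection has wandered for a while or once several single-source clouds overlap and collectively explore a large portion of the graph. The resolution is to exploit the exponential tail from step (2) to guarantee that each single-source cloud is \emph{small} (of size and diameter $O(\log n)$ and duration $O(\log n)$) before it dies, so that on the good event no exploration ever leaves the tree-like regime, and the union bound over $x$ is harmless because each individual bad event has probability $o(1/n)$. Making this circular-looking argument rigorous requires care: one runs the exploration from each vertex for a bounded time, checks the tree-likeness has not been violated, and uses that a breadth-first exploration of a region of size $O(\log n)$ in the random $d$-regular graph is a tree with probability $1 - O(\mathrm{polylog}(n)/n)$ — a standard local-structure estimate. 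Once that is in place, monotonicity/additivity of the contact process finishes the argument; part (i) is quoted verbatim from \cite{da2021contact}.
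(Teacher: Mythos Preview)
Your overall architecture is right and matches the paper: reduce to a single source, couple the exploration of the contact process on~$(G_t)$ with a subcritical herds process, and use the exponential tail of the latter. (The paper uses self-duality~$\P(\xi^{V_n}_t \neq \varnothing) \le n\cdot \P(\xi^{\bar u}_t \neq \varnothing)$ rather than additivity plus a union bound, but these two reductions are equivalent here: both require the single-source survival probability to be $o(1/n)$.)

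There is, however, a genuine quantitative gap in your step~(1) and its resolution in the last paragraph. You write that the coupling error is $o(1/n)$, but your own final estimate is the correct one: for a cloud of size $O(\log n)$ explored in a random $d$-regular graph, the probability that the exploration sees a cycle is only $1 - O(\mathrm{polylog}(n)/n)$, not $1 - o(1/n)$. After multiplying by $n$ (whether via the union bound in your step~(3) or via duality), this leaves a $\mathrm{polylog}(n)$ error, which does not vanish. So the argument as written does not close: you cannot simultaneously keep the cloud small enough to avoid cycles with probability $1 - o(1/n)$ and large enough to run for time $C\log n$.

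The paper's fix is a two-stage coupling. The first coupling breaks (a cycle-creating edge~$e$ is revealed) with probability $O(n^{-1+2\varepsilon}\log n)$, which is indeed not $o(1/n)$. But conditionally on this break: (a) the offending edge~$e$ is removed by a switch within time $\delta \log n$ with probability $1 - n^{-c}$, since any fixed edge is switched at rate $\approx \mathsf{v}$; (b) during this waiting period the cloud stays of size $\le n^{1/6}$ with high probability (pure-birth domination); (c) once~$e$ is gone the explored graph is again a forest and one restarts a fresh coupling from the current state, which now only needs to succeed with probability $1 - O(n^{-1/2})$ to make the product of the two failure probabilities $o(1/n)$. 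This restart argument (Lemmas~\ref{lem_stage_3} and~\ref{lem_from_not_forest} in the paper) is precisely the missing piece; without it, the $\mathrm{polylog}(n)/n$ cycle probability blocks the union bound.
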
 
As in the static case, the value $\bar \lambda(\mathsf v)$ corresponds to the critical value for the contact process on a limiting model, which in our case is called the \textbf{herds process}. 
This was introduced and analyzed in~\cite{da2021contact}, where in particular a phase transition delimited by a positive and finite parameter~$\bar{\lambda}(\mathsf{v})$ was established.
 Here we improve upon this result by showing that the herds process exhibits a form of sharp threshold phenomenon, namely that 
the tail distribution of the extinction time decays exponentially fast in the whole subcritical regime (see Lemma~\ref{lem_revineq} below). An informal description of the herds process is given in the next subsection, and a precise definition is given in Section~\ref{s_herds}.

Our second result answers a question of~\cite{da2021contact} concerning the monotonicity of $\bar \lambda(\mathsf v)$. 
\begin{theorem}\label{theo.2}
The mapping $\mathsf v\mapsto \bar \lambda(\mathsf v)$ is strictly decreasing. 
\end{theorem}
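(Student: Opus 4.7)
The plan is to prove the strict decrease by combining a coupling of two herds processes at rates $\mathsf v < \mathsf v'$ with a local ``effective infection rate'' argument that trades extra switch events for a strictly positive reduction of $\lambda$. Concretely, writing $\delta := \mathsf v' - \mathsf v$, I would thin the rate-$\mathsf v'$ switch clocks of the larger process into the rate-$\mathsf v$ clocks used by the smaller one together with an independent family of ``extra'' switch clocks at rate $\delta$, and construct both herds processes simultaneously on this common graphical representation. Verifying that each extra switch only ever helps the infection (typically by rerouting an infected individual into a fresh healthy neighborhood that the smaller process never sees) would yield the weak inequality $\bar\lambda(\mathsf v') \leq \bar\lambda(\mathsf v)$, reproducing the monotonicity that was already expected in~\cite{da2021contact}.

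\textbf{Strict monotonicity.} To upgrade the weak inequality to a strict one, I would argue that each extra switch near an infected herd contributes a definite positive gain: there should exist $p = p(\mathsf v, \lambda) > 0$ such that, conditional on an extra switch occurring at the boundary of the infected herd, it produces a new infection with probability at least $p$. Integrating over the lifetime of the process, the extra switches effectively add an infection rate of order $p\delta$, which one then exchanges for a strictly positive reduction $\epsilon = \epsilon(\delta, \mathsf v, \lambda) > 0$ in $\lambda$: survival of the $(\mathsf v, \lambda)$-process would force survival of the $(\mathsf v', \lambda - \epsilon)$-process, implying $\bar\lambda(\mathsf v') \leq \bar\lambda(\mathsf v) - \epsilon$. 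The exponential tail of the extinction time in the subcritical regime provided by Lemma~\ref{lem_revineq} enters here as a quantitative control on the size of the infected cloud, ensuring that beneficial extra switches accumulate on the relevant timescales and that the comparison between the two processes is not spoiled by rare unfavorable histories.

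\textbf{Main obstacle.} The hardest step will be to establish the uniform lower bound $p(\mathsf v, \lambda) > 0$ and to control pathwise the extra switches that are neutral or, in the herds process dynamics, momentarily unfavorable. I expect this to require a block/renormalization argument: one considers time blocks of suitable length, shows that within each block the extra switches produce, with probability bounded below, a net surplus of beneficial events compared to detrimental ones, and then upgrades the block-level comparison back to the continuous-time stochastic domination of $(\mathsf v', \lambda - \epsilon)$ over $(\mathsf v, \lambda)$. Once such a block comparison is secured, the strict inequality $\bar\lambda(\mathsf v') < \bar\lambda(\mathsf v)$ follows, and since $\mathsf v < \mathsf v'$ were arbitrary, this establishes Theorem~\ref{theo.2}.
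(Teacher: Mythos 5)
Your plan rests on a pathwise claim that the paper explicitly flags as unavailable and possibly false: that an extra split ``only ever helps the infection''. In the herds process a switch does not reroute a particle into fresh healthy territory; it splits one herd $A\cup B$ into two herds $A$ and $B$, each completed to its own copy of $\T^d$. Whether the process started from $\delta_A+\delta_B$ stochastically dominates the one started from $\delta_{A\cup B}$ is precisely the question raised in the remark following the proof of Lemma~\ref{lem_general_p}, where the authors explain why the natural injective coupling of particles fails (a later split in the unsplit process may separate more particles than the corresponding split in the other process, and subsequent births can then reverse the advantage) and state that it is not even clear whether the domination holds. Both halves of your argument --- the common graphical representation giving weak monotonicity, and the block comparison in which ``neutral or momentarily unfavorable'' extra switches are controlled pathwise --- stall at this point: there is no known monotone coupling in $\mathsf{v}$ to thin.

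The paper avoids any pathwise comparison between different values of $\mathsf{v}$. It works with the growth index $\varphi(\lambda,\mathsf{v})=\inf_{t}\E_{\lambda,\mathsf{v}}[X_t]^{1/t}$, shows $\varphi(\bar{\lambda}(\mathsf{v}),\mathsf{v})=1$ (equation~\eqref{varphi_lambda_crit}), and proves that $\varphi$ is strictly increasing in $\mathsf{v}$ (Proposition~\ref{prop_strict_mon}); Theorem~\ref{theo.2} is then a two-line consequence. The strict monotonicity of $\varphi$ is obtained in expectation only: a Russo-type derivative formula (Proposition~\ref{prop.derivative}) expresses $\partial_{\mathsf{v}}\E_{\lambda,\mathsf{v}}[X_T]$ as an integral of the expected gain from one additional split, and Lemma~\ref{lem_addherd} shows that splitting a herd of two neighboring particles increases $\E[X_t]$ by at least $\gamma\,\E[X_t]$ --- and even this simplest expectation-level comparison requires the two-type herds process behind Lemma~\ref{lem_general_p} rather than a direct coupling. (Lemma~\ref{lem_revineq}, which you invoke to control the infected cloud, is in fact used to get continuity of $\varphi$ and the characterization of criticality, not in any coupling.) If you want to salvage your approach, the quantity to compare across values of $\mathsf{v}$ is $\E[X_t]$, or rather its exponential growth rate, not the processes themselves.
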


\subsection{Methods of proof and organization of the paper}
As in~\cite{da2021contact}, the proof of Theorem~\ref{theo.1} relies on a detailed analysis of the herds process. 
Informally, this process evolves as a contact process on a family of $d$-regular trees, where the number of trees also evolves with time. On each tree, the process obeys the same rules as the usual contact process, regarding infection and recovery (though we adopt a slight change of terminology: the vertex states `healthy' and `infected' here are called `empty' and `occupied by a particle', respectively). In addition, each edge in any of the existing tree splits the tree into two pieces at a constant rate $\mathsf v$. When this happens, the two disjoint pieces of the tree are completed to form two new copies of a $d$-regular tree. 

The value $\bar \lambda(\mathsf v)$ is defined as the threshold for the infection parameter $\lambda$ above which the process has a positive probability of surviving forever, when starting from a single tree with a single particle. The heart of the proof is to show that when $\lambda$ is smaller than this threshold, 
the probability to survive for a time larger than $t$ decays exponentially fast with $t$. This is obtained using 
some coupling argument with a two-type herds process, which allows to show that the expected number of infected particles at time $t$, denoted (in this section only) $F(\lambda,\mathsf v,t)$, is a sub-multiplicative sequence (as a function of the time parameter), see Section~\ref{lem_sub_multiplicative}. Using this, we can define the rate of exponential decay of this function, $\varphi(\lambda,\mathsf v)$, and then the proof boils down to showing that it is strictly increasing with respect to both parameters. This is obtained via a kind of Russo's formula, see Proposition~\ref{prop.derivative}, which in our setting is quite involved, compared to the original formula from percolation theory. 
Moreover, the strict monotonicity of~$\varphi(\lambda,\mathsf v)$ also proves Theorem~\ref{theo.2}.

Another important ingredient is to control the higher moments of the number of infected particles, which requires some serious additional technical work, due to the non-linearity of these functionals, see Section~\ref{s_proof_derivative}. 
Bounding higher moments is needed to control the total number of particle births (or in the usual contact process terminology, infections) up to the extinction time, which 
in turn allows one to couple the contact process on a large dynamic $d$-regular graph with the herds process, up to this extinction time. This coupling argument is explained in Section~\ref{s_finite_graph}, where we complete the proof of Theorem~\ref{theo.1}.

\subsection{Related works}\label{ss_refs}
The study of the phase transition for long vs. short time extinction for the contact process on finite graphs, 
and the closely related question 
of metastability in the supercritical regime, has been studied intensively over the past  years. 
In particular as we already mentioned on finite boxes of $\mathbb Z^d$ \cite{CGOV84,DL88,DS88,DST89,M93,S85}, finite $d$-regular trees \cite{CMMV14,S01}, and random $d$-regular graphs~\cite{LS17,MV16}, but also in a number of other examples, such as the configuration model with power law degree distribution~\cite{CS15,CD09,MVY13},
or with general degree-distribution~\cite{BNNS21, HD20}, preferential attachment graphs~\cite{BBCS05,C17}, Erd\H{o}s-R\'enyi random graphs~\cite{BNNS21}, inhomogeneous random graphs~\cite{C19}, or hyperbolic random graphs~\cite{LMSV21}. There are also 
results concerning some general classes of finite graph sequences~\cite{MMVY16,SV17}.

On the other hand, the study of the contact process on dynamical graphs started more recently, see e.g.~\cite{da2021contact,LJ23,HUVV21,JLM19,JLM22,JM17,LR20,remenik08,seiler2022contact}.

\section{Preliminaries on the herds process}\label{s_herds}
In this section, we give a formal definition of the herds process and introduce notation. We also present some tools that will be employed in the analysis of this process in later sections, namely, stochastic domination by a pure-birth process and a submultiplicativity inequality for the expectation of the number of particles.

\subsection{Definition and construction}
Throughout this paper, we fix~$d \ge 3$ and let~$\T^d$ denote the infinite~$d$-regular {rooted} tree. {The root is denoted by $o$, and we write $u\sim v$ when two vertices $u$ and $v$ are neighbors.}
%We endow the vertices of~$\T^d$ with an arbitrary total order.

\begin{definition} \label{def_shapes}
Let
	\begin{equation*}
		P_\mathsf{f}(\T^d) := \{A \subset \T^d:\; A \text{ is finite and non-empty}\}.
	\end{equation*}
We call each~$A \in P_\mathsf{f}(\mathbb{T}^d)$ a \textbf{herd shape}, and each~$x \in A$ a \textbf{particle} of~$A$. 
	Given a herd shape~$A$ and an edge~$e=\{u,v\}$ of~$\T^d$ with~$u$ {closer to the root}  than~$v$, {in the graph distance of~$\T^d$,} define
	\[A_{e,1}:= \{w \in A:\; w \text{ is closer to $u$ than to $v$}\},\quad A_{e,2}:=A \backslash A_{e,1}.\]
	We say that~$e$ is an \textbf{active edge of} $A$ if~$A_{e,1} \neq \varnothing$ and~$A_{e,2} \neq \varnothing$.
\end{definition}

\begin{definition} \label{def_configurations}
Define the \textbf{set of herd configurations}
\[\mathcal{S} :=\{\xi:P_\mathsf{f}(\mathbb{T}^d) \to \mathbb{N}_0 \text{ with } \textstyle{\sum_{A}} \xi(A) < \infty\}.\] 
	In a herd configuration~$\xi \in \mathcal{S}$,~$\xi(A)$ is interpreted as the number of herds with shape~$A$. Given~$A \in P_\mathsf{f}(\T^d)$, we let~$\delta_A$ denote the herd configuration such that~$\delta_A(B) = 1$ if~$B = A$, and~$\delta_A(B) = 0$ otherwise.
	An \textbf{enumeration} of~$\xi \in \mathcal{S}$ is a sequence~$A_1,\ldots,A_m \in P_\mathsf{f}(\T^d)$ such that~$\xi = \sum_{i=1}^m \delta_{A_i}$.
\end{definition}
We will generally denote deterministic elements of~$\mathcal{S}$ by the letter~$\xi$, and random elements of~$\mathcal{S}$ by~$\Xi$.

\begin{definition}\label{def_herds_process}
	The \textbf{herds process}~$(\Xi_t)_{t \ge 0}$ with birth rate~$\lambda > 0$ and splitting rate~$\mathsf{v} > 0$ is a continuous-time Markov chain on~$\mathcal{S}$ whose dynamics is given by the following description of possible jumps and corresponding rates:
\begin{itemize}
	\item[(a)] \emph{death in herds with more than one particle:} for each~$A$ with~$|A| > 1$ such that~$\xi(A) > 0$, and for each~$x \in A$, with rate~$\xi(A)$, the process jumps from~$\xi$ to~$\xi - \delta_A + \delta_{A\backslash \{x\}}$;
	\item[(b)] \emph{death in herds with one particle:} for each~$A$ with~$|A|=1$ such that~$\xi(A) > 0$, with rate~$\xi(A)$, the process jumps from~$\xi$ to~$\xi - \delta_A$;
	\item[(c)] \emph{birth:} for each~$A$ such that~$\xi(A) > 0$, and for each~$y \in \mathbb{T}^d$ with~$y \notin A$, with rate~$\lambda \cdot |\{x\in A:x \sim y\}|\cdot \xi(A)$, the process jumps from~$\xi$ to~$\xi - \delta_A + \delta_{A \cup \{y\}}$;
	\item[(d)] \emph{split:} for each~$A$ such that~$\xi(A) > 0$, and for each active edge~$e$ of~$A$, with rate~$\mathsf{v}\cdot \xi(A)$, the process jumps from~$\xi$ to~$\xi - \delta_A + \delta_{A_{e,1}} + \delta_{A_{e,2}}$.
\end{itemize}
\end{definition}

\begin{remark}
\textit{A priori}, it could be the case that the above description gave rise to an explosive chain, that is, a chain that jumps infinitely many times in a bounded time interval. So, strictly speaking, the process is only defined up to the explosion time (the infimum of times~$t > 0$ such that infinitely many jumps happen in~$(0,t)$). However, we will show shortly (see Corollary~\ref{cor_non_exp} below) that the chain is in fact not explosive.
\end{remark}

\begin{remark}\label{rem.statespace}
	Our choice for the state space~$\mathcal{S}$ of the herds process makes it so that, in case there are multiple herds with the same shape~$A$ (meaning that~$\xi(A) \ge 2$), then these herds are indistinguishable. An alternative choice was made in~\cite{da2021contact}: there, a state of the process was an index set~$\mathcal{J}$ and a mapping from~$\mathcal{J}$ to~$P_{\mathsf{f}}(\T^d)$, so that each~$i \in \mathcal{J}$ represented a different herd. This alternative choice requires heavier notation, but has some advantages; for instance, when a particle is born in a herd, it makes sense to consider the herd before and after the birth (since it keeps the same index). 
 Although here we will adopt the leaner description of Definition~\ref{def_herds_process}, we will sometimes pretend that a richer description is available. For instance, in one of our arguments {(see Lemma~\ref{lem_only_one_sep})} we fix a particle in a herd at time~$0$, and consider the evolution of the cardinality of the herd containing that particle for times~$t \ge 0$.
\end{remark}

We let~$\mathbb{P}$ be a probability measure under which the herds process is defined, and~$\mathbb{E}$ the associated expectation. When we want to be explicit about the parameters, we will write~$\mathbb{P}_{\lambda,\mathsf{v}}$ and~$\mathbb{E}_{\lambda,\mathsf{v}}$. \emph{When no explicit mention regarding the initial configuration is made, we assume it to consist of a single herd with a single particle {placed at the root vertex}}.  We may write~{$\mathbb{P}_{\lambda,\mathsf{v}}(\cdot \mid \Xi_0= \xi)$} (and similarly~{$\E_{\lambda,\mathsf{v}}[\cdot\mid \Xi_0=\xi]$}) to specify some other initial configuration~$\xi \in \mathcal{S}$. 

\begin{definition} 
Given~$\xi \in \mathcal{S}$, we let
	\begin{equation} \label{eq_def_X}
		X(\xi) := \sum_{A \in P_\mathsf{f}(\T^d)} |A|\cdot \xi(A),
	\end{equation}
	where~$|\cdot|$ denotes cardinality; that is,~$X(\xi)$ is the total number of particles among all herds in~$\xi$. We also let
	\begin{equation}\label{eq_def_cale}
		\mathscr{E}(\xi) := \sum_{A \in P_\mathsf{f}(\T^d)} |\{\text{active edges of $A$}\}| \cdot \xi(A),
	\end{equation}
	the total number of active edges among all herds of~$\xi$.
For the herds process~$(\Xi_t)_{t \ge 0}$ (started from an arbitrary configuration), we write
\begin{equation}
	\label{eq_def_Xt}
X_t:= X(\Xi_t), \qquad \mathscr{E}_t:= \mathscr{E}(\Xi_t),
\end{equation}
with~$X$ and~$\mathscr{E}$ as in~\eqref{eq_def_X} and~\eqref{eq_def_cale}, respectively.

\end{definition}

We now state a useful stochastic domination result. The proof involves a quick inspection of transition rates, and we omit it.

\begin{lemma}[Domination by pure-birth chain]\label{lem_explosion}
	For the herds process~$(\Xi_t)_{t \ge 0}$ with parameters~$\lambda$,~$\mathsf{v}$ and some initial configuration~$\xi \in \mathcal{S}$, let~$N_t$ denote the number of birth events until time~$t$. Let~$(Z_t)_{t \ge 0}$ be the continuous-time Markov chain on~$\N$ with~$Z_0 = X(\xi)$ and jump rates
	\begin{equation}
		\label{eq_pb_rates}
	q(i,i+1)=d\lambda i, \qquad q(i,j) = 0\text{ for }j \neq i+1.
	\end{equation}
	Then,~$(N_t)_{t \ge 0}$ is stochastically dominated by~$(Z_t - Z_0)_{t \ge 0}$. In particular,~$\P(N_t < \infty) \ge \P(Z_t < \infty) = 1$ for any~$t$.
\end{lemma}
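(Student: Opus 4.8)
The plan is to construct an explicit coupling of the herds process $(\Xi_t)_{t\ge 0}$ with the pure-birth chain $(Z_t)_{t\ge 0}$ on the same probability space, in such a way that $N_t \le Z_t - Z_0$ holds pathwise for all $t$. The guiding principle is that, in the herds process, birth events occur at a total rate that is always bounded above by $d\lambda X_t$: indeed, by item (c) of Definition 2.3, summing over all herds $A$ with $\xi(A)>0$ and all $y\notin A$, the total birth rate is $\lambda\sum_A \xi(A)\sum_{y\notin A}|\{x\in A: x\sim y\}|$, and for each $A$ we have $\sum_{y\notin A}|\{x\in A:x\sim y\}| \le \sum_{x\in A} \deg(x) = d|A|$; hence the total birth rate equals $\lambda$ times at most $\sum_A d|A|\xi(A) = dX_t$, i.e. it is at most $d\lambda X_t$. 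Meanwhile $Z$ increases by $1$ at rate $d\lambda Z_t$, so it suffices to keep $X_t \le Z_t$ for all $t$ along the coupling.

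The cleanest way to carry this out is a graphical/Poisson construction: attach to the herds process a collection of independent Poisson clocks governing each possible transition (deaths, births, splits) at the rates in Definition 2.3, and construct $Z$ from the same source of randomness by the standard thinning procedure. Concretely, one can use a single Poisson process of birth marks: each time a birth would occur in $(\Xi_t)$ we also attempt to increment $Z$, but since the instantaneous birth rate $d\lambda X_t$ in the herds process may be strictly below $d\lambda Z_t$ (because $X_t\le Z_t$), we additionally run an independent Poisson clock of rate $d\lambda(Z_t - X_t)$ whose rings increment $Z$ without affecting $\Xi$. Then every birth in $(\Xi_t)$ is matched by a $+1$ in $Z$, so $N_t \le Z_t - Z_0$ automatically. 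It remains to check that the inequality $X_t \le Z_t$ is preserved: $Z$ only increases, and by $+1$ at each step; $X_t$ increases by $1$ at each birth (which is matched by a $+1$ in $Z$) and otherwise stays constant (splits preserve the total number of particles, $X(A_{e,1}) + X(A_{e,2}) = X(A)$) or decreases (deaths). Since $X_0 = Z_0 = X(\xi)$, the inequality is maintained for all $t$. Finally, $Z$ is a pure-birth chain with linearly growing rates, which is a classical non-explosive (Yule-type) process, so $\P(Z_t<\infty)=1$ for every $t$, whence $\P(N_t<\infty)\ge \P(Z_t<\infty)=1$.

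A slightly slicker alternative, if one prefers to avoid writing out clocks, is to appeal to a standard monotone-coupling lemma for counting functionals of pure-jump Markov chains: if for two chains the jump rate of ``the quantity we track'' (here $N$, resp. $Z-Z_0$) is, in every state, dominated by the corresponding rate for the larger chain whenever the tracked quantities currently agree, one obtains stochastic domination. The verification of the rate inequality is exactly the computation $d\lambda X_t \le d\lambda Z_t$ above together with the observation that $N$ only ever jumps upward by $1$.

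The only mildly delicate point — and the one I would be most careful about — is the bookkeeping ensuring that the coupled process $\Xi$ really has the correct law even after superimposing the auxiliary rate-$d\lambda(Z_t-X_t)$ clock, and that the construction makes sense before one has established non-explosion (one should run everything up to the explosion time of $\Xi$ and only afterward conclude, from $N_t\le Z_t - Z_0<\infty$, that no explosion occurred). This is precisely why the lemma is phrased so that its own conclusion ($\P(N_t<\infty)=1$) is what will be used, in Corollary 2.4, to rule out explosion a posteriori. Since the paper explicitly states that the proof ``involves a quick inspection of transition rates'' and is omitted, the intended argument is surely the rate comparison $\text{(total birth rate)} \le d\lambda X_t \le d\lambda Z_t$ sketched above.
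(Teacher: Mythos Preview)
Your proposal is correct and matches the paper's intended approach: the paper omits the proof, stating only that it ``involves a quick inspection of transition rates,'' and your rate comparison (total birth rate $\le d\lambda X_t \le d\lambda Z_t$) together with the explicit coupling is precisely that inspection carried out in full. Your remark about running the construction up to the potential explosion time and then concluding non-explosion a posteriori is the right way to handle the circularity, and is consistent with how the paper uses the lemma in Corollary~\ref{cor_non_exp}.
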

\begin{corollary}\label{cor_non_exp}
	The herds process is non-explosive.
\end{corollary}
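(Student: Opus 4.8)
The plan is to derive non-explosiveness of the herds process directly from Lemma~\ref{lem_explosion}. The key observation is that the only transitions of the herds process that change the total number of particles $X_t$ upward are birth events, and Lemma~\ref{lem_explosion} tells us that the number $N_t$ of birth events up to time $t$ is stochastically dominated by $Z_t - Z_0$, where $(Z_t)$ is the pure-birth (Yule-type) chain with rates $q(i,i+1) = d\lambda i$. Since a pure-birth chain with linear rates is well known to be non-explosive (the expected time to go from $i$ to $i+1$ is $1/(d\lambda i)$, and $\sum_i 1/(d\lambda i) = \infty$), we have $\P(Z_t < \infty) = 1$ for every $t$, hence $\P(N_t < \infty) = 1$ for every $t$ as well; this last conclusion is exactly the ``in particular'' part of Lemma~\ref{lem_explosion}.

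It then remains to upgrade ``finitely many births by any fixed time $t$, almost surely'' to ``finitely many jumps of any kind by any fixed time $t$, almost surely'', which is what non-explosiveness means. For this I would argue that between consecutive birth events, only finitely many jumps of the other three types (deaths in multi-particle herds, deaths in singleton herds, and splits) can occur. Indeed, all of death and split transitions can only decrease or keep constant the total particle count $X_t$, and in fact each death strictly decreases $X_t$ by one, while each split keeps $X_t$ unchanged but strictly increases the number of herds. So in an interval containing no birth event, starting from a configuration with finitely many particles (hence finitely many herds, and finitely many active edges), the quantity $X_t$ is non-increasing and the number of herds can only grow through splits; but splitting a herd shape $A$ strictly decreases the number of active edges (an active edge $e$ of $A$ is consumed and the pieces $A_{e,1}, A_{e,2}$ each have strictly fewer active edges than $A$). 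Hence between two births only finitely many transitions are possible --- concretely, the number of deaths is bounded by the current value of $X$, and the number of splits is bounded by the current value of $\mathscr E$ plus the accumulated effect of the (finitely many) deaths. Therefore on the event $\{N_t < \infty\}$, which has probability one, the total number of jumps in $(0,t)$ is finite.

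A slightly slicker way to package the same argument, which I would use if I wanted to avoid a careful count of splits: observe that each split increases the number of herds by exactly one and that the number of herds is also increased only by splits (deaths either leave it unchanged or decrease it by one when a singleton dies, and births leave it unchanged), while simultaneously $X_t$ does not increase except at births. So if infinitely many jumps occurred in a bounded interval with only finitely many births, infinitely many of them would be deaths or splits; infinitely many deaths is impossible since $X_t$ would have to decrease infinitely often by one with only finitely many compensating increases, and infinitely many splits is impossible because each split strictly decreases $\mathscr E_t$ (total active edges), which is a non-negative integer that only births can increase. Since births are finite in $(0,t)$ almost surely, we reach a contradiction, and the chain is non-explosive.

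I do not anticipate a serious obstacle here: the heavy lifting is already done in Lemma~\ref{lem_explosion}, and the corollary is a short deterministic bookkeeping argument about how the three non-birth transition types affect the monovariants $X_t$ (particle count) and $\mathscr E_t$ (active-edge count). The only point requiring a moment's care is making sure that ``finitely many births'' genuinely forces ``finitely many total jumps''; the potential-function observation above (using that $X_t$ and $\mathscr E_t$ are non-negative integers that decrease at non-birth transitions of the relevant kinds) settles this cleanly.
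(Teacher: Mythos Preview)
Your proposal is correct and follows the same approach as the paper: use Lemma~\ref{lem_explosion} to get finitely many births almost surely, then argue that finitely many births forces finitely many deaths and splits. The paper's proof is a single sentence asserting the latter implication without justification; your monovariant argument (deaths strictly decrease $X_t$, splits strictly decrease $\mathscr{E}_t$, and only births can increase either) is exactly the bookkeeping the paper leaves implicit, so you have in fact written out more than the paper does.
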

\begin{proof}
	When there are finitely many birth events in~$[0,t]$, there are also finitely many death events and split events in~$[0,t]$, so the herds process performs finitely many jumps of any kind in~$[0,t]$.
\end{proof}

We also have the following important consequence concerning the processes~$(X_t)$ and~$(\mathscr{E}_t)$ defined in~\eqref{eq_def_Xt}.
\begin{corollary}
	\label{cor_explosion}
	For any~$\lambda > 0$,~$\mathsf{v} >0$,~$T > 0$ and~$k \ge 1$, there exists~$c > 0$ such that the herds process~$(\Xi_t)_{t \ge 0}$ with parameters~$\lambda,~\mathsf{v}$ and arbitrary (deterministic) initial configuration~$\xi$ satisfies
	\begin{equation*}
		\E\left[\left.\max_{0 \le t \le T} (X_t)^k \;\right|\; \Xi_0 = \xi\right] \le cX(\xi)^k
	\end{equation*}
	and
	\begin{equation*}
\E\left[\left.\max_{0 \le t \le T} (\mathscr{E}_t)^k \;\right|\; \Xi_0 = \xi\right] \le c(X(\xi)+\mathscr{E}(\xi))^k.
	\end{equation*}
\end{corollary}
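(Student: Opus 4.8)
The plan is to reduce both bounds to the pure-birth domination of Lemma~\ref{lem_explosion}. First I would observe that, along the herds process, the total particle count $X_t$ can only increase at birth events (death events decrease it, splits leave it unchanged), so $\max_{0\le t\le T} X_t \le X(\xi) + N_T$, where $N_T$ is the number of birth events in $[0,T]$. By Lemma~\ref{lem_explosion}, $N_T$ is stochastically dominated by $Z_T - Z_0$, where $(Z_t)$ is the pure-birth (Yule-type) chain with $Z_0 = X(\xi)$ and rates $q(i,i+1) = d\lambda i$. Hence
\begin{equation*}
\E\!\left[\max_{0\le t\le T}(X_t)^k \;\Big|\; \Xi_0 = \xi\right] \le \E\!\left[(X(\xi) + (Z_T - Z_0))^k\right] \le \E\!\left[(Z_T)^k\right].
\end{equation*}
The moments of a pure-birth chain started from $Z_0 = m$ are classical: $Z_T$ is the sum of $m$ i.i.d.\ copies of a single-particle Yule process at time $T$, each of which is geometric with parameter $e^{-d\lambda T}$, so $\E[(Z_T)^k] \le C_{k,d,\lambda,T}\, m^k$ for a constant depending only on $k,d,\lambda,T$ (e.g.\ via the explicit negative-binomial law, or by a direct Gr\"onwall/generator estimate on $t \mapsto \E[(Z_t)^k]$ showing it grows at most like $e^{C_k t} m^k$). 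Taking $m = X(\xi)$ gives the first inequality with $c = C_{k,d,\lambda,T}$.

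For the second inequality I would bound the number of active edges in terms of particle counts and split events. Every active edge of a herd shape $A$ lies on a path between two particles of $A$; since $\T^d$ is a tree, the union of all such paths is the Steiner tree spanning $A$, which has at most $(d-2)|A| + 2 \le d|A|$ edges (or simply note $|\{\text{active edges of }A\}| \le X(\xi)\cdot(\text{something linear})$ — more crudely, the number of edges on the minimal subtree spanning a set of $j$ vertices in $\T^d$ is at most a constant times $j$). Consequently $\mathscr{E}(\zeta) \le c_d\, X(\zeta)$ for \emph{every} configuration $\zeta$ with all herds having the shape-structure induced by the process — but to be safe one should track it dynamically: $\mathscr{E}_t$ increases only at birth events (a new particle can create new active edges) and at split events (which, however, only split existing active edges among the two pieces, not creating new ones beyond removing the one split edge), while death events can only decrease it. So $\max_{0\le t \le T}\mathscr{E}_t \le \mathscr{E}(\xi) + (\text{increase due to births in } [0,T])$, and each birth event increases $\mathscr{E}_t$ by at most a constant $c_d$ (a new leaf adds at most $c_d$ new active edges to the Steiner tree). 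Thus $\max_{0\le t\le T}\mathscr{E}_t \le \mathscr{E}(\xi) + c_d N_T \le \mathscr{E}(\xi) + c_d(Z_T - Z_0)$, and raising to the $k$-th power and taking expectations as before yields
\begin{equation*}
\E\!\left[\max_{0\le t\le T}(\mathscr{E}_t)^k\;\Big|\;\Xi_0=\xi\right] \le \E\!\left[(\mathscr{E}(\xi) + c_d(Z_T-Z_0))^k\right] \le c\,(\mathscr{E}(\xi) + X(\xi))^k,
\end{equation*}
after expanding and using $\E[(Z_T-Z_0)^k\mid Z_0 = X(\xi)] \le c' X(\xi)^k$ and the elementary inequality $(a+b)^k \le 2^{k-1}(a^k+b^k)$.

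The main obstacle I anticipate is the bookkeeping in the second bound: one must carefully verify that split events do not cause a net increase in $\mathscr{E}_t$ (the split replaces $\delta_A$ by $\delta_{A_{e,1}} + \delta_{A_{e,2}}$, and every active edge of $A$ other than $e$ itself is an active edge of exactly one of the two pieces — this uses that $\T^d$ is a tree, so removing $e$ disconnects it), and that a single birth event adds only a bounded number of new active edges, with the bound depending only on $d$. Once this deterministic combinatorial fact is in place, the probabilistic part is an immediate application of Lemma~\ref{lem_explosion} together with the standard moment bound for the Yule process, and the two stated inequalities follow by the same argument.
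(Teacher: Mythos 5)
Your proposal follows essentially the same route as the paper: bound $\max_t X_t$ and $\max_t\mathscr{E}_t$ by the initial values plus the number of births $N_T$, dominate $N_T$ by the pure-birth chain of Lemma~\ref{lem_explosion}, and conclude with a standard $k$-th moment bound for the Yule process (the paper uses Minkowski's inequality over the $Z_0$ independent single-ancestor copies, which is interchangeable with your negative-binomial/Gr\"onwall estimate). One small correction to your combinatorial step: it is \emph{not} true that every active edge of $A$ other than $e$ remains active for one of the two pieces $A_{e,1}, A_{e,2}$ (e.g.\ if $A=\{u,w\}$ with $u\sim v\sim w$, both edges on the path are active, but after splitting at $\{u,v\}$ each singleton piece has none); the correct and sufficient statement is the reverse inclusion, namely that every active edge of $A_{e,1}$ or of $A_{e,2}$ was already an active edge of $A$ distinct from $e$, so splits can only decrease $\mathscr{E}_t$ --- and in fact a birth of $y$ adjacent to $x\in A$ creates at most the single new active edge $\{x,y\}$, so your constant $c_d$ can be taken to be $1$ as in the paper.
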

\begin{proof}
	Again let~$N_t$ denote the number of births in the herds process until time~$t$. Note that
	\begin{equation}\label{eq_to_justify_xe}
\max_{0 \le t \le T} X_t \le X_0 + N_T \qquad \text{and} \quad \max_{0 \le t \le T} \mathscr{E}_t \le \mathscr{E}_0 + N_T;
	\end{equation}
	to justify the latter, we observe that deaths and splits can only decrease~$\mathscr{E}_t$, while a birth can increase~$\mathscr{E}_t$ by at most one.

	Let~$(Z_t)_{t \ge 0}$ be the pure-birth chain of Lemma~\ref{lem_explosion}, started with~$Z_0 = X_0$. By that lemma,~$(Z_t-Z_0)_{t \ge 0}$ stochastically dominates~$(N_t)_{t \ge 0}$. Then,
	\[\E\left[\left.\max_{0 \le t \le T} (X_t)^k\;\right|\; \Xi_0 = \xi\right] \le \E \left[\left.(X_0 + N_T)^k\;\right|\; \Xi_0 = \xi \right] \le \E[(X_0  + Z_T-Z_0))^k],\]
	where the last expectation is with respect to the probability measure under which~$(Z_t)$ is defined (note that~$X_0$  is fixed and deterministic). Next, the law of~$Z_T - Z_0$ is equal to the law of~$\sum_{i=1}^{Z_0} \zeta^{(i)}_T$, where~$(\zeta^{(1)}_t),\ldots,(\zeta^{(Z_0)}_t)$ are independent pure-birth processes with rates as in~\eqref{eq_pb_rates}, each started with a population of one.
	Using Minkowski's inequality,
	\begin{align*}\E[(X_0+ Z_T-Z_0)^k]^{1/k} \le X_0+ \sum_{i=1}^{Z_0} \E[(\zeta^{(i)}_T)^k]^{1/k} = cX_0,
\end{align*}
	where~$c:= \E[(\zeta^{(i)}_T)^k]^{1/k}+1$, {which by~\cite[Corollary 1 p.111]{athreya}} is finite and only depends on~$\lambda$,~$k$ and~$T$. This completes the proof of the first bound. For the second one, we start using the second bound in~\eqref{eq_to_justify_xe}:
\begin{align*}
	\E\left[\left.\max_{0 \le t \le T} (\mathscr{E}_t)^k\;\right|\; \Xi_0 = \xi\right] &\le \E \left[\left.(\mathscr{E}_0 + N_T)^k\;\right|\; \Xi_0 = \xi \right] \\ &= \E \left[\left.( X_0 +\mathscr{E}_0 + N_T - X_0)^k\;\right|\; \Xi_0 = \xi \right], 
\end{align*}
and then complete the proof as in the previous case.
\end{proof}

 We now  present three properties of the herds process (in Lemmas~\ref{lem_auto}, \ref{lem_decompose} and~\ref{lem_dominate} below).  In all three cases, the proof is elementary and omitted.

\begin{lemma}[Invariance under tree automorphisms]\label{lem_auto}
Let~$\psi: \mathbb{T}^d \to \mathbb{T}^d$ be a graph automorphism. Fix~$A \in P_\mathsf{f}(\mathbb{T}^d)$ and let~$(\Xi_t)_{t \ge 0}$ and~$(\Xi_t')_{t \ge 0}$ be herds processes started from~$\delta_A$ and~$\delta_{\psi(A)}$, respectively. Then, the process
\[\sum_{A \in P_\mathsf{f}(\mathbb{T}^d)} \Xi_t(A) \cdot \delta_{\psi(A)},\quad t \ge 0\]
has the same distribution as~$(\Xi_t')_{t \ge 0}$. In particular, the processes~$(X(\Xi_t))_{t \ge 0}$ and~$(X(\Xi'_t))_{t \ge 0}$ have the same distribution.
\end{lemma}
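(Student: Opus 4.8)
The plan is to realise the claimed identity in distribution as an instance of a general principle: the generator of the herds process is invariant under the bijection of~$\mathcal{S}$ induced by any graph automorphism of~$\mathbb{T}^d$, and then invoke uniqueness of the (non-explosive) chain with a given generator and initial state.

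First I would set up the induced map. Given a graph automorphism~$\psi$ of~$\mathbb{T}^d$, define~$\Psi:\mathcal{S}\to\mathcal{S}$ by~$\Psi(\xi) := \sum_{A \in P_\mathsf{f}(\mathbb{T}^d)} \xi(A)\cdot \delta_{\psi(A)}$, equivalently~$\Psi(\xi)(B) = \xi(\psi^{-1}(B))$. Since~$\psi$ is a bijection of the vertex set, $A \mapsto \psi(A)$ is a bijection of~$P_\mathsf{f}(\mathbb{T}^d)$ with inverse~$B \mapsto \psi^{-1}(B)$, so~$\Psi$ is a bijection of~$\mathcal{S}$ whose inverse is induced by~$\psi^{-1}$; note also~$\Psi(\delta_A) = \delta_{\psi(A)}$.

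Then I would check, going case by case through the four families of transitions~(a)--(d) in Definition~\ref{def_herds_process}, that for all~$\xi \neq \eta$ in~$\mathcal{S}$ the jump rate of the herds process from~$\xi$ to~$\eta$ equals its jump rate from~$\Psi(\xi)$ to~$\Psi(\eta)$. The elementary facts used are: $\psi$ preserves cardinalities; $\psi$ preserves adjacency, so~$|\{x \in A: x\sim y\}| = |\{z \in \psi(A): z \sim \psi(y)\}|$ and~$y \notin A \iff \psi(y)\notin \psi(A)$; $\psi$ commutes with~$A \mapsto A\backslash\{x\}$ and~$A \mapsto A \cup \{y\}$; and~$\psi$ maps active edges of~$A$ bijectively onto active edges of~$\psi(A)$, carrying the unordered partition~$\{A_{e,1},A_{e,2}\}$ to~$\{(\psi A)_{\psi(e),1},(\psi A)_{\psi(e),2}\}$. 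The only point needing a word of care is that~$\psi$ need not fix the root~$o$, so the labelling of the two pieces of an active edge as ``1'' and ``2'' (determined by the root in Definition~\ref{def_shapes}) is not preserved; but the split transition~(d) adds~$\delta_{A_{e,1}}+\delta_{A_{e,2}}$ symmetrically, so only the unordered pair of pieces enters the target configuration and the rate identity survives. With these observations each of~(a)--(d) for~$A$ matches the corresponding transition for~$\psi(A)$, and the prefactors~$\xi(A)$, $\lambda|\{x\in A:x\sim y\}|\,\xi(A)$, $\mathsf{v}\,\xi(A)$ agree because~$\Psi(\xi)(\psi(A)) = \xi(A)$.

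Finally I would conclude by uniqueness: by the rate identity and Corollary~\ref{cor_non_exp}, both~$(\Xi'_t)_{t\ge 0}$ and~$(\Psi(\Xi_t))_{t\ge 0}$ are non-explosive continuous-time Markov chains on~$\mathcal{S}$ with the herds generator, and both start from~$\delta_{\psi(A)}$ (the latter because~$\Psi(\Xi_0) = \Psi(\delta_A) = \delta_{\psi(A)}$), hence they have the same law, which is exactly the asserted identity. For the last sentence, $X(\Psi(\xi)) = \sum_B |B|\,\xi(\psi^{-1}(B)) = \sum_A |\psi(A)|\,\xi(A) = \sum_A |A|\,\xi(A) = X(\xi)$, so~$X \circ \Psi = X$ pathwise and therefore~$(X(\Xi'_t))_{t\ge 0} \overset{d}{=} (X(\Psi(\Xi_t)))_{t\ge 0} = (X(\Xi_t))_{t\ge 0}$. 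The whole argument is routine; the only mild obstacle is the bookkeeping around the root-dependence of the piece labels noted above, which is why the split case is the one to write out with some care.
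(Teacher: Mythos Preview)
Your proposal is correct and is exactly the elementary rate-comparison argument the paper has in mind; indeed, the paper omits the proof of this lemma, saying only that it is ``elementary'' (the same is done for the companion Lemmas~\ref{lem_decompose} and~\ref{lem_dominate}, and for Lemma~\ref{lem_projections} the authors say ``the proof is done by comparing jump rates, and we omit it''). Your careful remark about the root-dependence of the labels~$A_{e,1},A_{e,2}$ versus the symmetry of the split transition is a nice point that the paper does not make explicit.
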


\begin{lemma}[Decomposition into independent processes]\label{lem_decompose}
	Let~$\xi \in \mathcal{S}$ with enumeration~$\xi = \sum_{i=1}^n \delta_{A_i}$, where~$A_1,\ldots,A_n \in P_\mathsf{f}(\T^d)$. Then, the herds process started from $\xi$ has the same distribution as~$(\Xi^{(1)}_t + \cdots + \Xi^{(n)}_t)_{t \ge 0}$,
	where~$(\Xi^{(1)}_t)_{t\ge 0}$,~$\ldots$,~$(\Xi^{(n)}_t)_{t\ge 0}$ are independent herds processes, started from $\delta_{A_1}$,~$\ldots$,~$\delta_{A_n}$, respectively.
\end{lemma}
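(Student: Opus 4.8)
The plan is to reduce to the case $n=2$ and then exploit the elementary observation that in Definition~\ref{def_herds_process} distinct herds never interact, so that the transition rates of the herds process are \emph{linear} in the configuration; combined with non-explosiveness (Corollary~\ref{cor_non_exp}) this identifies the superposition of independent copies as a single herds process. For general $n$ I would then simply induct.

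For $n=2$, I would construct independent herds processes $(\Xi^{(1)}_t)_{t\ge0}$ and $(\Xi^{(2)}_t)_{t\ge0}$ with parameters $\lambda,\mathsf{v}$, started from $\delta_{A_1}$ and $\delta_{A_2}$, on a common probability space, and set $\Psi_t:=\Xi^{(1)}_t+\Xi^{(2)}_t$. By Corollary~\ref{cor_non_exp} each component, hence also $\Psi$, makes finitely many jumps on any bounded interval, so $(\Psi_t)$ is a well-defined piecewise-constant process on $\mathcal{S}$. The key step is to check that, for any fixed finitely-supported increment $\zeta$, the rate at which a herds process in a configuration $\xi$ jumps to $\xi+\zeta$ is a finite sum --- over all ``jump mechanisms'' (a choice among the jump types (a)--(d) of Definition~\ref{def_herds_process}, an affected herd shape $A$ with $\xi(A)\ge 1$, and the relevant auxiliary datum: a particle $x\in A$, a vertex $y\notin A$, or an active edge $e$ of $A$) that realize the displacement $\zeta$ --- of terms of the form $c\cdot \xi(A)$, where the constant $c$ depends only on the mechanism and on $\lambda,\mathsf{v}$, and not on $\xi$. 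This is exactly the statement that herds do not interact, and it follows by inspection of Definition~\ref{def_herds_process}, using that herds sharing a shape are indistinguishable in the state space $\mathcal{S}$. Since $\xi\mapsto \xi(A)$ is additive, it follows that whenever $\Psi_t=\psi=\xi^{(1)}+\xi^{(2)}$, the rate at which $\Psi$ jumps to $\psi+\zeta$ --- namely the sum of the corresponding rate for $\Xi^{(1)}$ started from $\xi^{(1)}$ and for $\Xi^{(2)}$ started from $\xi^{(2)}$ --- equals the same expression with $\psi(A)=\xi^{(1)}(A)+\xi^{(2)}(A)$ in place of $\xi(A)$; that is, it equals the herds-process rate from $\psi$ to $\psi+\zeta$, and in particular it depends only on $\psi$ and $\zeta$, not on the decomposition of $\psi$.

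From here I would invoke the standard criterion for a function of a Markov chain to again be Markov, applied to the map $(\xi^{(1)},\xi^{(2)})\mapsto \xi^{(1)}+\xi^{(2)}$ on the product chain $(\Xi^{(1)}_t,\Xi^{(2)}_t)_{t\ge0}$ --- the aggregated rates just computed depend only on the image state; equivalently, one checks at the level of generators that $\mathcal{L}_{\mathrm{prod}}\big[g(\xi^{(1)}+\xi^{(2)})\big]=\big(\mathcal{L}_{\mathrm{herds}}\,g\big)(\xi^{(1)}+\xi^{(2)})$. This shows $(\Psi_t)$ is a continuous-time Markov chain on the countable state space $\mathcal{S}$ with the transition rates of the herds process and with $\Psi_0=\delta_{A_1}+\delta_{A_2}=\xi$; since the herds process is non-explosive (Corollary~\ref{cor_non_exp}), it is the unique chain with these rates and this initial state, so $(\Psi_t)_{t\ge0}$ has the law of the herds process started from $\xi$, settling the case $n=2$. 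For general $n$, write $\xi=\big(\sum_{i=1}^{n-1}\delta_{A_i}\big)+\delta_{A_n}$, apply the case $n=2$ to peel off an independent copy started from $\delta_{A_n}$, and then apply the induction hypothesis to the remaining process started from $\sum_{i=1}^{n-1}\delta_{A_i}$. I do not expect a genuine obstacle: the only two points needing attention are the linearity of the transition rates in the configuration (immediate from Definition~\ref{def_herds_process}) and the appeal to non-explosiveness to upgrade ``matching rates and initial state'' to ``matching law'' --- which is precisely why the paper can dispatch this as elementary and omit it.
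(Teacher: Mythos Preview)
Your proposal is correct and is precisely the kind of elementary verification the paper has in mind; indeed, the paper omits the proof entirely, stating only that it is elementary. Your argument via linearity of the rates in Definition~\ref{def_herds_process}, lumpability of the product chain under the sum map, and the appeal to non-explosiveness (Corollary~\ref{cor_non_exp}) for uniqueness is exactly the expected route.
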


Before stating the third property, we define a partial order on~$\mathcal{S}$.

\begin{definition}\label{def.stoch.dom}
	Given two herd configurations~$\xi$ and~$\xi'$, we write~$\xi \preceq \xi'$ if there exist enumerations
	\[\xi= \sum_{i=1}^{m} \delta_{A_i},\qquad \xi' = \sum_{j=1}^{n} \delta_{A_j'}\]
	such that~$m \le n$ and~$A_i \subset A_i'$ for~$i = 1,\ldots,m$.
\end{definition}

\begin{lemma}[Attractiveness]\label{lem_dominate}
If~$\xi \preceq \xi'$, then~$(\Xi_t)_{t \ge 0}$ started from~$\xi$ is stochastically dominated (with respect to~$\preceq$) by~$(\Xi'_t)_{t \ge 0}$ started from~$\xi'$.
\end{lemma}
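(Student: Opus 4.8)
The plan is to construct an explicit coupling of the two herds processes on a common probability space so that the domination $\Xi_t \preceq \Xi_t'$ is preserved for all $t \geq 0$. First I would reduce to the case of a single herd on each side: by Lemma~\ref{lem_decompose} and Definition~\ref{def.stoch.dom}, if $\xi \preceq \xi'$ via enumerations $\xi = \sum_{i=1}^m \delta_{A_i}$ and $\xi' = \sum_{j=1}^n \delta_{A_j'}$ with $A_i \subset A_i'$ for $i \le m$, it suffices to couple, for each $i \le m$, a herds process started from $\delta_{A_i}$ with one started from $\delta_{A_i'}$ so that domination is maintained, and then to run the remaining $n-m$ processes on the $\xi'$-side freely (the empty configuration on the $\xi$-side is dominated by anything). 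So the core task is: given $A \subset A'$ finite non-empty subsets of $\T^d$, couple $(\Xi_t)$ from $\delta_A$ and $(\Xi'_t)$ from $\delta_{A'}$ so that at every time there is a ``labelled'' correspondence in which each herd on the left sits inside a distinct herd on the right.

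Next I would set up the coupling using a graphical/Harris-type construction driven by a single family of Poisson clocks, as is standard for attractive particle systems. Adopting the richer bookkeeping alluded to in Remark~\ref{rem.statespace}, I would track, on each side, the genealogy of herds (which herd split from which), and maintain the invariant that the left-configuration admits an enumeration $\sum_{i} \delta_{B_i}$ and the right one an enumeration $\sum_j \delta_{B_j'}$ with a matching $i \mapsto j(i)$ and $B_i \subset B'_{j(i)}$. Use shared clocks as follows: (c) \emph{births}: a birth clock attached to a particle $x$ that lies in both a left herd $B_i$ and its partner $B'_{j(i)}$ (recall $B_i \subset B'_{j(i)}$, so every particle of $B_i$ is a particle of $B'_{j(i)}$) at a neighbour $y$ fires on both sides simultaneously; if $y \notin B'_{j(i)}$ the birth happens on both (preserving $B_i \cup\{y\} \subset B'_{j(i)}\cup\{y\}$), if $y \in B'_{j(i)} \setminus B_i$ it happens only on the left (still preserving inclusion, since $B_i\cup\{y\}\subset B'_{j(i)}$), and an extra independent clock handles births on the right at particles of $B'_{j(i)}\setminus B_i$. (b)/(a) \emph{deaths}: a death clock at particle $x$ removes $x$ from the left herd containing it (if any) and from the right herd containing it; since $B_i \subset B'_{j(i)}$, removing $x$ from both preserves inclusion, and if $x$ is only in the right herd it is removed there only. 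One must check that when a right herd $B'_{j(i)}$ collapses to a single particle and then that particle dies, the partner left herd $B_i$ — being a subset — has already disappeared, so matchings stay consistent. (d) \emph{splits}: a split clock on an edge $e$ of $\T^d$ acts on the right herd $B'_{j(i)}$ whenever $e$ is active for it, splitting it into $B'_{e,1}, B'_{e,2}$; simultaneously, if $e$ is active for $B_i$ it splits $B_i$ into $B_{e,1} \subset B'_{e,1}$ and $B_{e,2}\subset B'_{e,2}$, and the two new left herds are matched to the two new right herds; if $e$ is not active for $B_i$ (i.e.\ $B_i$ lies entirely on one side of $e$), then $B_i$ is re-matched to whichever of $B'_{e,1}, B'_{e,2}$ contains it, and the other right piece becomes unmatched. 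In every case the invariant is preserved.

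Finally I would verify that, after this construction, the left process has the correct marginal law (the clocks it responds to fire at exactly the rates of Definition~\ref{def_herds_process}, and the extra clocks it ignores do not affect it) and likewise for the right process, and that by Corollary~\ref{cor_non_exp} both processes are non-explosive so the coupling is well-defined for all $t$. Since the invariant is exactly the statement $\Xi_t \preceq \Xi'_t$ at all times, the stochastic domination follows. The main obstacle, as usual in these arguments, is the bookkeeping for the split events: one must be careful that the edge $e$ is active for $B_i$ only if it is active for $B'_{j(i)}$ (which holds because $B_i \subset B'_{j(i)}$ and activity for $B_i$ requires $B_i$ to straddle $e$), and that re-matching after a split where the smaller herd does not split keeps the correspondence injective and inclusion-respecting — this is where the ``labelled'' viewpoint of Remark~\ref{rem.statespace} is genuinely needed, even though the final statement is phrased in the unlabelled state space $\mathcal{S}$. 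This is precisely the kind of routine-but-tedious verification the paper chooses to omit.
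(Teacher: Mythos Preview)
Your proposal is correct and is precisely the kind of routine coupling argument the paper has in mind when it says the proof is ``elementary and omitted.'' The reduction via Lemma~\ref{lem_decompose} to the single-herd case, followed by a Harris-type graphical construction with shared death, birth, and split clocks (using the labelled viewpoint of Remark~\ref{rem.statespace} to maintain an injective, inclusion-preserving matching through splits), is exactly the expected approach, and your treatment of the delicate split case---checking that an edge active for~$B_i$ is necessarily active for~$B'_{j(i)}$, and re-matching when only the larger herd splits---is the right bookkeeping.
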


We now give a definition pertaining to extinction vs. survival of the herds process, and introduce the value~$\bar{\lambda}(\mathsf{v})$ that appears in the statements of our main theorems.

\begin{definition}\label{def_critical_value}
    We say that the herds process \textbf{survives} if the event
    $\{\sum_A \Xi_t(A) > 0 \;\forall t\}$ 
    occurs, that is, the process always has particles; otherwise we say that the process \textbf{dies out}. For any~$\mathsf{v}> 0$, we define~$\bar{\lambda}(\mathsf{v})$ as the supremum of the values of~$\lambda$ for which the process with parameters~$\lambda$ and~$\mathsf{v}$ dies out with probability 1.
\end{definition}

It is easy to see that~$\bar{\lambda}(\mathsf{v}) \ge 1/d$. Indeed, when~$\lambda < 1/d$, the rate at which existing particles die always exceeds the rate at which new particles are born, so the process eventually reaches the empty configuration. A moment's thought, using for instance a comparison with a branching process, shows that~$\bar{\lambda}(\mathsf{v}) < \infty$.

\subsection{Sub-multiplicativity of number of particles} \label{lem_sub_multiplicative}
The goal of this section is to prove the following inequality. Recall that~$X_t$ denotes the number of particles in the herds process at time~$t$. Also recall that, whenever the initial condition of the herds process is omitted (say, as in the expectation in the right-hand side of~\eqref{eq_subadd0} below), it is equal to~$\delta_{\{o\}}$.

\begin{proposition}\label{prop_subad}
For any~$t \ge 0$ and~$\xi \in \mathcal{S}$ we have
\begin{equation}\label{eq_subadd0}\mathbb{E}[X_t^p \mid \Xi_0 = \xi] \le X(\xi)^p \cdot \mathbb{E}[X_t^p ].\end{equation}
Consequently, for any~$t,s \ge 0$,~$p \ge 1$ and~$\xi \in \mathcal{S}$, we have
	\label{eq_subad}
\begin{equation}\label{eq_subadd}
	\E[X_{t+s}^p\mid \Xi_0 = \xi] \le \E[X_s^p\mid \Xi_0 = \xi] \cdot \E[X_t^p].
\end{equation}
\end{proposition}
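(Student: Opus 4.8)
The plan is to prove the first inequality~\eqref{eq_subadd0}, from which~\eqref{eq_subadd} follows by combining it with the Markov property. For~\eqref{eq_subadd0}, the starting point is the decomposition into independent processes (Lemma~\ref{lem_decompose}): writing an enumeration~$\xi = \sum_{i=1}^m \delta_{A_i}$, the herds process started from~$\xi$ has the same law as~$\Xi^{(1)}_t + \cdots + \Xi^{(m)}_t$, with the~$\Xi^{(i)}$ independent and started from~$\delta_{A_i}$. Hence~$X_t = \sum_{i=1}^m X^{(i)}_t$ with~$X^{(i)}_t := X(\Xi^{(i)}_t)$ independent, and I would apply Minkowski's inequality in~$L^p$:
\[
\E[X_t^p \mid \Xi_0 = \xi]^{1/p} = \Big\| \textstyle\sum_{i=1}^m X^{(i)}_t \Big\|_p \le \sum_{i=1}^m \|X^{(i)}_t\|_p = \sum_{i=1}^m \E[X_t^p \mid \Xi_0 = \delta_{A_i}]^{1/p}.
\]
So it suffices to show~$\E[X_t^p \mid \Xi_0 = \delta_A] \le |A|^p \cdot \E[X_t^p]$ for a single herd shape~$A$, since then~$\sum_i \E[X_t^p\mid \Xi_0=\delta_{A_i}]^{1/p} \le \big(\sum_i |A_i|\big)\E[X_t^p]^{1/p} = X(\xi)\,\E[X_t^p]^{1/p}$, which is exactly~\eqref{eq_subadd0} after raising to the~$p$-th power.

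For the single-herd bound I would again use independence, this time obtained by splitting the initial herd~$A$ into its~$|A|$ constituent particles. Enumerate~$A = \{x_1,\ldots,x_k\}$ (so~$k = |A|$) and note~$\delta_{\{x_1\}} + \cdots + \delta_{\{x_k\}} \succeq \delta_A$ is false in general (a union of singletons is not~$\succeq$ a larger set under Definition~\ref{def.stoch.dom}), so attractiveness in that direction is not directly what I want. Instead, the right comparison is: the herds process from~$\delta_A$ is stochastically dominated (with respect to~$\preceq$) by the herds process from~$\sum_{j=1}^k \delta_{\{x_j\}}$ — indeed~$\sum_{j=1}^k \delta_{\{x_j\}}$ has the enumeration~$\delta_{\{x_1\}},\ldots,\delta_{\{x_k\}}$ while~$\delta_A$ has the single-term enumeration~$\delta_A$; these do not fit Definition~\ref{def.stoch.dom} as stated since~$m=1 > k$ would be needed the other way. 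The cleaner route: observe that~$X$ is monotone with respect to~$\preceq$ (if~$\xi \preceq \xi'$ then~$X(\xi) \le X(\xi')$), and that~$\delta_A \preceq \delta_{A}$ trivially; what I actually need is a domination of~$\Xi_t$ from~$\delta_A$ by the \emph{superposition of~$k$ independent} copies started from singletons. This holds because one can couple the two processes so that at all times the herd(s) descended from~$A$ in the first process are contained, shape by shape after a suitable matching, in the union of the~$k$ single-particle herds' descendants in the second — formally, run a graphical construction in which the second process uses the same birth/death/split Poisson clocks restricted to the relevant particles, and the split clocks of the first process only create finer partitions in the second. Granting this coupling, the map~$X$ being monotone gives~$X_t$ from~$\delta_A$ is stochastically~$\le \sum_{j=1}^k \widetilde X^{(j)}_t$ with the~$\widetilde X^{(j)}_t$ i.i.d.\ distributed as~$X_t$ from~$\delta_{\{o\}}$ (using Lemma~\ref{lem_auto} to move each singleton to the root). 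Then Minkowski again yields~$\E[X_t^p\mid \Xi_0=\delta_A]^{1/p} \le k\,\E[X_t^p]^{1/p} = |A|\,\E[X_t^p]^{1/p}$.

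Putting the two Minkowski steps together proves~\eqref{eq_subadd0}; finiteness of all the moments involved is guaranteed by Corollary~\ref{cor_explosion} (with~$T=t$), so the manipulations are legitimate. Finally,~\eqref{eq_subadd} follows from~\eqref{eq_subadd0} and the Markov property: conditioning on~$\Xi_s$ and applying~\eqref{eq_subadd0} with initial configuration~$\Xi_s$,
\[
\E[X_{t+s}^p \mid \Xi_0 = \xi] = \E\big[\,\E[X_{t+s}^p \mid \Xi_s]\ \big|\ \Xi_0 = \xi\,\big] \le \E\big[\,X(\Xi_s)^p \cdot \E[X_t^p]\ \big|\ \Xi_0 = \xi\,\big] = \E[X_s^p \mid \Xi_0 = \xi]\cdot \E[X_t^p].
\]
I expect the main obstacle to be making the single-herd coupling with a superposition of single-particle processes fully rigorous: one must check that the natural graphical coupling respects the splitting dynamics (a split of a coarse herd corresponds to nothing, or to an already-present split, in the refined system) and that the containment~$\preceq$ is preserved through births (a birth in the coarse herd must be realizable as a birth in exactly one of the refined herds that contains a neighbour of the new site). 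This is the kind of "elementary but fiddly" transition-rate inspection the paper elsewhere chooses to omit, and I would either spell it out carefully here or isolate it as a lemma in the style of Lemma~\ref{lem_dominate}.
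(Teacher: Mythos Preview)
Your overall architecture is exactly the paper's: decompose~$\xi$ into independent single-herd pieces via Lemma~\ref{lem_decompose}, apply Minkowski, reduce to the single-herd bound~$\E[X_t^p\mid\Xi_0=\delta_A]^{1/p}\le|A|\,\E[X_t^p]^{1/p}$, and then derive~\eqref{eq_subadd} from~\eqref{eq_subadd0} by the Markov property. That part is fine.

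The gap is precisely where you yourself flag it, and it is a real one. You propose to dominate the process started from~$\delta_A$ by a superposition of~$|A|$ \emph{independent} single-particle processes, via a graphical coupling. The paper explicitly addresses this attempt in the remark following Lemma~\ref{lem_general_p}: the natural injective-particle coupling fails, and it is not even clear whether the stochastic domination~$X_t^{(\delta_{A\cup B})}\preceq X_t^{(\delta_A)}+X_t^{(\delta_B)}$ holds at all. The problem is the interaction between splits and herd structure: a split in the coarse herd~$A$ can separate particles in a way that has no counterpart in the refined system of singletons (which initially has \emph{no} active edges), and after subsequent births the coarse process can end up with configurations not embeddable in the refined one. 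So the step you describe as ``elementary but fiddly'' is, as far as the authors know, actually open.

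The paper's workaround is to introduce the \emph{two-type herds process}: starting from~$\delta_{(A,B)}$, the two types have independent births and deaths but \emph{share} splitting events. This gives (i)~each type marginal~$\pi_i(\widetilde\Xi_t)$ is a genuine herds process from~$\delta_A$ resp.~$\delta_B$ (Lemma~\ref{lem_projections}(a)), and (ii)~the union~$\pi(\widetilde\Xi_t)$ dominates the herds process from~$\delta_{A\cup B}$ (Lemma~\ref{lem_projections}(b)). Then~$X(\pi(\widetilde\Xi_t))\le X(\pi_1(\widetilde\Xi_t))+X(\pi_2(\widetilde\Xi_t))$, and Minkowski's inequality---which crucially does \emph{not} require independence---gives Lemma~\ref{lem_general_p}. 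An induction on~$|A|$ then yields the single-herd bound. The point is that Minkowski absorbs the dependence introduced by the shared splits, so one never needs the stochastic domination you were aiming for.
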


This proposition will be a consequence of the following lemma.
\begin{lemma}\label{lem_general_p}
Let~$A,B \in P_\mathsf{f}(\mathbb{T}^d)$ be disjoint and~$p \ge 1$. Then,
\begin{equation*}
\mathbb{E}[X_t^p \mid \Xi_0 = \delta_{A \cup B}]^{1/p} \le \mathbb{E}[X_t^p \mid \Xi_0 = \delta_A]^{1/p}+ \mathbb{E}[X_t^p \mid \Xi_0 = \delta_B]^{1/p}.
\end{equation*}
\end{lemma}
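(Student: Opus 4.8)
The plan is to compare the two initial conditions through a \emph{two-colour} variant of the herds process. Call the colours \emph{red} and \emph{blue}; a colour configuration is a finite multiset of \emph{coloured herds}, each of the form $(A',B')$ with $A',B'\subset\T^d$ disjoint and $A'\cup B'$ finite and non-empty (the red and the blue particles of the herd), and the dynamics mimics Definition~\ref{def_herds_process}: each coloured particle dies at rate $1$ (the herd being removed if it becomes empty); for a coloured herd $(A',B')$ and a vertex $y\notin A'\cup B'$, a red particle is born at $y$ at rate $\lambda\,|\{x\in A':x\sim y\}|$ and a blue one at rate $\lambda\,|\{x\in B':x\sim y\}|$; and for each edge $e$ active for $A'\cup B'$, the herd splits at rate $\mathsf{v}$ into $(A'_{e,1},B'_{e,1})$ and $(A'_{e,2},B'_{e,2})$, each completed into a fresh copy of $\T^d$. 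I would run this process started from the single coloured herd $(A,B)$ and write $R_t$, $S_t$ for its total numbers of red and of blue particles at time $t$.

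The first point is that this two-colour process refines the herds process: a comparison of the transition rates above with those of Definition~\ref{def_herds_process} — using that an edge active for $A'\cup B'$ always leaves both resulting pieces non-empty, so that the projected split dynamics matches~(d) — shows that the image of the two-colour process started from $(A,B)$ under the map $(A',B')\mapsto A'\cup B'$ is the ordinary herds process started from $\delta_{A\cup B}$. In particular $X_t\overset{d}{=}R_t+S_t$ under $\P(\,\cdot\mid\Xi_0=\delta_{A\cup B})$, and Minkowski's inequality, applied on the probability space of the two-colour process, gives
\[
\mathbb{E}\big[X_t^p\mid\Xi_0=\delta_{A\cup B}\big]^{1/p}=\mathbb{E}\big[(R_t+S_t)^p\big]^{1/p}\le\mathbb{E}\big[R_t^p\big]^{1/p}+\mathbb{E}\big[S_t^p\big]^{1/p}.
\]
It therefore suffices to prove $\mathbb{E}[R_t^p]\le\mathbb{E}[X_t^p\mid\Xi_0=\delta_A]$ together with the symmetric bound for $S_t$ and $B$.

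To establish $\mathbb{E}[R_t^p]\le\mathbb{E}[X_t^p\mid\Xi_0=\delta_A]$, I would couple the two-colour process with an ordinary herds process $(\Xi^A_t)_{t\ge 0}$ started from $\delta_A$ so that, at all times, the multiset of red shapes of the two-colour process is $\preceq$ (in the sense of Definition~\ref{def.stoch.dom}, and up to tree automorphisms) the configuration $\Xi^A_t$; in particular $R_t\le X(\Xi^A_t)$ almost surely, which yields the inequality. In this coupling $\Xi^A$ runs the ``full'' red dynamics, and the red marginal of the two-colour process departs from it only through two harmless effects: (i) a red birth is cancelled whenever its target vertex already carries a blue particle — this merely suppresses some births and so preserves $\preceq$, exactly in the spirit of Lemma~\ref{lem_dominate}; and (ii) when a coloured herd splits along an edge separating its blue particles but not its red ones, the red particles are transported onto a fresh copy of $\T^d$ with an isomorphic shape, which by Lemma~\ref{lem_auto} and Lemma~\ref{lem_decompose} does not affect the joint law of the particle counts, so the coupling can be carried over. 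Genuine red splits and red deaths are shared by the two processes, and propagating the relation $\preceq$ across every type of transition is then a routine verification; the argument for $S_t$ is identical.

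The main obstacle is making this last coupling rigorous. One has to set up an explicit graphical construction that simultaneously tracks each red particle, its copy inside $\Xi^A$, and the successive re-embeddings of herds into new trees produced by splits, and check that no transition — in particular none of the ``extra'' blue-driven splits of a coloured herd — can ever create a red configuration not dominated by $\Xi^A_t$. Everything is conceptually forced once Lemmas~\ref{lem_auto}, \ref{lem_decompose} and~\ref{lem_dominate} are available, but this bookkeeping over a fluctuating family of trees is the delicate part of the proof.
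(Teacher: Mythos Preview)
Your approach is correct and is the natural ``dual'' of the paper's argument. Both proofs introduce a two-type herds process with shared splitting and then apply Minkowski's inequality to~$R_t+S_t$; the difference lies in how the two colours interact within a herd.

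You keep the colours \emph{disjoint}, so that the projection~$(A',B')\mapsto A'\cup B'$ recovers the herds process from~$\delta_{A\cup B}$ \emph{exactly}. The price is that each colour's marginal is a herds process with some births suppressed (by the other colour), so you must argue by coupling that~$R_t$ is stochastically dominated by the free herds process from~$\delta_A$. As you note, this coupling is the delicate step: once a red birth is suppressed, the red configuration and~$\Xi^A$ diverge, and one must maintain the relation~$\preceq$ through all subsequent births, deaths, and splits occurring in herds that are no longer identical. The paper instead lets the two types evolve \emph{independently} for births and deaths (so they may overlap at a vertex). Then~$\pi_1(\widetilde{\Xi}_t)$ and~$\pi_2(\widetilde{\Xi}_t)$ are \emph{exactly} herds processes from~$\delta_A$ and~$\delta_B$ (Lemma~\ref{lem_projections}(a)), with no coupling needed; the cost is that the union~$\pi(\widetilde{\Xi}_t)$ now only stochastically dominates the process from~$\delta_{A\cup B}$, rather than equalling it. But that domination (Lemma~\ref{lem_projections}(b)) is easy: when the types are independent, a vertex in~$A\cap B$ needs two deaths to vacate and can receive births from either type, so the union process has at least as many particles as the true one.

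In short, both routes work, but the paper buys a much cleaner proof by putting the stochastic-domination step on the union side rather than on the marginal side; your version is valid but, as you correctly identify, requires carrying an attractiveness-type coupling through a dynamically changing collection of trees.
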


We postpone the proof of this lemma, and for now show how it implies the proposition:

\begin{proof}[Proof of Proposition~\ref{prop_subad}]
We claim that for any~$p \ge 1$,~$A \in P_\mathsf{f}(\mathbb{T}^d)$, and~$t \ge 0$,
\begin{equation}\label{eq_new_claim}
\mathbb{E}[X_t^p \mid \Xi_0 = \delta_A]^{1/p} \le |A| \cdot \mathbb{E}[X_t^p ]^{1/p}.
\end{equation}
We prove this by induction on~$|A|$. For~$|A| = 1$ the above holds with an equality, by Lemma~\ref{lem_auto}. For the induction step, we assume that~$|A| \ge 2$, take~$u \in A$ and bound, using Lemma~\ref{lem_general_p}, Lemma~\ref{lem_auto} and the induction hypothesis:
\begin{align*}
\mathbb{E}[X_t^p \mid \Xi_0 = \delta_A]^{1/p} &\le  \mathbb{E}[X_t^p \mid \Xi_0 = \delta_{A \backslash \{u\}}]^{1/p} + \mathbb{E}[X_t^p \mid \Xi_0 = \delta_{\{u\}}]^{1/p} \\
&\le (|A\backslash \{u\}|)\cdot \mathbb{E}[X_t^p]^{1/p} + \mathbb{E}[X_t^p ]^{1/p}\\
&= |A| \cdot \mathbb{E}[X_t^p ]^{1/p}.
\end{align*}

Now take~$\xi \in \mathcal{S}$ with enumeration~$\xi = \sum_{i=1}^m \delta_{A_i}$. Let~$(\Xi^{(1)}_t),\ldots,(\Xi^{(m)}_t)$ be independent herds processes, started from~$\delta_{A_1},\ldots,\delta_{A_m}$, respectively. By Lemma~\ref{lem_decompose}, we have
\begin{align*}
\mathbb{E}[X_t^p \mid \Xi_0 = \xi]^{1/p} &= \mathbb{E}\left[\left( \sum_{i=1}^m X(\Xi^{(i)}_t)\right)^p\right]^{1/p}.
\end{align*}
Minkowski's inequality gives
\begin{equation}\label{eq_fragment_one}
\mathbb{E}\left[\left( \sum_{i=1}^m X(\Xi^{(i)}_t)\right)^p\right]^{1/p} \le \sum_{i=1}^m \mathbb{E}\left[ X(\Xi^{(i)}_t)^p \right]^{1/p}. 
\end{equation}
By~\eqref{eq_new_claim}, the right-hand side is smaller than
\begin{align*}
\sum_{i=1}^m|A_i|\cdot \mathbb{E}\left[X_t^p \right]^{1/p} = X(\xi) \cdot \mathbb{E}\left[X_t^p \right]^{1/p}.
\end{align*}
We have thus proved~\eqref{eq_subadd0}. To prove~\eqref{eq_subadd}, we use the Markov property. Let~$s,t \ge 0$ and~$\xi \in \mathcal{S}$; we have:
\begin{align*}
\mathbb{E}[X_{t+s}^p \mid \Xi_0 = \xi] &= \sum_{\xi' \in \mathcal{S}} \mathbb{E}[X_{t+s}^p \mid \Xi_s = \xi']\cdot \mathbb{P}(\Xi_s = \xi' \mid \Xi_0 = \xi) \\
&\le \sum_{\xi' \in \mathcal{S}} X(\xi')^p \cdot \mathbb{E}[X_t^p ] \cdot \mathbb{P}(\Xi_s = \xi' \mid \Xi_0 = \xi)\\
&= \E[X_s^p\mid \Xi_0 = \xi] \cdot \E[X_t^p].
\end{align*}
\end{proof}

To prove Lemma~\ref{lem_general_p}, we will define an auxiliary process, {which informally describes two herds processes that evolve almost independently, except that they share the same splitting events.} We start by defining the state space of this process.

\begin{definition}
Let
\[P_{\mathsf{f},2}(\mathbb{T}^d) := \{(A,B):\;A,B \subset \mathbb{T}^d,\; A \cup B \text{ finite and non-empty}\}\]
and
\[\mathcal{S}_2 := \{\widetilde{\xi}:P_{\mathsf{f},2}(\T^d) \to \N_0\text{ with } \textstyle{\sum_{(A,B)}} \widetilde{\xi}(A,B) < \infty\}.\]
\end{definition}
We interpret an element~$(A,B) \in P_{\mathsf{f},2}(\mathbb{T}^d)$ as a \textbf{two-type herd}, that is, there are two species of particles, one of which occupies~$A$ and the other~$B$. We emphasize that~$A$ and~$B$ need not be disjoint, and one of them, but not both, can be empty. We will also need some projection functions from~${\mathcal{S}}_2$ to~$\mathcal{S}$.

\begin{definition}
We define~$\pi,\pi_1,\pi_2: {\mathcal{S}}_2 \to \mathcal{S}$ by setting, for~$\xi' = \sum_{i=1}^m \delta_{(A_i,B_i)}$:
\begin{equation*}
	\pi(\widetilde{\xi}) = \sum_{i=1}^m \delta_{A_i \cup B_i},\quad \pi_1(\widetilde{\xi}) = \sum_{i:A_i \neq \varnothing} \delta_{A_i}, \quad \pi_2(\widetilde{\xi}) = \sum_{i: B_i \neq \varnothing} \delta_{B_i}.
\end{equation*}
\end{definition}

This definition is illustrated in Figure~\ref{fig_drawing}.

\begin{figure}[htb]
\begin{center}
\setlength\fboxsep{0cm}
\setlength\fboxrule{0cm}
\fbox{\includegraphics[width=\textwidth]{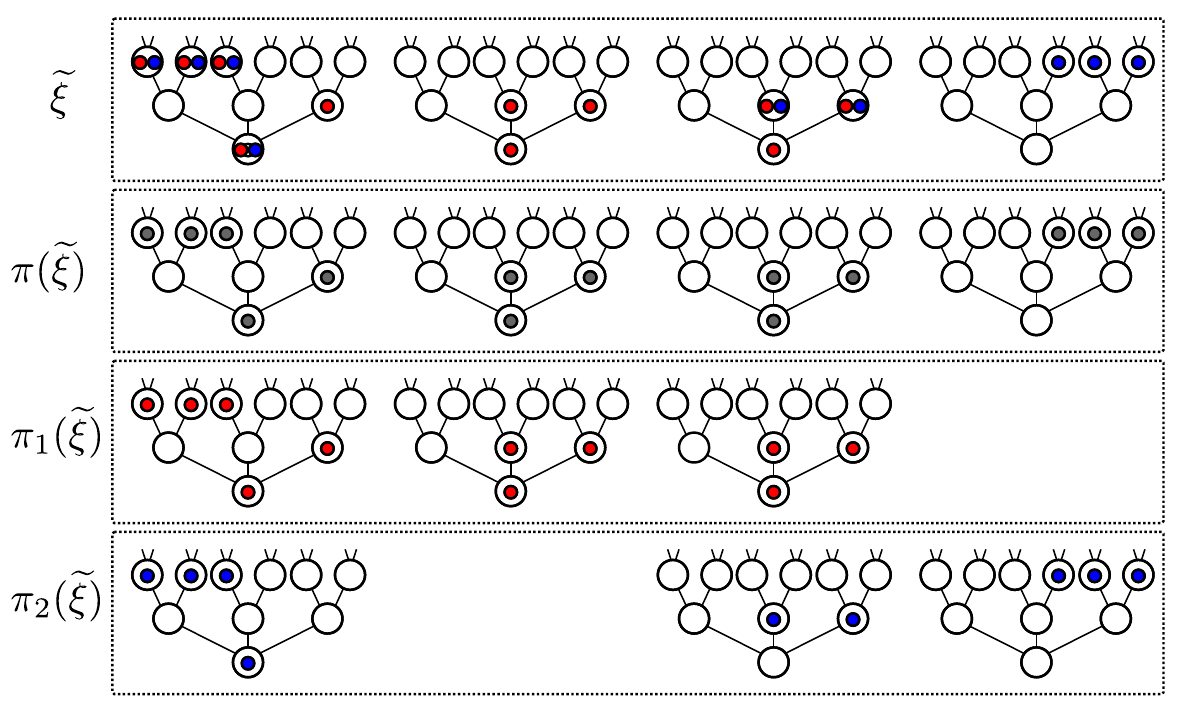}}
\end{center}
	\caption{\label{fig_drawing} {A two-type herds configuration~$\widetilde{\xi}$ is depicted on top, with the two types represented in blue and red (it is assumed that in the four herds of this configuration, no particle is present other than the ones depicted). The projections~$\pi(\widetilde{\xi})$,~$\pi_1(\widetilde{\xi})$ and~$\pi_2(\widetilde{\xi})$ are shown in the second, third and fourth rows, respectively.}}
\end{figure}

\begin{definition}
    The \textbf{two-type herds process}~$(\widetilde{\Xi}_t)_{t \ge 0}$ (with rates~$\lambda$ and~$\mathsf{v}$) is a continuous-time Markov chain on~${\mathcal{S}}_2$ with transitions described as follows. At any time~$t\ge 0$, and for any~$(A,B)$ with $\widetilde{\Xi}_t(A,B)\ge 1$, both $A$ and $B$ are subject, independently of each other, to death and birth mechanisms as in the original herds process (in particular, if either of them is empty, it stays empty). However, they are subject together to the same splitting events: an edge $e$ is said to be active if~$(A\cup B)_{e,1} \neq \varnothing$ and $(A\cup B)_{e,2} \neq \varnothing$. Then, a split occurs at any active edge $e$ with rate $\mathsf{v}$, and when this happens, $(A,B)$ is split into the two pairs $(A_{e,1},B_{e,1})$ and $(A_{e,2},B_{e,2})$. (In case~$A = \varnothing$, we let~$A_{e,1} = A_{e,2} = \varnothing$, and similarly for~$B$).   
\end{definition}

We record two observations about the two-type herds process in the following lemma. The proof is done by comparing jump rates, and we omit it.

\begin{lemma}\label{lem_projections} Let~$(A,B) \in P_\mathsf{f}(\T^d) \times P_\mathsf{f}(\T^d)$, and let~$(\widetilde{\Xi})_{t \ge 0}$ be the two-type herds process started from~$\delta_{(A,B)}$.
	\begin{itemize}
		\item[(a)]  The processes~$(\pi_1(\widetilde{\Xi}_t))_{t \ge 0}$ and~$(\pi_2(\widetilde{\Xi}_t))_{t \ge 0}$ are herds processes started from~$\delta_A$ and~$\delta_B$, respectively.
		\item[(b)]   The herds process started from~$\delta_{A \cup B}$ is stochastically dominated (in the sense of the partial order~$\preceq$) by~$(\pi(\widetilde{\Xi}_t))_{t \ge 0}$.
		\end{itemize}
\end{lemma}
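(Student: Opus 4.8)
The statement to be proved (the properties of the projections $\pi_1,\pi_2,\pi$ of the two-type herds process) has two parts, which call for rather different treatments. For part~(a) the plan is to argue that $(\pi_1(\widetilde{\Xi}_t))_{t\ge 0}$ is itself a Markov chain on $\mathcal{S}$ whose generator coincides exactly with that of the herds process started from $\delta_A$ (and part~(a) for $\pi_2$ is symmetric). For part~(b), the point to bear in mind is that $(\pi(\widetilde{\Xi}_t))_{t\ge 0}$ is generally \emph{not} a herds process: a particle belonging to both coordinates of a two-type herd leaves their union only once both of its copies have died, so its effective death rate inside $\pi(\widetilde{\Xi})$ is strictly below~$1$. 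Hence for part~(b) I would instead construct a coupling of a genuine herds process started from $\delta_{A\cup B}$ with $(\widetilde{\Xi}_t)_{t\ge 0}$ under which the former stays $\preceq$-dominated by $(\pi(\widetilde{\Xi}_t))_{t\ge 0}$ at all times.

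For part~(a): in the two-type herds process the death and birth events acting on the first coordinate of a herd $(A',B')$ occur at rates depending only on $A'$ --- a death of $x\in A'$ at rate~$1$, a birth of $y\notin A'$ at rate $\lambda\,|\{x\in A':x\sim y\}|$ --- exactly as in Definition~\ref{def_herds_process}; note that in a birth event $y$ might happen to lie in $B'$, but this changes neither the rate nor the new first coordinate $A'\cup\{y\}$. The only delicate point is the split mechanism. A split of $(A',B')$ at an active edge $e$ yields $(A'_{e,1},B'_{e,1})$ and $(A'_{e,2},B'_{e,2})$, and this alters $\pi_1$ precisely when both $A'_{e,1}$ and $A'_{e,2}$ are non-empty, i.e.\ when $e$ is an active edge of $A'$ in the sense of Definition~\ref{def_shapes}; if $e$ is not active for $A'$, one piece contributes an empty first coordinate and the other is just $A'$, so $\pi_1$ does not move. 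Moreover every active edge of $A'$ is active for $(A',B')$, since $(A'\cup B')_{e,i}\supseteq A'_{e,i}$, and such an edge fires at rate $\mathsf{v}$. So $\pi_1$ performs exactly one split of the herd $A'$ at rate $\mathsf{v}$ for each active edge of $A'$, again matching Definition~\ref{def_herds_process}. As all $\pi_1$-relevant rates thus depend only on the first coordinates, $(\pi_1(\widetilde{\Xi}_t))_{t\ge 0}$ is Markov with the herds-process generator; since $A$ is non-empty we have $\pi_1(\delta_{(A,B)})=\delta_A$, and part~(a) follows.

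For part~(b): using the individually-labelled description of herds from Remark~\ref{rem.statespace}, I would build on one probability space the two-type process $(\widetilde{\Xi}_t)$ from $\delta_{(A,B)}$ together with a herds process $(\Xi_t)$ from $\delta_{A\cup B}$, an injective assignment $\iota_t$ sending each herd $C$ of $\Xi_t$ to a herd $(A',B')$ of $\widetilde{\Xi}_t$ with $C\subseteq A'\cup B'$, and a designation of every particle occurrence $x$ of such a $C$ to one of its copies in $A'$ or $B'$ (chosen arbitrarily when $x$ enters $C$). The coupled rules would be: (i) splits share clocks --- when $(A',B')$ splits at $e$ its assigned $C$ splits at $e$ if $e$ is active for $C$ (with $C_{e,i}$ reassigned to $(A'_{e,i},B'_{e,i})$), and otherwise $C$ is left alone and reassigned to the one piece whose union still contains it, the other piece of $\widetilde{\Xi}$ becoming unassigned; (ii) the death of a particle occurrence $x$ in $C$ is slaved to the death of its designated copy; (iii) when $C$ gains a vertex $y$ from a neighbour $x\in C$, this is slaved, if $y\notin A'\cup B'$, to the birth of $y$ in the coordinate of $(A',B')$ containing $x$'s designated copy, and is carried by an independent rate-$\lambda$ clock when $y\in A'\cup B'$ already. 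Checking that the invariants survive each event uses: $(A'\cup B')_{e,i}=A'_{e,i}\cup B'_{e,i}$ and the fact that an active edge of $C$ is an active edge of $(A',B')$, for splits; the fact that a particle leaves $A'\cup B'$ only after \emph{all} its copies die, so killing a single designated copy never breaks $C\subseteq A'\cup B'$, for deaths; and an injective matching of the birth clocks, possible because $|\{x\in C:x\sim y\}|\le|\{x\in A':x\sim y\}|+|\{x\in B':x\sim y\}|$, for births. One then checks that the marginal law of $(\Xi_t)$ so constructed is indeed the herds process from $\delta_{A\cup B}$ --- each herd $C$ experiences death, birth and split at exactly the rates of Definition~\ref{def_herds_process} --- while $\iota_t$ witnesses $\Xi_t\preceq\pi(\widetilde{\Xi}_t)$ for all $t$, giving the claimed stochastic domination.

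The step I expect to be the main obstacle is the death mechanism in part~(b). Because a particle sitting in both coordinates of a two-type herd is more resistant to death in $\pi(\widetilde{\Xi})$ than a herds-process particle would be, $\pi(\widetilde{\Xi})$ cannot be coupled to a herds process ``on the nose'', and one must instead run the herds process's deaths against \emph{individual} designated copies and propagate these designations consistently through splits, deaths and subsequent rebirths of a vertex --- in particular guaranteeing that the designated copy of any particle currently in some $\Xi$-herd is alive and lies in a coordinate of the assigned two-type herd. Once this bookkeeping is in place, the verification of the remaining rate identities and inequalities is routine, and is presumably what the authors intend by the omitted ``comparison of jump rates''.
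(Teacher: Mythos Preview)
Your proposal is correct and is precisely the ``comparison of jump rates'' argument that the paper alludes to but omits entirely. Part~(a) is handled exactly as one would expect, and for part~(b) your explicit coupling---with the injective assignment~$\iota_t$ and the per-particle designation to a coordinate---is the natural way to make rigorous the intended domination; your identification of the death mechanism as the reason~$\pi(\widetilde{\Xi})$ is not itself a herds process, and hence the reason a nontrivial coupling is needed, is exactly the point.
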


\begin{proof}[Proof of Lemma~\ref{lem_general_p}]
Fix~$p \ge 1$ and disjoint sets~$A,B \in P_\mathsf{f}(\mathbb{T}^d)$. Let~$(\widetilde{\Xi}_t)_{t \ge 0}$ denote a two-type herds process started from~$\delta_{(A,B)}$, defined under a probability measure~$\widetilde{\P}$ (with expectation operator~$\widetilde{\E}$). By Lemma~\ref{lem_projections}(b), and the fact that~$X(\cdot)$ is monotone with respect to the partial order~$\preceq$, we have
	\[\E[X_t^p \mid \Xi_0 = \delta_{A \cup B}] \le \widetilde{\E}[X(\pi(\widetilde{\Xi}_t))^p].\]
Next, noting that~$X(\pi(\widetilde{\Xi}_t))  \le X(\pi_1(\widetilde{\Xi}_t)) + X(\pi_2(\widetilde{\Xi}_t))$,
we have
\[\widetilde{\E}[X(\pi(\widetilde{\Xi}_t))^p] \le \widetilde{\E}[(X(\pi_1(\widetilde{\Xi}_t)) + X(\pi_2(\widetilde{\Xi}_t))^p].\]
Putting these inequalities together (raised to the power~$1/p$) and using Minkowski's inequality, we obtain
\[\E[X_t^p \mid \Xi_0 = \delta_{A \cup B}]^{1/p} \le  \widetilde{\E}[X(\pi_1(\widetilde{\Xi}_t))^p]^{1/p} + \widetilde{\E}[X(\pi_2(\widetilde{\Xi}_t))^p]^{1/p}.\]
	By Lemma~\ref{lem_projections}(a), the right-hand side equals
\[\E[X_t^p \mid \Xi_0 = \delta_A]^{1/p}+ \E[X_t^p \mid \Xi_0 = \delta_B]^{1/p}.\] This completes the proof.
\end{proof}

%\begin{remark} \emph{We stress that the coupling given in the previous proof does not provide a stochastic domination (in any sense) of the herds process starting from $\Xi'$ over the one starting from $\Xi$, and we do not know if such a stochastic domination holds. }
%\end{remark}

\begin{remark}
    {It is worth mentioning a curious fact here, which is the main reason for introducing the two-type herds process. Let~$A,B \in P_\mathsf{f}(\mathbb{T}^d)$ be disjoint.
Then, a natural guess would be that the number of particles in the herds process starting from $\delta_A + \delta_B$ should stochastically dominate the number of particles in the process starting from $\delta_{A \cup B}$, since in the former case there is a priori more space for the particles to spread. However, there seems to be no simple proof of this fact, and it is not even clear whether it should be true or not. In a natural choice of coupling, one would try to map particles of the process started from~$\delta_{A \cup B}$ injectively into particles of the  process started from~$\delta_A + \delta_B$, and make it so that when a particle of the former process dies or gives birth, its image under the injective mapping does the same. However, this does not work. For instance, when starting from $\delta_{A \cup B}$, it could be that at a later time, a single split would separate more particles than in the process starting from $\delta_A + \delta_B$, which in turn could after another birth event give rise to a larger number of particles in the process starting from $\delta_{A \cup B}$. } 
\end{remark}

\begin{remark}\label{rem.ktype}
{The notion of two-type herds process can of course be generalized to a multi-type herds process. Given any integer $k\ge 1$, the $k$-type herds process is the continuous-time Markov chain on 
\[\mathcal{S}_k := \{\xi:P_{\mathsf{f},k}(\T^d)   \to \N_0,\text{ with } \textstyle{\sum_{(A_1,\dots,A_k)}} \xi(A_1,\dots,A_k) < \infty\},\]
where
\[P_{\mathsf{f},k}(\mathbb{T}^d):= \{(A_1,\ldots,A_k): A_1,\ldots,A_k \subset \mathbb{T}^d,\;\cup_{j=1}^k A_j \neq \varnothing\}\]
and where, like in the two-type herds process, every type obeys  birth and death mechanisms as in the original herds process, ignoring the other types, but they all share the same splitting events. Naturally, an analogue of Lemma~\ref{lem_projections} holds as well in this general setting. This remark will be used in the proof of Lemma~\ref{lem_separate_two_again} below.} 
\end{remark}

\section{Analysis of the herds process through a growth index}

\subsection{Definition and properties of growth index}
\label{s_first_properties_phi}
For any~$p \ge 1$, define the \textbf{growth index of order~$p$} as
\begin{equation}\label{def_varphi}
	\varphi_p=\varphi_p(\lambda,\mathsf{v}) := \inf_{t \ge 0}\; \E_{\lambda,\mathsf v}[X_t^p]^{1/t}.
\end{equation}
In case~$p=1$, we omit the subscript, so that
\[\varphi := \varphi_1.\]
Note that we have
\begin{equation}\label{eq_obvious_ineq}\varphi_p^t \le \mathbb{E}[X_t^p],\qquad t \ge 0,\; p \ge 1.\end{equation} 
Moreover,~\eqref{eq_subadd} implies that
\begin{equation}
\label{eq_repeat_subadd}
\E[X_{t+s}^p] \le \E[X_t^p]\cdot \E[X_s^p],\qquad t,s \ge 0,\; p \ge 1,
\end{equation}
and Fekete's lemma then ensures that
\begin{equation}\label{eq_lim}
	\lim_{t \to \infty} \E[X_t^p]^{1/t} = \varphi_p,\qquad p \ge 1.
\end{equation}

It will also be useful to observe that
\begin{equation}\label{eq_jensen}
[1,\infty) \ni p \mapsto \varphi_p^{1/p} \text{ is non-decreasing.}
\end{equation}
To see this, let~$p \ge q \ge 1$. We bound, for any~$t \ge 0$:
\[\E[X_t^p] = \E[(X_t^q)^{p/q}] \ge \E[X_t^q]^{p/q},\]
by Jensen's inequality. Hence,
\[
(\E[X_t^p]^{1/t})^{1/p} \ge (\E[X_t^q]^{1/t})^{1/q}.
\]
By taking~$t \to \infty$ and using~\eqref{eq_lim}, we obtain~$\varphi_p^{1/p} \ge \varphi_q^{1/q}$, as desired.

For the rest of this section, we focus on the growth index with~$p = 1$. We will analyse higher values of~$p$ in Section~\ref{s_higher_moments}.

We state a lemma with an upper bound that, apart from a constant prefactor, matches~\eqref{eq_obvious_ineq} in the case~$p=1$:
\begin{lemma} \label{lem_revineq}
There is a constant~$C = C(\lambda,\mathsf{v})$, such that for any $t\ge 0$, 
\begin{equation}\label{eq_const}\E[X_t] \le C\cdot \varphi^t.\end{equation}
\end{lemma}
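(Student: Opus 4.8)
The plan is to prove the reverse inequality $\E[X_t] \le C\varphi^t$ by combining the submultiplicativity of $\E[X_t]$ with a lower bound showing that $\E[X_t]$ cannot be too small relative to $\varphi^t$ uniformly over a bounded time window. First, note that by \eqref{eq_repeat_subadd} with $p=1$, the function $t \mapsto \log \E[X_t]$ is subadditive, and by \eqref{eq_lim} its long-run rate is $\log \varphi$. A standard consequence of subadditivity (Fekete) gives $\E[X_t] \ge \varphi^t$ for all $t \ge 0$, which is \eqref{eq_obvious_ineq}; what we need is a matching upper bound. The idea is: fix a large reference time $T_0$, and for general $t$ write $t = k T_0 + r$ with $r \in [0, T_0)$ and $k = \lfloor t/T_0 \rfloor$. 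Then submultiplicativity gives $\E[X_t] \le \E[X_r] \cdot \E[X_{T_0}]^{k}$. The first factor $\E[X_r]$ is bounded uniformly over $r \in [0,T_0)$ by Corollary~1.3 (with $k=1$, $\xi = \delta_{\{o\}}$), say by some constant $C_0 = C_0(\lambda,\mathsf v,T_0)$. For the second factor, we want $\E[X_{T_0}]^{k} \le C_1 \varphi^{kT_0} \le C_1 \varphi^{t} \varphi^{-r}$, and since $\varphi \ge 1$ (indeed $\varphi \ge 1$ because $\E[X_t] \ge 1$ as the process started from one particle could have done nothing; more precisely $X_t \ge \mathbf{1}\{$no jump by $t\}$ gives $\E[X_t]\ge e^{-t}\cdot 1$... hmm, that is not $\ge 1$). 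Let me restructure.

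The cleaner route: we must be careful about whether $\varphi \ge 1$. Actually $\varphi$ could be less than $1$ in the strongly subcritical regime, so $\varphi^{-r}$ for $r \in [0,T_0)$ is bounded by $\varphi^{-T_0}$ when $\varphi \le 1$, which is a finite constant; and by $1$ when $\varphi \ge 1$. In either case $\varphi^{-r} \le \max(1,\varphi^{-T_0}) =: C_2$, a constant depending only on $\lambda,\mathsf v,T_0$. So it remains to show $\E[X_{T_0}]^{1/T_0}$ is close to $\varphi$ — but by definition \eqref{def_varphi}, $\varphi = \inf_{t \ge 0} \E[X_t]^{1/t} \le \E[X_{T_0}]^{1/T_0}$, which is the wrong direction; $\E[X_{T_0}]^{1/T_0}$ could be strictly larger than $\varphi$. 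This is the crux: a single time $T_0$ need not realize the infimum. The fix is to choose $T_0$ depending on how close we want to get, but since we only need a constant $C$ (not $1+\varepsilon$), we can afford this: pick $T_0$ large enough that $\E[X_{T_0}]^{1/T_0} \le \varphi \cdot e^{1/T_0}$, say — wait, even that is not guaranteed for a single $T_0$ since $\E[X_t]^{1/t} \to \varphi$ only as $t \to \infty$, so for large $T_0$ we do have $\E[X_{T_0}]^{1/T_0} \le \varphi(1+o(1))$; but then $\E[X_{T_0}]^k = (\E[X_{T_0}]^{1/T_0})^{kT_0} \le (\varphi(1+o(1)))^{kT_0}$, and $(1+o(1))^{kT_0}$ is $e^{o(1)\cdot t}$ which is NOT bounded. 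So fixing one $T_0$ fails to give a constant.

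The correct approach is instead to use the infimum directly with a time that grows with $t$: write, for each $t \ge 1$, choose $s(t) := t$ itself and there is nothing to prove; the real point must be different. Let me reconsider — the honest statement is that $\E[X_t]^{1/t}$ decreases to $\varphi$ along a subadditive sequence, but individual terms can exceed $\varphi^t$ by more than a constant factor a priori. I expect the actual proof uses a \emph{lower bound} on $\E[X_{s}]$ of the form $\E[X_s] \ge c\,\varphi^{-\text{const}}\cdot(\text{something})$ — more precisely, the key lemma should be that there exist constants such that for all $s,t$, $\E[X_{t}] \cdot \E[X_s] \ge c\, \E[X_{t+s}]$, i.e. a reverse submultiplicativity up to a constant; combined with $\E[X_{t+s}] \ge \varphi^{t+s}$ and optimizing, or with a renewal/regeneration argument at the level of a single herd. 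Concretely: at rate $\mathsf v$ a split at the unique active edge of a size-two herd isolates a single particle into its own herd which thereafter evolves as an independent copy of the original process; iterating this "budding" gives a supermultiplicative-type lower bound $\E[X_{t+s}] \ge c\,\E[X_t]\,\E[X_s]$ for a fixed constant $c = c(\lambda,\mathsf v)$, because from a single particle, with probability bounded below in any unit time interval the configuration contains two particles separated by an active edge, whose split produces two independent single-particle herds. Then $\E[X_t]$ is, up to the constant $c$, \emph{supermultiplicative}, so $\big(c\,\E[X_t]\big)$ is submultiplicative in one direction and supermultiplicative in the other, forcing $c\,\E[X_t] \le (c\,\E[X_t])$... the standard two-sided-Fekete argument then yields $c\,\E[X_t] \le \varphi^t \le \E[X_t]$, i.e. $\E[X_t] \le c^{-1}\varphi^t$, giving \eqref{eq_const} with $C = c^{-1}$.

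So the skeleton I would write is: (1) recall $\E[X_t] \ge \varphi^t$ from subadditivity; (2) prove a reverse inequality $\E[X_{t+s}] \ge c\cdot \E[X_t]\cdot\E[X_s]$ for a constant $c = c(\lambda,\mathsf v) \in (0,1]$, via a splitting/regeneration argument — from a single particle at the root, in time $s$ the process reaches, with probability at least some $c$, a state dominating $\delta_{A} + \delta_{B}$ with $|A|,|B|\ge 1$ evolving independently thereafter, by forcing one birth and one split and using the attractiveness Lemma~1.6 and independence after splitting (Lemma~1.5 applied to the two resulting herds) — hence $\E[X_{t+s}\mid \text{this event}] \ge \E[X_t\mid\Xi_0=\delta_A]+\E[X_t\mid\Xi_0=\delta_B] \ge 2\varphi^t$ while unconditionally we just need $\ge c'\E[X_s]\E[X_t]$ matching the claimed form; (3) conclude by the elementary fact that a nonnegative sequence $a_t$ with $a_{t+s}\le a_t a_s$ and $a_{t+s}\ge c\, a_t a_s$ satisfies $c\,a_t \le (\lim a_u^{1/u})^t \le a_t$. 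The main obstacle I anticipate is step (2): making the regeneration argument fully rigorous for a continuous-time, unbounded state space process, in particular checking that after a split the two herds genuinely evolve as independent copies (this is exactly where Lemma~1.5, the decomposition into independent processes, and the strong Markov property are invoked), and getting a clean uniform lower bound $c$ on the probability of producing the required two-herd configuration within a bounded time.
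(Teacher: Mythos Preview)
Your overall strategy --- establish a reverse (super)multiplicative inequality $\E[X_{t+s}] \ge c\,\E[X_t]\,\E[X_s]$ and then combine it with Fekete to get $\E[X_t] \le c^{-1}\varphi^t$ --- is exactly the paper's approach, and your step~(3) is correct and is precisely how the paper concludes (they write $\E[X_{nt}] \ge (\kappa\rho)^{n-1}\E[X_t]^n$, rearrange, and send $n\to\infty$).

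The gap is in your step~(2). The regeneration event you describe --- ``with probability at least~$c$, by time~$s$ the process contains two independent single-particle herds'' --- only yields $\E[X_{t+s}] \ge 2c\,\E[X_t]$, not $c\,\E[X_s]\,\E[X_t]$. The factor $\E[X_s]$ is missing because your event produces a \emph{bounded} number of fresh copies, whereas what you need is that the \emph{expected number} of fresh copies at time~$s$ is comparable to $\E[X_s]$. The paper fills this gap with a dedicated lemma (Lemma~\ref{lem_only_one_sep}): there is $\rho>0$ such that for any configuration $\xi$, the expected number of singleton herds after one unit of time is at least $\rho\cdot X(\xi)$. The mechanism is that \emph{each} particle present at time $s-1$ has probability at least $\rho$ (uniform in the surrounding herd) of becoming isolated in its own herd by time~$s$, via splits on its incident edges and no death/birth at that vertex. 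This gives
\[
\E[X_{t+s}] \;\ge\; \E\!\Big[\sum_{A:|A|=1}\Xi_s(A)\Big]\cdot \E[X_t] \;\ge\; \rho\,\E[X_{s-1}]\cdot \E[X_t] \;\ge\; \frac{\rho}{\E[X_1]}\,\E[X_s]\,\E[X_t],
\]
where the last step uses submultiplicativity once more ($\E[X_s]\le \E[X_{s-1}]\E[X_1]$). So the missing idea is: do not aim for a single regeneration event of bounded yield, but show that every particle regenerates a fresh copy with uniformly positive probability in one time unit, so that the expected count of fresh copies tracks $\E[X_{s-1}]$.
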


Before proving this, we state and prove an auxiliary result, concerning the expected number of herds containing a single particle, after one time unit of the dynamics has elapsed.
\begin{lemma}\label{lem_only_one_sep} There is a constant~$\rho = \rho(\lambda,\mathsf{v})$ such that for any~$\xi \in \mathcal{S}$, we have
    \begin{equation}
        \label{eq_only_one}
        \mathbb{E}\left[\left. \sum_{A:|A|=1} \Xi_1(A)\; \right| \; \Xi_0 = \xi \right] \ge \rho \cdot X(\xi). 
    \end{equation}
\end{lemma}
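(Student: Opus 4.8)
\textbf{Proof plan for Lemma~\ref{lem_only_one_sep}.}

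The plan is to produce, for each particle present at time~$0$, a fixed (deterministic-rate) scenario that has positive probability and that results in a singleton herd containing (a descendant-copy of) that particle at time~$1$. Concretely, fix an enumeration~$\xi = \sum_{i=1}^m \delta_{A_i}$ and track one distinguished particle~$x$ lying in some herd~$A = A_i$ (so~$m \le X(\xi)$ herds, but $\sum_i |A_i| = X(\xi)$ distinguished particles). By Lemma~\ref{lem_decompose} it suffices to work with the single herds process started from~$\delta_A$ and, as explained in Remark~\ref{rem.statespace}, we may pretend we can follow the herd containing~$x$ over time. I would then lower-bound the probability of the event
\[
\mathcal{G}_x := \{\text{at time }1,\text{ the herd containing }x\text{ is exactly }\{x\}\}
\]
by a constant~$\rho_0 = \rho_0(\lambda,\mathsf v) > 0$ not depending on~$A$, $x$ or~$\xi$; summing over the~$X(\xi)$ distinguished particles and using~$\E[\sum_{A:|A|=1}\Xi_1(A)\mid \Xi_0=\xi] \ge \sum_x \P(\mathcal{G}_x)$ (distinct distinguished particles sitting in distinct singletons contribute distinct terms to the sum, and a singleton herd containing~$x$ forces the herd to be exactly~$\{x\}$) gives~\eqref{eq_only_one} with~$\rho = \rho_0$.

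For the lower bound on~$\P(\mathcal{G}_x)$ I would split~$[0,1]$ into two phases. First, on~$[0,1/2]$, ask that the herd containing~$x$ never grows (no birth attached to any of its particles is accepted) and that it loses all particles other than~$x$. Since the herd has at most some unbounded number of particles, I cannot ask for a single fixed scenario; instead I use domination by a pure-birth chain (Lemma~\ref{lem_explosion}) to control the total number of particles/births and argue as follows: condition on the herd's initial size~$|A| = k$; with probability bounded below by a constant depending only on~$\lambda$, there are no births in~$[0,1/2]$ in this herd (each particle independently attempts births at total rate at most~$d\lambda$, and more carefully the herd's particle count is dominated by a pure-birth chain started at~$k$, whose probability of not increasing in time~$1/2$ is~$e^{-d\lambda k/2}$ — this is exponentially small in~$k$, which is a problem; see below). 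So instead I would use a smarter scenario: allow births but insist the herd, viewed as a set, remains within a fixed finite region and then recovers; alternatively, and more robustly, I would argue by a two-step resampling: first wait a geometric-like amount of time until all-but-one of the original particles die while the herd's size stays bounded, which happens with positive probability by attractiveness/monotonicity comparisons. Second, on the remaining time interval, ask that the singleton~$\{x\}$ neither dies nor gives birth nor — since a singleton has no active edge — splits; this has probability exactly~$e^{-(1+d\lambda)/2} \ge$ a positive constant.

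The main obstacle is precisely making the first phase uniform in the herd size~$k$, since "kill everyone but~$x$ with no interference" naively costs~$e^{-ck}$. The fix is \textbf{not} to demand a deterministic scenario but to use the following: by Lemma~\ref{lem_explosion} the herd containing~$x$ at time~$t$ is dominated by a pure-birth chain, but crucially we only need a \emph{positive constant} lower bound, and we are free to lose the exponential factor because in~\eqref{eq_only_one} we sum one term \emph{per particle}, not per herd — so if herd~$A_i$ has~$k_i$ particles, we do not need each to succeed; we need the expected number of resulting singletons from~$\delta_{A_i}$ to be~$\ge \rho_0 k_i$. To get this I would instead choose the distinguished particle differently: pick~$x$ to be, say, a particle at maximal distance from the "center" of~$A_i$, or process the particles of~$A_i$ one at a time — ask a split to detach~$\{x\}$ from the rest of~$A_i$ (possible as soon as~$|A_i|\ge 2$ since then~$x$ has an active edge cutting it off, if~$x$ is a leaf of the induced structure; for a general configuration one first does~$O(1)$ births to make~$x$ detachable by a single edge, which costs only a constant), then have the detached~$\{x\}$ sit still. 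This detach-then-freeze scenario costs only a constant (one split at rate~$\ge \mathsf v$, one birth at rate~$\ge \lambda$, then freeze for the rest of~$[0,1]$ at cost~$e^{-O(1)}$), it is available for every particle independently enough to sum, and it yields the claim with~$\rho$ depending only on~$\lambda$ and~$\mathsf v$. I would write the argument cleanly by first treating~$|A|=1$ (trivial: freeze, probability~$e^{-(1+d\lambda)}$), then~$|A|\ge 2$ via the detach-then-freeze scenario applied once per particle, bounding the probabilities of the finitely many required Poisson-clock events from below by constants.
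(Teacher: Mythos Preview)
Your overall structure matches the paper's: reduce to a single herd~$A$ via Lemma~\ref{lem_decompose}, then for each particle~$u\in A$ lower-bound the probability of an event~$\mathcal{G}_u$ that produces the singleton herd~$\{u\}$ at time~$1$, and sum over~$u\in A$. The paper writes this as
\[
\sum_{u\in A}\mathbb{P}\bigl(\Xi_1(\{u\})>0 \mid \Xi_0=\delta_A\bigr)\ \ge\ \rho\,|A|.
\]

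The gap is in your scenario for~$\mathcal{G}_u$. Your ``detach-then-freeze'' asks for a \emph{single} split to separate~$u$ from the rest, which as you note only works when~$u$ is a leaf of the configuration~$A$ (more precisely, when only one of the~$d$ edges incident to~$u$ is active). You then propose to fix non-leaf~$u$ by ``doing~$O(1)$ births to make~$u$ detachable by a single edge''; this cannot work, since adding particles near~$u$ can only increase the number of active edges incident to~$u$, not decrease it. Your earlier attempts (no births in the whole herd, or waiting for all other particles to die) correctly run into the~$e^{-ck}$ obstacle.

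The missing observation is that the number of edges of~$\mathbb{T}^d$ incident to~$u$ is exactly~$d$, a constant independent of~$|A|$. The paper's scenario is simply: during~$[0,1]$, the particle at~$u$ neither dies nor gives birth, and each of the (at most~$d$) active edges incident to~$u$ undergoes a split. The first requirement has probability~$e^{-(1+d\lambda)}$. For the second, note that as long as~$u$ does not give birth, once its herd splits at an incident edge~$e=\{u,v\}$, the herd containing~$u$ lies entirely on the~$u$-side of~$e$ and can never re-enter the~$v$-side (the only bridge is through~$u$). Hence the set of ``still-connected'' directions at~$u$ is non-increasing, starts with at most~$d$ elements, and each element is removed at rate at least~$\mathsf{v}$; so the probability that all of them are removed by time~$1$ is at least~$(1-e^{-\mathsf{v}})^d$. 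This gives~$\rho=e^{-(1+d\lambda)}(1-e^{-\mathsf{v}})^d$, uniform in~$A$ and~$u$, with no case distinction and no control on the rest of the herd needed.
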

\begin{proof}
    By Lemma~\ref{lem_decompose} and the linearity of expectation, it suffices to show that there exists~$\rho>0$ such that for any~$A \in P_\mathsf{f}(\mathbb{T}^d)$,
    \begin{equation*}\mathbb{E}\left[\left. \sum_{A':|A'|=1} \Xi_1(A')\; \right| \; \Xi_0 = \delta_A \right] \ge \rho \cdot |A|. 
\end{equation*}
To prove this, fix~$A \in P_\mathsf{f}(\mathbb{T}^d)$, and note that the left-hand side above is larger than
\[
\sum_{u \in A} \mathbb{E}[\Xi_1(\{u\}) \mid \Xi_0 = \delta_A] \ge \sum_{u \in A} \mathbb{P}(\Xi_1(\{u\}) > 0 \mid \Xi_0 = \delta_A).
\] 
It is easy to see that there is a constant~$\rho > 0$, depending only on~$\lambda$ and~$\mathsf{v}$, such that the probability in the sum in the right-hand side is larger than~$\rho$ (for any~$A$ and~$u$). This is achieved by prescribing that the particle present at~$u$ at time~$0$ does not die or give birth until time~$1$, and moreover this particle becomes separated, through successive splits in the edges that are incident to~$u$, of any other particle in its herd. This shows that the right-hand side above is larger than~$\rho|A|$, completing the proof.
\end{proof}

\begin{proof}[Proof of Lemma~\ref{lem_revineq}] 
For any~$s,t \ge 0$ we have
$$\E[X_{t+s}\mid \mathcal F_s]= \sum_A \Xi_s(A) \cdot \mathbb{E}[X_t \mid \Xi_0 = \delta_A] \ge \sum_{A:|A|=1}\Xi_s(A) \cdot \E[X_t],$$
where the equality follows from Lemma~\ref{lem_decompose} and the Markov property, and the inequality from Lemma~\ref{lem_auto}.
Then, by taking expectations and using Lemma~\ref{lem_only_one_sep}, we obtain that
\[\mathbb{E}[X_{t+s}] \ge \mathbb{E} \left[ \sum_{A:|A|=1} \Xi_s(A) \right] \cdot \mathbb{E}[X_t].\]
In case~$s \ge 1$, by Lemma~\ref{lem_only_one_sep} and the Markov property, the right-hand side is larger than
\[\rho \cdot \mathbb{E}[X_{s-1}] \cdot \mathbb{E}[X_t].\]
Further, by Proposition~\ref{prop_subad}, letting~$\kappa:= 1/\mathbb{E}[X_1]$, the above is larger than
\[\kappa \rho \cdot \mathbb{E}[X_s] \cdot \mathbb{E}[X_t].\]

Using this recursively, we obtain, for any~$t > 0$ and~$n \in \mathbb{N}$,
\[\mathbb{E}[X_{nt}] \ge (\kappa \rho)^{n-1}\cdot \mathbb{E}[X_t]^n,\]
so that
\[\mathbb{E}[X_t] \le (\kappa \rho)^{-\frac{n-1}{n}} \cdot  \mathbb{E}[X_{nt}]^\frac{1}{n} = (\kappa \rho)^{-\frac{n-1}{n}}  \cdot (\mathbb{E}[X_{nt}]^\frac{1}{nt})^t.\]
Using~\eqref{eq_lim}, we obtain 
$$\E[X_t] \le  \liminf_{n\to \infty} \left( (\kappa \rho)^{-\frac{n-1}{n}}\cdot (\mathbb{E}[X_{nt}]^\frac{1}{nt})^t 
 \right) = C\cdot  \varphi^t,$$
with $C=\tfrac{1}{\kappa \rho}$, proving the lemma.  
\end{proof}

Together with the fact that~$(\lambda,\mathsf{v}) \mapsto \E_{\lambda,\mathsf v}[X_t]$ is continuous, we deduce that~$(\lambda,\mathsf{v}) \mapsto \varphi(\lambda,\mathsf{v})$ is both upper- and lower-semicontinuous, hence it is continuous. Moreover, we have the following simple characterization of the supercritical regime. Recall the definition of~$\bar{\lambda}(\mathsf{v})$ from Definition~\ref{def_critical_value}.

\begin{lemma}\label{lem_surv_cond}
For any~$\lambda > 0$ and~$\mathsf{v} > 0$, the following are equivalent: 
\begin{itemize}
\item[(a)] the herds process survives with positive probability; 
\item[(b)] $\varphi >1$;
\item[(c)]  $\E[X_t] \xrightarrow{t \to \infty} \infty$.
\end{itemize} 
Consequently, for any~$\mathsf{v} > 0$,
\begin{equation}\label{varphi_lambda_crit}
\varphi(\bar{\lambda}(\mathsf{v}),\mathsf{v}) = 1.
\end{equation}
\end{lemma}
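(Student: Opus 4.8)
The plan is to prove the chain of equivalences (a) $\Leftrightarrow$ (b) $\Leftrightarrow$ (c) and then deduce~\eqref{varphi_lambda_crit} from it together with the continuity of~$\varphi$ established just above. The easy direction is (b) $\Leftrightarrow$ (c): by Fekete's lemma~\eqref{eq_lim} we have $\E[X_t]^{1/t} \to \varphi$, so if $\varphi > 1$ then $\E[X_t]$ grows exponentially and in particular tends to infinity; conversely, by Lemma~\ref{lem_revineq} we have $\E[X_t] \le C\varphi^t$, so if $\varphi \le 1$ then $\E[X_t]$ stays bounded by $C$ and cannot tend to infinity. (One should note $\varphi \ge 1$ always, since $\E[X_t] \ge \P(X_t > 0)$ and... actually more simply, if $\varphi < 1$ then $\E[X_t] \to 0$; this is consistent but we don't strictly need $\varphi \ge 1$ for the argument.)

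Next I would handle (a) $\Rightarrow$ (c), or rather its contrapositive in the form $\neg$(c) $\Rightarrow$ $\neg$(a). Suppose $\E[X_t] \not\to \infty$; combined with subadditivity~\eqref{eq_repeat_subadd} and~\eqref{eq_lim} this forces $\varphi \le 1$, hence by Lemma~\ref{lem_revineq}, $\E[X_t] \le C$ for all $t$. Since $\P(\text{survives}) \le \P(X_t > 0) \le \E[X_t]$ is not immediately enough (it only gives $\le C$), I instead argue: on the survival event, $X_t \ge 1$ for all $t$, but more is needed. The cleaner route is to use the submultiplicativity to get that $\P(X_t > 0)$ itself decays: actually one shows $\P(\text{survives}) = \lim_{t\to\infty} \P(X_t > 0)$, and if this limit is positive, then since the herds process restricted to survival has a positive density of "renewal-like" opportunities to grow (via Lemma~\ref{lem_only_one_sep}, which guarantees that conditionally on having $k$ particles one produces order $k$ singleton herds one time unit later, each of which independently has a positive chance to survive and reproduce), a standard Borel–Cantelli / branching comparison shows $X_t \to \infty$ on survival, contradicting $\E[X_t] \le C$ unless $\P(\text{survives}) = 0$. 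I would make this precise by comparing, along integer times, the number of particles with a supercritical-or-critical branching process whose offspring distribution is extracted from Lemma~\ref{lem_only_one_sep}; on the survival event this comparison process cannot die, and a surviving branching process with nondegenerate offspring grows to infinity, giving $\E[X_t] = \infty$ for large $t$ — contradiction.

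For (b) $\Rightarrow$ (a) (equivalently (c) $\Rightarrow$ (a)): if $\varphi > 1$, then $\E[X_t]$ grows exponentially. The strategy is a second-moment / block argument: fix $t_0$ large enough that $\E[X_{t_0}]$ is large; using the independence from Lemma~\ref{lem_decompose} (the herd configuration at time $t_0$ decomposes into independent sub-processes, one per herd), together with the singleton-herd lower bound of Lemma~\ref{lem_only_one_sep} applied at integer-spaced times, one sets up a supercritical branching process dominated by $(X_{n t_0})_{n\ge 0}$ — roughly, each singleton herd present at time $nt_0$ independently produces, by time $(n+1)t_0$, a number of singleton herds whose expectation exceeds $1$ when $t_0$ is large (this uses $\varphi>1$ combined with Lemma~\ref{lem_only_one_sep} to convert "many particles" into "many singleton herds"). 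A supercritical branching process survives with positive probability, and on that event $X_t > 0$ for all $t$, so the herds process survives with positive probability. Finally,~\eqref{varphi_lambda_crit}: for $\lambda < \bar\lambda(\mathsf v)$ the process dies out a.s., so by (a)$\Leftrightarrow$(b), $\varphi(\lambda,\mathsf v) \le 1$; for $\lambda > \bar\lambda(\mathsf v)$ it survives with positive probability, so $\varphi(\lambda,\mathsf v) > 1$. Letting $\lambda \uparrow \bar\lambda(\mathsf v)$ and $\lambda \downarrow \bar\lambda(\mathsf v)$ and using continuity of $\varphi$ gives $\varphi(\bar\lambda(\mathsf v),\mathsf v) = 1$.

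The main obstacle I anticipate is the implication (c) $\Rightarrow$ (a) (and symmetrically the non-survival direction): turning "$\E[X_t]$ grows" into "the process genuinely survives with positive probability" requires a branching-process comparison, and the delicate point is that $X_t$ itself is not Markovian in a way that gives independent increments — one must use Lemma~\ref{lem_decompose} to split into independent herd-sub-processes, and Lemma~\ref{lem_only_one_sep} to guarantee that a positive fraction of the mass resides in singleton herds (which are the "fresh starts" from which the renewal structure is built). Getting the bookkeeping right — in particular choosing the time-step $t_0$ so that the induced offspring mean strictly exceeds one, while controlling that we are not double-counting particles across herds — is where the real work lies; the rest is bookkeeping with Fekete and Lemma~\ref{lem_revineq}.
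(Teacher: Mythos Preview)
Your proposal is correct and follows the same approach as the paper: (b)$\Leftrightarrow$(c) from the two-sided bound $\varphi^t \le \E[X_t] \le C\varphi^t$, (a)$\Rightarrow$(c) via conditional Borel--Cantelli (on survival $X_t\to\infty$) plus Fatou, and (c)$\Rightarrow$(a) by using Lemma~\ref{lem_only_one_sep} and Lemma~\ref{lem_decompose} to dominate the singleton-herd count $(Z_{nT})_{n\ge 0}$ by a supercritical branching process. Two minor quibbles: no second-moment input is needed anywhere (the branching comparison is purely first-moment, exactly as you describe it), and your phrase ``$\E[X_t]=\infty$ for large $t$'' should read ``$\E[X_t]\to\infty$'', since $\E[X_t]$ is always finite by pure-birth domination.
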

\begin{proof}
The fact that (b) and (c) are equivalent follows from the fact that for any $t\ge 0$, $\varphi^t \le \E[X_t] \le C \varphi^t$.

Now, if (a) holds, then we must also have~$X_t \xrightarrow{t \to \infty} \infty$ with positive probability,  
as otherwise using the conditional Borel-Cantelli Lemma and a standard argument, we would get a contradiction. Then (c) follows from Fatou's Lemma.

Conversely, assume that (c) holds. 
Let $Z_t := \sum_{A:|A|=1} \Xi_t(A)$ denote the number of herds in $\Xi_t$ with a single particle in them. By Lemma~\ref{lem_only_one_sep}, we have~$\E[Z_{t+1}] \ge \rho\cdot \E[X_t]$. It follows that there exists some $T>0$ such that $\E[Z_T]\ge 2$. Since different herds evolve independently of each other, we deduce that $(Z_{nT})_{n\ge 0}$ dominates a supercritical branching process, 
and therefore survives forever with positive probability. Since it is dominated by $(X_{nT})_{n\ge 0}$, we get that (a) is satisfied. 

Having established the equivalence between (a) and (b), the equality~\eqref{varphi_lambda_crit} follows from the continuity of~$\varphi$.
\end{proof}

\subsection{Strict monotonicity of growth index}

Our goal in this section is to prove the following result. 
\begin{proposition}\label{prop_strict_mon}
The map $(\lambda,\mathsf{v}) \mapsto \varphi(\lambda, \mathsf{v})$ is strictly increasing in both arguments. 
\end{proposition}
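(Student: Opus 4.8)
\textbf{Proof plan for Proposition~\ref{prop_strict_mon}.}

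The plan is to realize $\varphi(\lambda,\mathsf v)$ as an exponential growth rate of the quantity $\E_{\lambda,\mathsf v}[X_t]$ and then show that this quantity is strictly increasing in each parameter at a \emph{multiplicative} rate, i.e.\ that there is a comparison of the form $\E_{\lambda',\mathsf v}[X_t] \ge e^{c t}\,\E_{\lambda,\mathsf v}[X_t]$ (and similarly in $\mathsf v$) for $\lambda' > \lambda$, which combined with~\eqref{eq_lim} and Lemma~\ref{lem_revineq} forces $\varphi(\lambda',\mathsf v) \ge e^c\,\varphi(\lambda,\mathsf v) > \varphi(\lambda,\mathsf v)$. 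The cleanest way to set this up is via a differential (Russo-type) identity for $\lambda \mapsto \E_{\lambda,\mathsf v}[X_t]$ and $\mathsf v \mapsto \E_{\lambda,\mathsf v}[X_t]$, which is presumably what Proposition~\ref{prop.derivative} (referenced in the introduction) provides; I would invoke that and show the derivative is bounded below by something of the form (constant)$\,\cdot t\,\cdot \E[X_t]$. Monotonicity (non-strict) of $\varphi$ in both arguments is immediate from the attractiveness Lemma~\ref{lem_dominate} for $\lambda$ and from a direct coupling of splitting clocks for $\mathsf v$ (more splitting can only create more herds, each of which, being a sub-herd, spreads at least as a fresh copy would by Lemma~\ref{lem_auto}); the content is the \emph{strictness}.

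For the strictness in $\lambda$: fix $\lambda < \lambda'$ and couple the two herds processes so that the $\lambda$-process is embedded in the $\lambda'$-process, with the extra births (at rate $(\lambda'-\lambda)\cdot|\{x\in A : x\sim y\}|$ per herd) being the source of the gain. The key point is that any single extra birth, happening at some time $s \le t$ inside a herd, with positive probability (bounded below uniformly) produces by time $t$ an additional $\Omega(1)$ expected particles on top of what the coupled $\lambda$-process has — one can force the new particle to immediately split off into its own singleton herd (as in the proof of Lemma~\ref{lem_only_one_sep}) and then use Lemma~\ref{lem_auto} to say it contributes an independent copy of the process. Since the expected number of such extra-birth opportunities up to time $t$ is at least (constant)$\,\cdot \int_0^t \E[X_s]\,ds$, and $\E[X_s] \ge \varphi^s$, a Gronwall/iteration argument over the subcritical or critical regime (where $\varphi \le 1$ we still have $\E[X_s]\ge \varphi^s$, so the integral grows at least linearly in $t$ when $\varphi = 1$ and we are fine; when $\varphi > 1$ it grows even faster) yields $\E_{\lambda',\mathsf v}[X_t] \ge (1 + c t)\,\E_{\lambda,\mathsf v}[X_t]$ for large $t$ up to constants, hence after taking $t$-th roots and $t\to\infty$, a strict multiplicative gap. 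The argument for $\mathsf v$ is parallel: couple so that the $\mathsf v$-process sits inside the $\mathsf v'$-process with extra splitting events at rate $(\mathsf v'-\mathsf v)$ per active edge; an extra split converts one herd into two, and — crucially — each of the two pieces is completed to a full $d$-regular tree, so the total particle count is unchanged \emph{but} the number of herds (hence of independent future copies) strictly increases, and one shows this bought independence is worth a factor $e^{ct}$ by the same singleton-herd trick combined with Lemma~\ref{lem_only_one_sep} and the fact that $\mathscr E_s$, the number of active edges, is comparable to $X_s$ in expectation (indeed $\mathscr E_s \ge X_s - (\text{number of herds})$, and one controls the number of herds via Corollary~\ref{cor_explosion}).

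The main obstacle I anticipate is making the "extra birth/split at time $s$ contributes $\Omega(1)$ expected particles at time $t$, \emph{uniformly and independently}" step rigorous, because the herds process is not linear in its initial configuration: the contribution of a newly created herd is not literally an independent copy unless it has been split off from everything else, and forcing that split-off costs a uniformly-positive-but-not-one probability, which one must insert without destroying the coupling. I expect this is exactly why the introduction warns that the Russo-type formula here is "quite involved" and why controlling higher moments (Section~\ref{s_higher_moments}, Corollary~\ref{cor_explosion}) is needed — one needs, e.g., that $\E[X_t^2] \le C\,\E[X_t]^2$ or at least polynomial control, to turn the pointwise lower bound on the derivative into a genuine lower bound on $\E_{\lambda',\mathsf v}[X_t] - \E_{\lambda,\mathsf v}[X_t]$ after integrating, and to handle the second-order terms in the interpolation. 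A secondary technical nuisance is that $\varphi$ could be $\le 1$ (subcritical/critical), so one cannot assume $\E[X_s]$ is large; but since $\E[X_s] \ge \varphi^s$ always and we only need the \emph{relative} gain $\E_{\lambda'}[X_t]/\E_{\lambda}[X_t]$ to grow exponentially, the argument should still go through by tracking the ratio directly rather than the difference.
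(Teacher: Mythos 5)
Your plan is essentially the paper's: realize $\varphi$ via~\eqref{eq_lim}, bound the $\lambda$- and $\mathsf{v}$-derivatives of $F(t):=\E[X_t]$ from below using the Russo-type identity of Proposition~\ref{prop.derivative}, show the lower bound has the form $c\,t\,F(t)$ via submultiplicativity, and integrate. Two remarks worth making.

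First, a genuine slip: the bound $\E_{\lambda'}[X_t]\ge(1+ct)\,\E_{\lambda}[X_t]$ does not give strictness, since $(1+ct)^{1/t}\to 1$. You need the full exponential separation, and the way to get it is to rewrite the derivative bound as a bound on the logarithmic derivative, $\partial_{\mathsf{v}}\log F(\mathsf{v},t)\ge c't$, valid for every $\mathsf{v}$ in the interval, and then integrate over $\mathsf{v}\in[\mathsf{v}_1,\mathsf{v}_2]$ to obtain $F(\mathsf{v}_2,t)\ge e^{c't(\mathsf{v}_2-\mathsf{v}_1)}F(\mathsf{v}_1,t)$; taking $t$-th roots then gives $\varphi(\mathsf{v}_2)\ge e^{c'(\mathsf{v}_2-\mathsf{v}_1)}\varphi(\mathsf{v}_1)$. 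Your first paragraph states the exponential form correctly, but the Gronwall step in the detailed paragraph, as written, only uses the derivative at one endpoint and hence only produces the insufficient $(1+ct)$ factor.

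Second, the obstacle you flag — that an "extra split adds $\Omega(1)$ expected particles" is delicate because the process is not linear in its initial configuration — is exactly what the paper isolates in Lemma~\ref{lem_addherd}: it compares the herds process started from one herd $\{o,v\}$ ($v\sim o$) to the one started from two singleton herds $\{o\},\{v\}$, and shows the latter has expected particle count at time $t$ larger by at least $\gamma\,\E[X_t\mid\Xi_0=\delta_{\{o\}}]$. The nonlinearity is controlled there precisely via the two-type herds process (Lemma~\ref{lem_general_p}/Lemma~\ref{lem_projections}), not via higher moments of $X_t$. The higher-moment machinery in Section~\ref{s_higher_moments} is not needed for this proposition; what is needed is the integrability Lemma~\ref{lem_integral_g} (a consequence of Corollary~\ref{cor_explosion}) to justify the exchanges of limit and expectation in the derivative formula. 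Finally, the paper feeds the derivative formula with the quantity $X_t'=$ number of herds with exactly two neighboring particles and the inequality $\E[X_t']\ge c\,\E[X_{t-1}]$ (from Lemma~\ref{lem_only_one_sep}), rather than comparing $\mathscr{E}_t$ to $X_t$; both routes are plausible, but counting two-particle herds is cleaner because Lemma~\ref{lem_addherd} gives a uniform gain at each such herd.
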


Before discussing the proof of this, let us see how it allows us to prove Theorem~\ref{theo.2}:
\begin{proof}[Proof of Theorem~\ref{theo.2}]
Let~$\mathsf{v}' > \mathsf{v} > 0$. We have
\[\varphi(\bar{\lambda}(\mathsf{v}'),\mathsf{v}') = 1 = \varphi(\bar{\lambda}(\mathsf{v}),\mathsf{v}) < \varphi(\bar{\lambda}(\mathsf{v}),\mathsf{v}'), \]
where the two equalities are given by~\eqref{varphi_lambda_crit} and the inequality by the strict monotonicity of~$\varphi$. Again using the strict monotonicity of~$\varphi$, we conclude from~$\varphi(\bar{\lambda}(\mathsf{v}'),\mathsf{v}')< \varphi(\bar{\lambda}(\mathsf{v}),\mathsf{v}')$ that~$\bar{\lambda}(\mathsf{v}') < \bar{\lambda}(\mathsf{v}).$
\end{proof}

The proof of Proposition~\ref{prop_strict_mon} will require several steps. We start with a definition. 

\begin{definition}\label{def_g}
	Fix the parameters~$\mathsf{v}$ and~$\lambda$ of the herds process. Given~$\xi \in \mathcal{S}$ and~$t > 0$, we let
	\begin{equation}\label{eq_def_g}
	g_\mathsf{v}(\xi,t):= \sum_{A \in P_\mathsf{f}(\T^d)} \xi(A)\sum_{\substack{e \text{ active}\\\text{edge of }A}} (\mathbb{E}_{\lambda, \mathsf{v}}[X_t\mid \Xi_0 = \delta_{A_{e,1}} + \delta_{A_{e,2}}]- \mathbb{E}_{\lambda,\mathsf{v}}[X_t\mid \Xi_0 = \delta_{A}]),\end{equation}
and
	\begin{equation}\label{eq_def_h}
		h_\lambda(\xi,t):= \sum_{A \in P_\mathsf{f}(\T^d)} \xi(A) \sum_{\substack{x \in A,\; y \notin A,\\ x \sim y}} (\E_{\lambda, \mathsf{v}}[X_t \mid \Xi_0 = \delta_{A \cup \{y\}}] - \E_{\lambda, \mathsf{v}}[X_t \mid \Xi_0 = \delta_A])
	\end{equation}
	(we omit~$\lambda$ from the notation for~$g_\mathsf{v}$ and we omit~$\mathsf{v}$ from the notation for~$h_\lambda$).
\end{definition}
The functions~$g_\mathsf{v}$ and~$h_\lambda$ give a measure of the total impact on the number of particles at time~$t$ of splitting the herds of~$\xi$ at time zero and having a birth at time zero, respectively.

\begin{proposition}
	\label{prop.derivative}
	For any~$T > 0$ we have
	\begin{equation}\label{eq_derivative_v}
		\frac{\partial}{\partial\mathsf{v}}\E_{\lambda,\mathsf{v}}[X_T] = \int_0^T \mathbb E_{\lambda,\mathsf{v}}[ g_\mathsf{v}(\Xi_t,T-t)]\;\mathrm{d}t,
	\end{equation}
	and
	\begin{equation}\label{eq_derivative_l}
		\frac{\partial}{\partial\lambda}\E_{\mathsf{v},\lambda}[X_T] = \int_0^T \mathbb E_{\mathsf{v},\lambda}[h_\lambda(\Xi_t,T-t)]\;\mathrm{d}t.
	\end{equation}
\end{proposition}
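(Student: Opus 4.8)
\textbf{Proof plan for Proposition~\ref{prop.derivative}.}

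The plan is to prove these as instances of a ``Russo-type'' differentiation formula for the generator of the herds process. Write~$\mathcal{L}_{\lambda,\mathsf{v}}$ for the generator, decomposed as~$\mathcal{L}_{\lambda,\mathsf{v}} = \mathcal{L}^{\mathrm{death}} + \lambda\mathcal{L}^{\mathrm{birth}} + \mathsf{v}\mathcal{L}^{\mathrm{split}}$, where~$\mathcal{L}^{\mathrm{birth}}$ and~$\mathcal{L}^{\mathrm{split}}$ are the (parameter-free) operators collecting, respectively, the birth jumps and the split jumps as in Definition~\ref{def_herds_process}(c),(d). Let~$u(t,\xi) := \E_{\lambda,\mathsf{v}}[X_t \mid \Xi_0 = \xi]$. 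Corollary~\ref{cor_explosion} (with~$k=1$) gives the integrability and, more importantly, the domination needed to differentiate under the expectation and to apply Dynkin's formula without worrying about explosion; it also controls~$\mathscr{E}_t$, which appears when we differentiate in~$\mathsf{v}$. First I would record the forward/backward equations: for fixed~$\xi$, $\partial_t u(t,\xi) = (\mathcal{L}_{\lambda,\mathsf{v}} u(t,\cdot))(\xi)$, and the Duhamel (variation-of-constants) identity expressing the semigroup at parameters~$(\lambda,\mathsf{v}+\varepsilon)$ in terms of the one at~$(\lambda,\mathsf{v})$ plus an integral of~$\varepsilon\,\mathcal{L}^{\mathrm{split}}$ against the two semigroups.

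The core computation is the Duhamel/interpolation step. Denoting by~$(P^{\mathsf{v}}_t)$ the herds semigroup with splitting rate~$\mathsf{v}$ (birth rate~$\lambda$ fixed), one has the identity
\begin{equation*}
P^{\mathsf{v}+\varepsilon}_T - P^{\mathsf{v}}_T = \varepsilon\int_0^T P^{\mathsf{v}+\varepsilon}_{t}\,\mathcal{L}^{\mathrm{split}}\,P^{\mathsf{v}}_{T-t}\;\mathrm{d}t,
\end{equation*}
applied to the function~$X(\cdot)$ and evaluated at the initial configuration~$\delta_{\{o\}}$. Dividing by~$\varepsilon$ and letting~$\varepsilon \to 0$, the left side converges to~$\partial_{\mathsf{v}}\E_{\lambda,\mathsf{v}}[X_T]$ and the right side to~$\int_0^T (P^{\mathsf{v}}_t\,\mathcal{L}^{\mathrm{split}}\,P^{\mathsf{v}}_{T-t}X)(\delta_{\{o\}})\,\mathrm{d}t$, provided we justify the passage to the limit. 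Now unwind the inner expression: by definition of~$\mathcal{L}^{\mathrm{split}}$, for a configuration~$\xi$,
\begin{equation*}
(\mathcal{L}^{\mathrm{split}}\, P^{\mathsf{v}}_{T-t}X)(\xi) = \sum_{A} \xi(A) \sum_{\substack{e\text{ active}\\\text{edge of }A}} \big( u(T-t, \xi - \delta_A + \delta_{A_{e,1}} + \delta_{A_{e,2}}) - u(T-t,\xi) \big),
\end{equation*}
and using the decomposition into independent processes (Lemma~\ref{lem_decompose}), which makes~$u(T-t,\cdot)$ additive over herds — more precisely $u(T-t,\xi) = \sum_A \xi(A)\,\E[X_{T-t}\mid\Xi_0=\delta_A]$, so the herds not touched by the split cancel — the summand collapses to exactly~$\E_{\lambda,\mathsf{v}}[X_{T-t}\mid\Xi_0 = \delta_{A_{e,1}}+\delta_{A_{e,2}}] - \E_{\lambda,\mathsf{v}}[X_{T-t}\mid\Xi_0=\delta_A]$. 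Hence~$(\mathcal{L}^{\mathrm{split}}P^{\mathsf{v}}_{T-t}X)(\xi) = g_{\mathsf{v}}(\xi, T-t)$, and then~$(P^{\mathsf{v}}_t \mathcal{L}^{\mathrm{split}} P^{\mathsf{v}}_{T-t}X)(\delta_{\{o\}}) = \E_{\lambda,\mathsf{v}}[g_{\mathsf{v}}(\Xi_t, T-t)]$, which is~\eqref{eq_derivative_v}. The computation for~\eqref{eq_derivative_l} is identical with~$\mathcal{L}^{\mathrm{split}}$ replaced by~$\mathcal{L}^{\mathrm{birth}}$ and the prefactor~$\varepsilon$ combined with the existing~$\lambda$; here one checks that differentiating~$\lambda\mathcal{L}^{\mathrm{birth}}$ in~$\lambda$ produces the plain operator~$\mathcal{L}^{\mathrm{birth}}$, yielding~$h_\lambda(\xi,T-t)$ in place of~$g_{\mathsf{v}}$.

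I expect the main obstacle to be the analytic justification of the limit~$\varepsilon\to 0$ and of differentiating under the integral/expectation, since the state space is infinite and the relevant functionals (number of particles, number of active edges) are unbounded. The key tool here is Corollary~\ref{cor_explosion}: it provides, uniformly for parameters in a neighbourhood of~$(\lambda,\mathsf{v})$, moment bounds of the form~$\E[\max_{t\le T}X_t] \le c$ and~$\E[\max_{t\le T}\mathscr{E}_t]\le c$ starting from~$\delta_{\{o\}}$, and — via the Markov property and Lemma~\ref{lem_decompose} — also the bounds needed to dominate~$g_{\mathsf{v}}(\Xi_t, T-t)$ and the error terms in the Duhamel expansion (each split increment~$|u(T-t,\cdot)-u(T-t,\cdot)| \le \E[X_{T-t}\mid\Xi_0=\delta_A]\le c|A|$, summed against~$\mathscr{E}_t$). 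Concretely one would (i) verify that the map~$\mathsf{v}\mapsto P^{\mathsf{v}}_T X(\delta_{\{o\}})$ is differentiable by writing the finite-difference quotient via the Duhamel identity and controlling the remainder by these moment bounds together with continuity of~$\mathsf{v}\mapsto \E_{\lambda,\mathsf{v}}[X_t\mid\Xi_0=\delta_A]$ (which follows from the same corollary plus a standard coupling of the processes with rates~$\mathsf{v}$ and~$\mathsf{v}+\varepsilon$), and (ii) invoke dominated convergence for the~$t$-integral. An alternative, perhaps cleaner route avoiding semigroup notation entirely is to couple the processes at~$(\lambda,\mathsf{v})$ and~$(\lambda,\mathsf{v}+\varepsilon)$ on the same probability space so that they agree until the first ``extra'' split (which occurs at rate~$\varepsilon\mathscr{E}_t$), condition on the time and location of that split, use the Markov property to restart, and estimate the~$O(\varepsilon^2)$ contribution of two or more extra splits using Corollary~\ref{cor_explosion}; this makes the appearance of~$g_{\mathsf{v}}$ transparent and reduces the whole proof to a first-order expansion in~$\varepsilon$. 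Either way, the combinatorial identification of the integrand with~$g_{\mathsf{v}}$ and~$h_\lambda$ is routine once the differentiation is licensed.
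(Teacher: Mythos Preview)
Your proposal is correct, and in fact you identify both the abstract route and the concrete one. The paper chooses precisely what you call the ``alternative, perhaps cleaner route'': it builds an explicit coupling $(\mathcal{V}_t,\mathcal{W}_t)$ of the herds processes at split rates~$\mathsf{v}$ and~$\mathsf{v}+\varepsilon$ that agree until the first rejected split (time~$\tau_{\mathrm{sep}}$), introduces the process~$\mathcal{A}_t = \mathds{1}\{\tau_{\mathrm{sep}}\le t\}\,\widehat{\E}[\mathcal{Y}_T-\mathcal{X}_T\mid\mathcal{F}_{\tau_{\mathrm{sep}}}]$, computes its right derivative in~$t$ (this is where~$g_{\mathsf{v},\varepsilon}$, your~$P^{\mathsf{v}+\varepsilon}_t$-version of the integrand, appears), integrates, and then sends~$\varepsilon\to 0$ using exactly the moment bounds from Corollary~\ref{cor_explosion} that you invoke.

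Your primary route via the Duhamel identity $P^{\mathsf{v}+\varepsilon}_T - P^{\mathsf{v}}_T = \varepsilon\int_0^T P^{\mathsf{v}+\varepsilon}_t\mathcal{L}^{\mathrm{split}}P^{\mathsf{v}}_{T-t}\,\mathrm{d}t$ is of course the same computation in semigroup language, and your identification $(\mathcal{L}^{\mathrm{split}}P^{\mathsf{v}}_{T-t}X)(\xi)=g_{\mathsf{v}}(\xi,T-t)$ via the additivity from Lemma~\ref{lem_decompose} is exactly right. The only reason to prefer the coupling formulation is that it sidesteps the need to check that~$P^{\mathsf{v}}_{T-t}X$ lies in a suitable domain for~$\mathcal{L}^{\mathrm{split}}$ and that the formal differentiation $\tfrac{\mathrm{d}}{\mathrm{d}t}P^{\mathsf{v}+\varepsilon}_tP^{\mathsf{v}}_{T-t}$ is legitimate on unbounded functions over an infinite state space; the coupling replaces these functional-analytic verifications by the concrete moment estimates of Lemma~\ref{lem_integral_g} and Lemma~\ref{lem_bound_dct} (both consequences of Corollary~\ref{cor_explosion}, as you anticipated). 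Either way the substantive work is the same.
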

The proof is postponed to Section~\ref{s_proof_derivative}, and we now prove another intermediate result. 

\begin{lemma}\label{lem_addherd}
	Let~$\xi'$ consist of a single herd with exactly two particles, which are neighbors, 
	and~$\xi''$ consist of two herds, each containing a single particle. Then, there exists~$\gamma > 0$ (depending continuously on~$\lambda$ and~$\mathsf{v}$) such that for any $t\ge 1$, 
	\[ \E[X_t \mid \Xi_0 = \xi''] \ge  \E[X_t \mid \Xi_0 = \xi'] + \gamma\cdot \E[X_t \mid X_0 = \delta_{\{o\}}].\] 
\end{lemma}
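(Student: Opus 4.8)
The plan is to compare, using a clean coupling, the herds process started from $\xi''$ (two singleton herds) with the herds process started from $\xi'$ (one herd with two neighboring particles), and to extract a \emph{definite} gain $\gamma$ coming from the fact that in the $\xi''$ run, since the two particles already sit in separate herds, they can never be merged by a future split, whereas in the $\xi'$ run they can. Concretely, I would run a herds process $(\Xi_t^{\prime\prime})$ from $\xi''$ and couple it to a herds process $(\Xi_t')$ from $\xi'$ as follows: use the two-type herds process of Lemma~\ref{lem_projections} started from $\delta_{(\{o\},\{o'\})}$ with $o\sim o'$, call it $(\widetilde\Xi_t)$; by Lemma~\ref{lem_projections}(a) the two projections $\pi_1(\widetilde\Xi_t)$ and $\pi_2(\widetilde\Xi_t)$ are independent herds processes from $\delta_{\{o\}}$ and $\delta_{\{o'\}}$, so their sum is a herds process from $\xi''$; and by Lemma~\ref{lem_projections}(b), $\pi(\widetilde\Xi_t)$ stochastically dominates a herds process from $\delta_{\{o,o'\}}=\xi'$. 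Hence
\[
\E[X_t\mid \Xi_0=\xi''] = \widetilde\E\big[X(\pi_1(\widetilde\Xi_t))\big]+\widetilde\E\big[X(\pi_2(\widetilde\Xi_t))\big]\ge \widetilde\E\big[X(\pi(\widetilde\Xi_t))\big] \ge \E[X_t\mid \Xi_0=\xi'],
\]
where the first inequality uses $X(\pi_1(\widetilde\xi))+X(\pi_2(\widetilde\xi))\ge X(\pi(\widetilde\xi))$ pointwise. This already gives the inequality without the $\gamma$ term; the point is to quantify the slack in the middle inequality $X(\pi_1)+X(\pi_2)\ge X(\pi)$, which is strict precisely on the event that some two-type herd of $\widetilde\Xi_t$ has $A\cap B\neq\varnothing$, i.e.\ the two species overlap.

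The gain is harvested from a fixed good event in the first unit of time. I would prescribe: on $[0,1]$ both particles stay put (no births, no deaths for the particle at $o$ and the one at $o'$) and there is no split on the edge $\{o,o'\}$; this has probability bounded below by a constant $c_0=c_0(\lambda,\mathsf v)>0$. On this event, at time $1$ the two-type configuration $\widetilde\Xi_1$ is still $\delta_{(\{o\},\{o'\})}$, and moreover at that moment $\pi(\widetilde\Xi_1)$ equals $\delta_{\{o,o'\}}$ while $\pi_1(\widetilde\Xi_1)+\pi_2(\widetilde\Xi_1)=\delta_{\{o\}}+\delta_{\{o'\}}$ — so the two species fully overlap on the two-vertex set $\{o,o'\}$, and thereafter (by the attractiveness Lemma~\ref{lem_dominate} applied from time $1$) each of $\pi_1,\pi_2$ evolves as a herds process started from a single particle. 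The key structural observation, to be proven by a short coupling/induction argument, is: if at some time the two species of a two-type herds process occupy overlapping sets, then from that time on the number of particles in $\pi_1$ plus those in $\pi_2$ \emph{strictly} exceeds the number in $\pi$ by at least the number of overlapping particles carried along — in particular, starting both species from the single shared vertex $o$ (after further collapsing $\{o,o'\}$ to $\{o\}$ via an extra split at $\{o,o'\}$, which I can instead include in the good event), we get for $t\ge 1$
\[
\E\big[X(\pi_1(\widetilde\Xi_t))+X(\pi_2(\widetilde\Xi_t))\ \big|\ \text{good event}\big]\ \ge\ \E\big[X(\pi(\widetilde\Xi_t))\ \big|\ \text{good event}\big]\ +\ \E[X_{t-1}\mid \Xi_0=\delta_{\{o\}}],
\]
since on the good event one of the two independent single-particle herds processes accounts for the full $\E[X_{t-1}]$ on top of what $\pi$ produces. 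Combining with the complementary-event bound $\E[X(\pi_1)+X(\pi_2)]\ge \E[X(\pi)]$ (valid unconditionally from Lemma~\ref{lem_projections}) and using $\E[X_{t-1}\mid\Xi_0=\delta_{\{o\}}]\ge \kappa\,\E[X_t\mid\Xi_0=\delta_{\{o\}}]$ for a constant $\kappa=\kappa(\lambda,\mathsf v)>0$ (from Proposition~\ref{prop_subad} / the submultiplicativity bound $\E[X_t]\le \E[X_1]\,\E[X_{t-1}]$), one obtains the claim with $\gamma = c_0\kappa>0$, depending continuously on $\lambda,\mathsf v$ because $c_0$ is (an explicit polynomial/exponential expression in $\lambda,\mathsf v$) and $\kappa=1/\E_{\lambda,\mathsf v}[X_1]$ is continuous.

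The main obstacle is making precise and rigorous the ``overlap is carried forward and contributes an extra full copy'' step, i.e.\ the assertion that once the two species of $\widetilde\Xi$ coincide on a vertex set, the discrepancy $X(\pi_1)+X(\pi_2)-X(\pi)$ stochastically dominates an independent copy of $X_{t-1}$ started from a single particle. The subtlety is that births and deaths act \emph{independently} on the two species within a shared herd, so the two species immediately decouple — but they still share splits, so one must check that a split never destroys more overlap-descendants than it creates, and that the birth/death decoupling only helps. I would handle this by working with the full $k$-type formalism of Remark~\ref{rem.ktype}: treat the particle at $o$ as its own ``tagged'' type that is present in all of species~1, species~2, and the merged picture, and observe that the tagged type evolves as an honest single-tree herds process (its births/deaths/splits are a faithful copy of the standard dynamics), independently of whatever else happens; then species~1 and species~2 each contain the tagged subprocess \emph{plus} their own extra particles, while $\pi$ contains essentially only one copy of it. This reduces the estimate to a one-line monotonicity statement plus Lemma~\ref{lem_auto} and the Markov property at time $1$, and is where the bulk of the careful writing will go.
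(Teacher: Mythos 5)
Your framework (compare the two-type process projections and extract a gain from the gap $X(\pi_1)+X(\pi_2)-X(\pi)$, which equals the total overlap $\sum |A\cap B|$ over herds) is a reasonable place to start, but there is a genuine gap in your identification of the good event, and a second gap in how you propose to carry the gain forward.

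First, the good event you describe does not produce any gain. On the event "no births, no deaths, and no split on $\{o,o'\}$ in $[0,1]$," the two-type configuration at time~$1$ is still $\delta_{(\{o\},\{o'\})}$, and $A=\{o\}$, $B=\{o'\}$ are \emph{disjoint}: the overlap is empty, so $X(\pi_1)+X(\pi_2)-X(\pi)=0$. Your phrase "the two species fully overlap on $\{o,o'\}$" conflates "together they cover both vertices" with genuine overlap $A\cap B\neq\varnothing$, which is what controls the gap. Worse, the modification you suggest --- including a split at $\{o,o'\}$ in the good event --- \emph{eliminates} any possibility of future overlap: a split of $(\{o\},\{o'\})$ at $\{o,o'\}$ yields the two herds $(\{o\},\varnothing)$ and $(\varnothing,\{o'\})$, with the species now living in separate herds (and so $\pi_1+\pi_2=\pi$ on those herds forever). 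A split does not "collapse" $\{o,o'\}$ into $\{o\}$. The event that actually produces overlap --- and the one used in the paper --- is the \emph{birth} event in which one of the two particles attempts to give birth at the other's vertex (say $o$ gives birth at $o'$): in the two-type picture this turns $(\{o\},\{o'\})$ into $(\{o,o'\},\{o'\})$ and creates overlap at $o'$; in the direct picture (the paper's), the herd $\{o\}$ of $\xi''$ becomes $\{o,o'\}$ while the separate herd $\{o'\}$ is untouched, whereas in $\xi'$ the birth attempt at $o'$ is wasted because $o'$ is already occupied, so $\Xi''_\tau=\Xi'_\tau+\delta_{\{o'\}}$ has an \emph{extra singleton herd}.

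Second, even with the correct good event, the step "the overlap from then on contributes at least $\E[X_{t-1}\mid\Xi_0=\delta_{\{o\}}]$" is not established and your "tagged type" sketch will not fix it. The overlap set is not a herds process: an overlap vertex exits the overlap at rate~$2$ (death of either species' particle there), and new overlap is only created when one species gives birth \emph{onto a vertex already occupied by the other species}, which is not the standard contact-process birth rate. So the overlap decays faster than a single-particle herds process, and $\E[X(\pi_1)+X(\pi_2)-X(\pi)]$ is not obviously $\ge \gamma\,\E[X_t\mid\Xi_0=\delta_{\{o\}}]$; in fact you only get the crude sandwich $0\le \E[X(\pi_1)+X(\pi_2)-X(\pi)\mid\mathcal F_\tau]\le F(t-\tau)$ from $\pi_1\preceq\pi\preceq\pi_1+\pi_2$. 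The $k$-type formalism also does not help in the way you describe: in the multi-type process the types move \emph{independently} given the shared splits, so a "tagged" type is an independent herds process with no structural relation to the overlap set. The paper avoids this problem entirely by coupling $(\Xi'_t)$ and $(\Xi''_t)$ directly (not through the two-type process), letting them evolve independently after the first event time~$\tau$: on the good event $\Xi''_\tau=\Xi'_\tau+\delta_{\{o'\}}$, and by Lemma~\ref{lem_decompose} the extra singleton herd contributes exactly $F(t-\tau)$ to $\widehat\E[X(\Xi''_t)\mid\mathcal F_\tau]$ on top of $\widehat\E[X(\Xi'_t)\mid\mathcal F_\tau]$. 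Lemma~\ref{lem_general_p} is then only used to handle the remaining (non-good) cases of the coupling, which is a much lighter task than tracking overlap dynamics. I'd suggest switching to this direct coupling; if you want to salvage the two-type route, you would need a genuine lower bound on the expected overlap, and I don't see an easy one.
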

\begin{proof}
	Fix~$v \sim o$. Without loss of generality, we assume that~$\xi'$ consists of the herd~$\{o,v\}$ and~$\xi''$ consists of the herds~$\{o\}$ and~$\{v\}$. {We define a coupling of the two herds processes $(\Xi'_s)$ and $(\Xi''_s)$ starting respectively from $\xi'$ and $\xi''$:}  first we take independent random variables, as follows:
	\begin{itemize}
		\item $\tau_o$ and~$\tau_v$, both~$\sim \mathrm{Exp}(1)$;
		\item for each~$u \sim o$,~$\tau_{o,u} \sim \mathrm{Exp}(\lambda)$;
		\item for each~$u \sim v$,~$\tau_{v,u} \sim \mathrm{Exp}(\lambda)$;
		\item $\tau_{\mathrm{split}} \sim \mathrm{Exp}(\mathsf{v})$.
	\end{itemize}
	Additionally, let~$\tau'$ denote the minimum of all these random variables, and~$\tau := \min(1,\tau')$. Now, the coupling is defined as follows. In all cases, we let~$(\Xi'_s,\Xi_s'') = (\xi',\xi'')$ for~$s \in [0,\tau)$; the definition of~$(\Xi'_\tau,\Xi_\tau'')$ will be split into several cases, but after time~$\tau$, we let~$\Xi_\tau'$ and~$\Xi_{\tau}''$ continue evolving independently as two herds processes with split rate~$\mathsf{v}$. The definition of $(\Xi_\tau',\Xi_\tau'')$ is as follows: if $\tau =1<\tau'$, then 
	$(\Xi_\tau', \Xi_\tau'') =  (\xi',\xi'')$. Otherwise: 
	\begin{itemize}
		\item if $\tau = \tau_o$, then $\Xi_\tau'= \Xi''_\tau = \delta_{\{v\}}$; similarly if $\tau = \tau_v$, then $\Xi_\tau'= \Xi''_\tau = \delta_{\{o\}}$;
		\item if for some~$u \sim o$,~$\tau = \tau_{o,u}$, then $\Xi_\tau'=\delta_{\{o,u,v\}}$ and $\Xi''_\tau =\delta_{\{o,u\}} + \delta_{\{v\}}$; 
		\item if for some~$u \sim v$,~$\tau=\tau_{v,u}$, then $\Xi_\tau'=\delta_{\{o,u,v\}}$ and $\Xi''_\tau =\delta_{\{o\}} + \delta_{\{u,v\}}$;
		\item if $\tau = \tau_{\mathrm{split}}$, then $\Xi_\tau' = \Xi_\tau''= \delta_{\{o\}} + \delta_{\{v\}}$.
	\end{itemize}

 We let~$\widehat{\P}$ denote a probability measure under which this coupling is defined, and~$\widehat{\E}$ be the associated expectation operator. Also let~$(\mathcal{F}_t)_{t \ge 0}$ denote the natural filtration of~$(\Xi_t',\Xi_t'')_{t \ge 0}$.

Fix~$t \ge 1$. Using Lemma~\ref{lem_general_p} and inspecting all cases concerning~$\tau$, it is easy to check that
\begin{equation}
    \label{eq_aoux1}
    \widehat{\E}[X(\Xi_t'') \mid \mathcal{F}_\tau] \ge \widehat{\E}[X(\Xi_t') \mid \mathcal{F}_\tau]. 
\end{equation}
 
	Define the good event~$E:= \{\tau = \tau_{o,v}\} \cup \{\tau = \tau_{v,o}\}$, and note that on this event, $\Xi''_\tau$  contains $\Xi_\tau'$ plus an additional herd with a single particle in it. Hence,
\begin{equation}\label{eq_aoux2}\widehat{\E}[X(\Xi_t'')\mid \mathcal{F}_\tau] \cdot \mathds{1}_E = \left(\widehat{\E}[X(\Xi_t')\mid \mathcal{F}_\tau] + F(t-\tau) \right) \cdot \mathds{1}_E,\end{equation}
where~$F(s) := \E[X_s\mid \Xi_0 = \delta_{\{o\}}]$. 

We now write
\begin{align*}\E[X_t \mid \Xi_0 = \xi''] &= \widehat{\E}[X(\Xi''_t)] = \widehat{\E} [\widehat{\E}[X(\Xi''_t) \mid \mathcal{F}_\tau] \cdot \mathds{1}_E + \widehat{\E}[X(\Xi''_t) \mid \mathcal{F}_\tau] \cdot \mathds{1}_{E^c}]\end{align*}
and using~\eqref{eq_aoux1} and~\eqref{eq_aoux2}, we bound the right-hand side from below by
\begin{align*}  &\widehat{\E}[ (\widehat{\E}[X(\Xi'_t) \mid \mathcal{F}_\tau] + F(t-\tau)) \cdot \mathds{1}_E +  \widehat{\E}[X(\Xi'_t) \mid \mathcal{F}_\tau] \cdot \mathds{1}_{E^c} ]
\\
&\ge \widehat{\E}[X(\Xi'_t)] + \widehat{\E}[F(t-\tau) \cdot \mathds{1}_E]\\
		&\ge \E[X_t \mid \Xi_0 = \xi'] +\widehat{\mathbb{P}}(E)\cdot  \min_{t-1 \le s \le t} F(s)  .
	\end{align*}
 Using~\eqref{eq_subadd} we have that~$F(s) \cdot F(t-s) \ge F(t)$ for any $s \in [t-1,t]$, which gives
 \[\min_{t-1 \le s \le t} F(s) \ge F(t) \cdot \left(\min_{0 \le r \le 1} F(r)\right)^{-1}.\]
 The lemma is thus proved with~$\gamma := \widehat{\P}(E)\cdot  (\min_{0 \le t \le 1}F(r))^{-1} $.
\end{proof}

\begin{proof}[Proof of Proposition~\ref{prop_strict_mon}]
We only prove the strict monotonicity in $\mathsf v$, the argument for the strict monotonicity in $\lambda$ is entirely similar. We start with some basic observations. Let~$X_t'$ denote the number of herds in the herds process at time~$t$ which contain exactly two particles, these particles being neighbors. We claim that
\begin{equation}\label{X'tXt}
	\E[X_t'] \ge c\cdot  \E[X_{t-1}]\quad \text{for any } t \ge 1, 
\end{equation}
with $c$ some positive constant depending continuously on  $\mathsf v$. 
To see this, recall that if we denote by $Z_t$ the number of herds of~$\Xi_t$ that contain a single particle, then (as in the proof of Lemma~\ref{lem_revineq}) one has $\E[Z_{t-1/2}] \ge c_1 \cdot \E[X_{t-1}]$, for some constant $c_1>0$, depending continuously on $\mathsf v$. Since in any half unit of time, a herd with a single particle can be transformed in a herd with two neighboring particles, at a constant price (depending continuously on $\mathsf v$), this gives the claim~\eqref{X'tXt}.

Recalling the definition of $g_{\mathsf v}$ in Definition~\ref{def_g}, by Lemma~\ref{lem_addherd} we have 
$$g_{\mathsf v}(\Xi_s,t-s) \ge \gamma \cdot X'_s \cdot \E[X_{t-s}].$$
Together with Proposition~\ref{prop.derivative} and~\eqref{X'tXt}, this gives for any  
$\mathsf v>0$, and any $t\ge 1$, with $F(\mathsf v,t)=\mathbb E_{\mathsf v}[X_t]$, 
\begin{equation*}
	\frac{\partial F}{\partial \mathsf{v}}(\mathsf{v},t) \ge c\gamma \cdot \int_0^t \E[X_{s-1}] \cdot \E[X_{t-s}] \, ds.
\end{equation*}
Applying then Proposition~\ref{prop_subad}, gives  
\begin{equation*}
	\frac{\partial F}{\partial \mathsf{v}}(\mathsf{v},t) \ge c' \cdot t \cdot F(\mathsf{v},t),
\end{equation*}
where~$c'>0$ depends continuously on~$\mathsf{v}$. 
This implies that, for any fixed~$0<\mathsf{v}_1 < \mathsf{v}_2$, 
\[\log \frac{F(\mathsf{v}_2,t)}{F(\mathsf{v}_1,t)} \ge \rho \cdot t \cdot (\mathsf{v}_2 - \mathsf{v}_1), \]
for some $\rho>0$. 
Thus, by taking the limit as~$t \to \infty$ and using~\eqref{eq_lim}, we get 
\[\varphi(\mathsf{v}_1,\lambda) \le e^{-\rho(\mathsf{v}_2 - \mathsf{v}_1)}\cdot \varphi(\mathsf{v}_2,\lambda).\]
\end{proof}

\subsection{Proof of derivative formulas}\label{s_proof_derivative}
We now turn to establishing~\eqref{eq_derivative_v} and~\eqref{eq_derivative_l}. The proofs are entirely analogous, so we only do the former. A few of the more technical points of the proof are done in the appendix.

Recall the definition of the function~$g_{\mathsf{v}}(\xi,t)$ in~\eqref{eq_def_g}. We now define the closely related function:
\begin{equation}
	\label{eq_alternate_g_other}
	\begin{split}
		g_{\mathsf{v},\varepsilon}(\xi,t):= \sum_{A \in P_\mathsf{f}(\T^d)} \xi(A)\sum_{\substack{e \text{ active}\\\text{edge of }A}} &(\mathbb{E}_{\lambda, \mathsf{v}+\varepsilon}[X_t\mid \Xi_0 = \delta_{A_{e,1}} + \delta_{A_{e,2}}]\\[-.7cm]&\hspace{2cm} - \mathbb{E}_{\lambda,\mathsf{v}}[X_t\mid \Xi_0 = \delta_{A}]),
	\end{split}
\end{equation}
where~$\mathsf{v} > 0$ and~$\varepsilon > 0$. Note that the expression for~$g_{\mathsf{v},\varepsilon}$ only differs from that for~$g_{\mathsf{v}}$ in that in the former, the first expectation that appears in the right-hand side is under parameters~$\lambda$,~$\mathsf{v} + \varepsilon$ rather than~$\lambda$,~$\mathsf{v}$.

The following integrability result will be useful. The proof is done in the appendix.
\begin{lemma}\label{lem_integral_g}
For any~$T > 0$ and~$k \ge 1$, we have
	\[\E_{\lambda,\mathsf{v}}\left[\max_{0 \le t \le T}|g_\mathsf{v}(\Xi_t,T-t)|^k\right] < \infty \quad \text{and}\quad \E_{\lambda,\mathsf{v}}\left[\max_{0 \le t \le T} |g_{\mathsf{v},\varepsilon}(\Xi_t,T-t)|^k\right] < \infty.\] 
\end{lemma}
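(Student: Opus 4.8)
The goal is to bound the $k$-th moment of the running maximum of $|g_{\mathsf v}(\Xi_t,T-t)|$ (and the analogous quantity for $g_{\mathsf v,\varepsilon}$). The natural approach is to first bound $|g_{\mathsf v}(\xi,s)|$ for a fixed configuration $\xi$ by an explicit functional of $\xi$, and then to control the supremum over $t\le T$ of that functional along the trajectory of the herds process, using Corollary~\ref{cor_explosion}.

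First I would estimate the summand in~\eqref{eq_def_g}. For each herd $A$ and active edge $e$, the difference $\mathbb E_{\lambda,\mathsf v}[X_s\mid \Xi_0=\delta_{A_{e,1}}+\delta_{A_{e,2}}]-\mathbb E_{\lambda,\mathsf v}[X_s\mid \Xi_0=\delta_A]$ has absolute value at most the sum of the two terms; by Proposition~\ref{prop_subad} (in the case $p=1$), $\mathbb E_{\lambda,\mathsf v}[X_s\mid\Xi_0=\delta_A]\le |A|\cdot \mathbb E_{\lambda,\mathsf v}[X_s]\le |A|\cdot\mathbb E_{\lambda,\mathsf v}[X_T]$ for $s\le T$, and similarly for the split configuration (whose total particle count is still $|A|$). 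Hence each summand is bounded in absolute value by $2|A|\cdot C_T$ with $C_T:=\mathbb E_{\lambda,\mathsf v}[X_T]$, a finite constant. Summing over active edges of $A$ and over $A$ with multiplicity $\xi(A)$ gives
\begin{equation*}
|g_{\mathsf v}(\xi,s)| \le 2C_T \sum_{A} \xi(A)\,|A|\cdot|\{\text{active edges of }A\}| \le 2C_T\, X(\xi)\cdot \mathscr E(\xi),
\end{equation*}
using that $|A|\le X(\xi)$ and that $\sum_A\xi(A)|\{\text{active edges of }A\}|=\mathscr E(\xi)$; alternatively one keeps $\sum_A\xi(A)|A|\cdot\#\{\text{active edges}\}\le X(\xi)\mathscr E(\xi)$ directly. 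For $g_{\mathsf v,\varepsilon}$ the only change is that the first expectation is under parameter $\mathsf v+\varepsilon$ instead of $\mathsf v$; applying Proposition~\ref{prop_subad} with parameters $(\lambda,\mathsf v+\varepsilon)$ gives the same bound with $C_T$ replaced by $\max(\mathbb E_{\lambda,\mathsf v}[X_T],\mathbb E_{\lambda,\mathsf v+\varepsilon}[X_T])$, still finite.

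Then I would conclude: along the trajectory, $\max_{0\le t\le T}|g_{\mathsf v}(\Xi_t,T-t)|\le 2C_T\max_{0\le t\le T}\big(X_t\,\mathscr E_t\big)\le 2C_T\big(\max_{0\le t\le T}X_t\big)\big(\max_{0\le t\le T}\mathscr E_t\big)$. Raising to the power $k$ and applying Cauchy--Schwarz,
\begin{equation*}
\mathbb E_{\lambda,\mathsf v}\Big[\max_{0\le t\le T}|g_{\mathsf v}(\Xi_t,T-t)|^k\Big] \le (2C_T)^k\, \mathbb E_{\lambda,\mathsf v}\Big[\big(\max_{t\le T}X_t\big)^{2k}\Big]^{1/2}\,\mathbb E_{\lambda,\mathsf v}\Big[\big(\max_{t\le T}\mathscr E_t\big)^{2k}\Big]^{1/2},
\end{equation*}
and both factors on the right are finite by Corollary~\ref{cor_explosion} (with the initial configuration $\delta_{\{o\}}$, so $X_0=1$, $\mathscr E_0=0$). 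The same argument handles $g_{\mathsf v,\varepsilon}$. The only mildly delicate point is the uniform-in-$s\le T$ bound on the expectations $\mathbb E[X_s\mid\Xi_0=\delta_A]$, but this is immediate from Proposition~\ref{prop_subad} together with monotonicity of $s\mapsto \mathbb E[X_s]$ coming from~\eqref{eq_repeat_subadd} (or simply by bounding $\mathbb E[X_s]\le \mathbb E[X_T]\cdot\mathbb E[X_{0}]$-type estimates)---so there is no real obstacle here, the lemma is essentially a bookkeeping consequence of Proposition~\ref{prop_subad} and Corollary~\ref{cor_explosion}.
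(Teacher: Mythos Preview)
Your approach is essentially the same as the paper's: bound $|g_{\mathsf v}(\xi,s)|$ by a constant (depending only on $T$) times a polynomial in $X(\xi)$ and $\mathscr{E}(\xi)$, then invoke Corollary~\ref{cor_explosion}. The paper bounds the expectations $\E[X_s\mid \Xi_0=\delta_A]$ and $\E[X_s\mid \Xi_0=\delta_{A_{e,1}}+\delta_{A_{e,2}}]$ directly by $|A|\,e^{d\lambda s}\le |A|\,e^{d\lambda T}$ via pure-birth domination, arriving at $|g_{\mathsf v}(\xi,s)|\le e^{d\lambda T}(X(\xi)+\mathscr E(\xi))^2$; you route through Proposition~\ref{prop_subad} and then Cauchy--Schwarz, which is a harmless variation.

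One small slip: submultiplicativity~\eqref{eq_repeat_subadd} does \emph{not} give monotonicity of $s\mapsto \E[X_s]$ (indeed this map is decreasing in the subcritical regime), and the ``$\E[X_s]\le \E[X_T]\cdot\E[X_0]$-type'' inequality you gesture at goes the wrong way. The fix is immediate: either use the pure-birth bound $\E[X_s]\le e^{d\lambda s}\le e^{d\lambda T}$ (which is exactly what the paper does), or simply note that $s\mapsto \E[X_s]$ is continuous and hence bounded on $[0,T]$. With that correction your argument is complete.
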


The following definition will give an alternative expression for~$g_\mathsf{v}$ and~$g_{\mathsf{v},\varepsilon}$, in~\eqref{eq_alternate_g} and~\eqref{eq_alternate_g_eps} below.

\begin{definition}
	\label{def_n_for_g}
	Given~$\xi \in \mathcal{S}$, let~$\mathcal{G}(\xi)$ denote the set of all herd configurations~$\xi'$ that can be obtained by performing a single split on~$\xi$. Given~$\xi' \in \mathcal{G}(\xi)$, let~$A$ be the (unique) herd shape that is split into two to obtain~$\xi'$ from~$\xi$; let~$\mathsf{m}(\xi,\xi'):= \xi(A)$.
\end{definition}
To further clarify this definition, fix an enumeration~$\xi = \sum_{i=1}^m \delta_{A_i}$ of~$\xi$. Then,~$\mathcal{G}(\xi)$ is the set of~$\xi' \in \mathcal{S}$ with
	\[\xi' = \sum_{i \neq j} \delta_{A_i} + \delta_{(A_j)_{e,1}} + \delta_{(A_j)_{e,2}},\]
	where~$j \in \{1,\ldots, m\}$ and~$e$ is an active edge of~$A_j$. For~$\xi'$ as in the above display, we have~$\mathsf{m}(\xi,\xi')=\xi(A_j)$.

	We now observe that, using Lemma~\ref{lem_decompose}, we can rewrite
\begin{equation}
	\label{eq_alternate_g}
	g_\mathsf{v}(\xi,t) = \sum_{\xi' \in \mathcal{G}(\xi)} \mathsf{m}(\xi,\xi') \cdot (\E_{\lambda,\mathsf{v}}[X_t\mid \Xi_0 = \xi'] - \E_{\lambda,\mathsf{v}}[X_t \mid \Xi_0 = \xi])
\end{equation}
and
\begin{equation}
	\label{eq_alternate_g_eps}
	g_{\mathsf{v},\varepsilon}(\xi,t) = \sum_{\xi' \in \mathcal{G}(\xi)} \mathsf{m}(\xi,\xi') \cdot (\E_{\lambda,\mathsf{v}+\varepsilon}[X_t\mid \Xi_0 = \xi'] - \E_{\lambda,\mathsf{v}}[X_t \mid \Xi_0 = \xi]).
\end{equation}

Fix~$\mathsf{v} > 0$,~$\lambda > 0$ and~$\varepsilon > 0$. We will now construct a coupling~$(\mathcal{V}_t,\mathcal{W}_t)_{t \ge 0}$ under a probability measure~$\widehat{\P}$ (with dependence on~$\mathsf{v},\lambda,\varepsilon$ omitted) so that~$(\mathcal{V}_t)$ is a herds process with parameters~$\lambda$,~$\mathsf{v}$, and~$(\mathcal{W}_t)$ is a herds process with parameters~$\lambda$,~$\mathsf{v}+\varepsilon$; both these processes are started from a single herd with a single particle (at the root of~$\T^d$). 

\begin{definition}[Coupling~$(\mathcal{V}_t,\mathcal{W}_t)$]
	Take a probability space with probability measure~$\widehat{\P}$ under which a herds process~$(\mathcal{V}_t)_{t \ge 0}$ with parameters~$\lambda$,~$\mathsf{v}$ is defined, started from~$\delta_{\{o\}}$. Assume that the split jumps of~$(\mathcal{V}_t)$ are given as follows: splitting instructions arise with rate~$\mathsf{v}+\varepsilon$ (rather than~$\mathsf{v}$), but they are rejected with probability~$\frac{\varepsilon}{\mathsf{v}+\varepsilon}$. Let
	\[\tau_\mathrm{sep} := \inf\{t \ge 0:\; \text{a splitting instruction is rejected at time $t$}\}.\]
	We define~$(\mathcal{W}_t)_{t \ge 0}$ as follows. For~$0 \le t < \tau_\mathrm{sep}$, we set~$\mathcal{W}_t = \mathcal{V}_t$. At time~$\tau_\mathrm{sep}$, this process obeys the splitting instruction that was rejected by~$(\mathcal{V}_t)$. We then let~$(\mathcal{W}_t)_{t \ge \tau_\mathrm{sep}}$ continue evolving from~$\mathcal{W}_{\tau_\mathrm{sep}}$ as a herds process with parameters~$\lambda$,~$\mathsf{v}+\varepsilon$, independent of~$(\mathcal{V}_t)_{t \ge \tau_\mathrm{sep}}$. Finally, we let 
	\[\mathcal X_t = X(\mathcal V_t),\qquad \mathcal Y_t = X(\mathcal W_t),\qquad t \ge 0.\] 
\end{definition}

For the rest of this section, we fix~$T > 0$.

\begin{definition}
	\label{def_of_A}
We define the process~$(\mathcal{A}_t)_{0 \le t \le T}$ as
\begin{equation*}
	\mathcal{A}_t:= \mathds{1}\{\tau_\mathrm{sep} \le t\}\cdot \widehat{\E}[\mathcal{Y}_T - \mathcal{X}_T \mid \mathcal{F}_{\tau_\mathrm{sep}}],\quad 0 \le t \le T.
\end{equation*}
\end{definition}

That is, in the event~$\{\tau_{\mathrm{sep}} > T\}$ we have~$\mathcal{A}_t \equiv 0$, whereas in~$\{\tau_\mathrm{sep} \le T\}$, this process takes just two values:~$\mathcal{A}_t = 0$ for~$t \in  [0,\tau_\mathrm{sep})$ and~$\mathcal{A}_t = \widehat{\E}[\mathcal{Y}_T - \mathcal{X}_T \mid \mathcal{F}_{\tau_\mathrm{sep}}]$ for~$t \in [\tau_\mathrm{sep},T]$. Our interest in this process stems from the fact that
	\begin{equation}\label{eq_stems}\begin{split}\E_{\lambda,\mathsf{v}+\varepsilon}[X_T] - \E_{\lambda,\mathsf{v}}[X_T] &= \widehat{\E}[\mathcal{Y}_T - \mathcal{X}_T] \\
		&= \widehat{\E}[(\mathcal{Y}_T - \mathcal{X}_T)\cdot \mathds{1}\{\tau_\mathrm{sep} \le T\}]\\
	&= \widehat{\E}[\widehat{\E}[(\mathcal{Y}_T - \mathcal{X}_T)\mid \mathcal{F}_{\tau_\mathrm{sep}}] \cdot \mathds{1}\{\tau_\mathrm{sep} \le T\}] = \widehat{\E}[\mathcal{A}_T].
	\end{split}
	\end{equation}
	We now compute the right derivative with respect to time of the conditional expectation of~$\mathcal{A}_t$. This lemma is where the function~$g_{\mathsf{v},\varepsilon}$ enters the picture.

\begin{lemma}\label{lem_cond_derivative_A}
	\label{lem_right_derivative_A}
	For any~$t \in [0,T)$, on the event~$\{\tau_\mathrm{sep} > t\}$ we have
	\begin{equation}
		\label{eq_cond_derA}
		\left.\frac{\mathrm{d}}{\mathrm{d}s} \widehat{\E}[\mathcal{A}_{t + s}\mid \mathcal{F}_t ] \right|_{s = 0+} = \varepsilon\cdot g_{\mathsf{v},\varepsilon}(\mathcal{V}_t, T-t).
	\end{equation}
\end{lemma}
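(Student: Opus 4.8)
The plan is to compute the right derivative of $s \mapsto \widehat{\E}[\mathcal{A}_{t+s} \mid \mathcal{F}_t]$ at $s = 0$, working on the event $\{\tau_{\mathrm{sep}} > t\}$. On this event $\mathcal{A}_t = 0$ and $\mathcal{W}_r = \mathcal{V}_r$ for $r \le t$, so the only way $\mathcal{A}_{t+s}$ becomes nonzero is if the separating (rejected) splitting instruction occurs in the time window $(t, t+s]$. First I would note that, conditionally on $\mathcal{F}_t$ and on $\{\tau_{\mathrm{sep}} > t\}$, the probability that a rejected splitting instruction occurs in $(t, t+s]$ at a \emph{particular} active edge $e$ of a \emph{particular} herd $A$ of $\mathcal{V}_t$ equals $\varepsilon \cdot s + o(s)$ as $s \to 0+$ (rate $\mathsf{v}+\varepsilon$ for a splitting instruction times rejection probability $\frac{\varepsilon}{\mathsf{v}+\varepsilon}$), while the probability of two or more events of any kind in $(t,t+s]$ is $o(s)$.

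Next I would condition on which event happens first after time $t$. With probability $1 - o(s)$, either nothing happens in $(t, t+s]$ (then $\mathcal{A}_{t+s}$ stays $0$), or exactly one transition of $(\mathcal{V}_r, \mathcal{W}_r)$ happens. If that transition is a birth, death, or \emph{accepted} split, then $\tau_{\mathrm{sep}}$ is still $> t+s$, so $\mathcal{A}_{t+s} = 0$ and this contributes nothing to the derivative. If the transition is a rejected splitting instruction, say at active edge $e$ of a herd $A$ (this herd being one of the $\mathcal{V}_t(A)$ copies of shape $A$ in $\mathcal{V}_t$), then $\tau_{\mathrm{sep}} = t + O(s)$ occurs, and at that moment $\mathcal{W}$ performs the split that $\mathcal{V}$ did not: so $\mathcal{W}_{\tau_{\mathrm{sep}}}$ agrees with $\mathcal{V}_{\tau_{\mathrm{sep}}}$ except that the copy of $A$ is replaced by $A_{e,1} \sqcup A_{e,2}$, while from $\tau_{\mathrm{sep}}$ onward $\mathcal{W}$ runs with parameter $\mathsf{v}+\varepsilon$ independently of $\mathcal{V}$, which keeps running with parameter $\mathsf{v}$. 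Hence, using the Markov property at $\tau_{\mathrm{sep}}$, Lemma~\ref{lem_decompose} (to isolate the affected herd from the rest of $\mathcal{V}_t$, which evolves identically in both processes and cancels), and Lemma~\ref{lem_auto} together with translation to the root, the conditional expectation $\widehat{\E}[\mathcal{Y}_T - \mathcal{X}_T \mid \mathcal{F}_{\tau_{\mathrm{sep}}}]$ on this sub-event equals, up to an $o(1)$ coming from $\tau_{\mathrm{sep}} - t = O(s)$ and the continuity of $u \mapsto \E[X_u]$,
\[
\E_{\lambda, \mathsf{v}+\varepsilon}[X_{T-t} \mid \Xi_0 = \delta_{A_{e,1}} + \delta_{A_{e,2}}] - \E_{\lambda, \mathsf{v}}[X_{T-t} \mid \Xi_0 = \delta_A].
\]
Summing the contribution $(\varepsilon s + o(s))$ times this quantity over all herds $A$ (with multiplicity $\mathcal{V}_t(A)$) and all active edges $e$, dividing by $s$, and letting $s \to 0+$, gives exactly $\varepsilon \cdot g_{\mathsf{v},\varepsilon}(\mathcal{V}_t, T-t)$ by the definition~\eqref{eq_alternate_g_other} of $g_{\mathsf{v},\varepsilon}$ (or equivalently its form~\eqref{eq_alternate_g_eps}).

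The main obstacle is making the $o(s)$ estimates uniform enough to justify interchanging the limit $s \to 0+$ with the (conditionally, possibly infinite) sum over herds and active edges, and to control the error from $\tau_{\mathrm{sep}}$ being slightly larger than $t$ rather than exactly $t$. For the first point I would use that $\mathcal{V}_t$ has finitely many herds and finitely many active edges (it lies in $\mathcal{S}$) so the sum is finite $\widehat{\P}$-a.s.\ on $\{\tau_{\mathrm{sep}} > t\}$, but one still needs a dominating function to pass the derivative through $\widehat{\E}[\,\cdot \mid \mathcal{F}_t]$; here Corollary~\ref{cor_explosion} and Lemma~\ref{lem_integral_g} provide the required integrable bounds on $X_t$, $\mathscr{E}_t$ and on $g_{\mathsf{v},\varepsilon}$. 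For the second point, I would use that $r \mapsto \E_{\lambda,\mathsf{v}}[X_r]$ and $r \mapsto \E_{\lambda,\mathsf{v}+\varepsilon}[X_r]$ are continuous (indeed locally Lipschitz, by the derivative bounds available once Proposition~\ref{prop.derivative} is in place, or more elementarily by the pure-birth domination of Lemma~\ref{lem_explosion}), so replacing $T - \tau_{\mathrm{sep}}$ by $T - t$ costs only $o(1)$ as $s \to 0+$. Assembling these estimates yields~\eqref{eq_cond_derA}.
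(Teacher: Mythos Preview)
Your overall strategy mirrors the paper's: condition on the first jump after time~$t$, observe that only a \emph{rejected} splitting instruction contributes, compute its rate as~$\varepsilon s + o(s)$ per active edge, and handle the discrepancy between~$T-\tau_{\mathrm{sep}}$ and~$T-t$ by continuity. That part is fine.

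However, there is a genuine error in the step where you invoke Lemma~\ref{lem_decompose} ``to isolate the affected herd from the rest of~$\mathcal{V}_t$, which evolves identically in both processes and cancels.'' This cancellation is \emph{false}. After time~$\tau_{\mathrm{sep}}$, the processes~$(\mathcal{V}_t)$ and~$(\mathcal{W}_t)$ evolve \emph{independently} and with \emph{different} split rates:~$\mathcal{V}$ uses~$\mathsf{v}$ while~$\mathcal{W}$ uses~$\mathsf{v}+\varepsilon$. Hence the herds of~$\mathcal{V}_t$ that are not touched by the rejected split do \emph{not} contribute the same expected number of particles at time~$T$ in the two processes, and the corresponding terms do not cancel. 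Concretely, on the event where the first jump after~$t$ is the rejection at edge~$e$ of herd~$A$, one has~$\mathcal{V}_{\tau_{\mathrm{sep}}} = \mathcal{V}_t =: \xi$ and~$\mathcal{W}_{\tau_{\mathrm{sep}}} = \xi' \in \mathcal{G}(\xi)$, so
\[
\widehat{\E}[\mathcal{Y}_T - \mathcal{X}_T \mid \mathcal{F}_{\tau_{\mathrm{sep}}}]
= \E_{\lambda,\mathsf{v}+\varepsilon}[X_{T-\tau_{\mathrm{sep}}} \mid \Xi_0 = \xi']
- \E_{\lambda,\mathsf{v}}[X_{T-\tau_{\mathrm{sep}}} \mid \Xi_0 = \xi],
\]
which involves the \emph{full} configurations~$\xi$ and~$\xi'$, not just~$\delta_A$ and~$\delta_{A_{e,1}}+\delta_{A_{e,2}}$.

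The paper's proof keeps these full configurations throughout, writes the contribution as~$\beta(\xi,\xi',T-\tau_{\mathrm{sep}})$, and sums over~$\xi' \in \mathcal{G}(\xi)$ weighted by~$\mathsf{m}(\xi,\xi')$ to land directly on the expression~\eqref{eq_alternate_g_eps} for~$g_{\mathsf{v},\varepsilon}$. Your route would be salvageable only if the two expressions~\eqref{eq_alternate_g_other} and~\eqref{eq_alternate_g_eps} agreed for a reason other than the cancellation you propose; at any rate, the justification you give is incorrect and must be replaced by working with the full configurations as above.
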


\begin{proof}

	We abbreviate~\eqref{eq_alternate_g_eps} by writing
	\[g_{\mathsf{v},\varepsilon}(\xi,s) = \sum_{\xi' \in \mathcal{G}(\xi)} \mathsf{m}(\xi,\xi')\cdot \beta(\xi,\xi',s),\]
	where
	\[\beta(\xi, \xi', s):= \E_{\lambda,\mathsf{v}+\varepsilon}[X_s \mid \Xi_0 = \xi'] - \E_{\lambda,\mathsf{v}}[X_s \mid \Xi_0 = \xi],\quad  \xi \in \mathcal{S},\;\xi' \in \mathcal{G}(\xi),\;s > 0.\]

	Fix~$s > 0$ so that~$t+s \le T$, and fix~$\xi \in \mathcal{S}$. For each~$\xi' \in \mathcal{G}(\xi)$, let~$E(\xi')$ be the event that:
	\begin{itemize}
		\item $\tau_\mathrm{sep} \in (t,t+s]$,
		\item the first jump of~$(\mathcal{V}_r,\mathcal{W}_r)_{t \le r \le t+s}$ is the one that occurs at time~$\tau_\mathrm{sep}$, and
		\item $\mathcal{W}_{\tau_\mathrm{sep}} = \xi'$.
	\end{itemize}

	On the event~$\{\tau_\mathrm{sep} > t,\; \Xi_t = \xi\}$, we have
	\begin{align*}
		\widehat{\E}[\mathcal{A}_{t + s}\mid \mathcal{F}_t] - \widehat{\E}[\mathcal{A}_{t}\mid \mathcal{F}_t]&=\widehat{\E}[\mathcal{A}_{t + s}\mid \mathcal{F}_t] \\
		&=\sum_{\xi' \in \mathcal{G}(\xi)} \widehat{\E}[\mathds{1}_{E(\xi')} \cdot \beta(\xi,\xi',T- \tau_\mathrm{sep})\mid \mathcal{F}_t] + o(s),
	\end{align*}
	where the~$o(s)$ term (which of course refers to when~$s \to 0$) comes from events where there are multiple jumps in~$(t,t+s]$.
	The above sum equals
	\begin{equation}
		\label{eq_sigma_first} \begin{split}&\sum_{\xi' \in \mathcal{G}(\xi)} \beta(\xi,\xi',T-t)\cdot \widehat{\P}(E(\xi')\mid \mathcal{F}_t) \\
			&+ \sum_{\xi' \in \mathcal{G}(\xi)} \widehat{\E}[\mathds{1}_{E(\xi')}\cdot (\beta(\xi,\xi',T-\tau_\mathrm{sep}) - \beta(\xi,\xi',T-t)) \mid \mathcal{F}_t]. \end{split}
	\end{equation}
	We will treat the two sums separately. The absolute value of the second sum  is bounded by
	\[\left(\sup_{\xi' \in \mathcal{G}(\xi),\; u \in (t,t+s]} |\beta(\xi,\xi',T-u) - \beta(\xi,\xi',T-t)|\right)\cdot \sum_{\xi' \in \mathcal{G}(\xi)} \widehat{\P}(E(\xi')\mid \mathcal{F}_t).\]
	As~$s \to 0$, the supremum tends to zero and the sum is bounded by 1, so the whole expression is~$o(s)$.

	We now turn to the first sum in~\eqref{eq_sigma_first}. As~$s\to 0$ we have
	\[\widehat{\P}(E(\xi') \mid \mathcal{F}_t) = (\mathsf{v}+\varepsilon)\cdot \frac{\varepsilon}{\mathsf{v}+\varepsilon}\cdot \mathsf{m}(\xi,\xi')\cdot s + o(s) = \varepsilon s \cdot \mathsf{m}(\xi,\xi') + o(s),\]
	so the sum equals
	\[\varepsilon s \sum_{\xi' \in \mathcal{G}(\xi)} \mathsf{m}(\xi,\xi')\cdot \beta(\xi,\xi',T-t) + o(s) = \varepsilon s \cdot g_{\mathsf{v},\varepsilon}(\xi, T-t) + o(s).\]
	We have thus proved that
	\[ \mathds{1}\{\tau_\mathrm{sep} > t\} \cdot  (\widehat{\E}[\mathcal{A}_{t+s} \mid \mathcal{F}_t] - \widehat{\E}[\mathcal{A}_t \mid \mathcal{F}_t]) =\mathds{1}\{\tau_\mathrm{sep} > t\}\cdot (\varepsilon s\cdot g_{\mathsf{v},\varepsilon}(\mathcal{V}_t,T-t)+o(s)),\]
	so~\eqref{eq_cond_derA} follows. 
\end{proof}

Next, we obtain the expression for the derivative with respect to time of the (non-conditional) expectation of~$\mathcal{A}_t$.
\begin{lemma}\label{lem_derivative_A} For~$t \in [0,T)$, we have
	\begin{equation}
		\label{eq_derivative_A}
		\frac{\mathrm{d}}{\mathrm{d}t} \widehat{\E}[\mathcal{A}_t] = \varepsilon \cdot \widehat{\E}[\mathds{1}\{\tau_{\mathrm{sep}} > t\}\cdot g_{\mathsf{v},\varepsilon}(\mathcal{V}_t, T-t)].
	\end{equation}
\end{lemma}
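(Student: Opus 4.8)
The plan is to differentiate under the expectation, using Lemma~\ref{lem_right_derivative_A} to identify the pointwise conditional right-derivative. For $h>0$ with $t+h\le T$, write
\[
\frac{\widehat{\E}[\mathcal{A}_{t+h}]-\widehat{\E}[\mathcal{A}_t]}{h}=\widehat{\E}\!\left[\frac{\widehat{\E}[\mathcal{A}_{t+h}\mid\mathcal{F}_t]-\mathcal{A}_t}{h}\right].
\]
On $\{\tau_\mathrm{sep}\le t\}$ one has $\mathcal{A}_{t+h}=\mathcal{A}_t$, an $\mathcal{F}_t$-measurable random variable, so the integrand in the outer expectation vanishes there; on $\{\tau_\mathrm{sep}>t\}$, where $\mathcal{A}_t=0$, Lemma~\ref{lem_right_derivative_A} says that this integrand converges $\widehat{\P}$-a.s.\ as $h\to0+$ to $\varepsilon\,g_{\mathsf{v},\varepsilon}(\mathcal{V}_t,T-t)$. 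Hence the pointwise limit of the integrand is $\varepsilon\,\mathds{1}\{\tau_\mathrm{sep}>t\}\,g_{\mathsf{v},\varepsilon}(\mathcal{V}_t,T-t)$, and the substance of the proof is (a) a domination bound valid uniformly for small $h$, so that this limit may be taken inside $\widehat{\E}$, and (b) a soft regularity step upgrading the resulting right-derivative of $t\mapsto\widehat{\E}[\mathcal{A}_t]$ to a genuine derivative, giving~\eqref{eq_derivative_A}.

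For (a): the quotient vanishes on $\{\tau_\mathrm{sep}\le t\}$, so I work on $\{\tau_\mathrm{sep}>t\}$, where $\mathcal{A}_t=0$ and $\mathcal{A}_{t+h}=\mathds{1}\{\tau_\mathrm{sep}\in(t,t+h]\}\cdot\widehat{\E}[\mathcal{Y}_T-\mathcal{X}_T\mid\mathcal{F}_{\tau_\mathrm{sep}}]$. After $\tau_\mathrm{sep}$ the processes $(\mathcal{V}_r)$ and $(\mathcal{W}_r)$ evolve as independent herds processes with rates $\lambda,\mathsf{v}$ and $\lambda,\mathsf{v}+\varepsilon$, and a split leaves the number of particles unchanged, so $X(\mathcal{W}_{\tau_\mathrm{sep}})=X(\mathcal{V}_{\tau_\mathrm{sep}})$; Corollary~\ref{cor_explosion} with $k=1$ then gives, on $\{\tau_\mathrm{sep}\le T\}$,
\[
\bigl|\widehat{\E}[\mathcal{Y}_T-\mathcal{X}_T\mid\mathcal{F}_{\tau_\mathrm{sep}}]\bigr|\le c\bigl(X(\mathcal{W}_{\tau_\mathrm{sep}})+X(\mathcal{V}_{\tau_\mathrm{sep}})\bigr)=2c\,\mathcal{X}_{\tau_\mathrm{sep}}\le 2c\max_{0\le r\le T}\mathcal{X}_r,
\]
where $c$ is the constant of Corollary~\ref{cor_explosion} (which does not depend on the splitting rate). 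Moreover, a rejected splitting instruction is emitted at total rate $\varepsilon\,\mathscr{E}(\mathcal{V}_r)$ at time $r$, so conditioning on $\mathcal{F}_t$ and the trajectory $(\mathcal{V}_r)_{r\ge t}$ gives, on $\{\tau_\mathrm{sep}>t\}$, $\widehat{\P}(\tau_\mathrm{sep}\in(t,t+h]\mid\mathcal{F}_t,(\mathcal{V}_r)_{r\ge t})\le\varepsilon\int_t^{t+h}\mathscr{E}(\mathcal{V}_r)\,\mathrm{d}r\le\varepsilon h\max_{0\le r\le T}\mathscr{E}(\mathcal{V}_r)$. Since $\max_{0\le r\le T}\mathcal{X}_r$ is measurable with respect to $\mathcal{F}_t\vee\sigma((\mathcal{V}_r)_{r\ge t})$, these two estimates combine to give
\[
\frac1h\bigl|\widehat{\E}[\mathcal{A}_{t+h}\mid\mathcal{F}_t]-\mathcal{A}_t\bigr|\le 2c\varepsilon\cdot\widehat{\E}\!\left[\,\max_{0\le r\le T}\mathscr{E}(\mathcal{V}_r)\cdot\max_{0\le r\le T}\mathcal{X}_r\;\Big|\;\mathcal{F}_t\right]=:G
\]
uniformly in $h\in(0,T-t]$, and $\widehat{\E}[G]<\infty$ by Corollary~\ref{cor_explosion} and the Cauchy--Schwarz inequality. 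Dominated convergence then shows that the right-derivative of $t\mapsto\widehat{\E}[\mathcal{A}_t]$ exists and equals $\varepsilon\,\widehat{\E}[\mathds{1}\{\tau_\mathrm{sep}>t\}\,g_{\mathsf{v},\varepsilon}(\mathcal{V}_t,T-t)]$.

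For (b): the map $t\mapsto\widehat{\E}[\mathcal{A}_t]$ is Lipschitz on $[0,T]$ (use the bound above with $t$ and $t+h$ arbitrary), hence continuous, and the candidate derivative $t\mapsto\widehat{\E}[\mathds{1}\{\tau_\mathrm{sep}>t\}\,g_{\mathsf{v},\varepsilon}(\mathcal{V}_t,T-t)]$ is also continuous: as $t'\to t$ we have $\mathds{1}\{\tau_\mathrm{sep}>t'\}\to\mathds{1}\{\tau_\mathrm{sep}>t\}$ and $g_{\mathsf{v},\varepsilon}(\mathcal{V}_{t'},T-t')\to g_{\mathsf{v},\varepsilon}(\mathcal{V}_t,T-t)$ $\widehat{\P}$-a.s.\ (because $\widehat{\P}(\tau_\mathrm{sep}=t)=0$ and the pure-jump process $(\mathcal{V}_r)$ a.s.\ does not jump at the fixed time $t$), while $\max_{0\le r\le T}|g_{\mathsf{v},\varepsilon}(\mathcal{V}_r,T-r)|\in L^1(\widehat{\P})$ by Lemma~\ref{lem_integral_g} supplies a dominating function. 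Since a continuous function on $[0,T)$ whose right-derivative exists everywhere and is continuous is continuously differentiable with that derivative, applying this to $t\mapsto\widehat{\E}[\mathcal{A}_t]$ yields~\eqref{eq_derivative_A}.

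I expect the main obstacle to be the uniform domination in (a). It rests on two model-specific facts: that a split preserves the total number of particles, so that the jump of $\mathcal{A}$ at $\tau_\mathrm{sep}$ is controlled \emph{uniformly in the residual time $T-\tau_\mathrm{sep}$} by a constant times $\max_{0\le r\le T}\mathcal{X}_r$ through Corollary~\ref{cor_explosion}; and the higher-moment bounds of Corollary~\ref{cor_explosion} for $\max_r\mathcal{X}_r$ and $\max_r\mathscr{E}(\mathcal{V}_r)$, which make $G$ integrable once the probability of a rejection in a window of length $h$ has been estimated to be $O(\varepsilon h)$. For what it is worth, one can dispense with Lemma~\ref{lem_right_derivative_A} altogether: conditionally on the full trajectory $(\mathcal{V}_r)_{r\ge0}$, the rejected splitting instructions form a Poisson process of rate $\varepsilon$ per active edge, and integrating over the location of its first point yields directly the identity $\widehat{\E}[\mathcal{A}_t]=\varepsilon\int_0^t\widehat{\E}[\mathds{1}\{\tau_\mathrm{sep}>r\}\,g_{\mathsf{v},\varepsilon}(\mathcal{V}_r,T-r)]\,\mathrm{d}r$, after which~\eqref{eq_derivative_A} follows by differentiating the integral, its integrand being continuous by the argument in (b).
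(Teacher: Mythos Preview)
Your proof is correct and follows the same overall architecture as the paper: compute the conditional right-derivative via Lemma~\ref{lem_right_derivative_A}, justify passing the limit inside $\widehat{\E}$ by dominated convergence, and finally upgrade the continuous right-derivative to a two-sided one. Step~(b) is identical to the paper's.

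The difference lies in how the dominating function is produced. The paper isolates this in a separate lemma (Lemma~\ref{lem_bound_dct}), which bounds $|\widehat{\E}[\mathcal{A}_{t+s}\mid\mathcal{F}_t]|$ by $c_1(X(\mathcal{V}_t)+\mathscr{E}(\mathcal{V}_t))^{c_2}\cdot s$ via a case split on the number $N$ of jumps of $(\mathcal{V},\mathcal{W})$ in $[t,t+s]$: the case $N=1$ is handled by the rate bound $\mu_1(\mathcal{V}_t)\cdot s$, and the case $N\ge 2$ by H\"older and a Poisson tail estimate giving $O(s^{4/3})$. Your route is more structural: you exploit the Poisson thinning built into the coupling (accepted and rejected splitting instructions are independent Poisson processes), condition on the whole trajectory $(\mathcal{V}_r)_{r\ge 0}$, and read off $\widehat{\P}(\tau_\mathrm{sep}\in(t,t+h]\mid(\mathcal{V}_r),\tau_\mathrm{sep}>t)\le \varepsilon h\max_r\mathscr{E}(\mathcal{V}_r)$ directly. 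This avoids the case analysis and yields a dominating variable $G$ that is a conditional expectation of a product of two maxima, integrable by Corollary~\ref{cor_explosion} and Cauchy--Schwarz. The paper's bound has the advantage of being $\mathcal{F}_t$-measurable (a polynomial in $X(\mathcal{V}_t)$ and $\mathscr{E}(\mathcal{V}_t)$) rather than a conditional expectation, but for the purpose of dominated convergence either suffices. Your closing remark---that conditioning on $(\mathcal{V}_r)$ gives the integral formula $\widehat{\E}[\mathcal{A}_t]=\varepsilon\int_0^t\widehat{\E}[\mathds{1}\{\tau_\mathrm{sep}>r\}\,g_{\mathsf{v},\varepsilon}(\mathcal{V}_r,T-r)]\,\mathrm{d}r$ outright---is a genuine shortcut the paper does not take.
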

The first step in establishing this lemma is noting that, for~$0 \le t < t+s \le T$, 
\[
	 \frac{\widehat{\E}[\mathcal{A}_{t+s}] - \widehat{\E}[\mathcal{A}_t]}{s} =  \widehat{\E}\left[\mathds{1}\{\tau_{\mathrm{sep}} > t\} \cdot \frac{\widehat{\E}[\mathcal{A}_{t+s}\mid \mathcal{F}_t]}{s} \right],
\]
since~$\mathcal{A}_{t+s} = \mathcal{A}_t$ on~$\{\tau_\mathrm{sep} \le t\}$, and~$\mathcal{A}_t = 0$ on~$\{\tau_\mathrm{sep} > t\}$.
We would now like to take~$s$ to zero (from the right only, at least at first) and use Lemma~\ref{lem_cond_derivative_A}, but we need to exchange the limit and the expectation; formally:
\begin{equation}
	\label{eq_exchange}
\lim_{s \to 0+}\widehat{\E}\left[\mathds{1}\{\tau_{\mathrm{sep}} > t\} \cdot \frac{\widehat{\E}[\mathcal{A}_{t+s}\mid \mathcal{F}_t]}{s} \right] = \widehat{\E}\left[\mathds{1}\{\tau_{\mathrm{sep}} > t\} \cdot \lim_{s \to 0+} \frac{\widehat{\E}[\mathcal{A}_{t+s}\mid \mathcal{F}_t]}{s}\right].
\end{equation}
The justification of this exchange is done with a standard dominated convergence argument, but an additional bound is required, so we postpone the full proof of Lemma~\ref{lem_derivative_A} to the Appendix.

\begin{proof}[Proof of~\eqref{eq_derivative_v}]
	Fix~$\varepsilon > 0$. Using~\eqref{eq_stems} and Lemma~\ref{lem_derivative_A}, we have
	\[\frac{\E_{\lambda,\mathsf{v}+\varepsilon}[X_T] - \E_{\lambda,\mathsf{v}}[X_T]}{\varepsilon} = \int_0^T \widehat{\E}[\mathds{1}\{\tau_{\mathrm{sep}} > t\}\cdot g_{\mathsf{v},\varepsilon}(\mathcal{V}_t, T-t)]\;\mathrm{d}t.\]
	By Fubini's theorem (which we can use since we have the integrability condition given in Lemma~\ref{lem_integral_g}), the right-hand side above equals
	\[\widehat{\E}\left[\int_0^{\tau_\mathrm{sep} \wedge T} g_{\mathsf{v},\varepsilon}(\mathcal{V}_t,T-t)\;\mathrm{d}t\right].\]
	We write this as
	\begin{equation}\label{eq_three_exps}\begin{split}
		&\widehat{\mathbb{E}}\left[\int_0^T g_\mathsf{v}(\mathcal{V}_t,T-t)\;\mathrm{d}t\right] \\&+ \widehat{\mathbb{E}}\left[\int_0^T (g_{\mathsf{v},\varepsilon}(\mathcal{V}_t,T-t)-g_\mathsf{v}(\mathcal{V}_t,T-t))\;\mathrm{d}t\right] -\widehat{\mathbb{E}}\left[\int_{\tau_\mathrm{sep}}^T g_{\mathsf{v},\varepsilon}(\mathcal{V}_t,T-t)\;\mathrm{d}t\right].
	\end{split}\end{equation}
	Note that, since the law of~$(\mathcal{V}_t)$ under~$\widehat{\mathbb{P}}$ equals the law of~$(\Xi_t)$ under~$\mathbb{P}_{\lambda,\mathsf{v}}$, we have
	\[\widehat{\mathbb{E}}\left[\int_0^T g_\mathsf{v}(\mathcal{V}_t,T-t)\;\mathrm{d}t\right] = \mathbb{E}_{\lambda,\mathsf{v}}\left[\int_0^T g_\mathsf{v}(\Xi_t,T-t)\;\mathrm{d}t\right].\]
	Hence, the proof will be completed once we prove that the second and third expectations in~\eqref{eq_three_exps} tend to zero as~$\varepsilon \to 0$.

	It is straightforward to show that, for any~$\xi\in \mathcal{S}$ and any~$t \in [0,T]$,
	\[g_{\mathsf{v},\varepsilon}(\xi,t) \xrightarrow{\varepsilon \to 0} g_{\mathsf{v}}(\xi,t).\]
	Combining this with the dominated convergence theorem (using Lemma~\ref{lem_integral_g}), we obtain
	\[\widehat{\mathbb{E}}\left[\int_0^T (g_{\mathsf{v},\varepsilon}(\mathcal{V}_t,T-t)-g_\mathsf{v}(\mathcal{V}_t,T-t))\;\mathrm{d}t\right]\xrightarrow{\varepsilon \to 0} 0.\]
Next, using the Cauchy-Schwarz inequality, we bound
	\begin{align*}&\left|\widehat{\mathbb{E}}\left[\int_{\tau_\mathrm{sep}}^T g_{\mathsf{v},\varepsilon}(\mathcal{V}_t,T-t)\;\mathrm{d}t\right]\right| \\
		&\le T\cdot \widehat{\E}\left[\left(\max_{0\le t \le T} |g_{\mathsf{v},\varepsilon}(\mathcal{V}_t,T-t)|^2 \right) \right]^{1/2}\cdot \widehat{\P}(\tau_\mathrm{sep} \le T)^{1/2}. 
	\end{align*}
	The expectation on the right-hand side is finite by Lemma~\ref{lem_integral_g}, and it is straightforward to check that~$\widehat{\P}(\tau_\mathrm{sep} \le T) \xrightarrow{\varepsilon \to 0} 0$. This completes the proof.
\end{proof}

\subsection{Analysis of higher moments}
\label{s_higher_moments}

Throughout this section, we fix~$\lambda$ and~$\mathsf{v}$ with~$\lambda < \bar{\lambda}(\mathsf{v})$ (recalling the definition of $\bar{\lambda}(\mathsf v)$ in Definition~\ref{def_critical_value}).

We now analyse the growth index~$\varphi_p$ for~$p$ possibly larger than 1.
Our main goal is to prove the following.
\begin{proposition}\label{prop.pmom}
If $\lambda<\bar{\lambda}(\mathsf v)$, then for every~$p \ge 1$ we have
\begin{equation}\label{eq_phi_less_one}
    \varphi_p<1
\end{equation} and
\begin{equation} \label{eq_finite_any_time}
    \sup_{t \ge 0} \E[X_t^p] < \infty.
\end{equation}
\end{proposition}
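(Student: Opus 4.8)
By~\eqref{eq_jensen} the map $p\mapsto\varphi_p^{1/p}$ is non-decreasing, and by Jensen's inequality $\E[X_t^p]\le\E[X_t^{\lceil p\rceil}]^{p/\lceil p\rceil}$ for every $t$; hence it suffices to prove~\eqref{eq_phi_less_one} and~\eqref{eq_finite_any_time} when $p=k$ is a positive integer, and the plan is to induct on $k$. The case $k=1$ is already available: $\varphi_1=\varphi(\lambda,\mathsf v)<\varphi(\bar\lambda(\mathsf v),\mathsf v)=1$ by the strict monotonicity of $\varphi$ (Proposition~\ref{prop_strict_mon}) and~\eqref{varphi_lambda_crit}, while $\sup_{t\ge0}\E[X_t]\le C<\infty$ by Lemma~\ref{lem_revineq}. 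For the inductive step, first note that \emph{both} conclusions follow from the single assertion that $\E[X_t^k]\to0$ as $t\to\infty$: as soon as $\E[X_{t_0}^k]<1$ for some $t_0$, the submultiplicativity~\eqref{eq_repeat_subadd} gives $\E[X_{nt_0}^k]\le\E[X_{t_0}^k]^n\to0$ and hence $\varphi_k\le\E[X_{t_0}^k]^{1/t_0}<1$; and for arbitrary $t$, writing $t=nt_0+r$ with $0\le r<t_0$, \eqref{eq_repeat_subadd} and Corollary~\ref{cor_explosion} give $\E[X_t^k]\le\E[X_{nt_0}^k]\cdot\sup_{0\le r\le t_0}\E[X_r^k]\le\sup_{0\le r\le t_0}\E[X_r^k]<\infty$.

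It thus remains to show $\E[X_t^k]\to0$, assuming $\E[X_t^j]\to0$ for all $j<k$. The plan is to expand $X_t^k$ into the falling factorials $X_t^{(j)}=X_t(X_t-1)\cdots(X_t-j+1)$, $j\le k$ (via Stirling numbers), so that it is enough to bound $\E[X_t^{(k)}]$, the expected number of ordered $k$-tuples of \emph{distinct} particles in $\Xi_t$. Every such tuple has a genealogy inside the ``family forest'' of the herds process — each particle being attached to the one from which it was born, with deaths and splits leaving lineages intact — and tracing the $k$ ancestral lineages back to time $0$ produces a tree with $k$ leaves, one root, and at most $k-1$ branch points, each branch point being a birth event. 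Decomposing $\E[X_t^{(k)}]$ over the finitely many shapes of this reduced tree and over the branch times $0<s_1<\cdots<s_{k-1}<t$, I would bound the contribution of a fixed shape using the multi-type herds process of Remark~\ref{rem.ktype}: since distinct types spread without hindering one another, their union dominates the true process, and the bound factorizes into a product of constants at the $k-1$ branch points (each birth rate being at most $d\lambda$) times, for each lineage issued from a branch point, an expectation of the form $\E[X_{t-s_i}\mid\Xi_0=\delta_{\{y\}}]\le C\varphi^{t-s_i}$ (the true number of descendants of a lineage being dominated, through crowding, by that of a free herds process from a single particle). Since there are at least $k-1$ branchings and each surviving lineage carries a geometrically decaying factor (a power of $\varphi<1$), integrating over the simplex of branch times should give $\E[X_t^{(k)}]\le C_k\,t^{k-1}\varphi^t\to0$; together with the induction hypothesis applied to the lower falling factorials this yields $\E[X_t^k]\to0$. (A more elementary route, via the generator of $X^k$ and Dynkin's formula, only gives $\tfrac{d}{dt}\E[X_t^k]\le k(d\lambda-1)\E[X_t^k]+(\text{lower moments})$, which closes the induction when $d\lambda<1$ but not when $\lambda\in[1/d,\bar\lambda(\mathsf v))$, where $d\lambda\ge1$; this is precisely the regime that forces the genealogical argument.)

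The main difficulty — the ``serious additional technical work'' mentioned in the introduction — is twofold. First there is the combinatorial bookkeeping: a particle may give birth several times, so the reduced genealogical tree need not be binary, and one must carefully encode which birth events separate which particles in order to count each $k$-tuple exactly once. Second, and more delicate, the coordinate projections of the multi-type herds process are only \emph{marginally} herds processes — they share their splitting events and so are not independent — so the product bound above is not immediate. I expect this to be handled either by a further coupling that genuinely decouples the lineages at the cost of slightly raising the splitting rate from $\mathsf v$ to some $\mathsf v'>\mathsf v$ (which is harmless since $\varphi(\lambda,\cdot)$ is continuous and $\varphi(\lambda,\mathsf v)<1$, so $\varphi(\lambda,\mathsf v')<1$ for $\mathsf v'$ close to $\mathsf v$), or by a direct estimate showing the shared splits contribute only a bounded correction. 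The moment bounds of Corollary~\ref{cor_explosion}, and estimates in the spirit of Lemma~\ref{lem_integral_g}, should justify all the interchanges of sums, integrals and expectations involved.
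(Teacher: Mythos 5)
Your reduction to integer $p$ and the observation that both conclusions follow from $\E[X_t^k]\to 0$ are correct and match the paper's setup, but the core of your inductive step --- the falling-factorial/genealogical expansion --- has a gap that you flag but do not close, and I do not believe it closes along either route you suggest. The decomposition of $\E[X_t^{(k)}]$ over reduced ancestral trees requires, at the leaves, a factorization of the form $\E[\prod_{i=1}^k Y_i]\le C\prod_{i=1}^k\E[Y_i]$, where $Y_i$ is the number of descendants of the $i$-th lineage. The lineages are not independent: even after passing to the multi-type herds process of Remark~\ref{rem.ktype} to remove crowding, the types share their splitting events, and the splitting rate of a herd is governed by the \emph{union} of all types. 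Since splits only create room for births, more particles of one type plausibly means more splits and hence more births for every other type; the correlation one expects is \emph{positive}, i.e.\ the wrong direction. The generic inequalities that hold without independence (Minkowski, H\"older, AM--GM) convert the product into $k$-th moments of single lineages over time intervals that can be as long as $[0,t]$, which is exactly the quantity being bounded --- the argument becomes circular. Your fix (a), replacing the shared splits by independent splits at a slightly larger rate $\mathsf v'$, would make the types independent, but there is no stochastic domination backing it: a split replaces $\delta_A$ by $\delta_{A_{e,1}}+\delta_{A_{e,2}}$, and these configurations are incomparable under the order $\preceq$ of Definition~\ref{def.stoch.dom}; the remark following Lemma~\ref{lem_general_p} is precisely the paper's warning that ``more splitting'' is not a monotone perturbation in any sense it knows how to exploit. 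Fix (b) is not an argument. So as written the induction step does not go through.

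For contrast, the paper's proof avoids genealogies and products over lineages altogether. Its refinement of submultiplicativity, \eqref{eq_refinement}, isolates $\E[\sum_A\Xi_s(A)\,|A|^p]$ --- the expected sum of $p$-th powers of \emph{herd sizes} --- as the prefactor in $\E[X_{t+s}^p]\le \E[\sum_A\Xi_s(A)|A|^p]\cdot\E[X_t^p]+(\text{lower order})$. Lemma~\ref{lem_more_refin} then makes this prefactor small not by decoupling lineages but by showing individual herds are small: two types planted at tree-distance $\ell$ are still cohabiting at time $T_p$ with probability decaying exponentially in $\ell$ (Lemma~\ref{lem_separate_two}, combined with Cauchy--Schwarz against a crude pure-birth moment bound), which leads to $\P(\exists A:\,|A|\ge x,\ \Xi_{T_p}(A)>0)\lesssim X(\xi)\,x^{-p}$ (Lemma~\ref{lem_last_in_chain}). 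A bootstrap ($\E[X_s^{3/2}]\le C\E[X_s^p]^{3/4}\varphi^{s/4}$, then a contradiction from assuming $\varphi_p\ge1$) closes the induction. To salvage your route you would essentially have to prove a negative-correlation or approximate-factorization statement for the shared-split multi-type process, which is a substantial sub-problem in its own right; the herd-size route is how the paper gets around it.
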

Note that the case $p=1$ has already been proved: the fact that~$\varphi < 1$ when~$\lambda < \bar{\lambda}(\mathsf{v})$ is given by~\eqref{varphi_lambda_crit} and Proposition~\ref{prop_strict_mon}, and then~\eqref{eq_finite_any_time} follows from~$\varphi < 1$ and~\eqref{eq_const}. 

In order to prove Proposition~\ref{prop.pmom}, we shall need an intermediate result, which we now state.

\begin{lemma} \label{lem_more_refin}
Let $p \ge 2$. There exists~$\mathfrak{C}_p>0$ (depending on~$\lambda$,~$\mathsf{v}$ and~$p$) such that for any $s\ge 12p\log(2d)/\mathsf{v}$, 
$$ \E\left[\sum_{A \in P_\mathsf{f}(\mathbb{T}^d)} \Xi_s(A) \cdot |A|^p\right] \le \mathfrak{C}_p\cdot \big(\E[X_s^{3/2}] + \varphi^s\big). $$ 
\end{lemma}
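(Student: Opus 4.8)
The plan is to control the quantity $\E[\sum_A \Xi_s(A)|A|^p]$ by separating the herds of $\Xi_s$ according to size, using the fact that over a time window of length $s \ge 12p\log(2d)/\mathsf{v}$, many split events have occurred, so that a large herd is unlikely to have remained intact without being fragmented. First I would fix a particle $x$ present at time $0$ in some herd (in the sense of Remark~\ref{rem.statespace}, pretending herds carry labels), and consider the herd $A_s^x \ni x$ at time $s$. The key observation is that, by Lemma~\ref{lem_decompose}, $\E[\sum_A \Xi_s(A)|A|^p] = \E[\sum_{x} |A_s^x|^{p-1}]$ where the sum is over particles $x$ alive at time $s$ counted with the appropriate weight, so it suffices to bound $\E[|A_s^x|^{p-1} \mathbf 1\{x \text{ alive at }s\}]$ per initial particle and then invoke the single-particle bound combined with $\E[X_s]$. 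The cleanest route is to bound, for a single herd containing $x$, the probability that $|A_s^x|$ is large: I would show that, conditionally on the whole history of births and deaths in the herd of $x$ (which produces some candidate set of potential co-inhabitants), each edge on the geodesic between $x$ and any other particle is ``cut'' by a split with high probability over time $s$, because splits at a given active edge occur at rate $\mathsf{v}$ and, crucially, once $x$ is separated from a particle it stays separated. So $\P(|A_s^x| \ge k)$ decays geometrically in $k$ at a rate governed by $e^{-\mathsf{v}s}$ versus the branching growth rate $2d$ of the pure-birth domination (Lemma~\ref{lem_explosion}); the threshold $s \ge 12p\log(2d)/\mathsf v$ is exactly what makes $(2d)^{\text{something}} e^{-\mathsf v s}$ summable against the $p$-th power.

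More concretely, the second moment-type term $\E[X_s^{3/2}]$ and the term $\varphi^s \approx \E[X_s]$ on the right-hand side should arise as follows: write $|A_s^x|^p = |A_s^x|\cdot|A_s^x|^{p-1}$ and split on whether $|A_s^x|$ exceeds a threshold $L$ (to be chosen of order a constant times $s$, or perhaps just a large constant). On $\{|A_s^x| \le L\}$ we get a contribution bounded by $L^{p-1}\E[X_s]$, which after using Lemma~\ref{lem_revineq} ($\E[X_s]\le C\varphi^s$) gives the $\varphi^s$ term. On $\{|A_s^x| > L\}$ we use a union bound over particles and a Cauchy--Schwarz / Hölder step: the number of herds of size $>L$ at time $s$ is small, and pairing the event $\{|A_s^x|>L\}$ (whose probability is tiny, controlled by the splitting argument above) with the $L^2$ or $L^{3/2}$ norm of $X_s$ via Cauchy--Schwarz yields a bound of the form $\E[X_s^{3/2}]^{2/3}\cdot(\text{small})^{1/3}$ times polynomial factors, which is absorbed into $\mathfrak{C}_p\E[X_s^{3/2}]$. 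The exponent $3/2$ (rather than $2$) presumably reflects the optimal Hölder exponent making both the size-control estimate and the moment estimate balance; I would set up a Hölder inequality with exponents $(3, 3/2)$ and verify that the ``bad'' factor, raised to the power $1/3$, still beats the combinatorial/branching growth.

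The main obstacle I anticipate is making the fragmentation estimate rigorous despite the subtle dependence structure: the set of particles that could end up in $A_s^x$ is itself random and depends on the birth history, and births can create new active edges that themselves need to be cut, so one cannot simply say ``each of finitely many fixed edges gets cut independently.'' The right way around this is to dominate the herd of $x$ by the pure-birth process of Lemma~\ref{lem_explosion} to control how many particles and hence how many relevant geodesic edges ever appear, and then argue that, given this, the splitting clocks (which run at rate $\mathsf v$ on each currently-active edge and are, crucially, monotone — a separated pair never rejoins) succeed in isolating $x$ except on an event of probability $\lesssim (\text{birth count})\cdot e^{-c\mathsf v s}$. One then integrates this tail against the moments of the pure-birth process, using the Athreya--Ney moment bounds (\cite[Corollary 1 p.111]{athreya}, already cited in the proof of Corollary~\ref{cor_explosion}) to ensure the resulting series converges precisely when $s$ exceeds the stated threshold $12p\log(2d)/\mathsf v$. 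Handling the bookkeeping of ``which edges are active when'' and getting a clean geometric-in-$k$ tail for $\P(|A_s^x|\ge k)$ uniformly over initial configurations is where the real work lies.
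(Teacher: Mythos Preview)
Your overall intuition---that splits fragment herds and one should separate by size with a threshold---is sound, but the proposal misses the structural device that makes the paper's argument work, and the sketch as written has a genuine gap.

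\textbf{The key missing idea: condition at time $s-T_p$ with a \emph{random} threshold.} The paper does not attempt to control herd sizes over the whole interval $[0,s]$. Instead, it sets $T_p:=12p\log(2d)/\mathsf v$, conditions on $\mathcal F_{s-T_p}$, and chooses the threshold to be $X_{s-T_p}^{1/(2p)}$ (not a fixed constant). On the event $E_1^c=\{$no herd at time $s$ exceeds $X_{s-T_p}^{1/(2p)}\}$ one gets $\sum_A\Xi_s(A)|A|^p\le X_{s-T_p}^{1/2}X_s$, whose expectation is then shown to be at most $C\,\E[X_s^{3/2}]$ via the Markov property and a short survival-of-particles argument relating $X_{s-T_p}$ to $X_s$. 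On $E_1$ one uses the crude bound $\sum_A\Xi_s(A)|A|^p\le X_s^p$ and Cauchy--Schwarz \emph{conditionally on} $\mathcal F_{s-T_p}$: the factor $\E[X_s^{2p}\mid\mathcal F_{s-T_p}]^{1/2}\le C X_{s-T_p}^{p}$ from Proposition~\ref{prop_subad}, and $\P(E_1\mid\mathcal F_{s-T_p})^{1/2}$ is bounded using Lemma~\ref{lem_last_in_chain} with exponent $4p^2$, giving $C X_{s-T_p}^{(1-2p)/2}$. The point is that these powers cancel to leave a single factor $X_{s-T_p}$, whose expectation is $\le C\varphi^{s}$ by Lemma~\ref{lem_revineq}. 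Your fixed threshold $L$ cannot achieve this cancellation: with constant $L$ the ``large herd'' term leaves you with $\E[X_s^{2p}]^{1/2}$ or similar, which is not controlled by $\E[X_s^{3/2}]$.

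\textbf{The tail bound is not obtained by pure-birth domination plus split clocks.} Lemma~\ref{lem_last_in_chain} (the estimate $\P(\exists A:|A|\ge x,\Xi_{T_p}(A)>0\mid\Xi_0=\xi)\le C''_p X(\xi)/x^p$) is proved by an induction on $|A_0|$ that peels off one particle at a time, using the two-type and $m$-type herds processes of Remark~\ref{rem.ktype} and the separation estimates of Lemmas~\ref{lem_separate_two}--\ref{lem_separate_two_again}. The argument you sketch---dominate births by a Yule process and then argue that split clocks sever each geodesic edge---runs into exactly the dependence problem you flag in your last paragraph, and the paper does not attempt to resolve it that way; instead, the multi-type herds process linearises the problem so that one only ever needs to control a single distinguished particle against a sea of others sharing the same split clocks.

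\textbf{A smaller issue.} Your identity $\sum_A\Xi_s(A)|A|^p=\sum_{x}|A_s^x|^{p-1}$ is correct when the sum runs over particles alive at time $s$, but the subsequent phrase ``per initial particle'' is inconsistent with this: there is only one initial particle, and particles present at time $s$ are born dynamically with herd-size correlated to the event of being alive. This is precisely what the time-shift to $s-T_p$ disentangles.
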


The proof of this lemma is quite involved, and we postpone it to the next section. We now show how to obtain Proposition~\ref{prop.pmom} from it.

\begin{proof}[Proof of Proposition~\ref{prop.pmom}]
	Fix~$\lambda < \bar{\lambda}(\mathsf{v})$. Note that~\eqref{eq_finite_any_time} follows readily from~\eqref{eq_phi_less_one} and~\eqref{eq_lim}, so we only need to prove~\eqref{eq_phi_less_one}.
 
 By~\eqref{eq_jensen}, it suffices to prove that~$\varphi_p < 1$ for all~$p \in \mathbb{N}$, and the case~$p=1$ is already done. We proceed by induction, fixing~$p \in \{2,3,\ldots\}$ and assuming that~$\varphi_q < 1$ for all~$q \in \{1,\ldots,p-1\}$.

We first claim that there exist $C,c > 0$ such that, for all~$t \ge 0$ and~$\xi \in \mathcal{S}$,
	\begin{equation}\label{eq_refinement}
		\mathbb{E}[X_t^p \mid \Xi_0 = \xi] \le \left( \sum_{A \in P_\mathsf{f}(\mathbb{T}^d)} \xi(A) \cdot |A|^p \right) \cdot \mathbb{E}[X_t^p] + X(\xi)^p \cdot  C\mathrm{e}^{-ct} .
	\end{equation}
 This bound is a refinement of Proposition~\ref{prop_subad}. While in the proof of that proposition we used Minkowski's inequality, here we expand the~$p$-th power of a sum in full and bound the various terms that appear.

 To prove~\eqref{eq_refinement}, let~$t \ge 0$ and~$\xi \in \mathcal{S}$ with enumeration~$\xi = \sum_{i=1}^m \delta_{A_i}$.  By Lemma~\ref{lem_decompose}, $(\Xi_t)_{t \ge 0}$ started from~$\Xi_0 = \xi$ has the same distribution as~$(\Xi^{(1)}_t + \cdots + \Xi^{(m)}_t)_{t \ge 0}$, where $(\Xi^{(1)}_t)_{t \ge 0}$,~$\ldots$,~$(\Xi^{(m)}_t)_{t \ge 0}$ are independent herds processes, with~$\Xi^{(i)}_0 = \delta_{A_i}$ for each~$i$. In particular,
	\begin{align*}
		\mathbb{E}[X_t^p \mid \Xi_0 = \xi] &= \mathbb{E}\left[\left( \sum_{i=1}^m X(\Xi^{(i)}_t) \right)^p\right]\\
		&= \sum_{\substack{(a_1,\ldots,a_m):\\ a_1 + \cdots +a_m = p}} \frac{p!}{a_1! \cdots a_m!} \cdot \prod_{\substack{i \in \{1,\ldots,m\}:\\ a_i > 0}} \mathbb{E}[X(\Xi_t^{(i)})^{a_i}].
	\end{align*}
	By the fact that~$\Xi^{(i)}_0 = \delta_{A_i}$ and Proposition~\ref{prop_subad}, the right-hand side is smaller than
	\begin{equation}
		\sum_{\substack{(a_1,\ldots,a_m):\\ a_1 + \cdots +a_m = p}} \frac{p!}{a_1! \cdots a_m!} \cdot \prod_{\substack{i \in \{1,\ldots,m\}:\\ a_i > 0}} |A_i|^{a_i}\cdot \mathbb{E}[X_t^{a_i}].
\end{equation}
We break this sum as
	\begin{equation} \label{eq_break_sum}
		\E[X_t^p] \cdot \sum_{i=1}^m |A_i|^{p} + \sum_{\substack{(a_1,\ldots,a_m):\\ a_1+ \cdots + a_m = p,\\ a_1,\ldots,a_m < p}} \frac{p!}{a_1! \cdots a_m!} \cdot\prod_{\substack{i\in \{1,\ldots,m\}:\\ a_i > 0}} |A_i|^{a_i} \cdot \mathbb{E}[X_t^{a_i}].
	\end{equation}
	Note that the first sum can be written as
	\[\E[X_t^p] \cdot \sum_{A \in P_\mathsf{f}(\mathbb{T}^d)} \xi(A)\cdot |A|^p.\]
	 Next, the induction hypothesis that~$\varphi_{a} < 1$ for all~$a \in \{1,\ldots, p-1\}$ together with~\eqref{eq_lim}  imply that there exist~$C,c > 0$ (not depending on~$t$) such that~$\E[X_t^a] \le Ce^{-ct}$ for any~$a \in \{1,\ldots, p-1\}$. Hence, the second sum in~\eqref{eq_break_sum} is smaller than
	\begin{align*}
	 \sum_{\substack{a_1,\ldots,a_m \in \N_0:\\a_1+ \cdots + a_m = p,\\ a_1,\ldots, a_m < p}} \frac{p!}{a_1!\cdots a_m!}\cdot \prod_{\substack{i \in \{1,\ldots,m\}:\\ a_i > 0}} |A_i|^{a_i}\cdot C\mathrm{e}^{-ct}.
	\end{align*}
By forgetting the last condition in the summation and increasing the value of~$C$ if necessary, this is smaller than
	\begin{align*}
		& C \mathrm{e}^{-ct} \cdot  \sum_{\substack{a_1,\ldots,a_m \in \N_0:\\a_1+ \cdots + a_m = p}} \frac{p!}{a_1!\cdots a_m!}\cdot \prod_{\substack{i \in \{1,\ldots,m\}:\\ a_i > 0}} |A_i|^{a_i} =  C \mathrm{e}^{-ct} \cdot  X(\xi)^p. 
	\end{align*}
	We have thus proved~\eqref{eq_refinement}. 
 
 Now,~\eqref{eq_refinement} and the Markov property imply that, for any~$s, t \ge 0$, 
  	\begin{equation}\label{eq_refinement_markov} \E[X_{t+s}^p] \le  \E\Big[\sum_A \Xi_s(A)\cdot |A|^p\Big] \cdot \mathbb{E}[X_t^p] +   \E[X_s^p] \cdot C\mathrm{e}^{-ct}.
\end{equation}
Using the bound of Lemma~\ref{lem_more_refin}, increasing the constant~$C$ if necessary, for any~$s,t$ large enough we then have
	\begin{equation}\label{eq_put_together}
		\mathbb{E}[X_{t+s}^p] \le C \left\{ 
		\E[X_s^{3/2}] \cdot \E[X_t^p] + \varphi^s \cdot \E[X_t^p] + \E[X_s^p] \cdot \mathrm{e}^{-ct}
		\right\}.
	\end{equation}
	We also bound
\begin{align*}
    	\E[X_s^{3/2}] & = \E[X_s^{3/2} \cdot \mathds{1}\{X_s > 0\}] \\ &\le \E[(X_s^{3/2})^{4/3}]^{3/4} \cdot \P(X_s > 0)^{1/4} \\
     &\le \E[X_s^2]^{3/4} \cdot \P(X_s \neq 0)^{1/4} \le \E[X_s^2]^{3/4} \cdot \E[X_s]^{1/4} \le C \E[X_s^p]^{3/4} \cdot \varphi^{s/4},
\end{align*}
	where the first inequality is H\"older's, the second inequality follows from~$\mathds{1}\{X_s \neq 0\} \leq X_s$ and the third inequality follows from~$X_s^2 \le X_s^p$ and~\eqref{eq_const}.  We use this bound in~\eqref{eq_put_together}, together with the fact that~$\varphi < 1$, to obtain
	\begin{equation}\label{eq_put_together2}
		\mathbb{E}[X_{t+s}^p] \le C \left\{ 
		\E[X_s^p]^{3/4} \cdot \E[X_t^p] \cdot \mathrm{e}^{-cs} + \E[X_t^p]\cdot \mathrm{e}^{-cs} + \E[X_s^p] \cdot \mathrm{e}^{-ct}
		\right\}
	\end{equation}
	for suitable choices of~$c,C$.

	Now, assume for a contradiction that~$\varphi_p \ge 1$. In that case, by~\eqref{eq_obvious_ineq} we have~$\E[X_r^p] \ge 1$ for all~$r \ge 0$, and from~\eqref{eq_put_together2} we obtain, for large enough~$s,t$ with~$s \le t$:
	\[
		\mathbb{E}[X_{t+s}^p] \le C  
				\E[X_s^p] \cdot \E[X_t^p] \cdot \mathrm{e}^{-cs}. 
					\]
					Using this recursively, we have that for all sufficiently large~$s$ and all~$n \in \N$,
\[ \mathbb{E}[X_{ns}^p] \le C^n \cdot  \E[X_s^p]^n \cdot \mathrm{e}^{-cns}.\]
	Taking both sides to the power~$\frac{1}{ns}$ and letting~$n \to \infty$ (using~\eqref{eq_lim}) gives
	\[
		\varphi_p \le C^{1/s} \cdot \E[X_s^p]^{1/s} \cdot \mathrm{e}^{-c}.
	\]
	Now letting~$s \to \infty$ and again using~\eqref{eq_lim} yields~$\varphi_p \le \varphi_p\cdot  \mathrm{e}^{-c}$, a contradiction.
\end{proof}

Before we turn to the proof of Lemma~\ref{lem_more_refin}, we want to give a consequence of Proposition~\ref{prop_subad}, namely Corollary~\ref{lem.sub.totsize} below, which will be useful in dealing with the contact process on dynamic graphs. For $t\ge 0$, we denote by $N_t$ the number of birth events in the herds process up to time $t$, as in Lemma~\ref{lem_explosion}. Also let~$N_\infty := \lim_{n \to \infty} N_t$.

\begin{lemma}\label{lem_for_p_infinity}
    Let~$p \ge 1$. For any~$t \in [0,\infty]$ and~$\xi \in \mathcal{S}$, we have
    \[\E[N_t^p \mid \Xi_0  =\xi] \le X(\xi)^p \cdot \E[N_t^p]. \]
    Consequently, for any~$s \in [0,\infty)$ and~$t \in [0,\infty]$,
    \begin{equation}\label{eq_s_and_t}
        \E[(N_{s+t}-N_s)^p] \le \E[X_s^p] \cdot \E[N_t^p].
    \end{equation}
\end{lemma}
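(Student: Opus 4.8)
The plan is to reproduce, essentially verbatim, the proof of Proposition~\ref{prop_subad}, with the number of particles $X_t$ replaced by the number of births $N_t$ throughout. The backbone is the following analogue of Lemma~\ref{lem_general_p}: for disjoint $A,B\in P_\mathsf{f}(\T^d)$ and $p\ge1$,
\[\E[N_t^p\mid \Xi_0=\delta_{A\cup B}]^{1/p}\le \E[N_t^p\mid \Xi_0=\delta_A]^{1/p}+\E[N_t^p\mid \Xi_0=\delta_B]^{1/p}.\]
Granting this, one shows by induction on $|A|$ that $\E[N_t^p\mid\Xi_0=\delta_A]^{1/p}\le|A|\cdot\E[N_t^p]^{1/p}$: the base case $|A|=1$ holds with equality since $N_t$ is invariant under tree automorphisms (Lemma~\ref{lem_auto}), and the induction step combines the displayed inequality with Lemma~\ref{lem_auto}, exactly as in the proof of~\eqref{eq_new_claim}. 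Then, for $\xi=\sum_{i=1}^m\delta_{A_i}$, decomposing the herds process into independent pieces (Lemma~\ref{lem_decompose}) gives $N_t=\sum_{i}N_t^{(i)}$, and Minkowski's inequality together with the previous bound yields $\E[N_t^p\mid\Xi_0=\xi]^{1/p}\le\sum_i|A_i|\cdot\E[N_t^p]^{1/p}=X(\xi)\cdot\E[N_t^p]^{1/p}$, which is the first claim. The estimate for $N_{s+t}-N_s$ then follows from the Markov property at time $s$: conditioned on $\Xi_s=\xi'$, the increment $N_{s+t}-N_s$ has the law of $N_t$ started from $\xi'$, so $\E[(N_{s+t}-N_s)^p]=\sum_{\xi'}\P(\Xi_s=\xi')\,\E[N_t^p\mid\Xi_0=\xi']\le\E[X_s^p]\cdot\E[N_t^p]$. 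The case $t=\infty$ (with $N_{s+t}$ read as $N_\infty=\lim_{r\to\infty}N_r$) is recovered from the finite-$t$ statements by monotone convergence, since $N_r^p\uparrow N_\infty^p$; all manipulations are valid in $[0,\infty]$.

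To prove the analogue of Lemma~\ref{lem_general_p}, I would argue as in the proof of Lemma~\ref{lem_general_p} itself, using the two-type herds process $(\widetilde{\Xi}_t)_{t\ge0}$ started from $\delta_{(A,B)}$. Let $\widetilde N_t$ be the total number of birth events (of either type) in $(\widetilde{\Xi}_t)$ up to time $t$, and $\widetilde N_t^{(1)}$, $\widetilde N_t^{(2)}$ the numbers of type-$1$ and type-$2$ births, so that $\widetilde N_t=\widetilde N_t^{(1)}+\widetilde N_t^{(2)}$. By Lemma~\ref{lem_projections}(a), $\pi_1(\widetilde{\Xi}_t)$ and $\pi_2(\widetilde{\Xi}_t)$ are herds processes started from $\delta_A$ and $\delta_B$; since the type-$i$ births of $(\widetilde{\Xi}_t)$ are precisely the births of $\pi_i(\widetilde{\Xi}_t)$, we get $\widetilde N_t^{(i)}\overset{d}{=}(N_t\mid\Xi_0=\delta_A)$ for $i=1$ and $\overset{d}{=}(N_t\mid\Xi_0=\delta_B)$ for $i=2$. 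It then remains to check that $(N_t\mid\Xi_0=\delta_{A\cup B})$ is stochastically dominated by $\widetilde N_t$; granting this,
\[\E[N_t^p\mid\Xi_0=\delta_{A\cup B}]^{1/p}\le\E[\widetilde N_t^p]^{1/p}=\E[(\widetilde N_t^{(1)}+\widetilde N_t^{(2)})^p]^{1/p}\le\E[(\widetilde N_t^{(1)})^p]^{1/p}+\E[(\widetilde N_t^{(2)})^p]^{1/p}\]
by Minkowski's inequality, which is the desired bound.

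The main obstacle is this last stochastic domination, which refines Lemma~\ref{lem_projections}(b): that lemma only asserts $\pi(\widetilde{\Xi}_t)$ dominates the $\delta_{A\cup B}$-herds process as a configuration, whereas I need domination at the level of birth counts. The point is that, in a coupling realizing the containment of the $\delta_{A\cup B}$-herds process inside $\pi(\widetilde{\Xi}_t)$, the two-type process has at each site a total birth rate at least as large as that of the herds process: a single-type process at configuration $C$ has birth rate at $y\notin C$ equal to $\lambda\,|\{x\in C:x\sim y\}|$, while in the two-type process at $(A',B')$ with $C\subseteq A'\cup B'$, births at $y$ of the two types combined occur at rate $\lambda(|\{x\in A':x\sim y\}|+|\{x\in B':x\sim y\}|)$, which dominates $\lambda\,|\{x\in C:x\sim y\}|$ by classifying the neighbours of $y$ lying in $C$ according to their type; moreover the splitting events of the two processes can be coupled identically, since an edge is active for a two-type herd $(A',B')$ exactly when it is active for the single herd $A'\cup B'$. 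Hence one can couple the two processes so that every birth in the $\delta_{A\cup B}$-herds process is matched to a distinct birth in $(\widetilde{\Xi}_t)$, which gives $(N_t\mid\Xi_0=\delta_{A\cup B})\preceq\widetilde N_t$. Making this fully rigorous amounts to carrying out the (omitted) proof of Lemma~\ref{lem_projections} while tracking birth counts; I expect this to be routine in spirit but somewhat delicate, because particles occupying the same site with two different types interact nontrivially with the death mechanism.
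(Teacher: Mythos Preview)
Your approach is sound and close in spirit to the paper's, but the paper takes a shortcut that avoids the two-type lemma and the induction on $|A|$ entirely. Rather than peeling off one particle at a time via a two-type process, the paper invokes the $\ell$-type herds process of Remark~\ref{rem.ktype} with $\ell=X(\xi)$, assigning each particle of $\xi$ its own type. Since each type then starts from a single particle, each $N_t^{(i)}$ already has the law of $N_t$ under $\Xi_0=\delta_{\{o\}}$, so Minkowski's inequality applied to $\sum_{i=1}^\ell N_t^{(i)}$ gives $\E[N_t^p\mid\Xi_0=\xi]^{1/p}\le \ell\cdot\E[N_t^p]^{1/p}$ in one stroke. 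Your route reaches the same destination by doing the induction by hand; the paper's is shorter but relies on exactly the same underlying birth-count domination you flag as the main obstacle.

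On that obstacle, your rate-comparison sketch has a real flaw. You write that in the two-type process at $(A',B')$ the combined birth rate at $y$ is $\lambda(|\{x\in A':x\sim y\}|+|\{x\in B':x\sim y\}|)$, but this is only correct when $y\notin A'$ and $y\notin B'$; if $y\in A'$ then type~1 cannot give birth there, and the comparison can fail. Concretely, in a coupling maintaining $C\subseteq A'\cup B'$, one can have $y\notin C$ yet $y\in A'\setminus B'$ with a neighbour of $y$ lying in $C\cap(A'\setminus B')$: then the single-type birth rate at $y$ is positive while the two-type rate is zero. So births cannot be matched instant-by-instant in the way you suggest. A clean fix is to label each particle of the single-type process by its ancestral type and run it on the per-type Poisson clocks $H_1,H_2$; one then checks that labeled-$i$ particles stay inside type-$i$ particles and, crucially, that every labeled-$i$ death is also a type-$i$ death. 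The identity $N_t^{(i)}=X_t^{(i)}-X_0^{(i)}+D_t^{(i)}$ then yields $N_t^{(i,\text{single})}\le N_t^{(i,\text{multi})}$ without any site-by-site rate comparison. (The paper itself leaves this step implicit, citing only the particle-count domination of Remark~\ref{rem.ktype}, so your instinct that something extra is needed here is well placed.)
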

\begin{proof}
First, recall Remark~\ref{rem.ktype}, which in particular shows that one can dominate the number of particles in a herds process starting from $\xi$ by the total number of particles in a multi-type herds process with~$\ell:=X(\xi)$ types, starting from the configuration $\xi'$ where each particle in $\xi$ represents a distinct type. For this auxiliary process, let~$N_t^{(i)}$ denote the number of births of particles of type~$i$ by time~$t$, for~$i \in \{1,\ldots,\ell\}$. Then,
\[\E[N_t^p \mid \Xi_0 = \xi] \le \E\left[\left( \sum_{i=1}^\ell N_t^{(i)} \right)^p \right] \le \left(\sum_{i=1}^\ell \E[(N_t^{(i)})^p]^{1/p} \right)^p,\]
where we have used Minkowski's inequality. Now, since each type evolves a usual herds process, we have~$\E[(N_t^{(i)})^p] = \E[N_t^p]$ for every~$i$, so the right-hand side above equals~$\ell^p \cdot \E[N_t^p]$, as required.
\end{proof}

\begin{corollary}\label{lem.sub.totsize}
If~$\lambda < \bar{\lambda}(\mathsf{v})$, then~$\E[N_\infty^p]< \infty$ for all~$p \ge 1$.
\end{corollary}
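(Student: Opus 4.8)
The goal is to deduce $\E[N_\infty^p] < \infty$ from the tools already assembled, chiefly Proposition~\ref{prop.pmom} (which gives $\varphi_p < 1$ and $\sup_t \E[X_t^p] < \infty$ in the subcritical regime) and Lemma~\ref{lem_for_p_infinity} (the submultiplicative-type bound \eqref{eq_s_and_t} for increments of $N$). The natural strategy is to write $N_\infty$ as a telescoping sum of increments over unit time intervals, $N_\infty = \sum_{k \ge 0} (N_{k+1} - N_k)$, and control the $L^p$ norm of each increment using the fact that, by the Markov property, $N_{k+1} - N_k$ conditioned on $\Xi_k$ is distributed as $N_1$ started from $\Xi_k$.

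First I would apply \eqref{eq_s_and_t} with $s = k$ and $t = 1$ to get $\E[(N_{k+1} - N_k)^p] \le \E[X_k^p] \cdot \E[N_1^p]$. The factor $\E[N_1^p]$ is finite: indeed, by Lemma~\ref{lem_explosion}, $N_1$ is stochastically dominated by $Z_1 - Z_0$ for the pure-birth chain started from a single particle, and the moments of a Yule-type process at a fixed time are finite (this is exactly the input $\E[(\zeta^{(i)}_T)^k] < \infty$ used in the proof of Corollary~\ref{cor_explosion}, citing \cite{athreya}). For the factor $\E[X_k^p]$, I would invoke \eqref{eq_phi_less_one}: since $\varphi_p < 1$, the limit relation \eqref{eq_lim} gives $\E[X_k^p]^{1/k} \to \varphi_p < 1$, so $\E[X_k^p] \le C' \theta^k$ for some $\theta \in (\varphi_p, 1)$ and all $k$. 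Hence $\|N_{k+1} - N_k\|_p \le (C' \theta^k)^{1/p} \cdot \E[N_1^p]^{1/p} = C'' \theta^{k/p}$, which is summable in $k$.

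Then by Minkowski's inequality, $\|N_\infty\|_p \le \sum_{k \ge 0} \|N_{k+1} - N_k\|_p \le C'' \sum_{k \ge 0} \theta^{k/p} < \infty$, which is the claim. (Here one should note that $N_\infty$ is genuinely finite almost surely — this is already guaranteed since the herds process is non-explosive by Corollary~\ref{cor_non_exp}, so the monotone limit $N_\infty = \lim_k N_k$ makes sense as an $[0,\infty)$-valued random variable, and the $L^p$ bound on partial sums passes to the limit by monotone convergence.)

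**Main obstacle.** There is essentially no deep obstacle here — the corollary is a quick consequence of the machinery already in place. The only point requiring a little care is making sure the interchange of limit and norm is legitimate: one should phrase it as $\|N_\infty\|_p = \lim_{n} \|N_n\|_p \le \lim_n \sum_{k=0}^{n-1} \|N_{k+1}-N_k\|_p$ using monotone convergence for the left equality (since $N_n \uparrow N_\infty$ pointwise and $x \mapsto x^p$ is continuous and increasing) and Minkowski for the inequality, then bound the resulting geometric series. Everything else is a direct substitution into \eqref{eq_s_and_t} and \eqref{eq_lim}.
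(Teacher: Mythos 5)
Your proof is correct, and it takes a genuinely different route from the paper's. You both build on the same two ingredients --- the increment bound \eqref{eq_s_and_t} over unit time intervals and Proposition~\ref{prop.pmom} --- but you assemble them differently. You telescope $N_\infty=\sum_{k\ge 0}(N_{k+1}-N_k)$, use $\varphi_p<1$ (the harder conclusion \eqref{eq_phi_less_one} of Proposition~\ref{prop.pmom}) together with \eqref{eq_lim} to get geometric decay $\|N_{k+1}-N_k\|_p\le C''\theta^{k/p}$, and sum with Minkowski; the monotone-convergence justification you give for passing to $N_\infty$ is the right one. The paper instead estimates the tail $\P(N_\infty>x)$ directly: it cuts off at a time $t_x\asymp\log x$, bounds $\P(\tau>t_x)$ using only the \emph{first}-moment decay $\E[X_t]\le C\varphi^t$ (Lemma~\ref{lem_revineq}), and handles $\P(N_{t_x}>x)$ by a union bound over the $\lfloor t_x\rfloor$ unit increments combined with the \emph{uniform} moment bound $\sup_t\E[(N_{t+1}-N_t)^{p+1}]<\infty$, which follows from \eqref{eq_finite_any_time} and \eqref{eq_s_and_t}. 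So the paper only needs $\varphi<1$ plus boundedness of the $(p+1)$-th moments in time, whereas you use the exponential decay of the $p$-th moment itself; your version is more direct and yields a cleaner quantitative conclusion ($N_\infty\in L^p$ with an explicit geometric series bound, versus the paper's tail bound $C(\log x)^{p+2}x^{-(p+1)}$), at the cost of leaning on the stronger statement $\varphi_p<1$. Both inputs are available at this point in the paper, so either argument is legitimate.
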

\begin{proof}
Fix~$p \ge 1$.
For any~$x > 1$, let~$t_x := \frac{p+1}{|\log \varphi|} \cdot \log x$ (note that~$\varphi < 1$ since~$\lambda < \bar{\lambda}(\mathsf{v})$, by Lemma~\ref{lem_surv_cond}). Letting~$\tau$ denote the extinction time of the herds process, we first bound
\begin{align*} \P(N_\infty > x) &\le \P(\tau > t_x) + 
\P(N_{t_x} > x).
\end{align*}
The first term on the right-hand side is easy to bound:
\[\P(\tau > t_x) \le \E[X_{t_x}] \stackrel{\eqref{eq_const}}{\le} C\varphi^{t_x} = Cx^{-(p+1)}.\]
Next, we bound
\begin{align*}
    \P(N_{t_x} > x) &\le  \sum_{i=0}^{\lfloor t_x\rfloor -1} \P\left(N_{k+1}-N_k > \frac{x}{t_x}\right) \\[.2cm]
    &\le \sum_{i=0}^{\lfloor t_x \rfloor -1} \frac{\E[(N_{k+1}-N_k)^{p+1}]}{(x/t_x)^{p+1}} \le t_x^{p+2} \cdot x^{-(p+1)} \cdot \sup_{t \ge 0} \E[(N_{t+1}-N_t)^{p+1}].
\end{align*}
Now, the supremum on the right-hand side is finite by~\eqref{eq_finite_any_time} and~\eqref{eq_s_and_t}. We have thus proved that~$\P(N_\infty > x) \le C (\log x)^{p+2} x^{-(p+1)}$ for some~$C > 0$, which implies that~$N_\infty$ has finite~$p$-th moment.
\end{proof}

\subsection{Bound on moments of herd sizes: proof of Lemma~\ref{lem_more_refin}}
We will need several preliminary results, starting with the following.
\begin{lemma} \label{lem_separate_two}
For any~$\alpha \ge 1$ and~$p \ge 1$, there exists $C_{\alpha,p} > 0$ (depending on~$\lambda$,~$\mathsf{v}$,~$\alpha$ and~$p$) such that the following holds. Fix~$u \in \mathbb{T}^d \backslash \{o\}$ and let~$(\widetilde{\Xi}_t)_{t \ge 0}$ be a two-type herds process with rates~$\lambda$ and~$\mathsf{v}$, started from a single herd containing a type-1 particle at the root and a type-2 particle at~$u$, that is,~$\widetilde{\Xi}_0= \delta_{\{o\},\{u\}}$. Then, letting~$t_\alpha:= 12\log(\alpha)/\mathsf{v}$, we have
\begin{equation}\label{eq_bound_with_AB}
\mathbb{E}\left[\left( \sum_{(A,B):A \neq \varnothing} \widetilde{\Xi}_{t_\alpha}(A,B)\cdot |B| \right)^p\;\right] < C_{\alpha,p}\cdot \alpha^{-\mathrm{dist}(o,u)}.
\end{equation}
\end{lemma}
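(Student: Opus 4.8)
The plan is to track the herd containing the type-2 particle at $u$ and show that, with overwhelming probability by time $t_\alpha = 12\log(\alpha)/\mathsf v$, this herd has been split off from the root so many times that the type-2 population it carries is tiny. The key quantity is $Y := \sum_{(A,B):A\neq\varnothing}\widetilde\Xi_{t_\alpha}(A,B)\cdot|B|$, the total number of type-2 particles that still live in a herd which also contains at least one type-1 particle. Since splits are shared between the two types, a type-2 particle contributes to $Y$ only if it has never been separated (via a split at an edge between it and the root-side particles) from the original type-1 particle. Write $\mathrm{dist}(o,u)=r$. The $r$ edges on the geodesic from $o$ to $u$ are each, on a herd containing particles on both sides, active; so as long as no such separation has occurred, these edges are subject to splitting at rate at least $\mathsf v$ each (actually the split rate $\mathsf v$ per active edge). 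The first time any one of them splits, the type-2 particle and its descendants land in a herd disjoint from the herd of $o$, and from then on they contribute $0$ to $Y$. Thus $Y \le X(\pi_2(\widetilde\Xi_{t_\alpha}))\cdot \mathds 1\{S > t_\alpha\}$, where $S$ is the first separation time along the geodesic.

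First I would make this rigorous using Lemma~\ref{lem_projections}: $(\pi_2(\widetilde\Xi_t))$ is a genuine herds process started from $\delta_{\{u\}}$, so by Proposition~\ref{prop_subad} (applied with the single-particle start) all its moments at time $t_\alpha$ are finite, bounded by a constant $c_p = c_p(\lambda,\mathsf v,\alpha)$ since $t_\alpha$ is determined by $\alpha$. Next I would bound $\widehat{\P}(S > t_\alpha)$. The delicate point is that the rate at which the geodesic ``tries to break'' depends on the current shape: an edge on the geodesic is active only while both endpoints-sides of the herd are occupied, and the original $o$- and $u$-particles may themselves die. To handle this cleanly I would use a domination argument: couple $(\widetilde\Xi_t)$ with a modified process in which the type-1 particle at $o$ and the type-2 particle at $u$ are declared immortal and are never allowed to be separated by anything but a direct split of one of the $r$ geodesic edges — in that modified process each of the $r$ geodesic edges splits independently at rate $\mathsf v$, so the first separation time $\tilde S$ stochastically lies below $S$ (breaking is harder in the real process only in the sense of fewer particles, but the point is we want a lower bound on $S$)... more carefully, I would instead upper-bound $\widehat\P(S>t_\alpha)$ by noting that along the geodesic, conditionally on both endpoint particles still being present, each geodesic edge carries an independent exponential clock of rate $\mathsf v$, and feeding in the (random, possibly infinite) lifetimes of the endpoint particles only decreases the separation probability; the cleanest route is to observe that the probability the root-$u$ connection survives $r$ consecutive ``attempts'' decays geometrically, giving $\widehat\P(S>t_\alpha)\le (1-q)^{r}$ for some $q=q(\mathsf v,\alpha)\in(0,1)$, with $q$ bounded away from $0$ once $t_\alpha/r$ is... but $r$ is unbounded, so instead the bound should be: along the geodesic of length $r$, the nested sequence of edges must each avoid splitting; the $k$-th edge (counting from $o$) has an active-edge split clock running at rate $\mathsf v$ for the whole window of length $t_\alpha$ provided the particle at $u$ is still alive and not yet separated, and a crude union/independence estimate gives $\widehat\P(S>t_\alpha)\le e^{-\mathsf v t_\alpha}$ for a single edge — but with $r$ edges in series we get the stronger $\widehat\P(S>t_\alpha)\le (\text{something})^{r}$. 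Working out the exponent: with $t_\alpha=12\log\alpha/\mathsf v$ and wanting a bound of order $\alpha^{-r}$, one uses that the edge nearest $u$ alone splits by time $t_\alpha$ with probability $1-e^{-\mathsf v t_\alpha}=1-\alpha^{-12}$, so we cannot get $\alpha^{-r}$ from one edge; instead the point is that the herd containing $u$ undergoes repeated splitting and the number of type-2 particles decays, combined with the geometric-in-$r$ factor coming from the $r$ independent edges. So I would bound $\widehat\P(S>t_\alpha)$ by first splitting $[0,t_\alpha]$ and using that in each of, say, $6\log\alpha$ unit sub-intervals the geodesic has a chance $\ge 1-e^{-\mathsf v}$ bounded below of breaking — no: the correct and simplest estimate is that the edges of the geodesic split independently (by Lemma~\ref{lem_projections} applied to $\pi(\widetilde\Xi)$, or directly), so $S$ is the minimum of $r$ i.i.d.\ hitting times, each $\le$ a rate-$\mathsf v$ exponential on the event the relevant particles survive; hence $\widehat\P(S>t_\alpha)\le \widehat\P(\text{a fixed rate-}\mathsf v\text{ exp} > t_\alpha)^{1} $ is too weak, but bounding via the $r$-fold product over a window where survival has probability $\ge 1/2$ gives $\widehat\P(S > t_\alpha)\le (1-\tfrac12 e^{-\mathsf v \cdot 2})^{r}\le e^{-c_0 r}$, and choosing the window length inside $t_\alpha=12\log\alpha/\mathsf v$ appropriately makes $e^{-c_0 r}\le \alpha^{-2r}$, say.

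Then I would combine: by Cauchy--Schwarz,
\[
\mathbb E[Y^p] \le \mathbb E\big[X(\pi_2(\widetilde\Xi_{t_\alpha}))^{2p}\big]^{1/2}\cdot \widehat\P(S>t_\alpha)^{1/2} \le c_{2p}^{1/2}\cdot \big(\alpha^{-2r}\big)^{1/2} = C_{\alpha,p}\,\alpha^{-r},
\]
which is exactly \eqref{eq_bound_with_AB} with $r=\mathrm{dist}(o,u)$ and $C_{\alpha,p}:=c_{2p}^{1/2}$ (finite because $t_\alpha$ depends only on $\alpha$, via Proposition~\ref{prop_subad} and Corollary~\ref{cor_explosion}). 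The main obstacle, and the step I would spend the most care on, is the estimate on $\widehat\P(S>t_\alpha)$: one must correctly argue that the $r$ geodesic edges provide $r$ essentially-independent chances to separate $u$ from the root, each succeeding with probability bounded away from zero over the time window $t_\alpha$, despite the complication that an edge is only ``active'' while both sides of the current herd are occupied and despite the particles at $o$ and $u$ potentially dying. I expect this is handled by a coupling/monotonicity argument (feeding in immortal endpoint particles only makes separation harder, so gives a valid upper bound on $\widehat\P(S>t_\alpha)$) together with the fact that the split clocks on distinct edges are independent; the constant $12$ in $t_\alpha$ is presumably chosen to make the resulting geometric rate beat $\alpha^{-r}$ with room to spare for the Cauchy--Schwarz loss and for the later application in Lemma~\ref{lem_more_refin} where the factor $\alpha^{-\mathrm{dist}(o,u)}$ must be summed against the $\le d(d-1)^{r-1}\le (2d)^r$ vertices at distance $r$.
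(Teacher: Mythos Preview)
Your overall architecture matches the paper: apply Cauchy--Schwarz to split off a moment of the type-2 population (bounded uniformly via pure-birth domination, since $t_\alpha$ is fixed once $\alpha$ is) times the square root of the probability that the two types still share a herd at time $t_\alpha$. The paper does exactly this with $Y_{t_\alpha}:=\#\{\text{herds containing both types}\}$ and reduces to showing $\mathbb P(Y_{t_\alpha}\ge 1)\le C\alpha^{-2\ell}$ with $\ell=\mathrm{dist}(o,u)$.

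However, your treatment of the separation probability has a real gap. You argue that the $r$ geodesic edges give ``$r$ essentially-independent chances to separate'', and that making the endpoint particles immortal handles the delicacy. That is not the issue. The obstruction is births \emph{along} the geodesic: once a type-1 particle is born at the first geodesic vertex toward $u$, splitting the edge $\{o,u_1\}$ no longer separates the types, because the resulting herd on the $u$-side still carries a type-1 particle. As type-1 descendants creep toward $u$ (and type-2 descendants toward $o$), the set of ``separating'' geodesic edges shrinks, and can vanish entirely. So neither the inequality $Y\le X(\pi_2(\widetilde\Xi_{t_\alpha}))\cdot\mathds 1\{S>t_\alpha\}$ (with $S$ the first geodesic-edge split) nor any estimate of the form $\mathbb P(S>t_\alpha)\le e^{-c_0 r}$ with $c_0$ depending only on $\mathsf v$ can be justified from your argument; and your attempt to upgrade $e^{-c_0 r}$ to $\alpha^{-2r}$ by ``choosing the window length'' does not work, since $c_0$ does not see $\alpha$.

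The paper's fix is to control this spread explicitly. Let $K^{(1)}_t,K^{(2)}_t$ be the sets of vertices ever visited by type-1 and type-2 particles respectively; these are connected and grow only by nearest-neighbour births. Take the geodesic $o=u_0\sim\cdots\sim u_\ell=u$, set $u'=u_{\lfloor\ell/3\rfloor}$, $u''=u_{\lfloor 2\ell/3\rfloor}$, and let $\sigma^{(i)}$ be the first time $K^{(i)}$ reaches its target ($u'$ resp.\ $u''$). As long as $\min(\sigma^{(1)},\sigma^{(2)})>t$, the $\sim\ell/3$ middle-third edges are all genuinely separating, so
\[
\mathbb P\bigl(\min(\sigma^{(1)},\sigma^{(2)})>t_\alpha,\ Y_s=1\ \forall s\le t_\alpha\bigr)\le \exp\{-\mathsf v\, t_\alpha\cdot(\lfloor 2\ell/3\rfloor-\lfloor\ell/3\rfloor)\}\le \alpha^{-2\ell},
\]
using $t_\alpha=12\log\alpha/\mathsf v$. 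The remaining terms $\mathbb P(\sigma^{(i)}\le t_\alpha)$ are handled by dominating $K^{(i)}$ by a pure-growth process $(W_t)$ with $\mathbb E[W_t(v)]=e^{d\lambda t}p_t(v)$ for the rate-$\lambda$ random walk kernel $p_t$, and then a Poisson tail bound (one needs $\ge\lfloor\ell/3\rfloor$ jumps of a Poisson$(d\lambda t_\alpha)$ variable, whose mean is $O(\log\alpha)$). This is the missing ingredient in your sketch: without bounding the spread, the geodesic-edge count you want to exploit is not stable under the dynamics.
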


\begin{proof}
	Fix~$\lambda$,~$\mathsf{v}$ and~$\alpha$, let~$t_\alpha := 12\log(\alpha)/\mathsf{v}$ and fix~$u$ as in the statement.
For~$t \ge 0$, let~$Y_t$ denote the number of herds in~$\widetilde{\Xi}_t$ that contain both types, that is,
\[Y_t:= \sum_{\substack{(A,B):\\ A \ne \varnothing,\\ B \ne \varnothing}} \widetilde{\Xi}_t(A,B).\]
It will be useful to note that
	\begin{equation}
		\label{eq_monotone_Y}
		s \le t \quad \Longrightarrow \quad \{Y_s = 0 \} \subseteq \{Y_t = 0\}.
	\end{equation}
Noting that
\[\left( \sum_{(A,B):A \neq \varnothing} \widetilde{\Xi}_{t_\alpha}(A,B)\cdot |B| \right)^p = \left( \sum_{(A,B):A \neq \varnothing} \widetilde{\Xi}_{t_\alpha}(A,B)\cdot |B| \right)^p \cdot \mathds{1}\{Y_{t_\alpha}>0\}\]
and using the Cauchy-Schwarz inequality, the left-hand side of~\eqref{eq_bound_with_AB} is smaller than
\begin{align*}
\mathbb{E}\left[\left( \sum_{(A,B):A \neq \varnothing} \widetilde{\Xi}_{t_\alpha}(A,B)\cdot |B| \right)^{2p}\right]^{1/2}\cdot \mathbb{P}(Y_{t_\alpha} \ge 1)^{1/2}.
\end{align*}
	Using domination by a pure-birth process, the first term in the product above can be bounded by a finite constant that only depends on~$\lambda$,~$p$ and~$\alpha$. We now show that~$\mathbb{P}(Y_{t_\alpha} \ge 1)$ is smaller than~$C \alpha^{-2\mathrm{dist(o,u)}}$ for some~$C > 0$.

For~$i \in \{1,2\}$ and~$t \ge 0$, let~$K^{(i)}_t$ denote the set of vertices of~$\mathbb{T}^d$ that have been occupied by a type-$i$ particle in some herd for some time~$s \le t$, that is,
\begin{align*}
K^{(1)}_t:= \{v \in \mathbb{T}^d:\; \widetilde{\Xi}_s(A,B) > 0\text{ for some } s \le t \text{ and }(A,B) \text{ with } v \in A\},\\
K^{(2)}_t:= \{v \in \mathbb{T}^d:\; \widetilde{\Xi}_s(A,B) > 0\text{ for some } s \le t \text{ and }(A,B) \text{ with } v \in B\}.
\end{align*}
	We have that~$K^{(1)}_0 = \{o\}$,~$K^{(2)}_0 = \{u\}$ and~$t \mapsto K^{(1)}_t$ and~$t \mapsto K^{(2)}_t$ are both non-decreasing (with respect to set inclusion).  Moreover,~$K^{(1)}_t$ and~$K^{(2)}_t$ are connected subsets of~$\mathbb{T}^d$, since they only grow by the inclusion of vertices neighboring vertices that are already present.  Also note that as long as these sets stay disjoint, there can be at most one herd containing both types. In other words, letting
\[\sigma:= \inf\{t: K^{(1)}_t \cap K^{(2)}_t \neq \varnothing\},\]
we have, for any~$t \ge 0$,
\begin{equation}\label{eq_Y_witht}
\{\sigma > t\} \subseteq \{Y_s \le 1 \text{ for all } s \le t\}.
\end{equation}

Next, let
	\[\ell = \mathrm{dist}(o,u)\]
and let~$o = u_0 \sim u_1 \sim \ldots\sim u_\ell = u$ be the vertices of~$\mathbb{T}^d$ in the geodesic from~$o$ to~$u$. Let
	\[u':= u_{\lfloor \ell/3\rfloor},\quad \sigma^{(1)}:= \inf\{t \ge 0:\;u' \in K^{(1)}_t\}\]
and 
	\[u'':= u_{\lfloor 2\ell/3\rfloor},\quad \sigma^{(2)}:= \inf\{t \ge 0:\;u'' \in K^{(2)}_t\}.\]
It is easy to see that
\[\min(\sigma^{(1)},\sigma^{(2)}) \le \sigma.\]
Putting this together with~\eqref{eq_Y_witht}, we obtain
	\begin{align}\label{eq_monotone_Y2}
		\{Y_{s} \ge 2 \text{ for some } s \le t_\alpha\} \subseteq  \{\sigma \le t_\alpha\} \subseteq \{\min(\sigma^{(1)},\sigma^{(2)}) \le t_\alpha\},
	\end{align}
so we can bound
	\begin{align}
\nonumber \mathbb{P}(Y_{t_\alpha} \ge 1) &\stackrel{\eqref{eq_monotone_Y}}{=} \mathbb{P}(Y_s \ge 1 \text{ for all } 0 \le s \le t_\alpha) \\
		\label{eq_first_two_terms}&\stackrel{\eqref{eq_monotone_Y2}}{\le} \mathbb{P}(\sigma^{(1)} \le t_\alpha) +\mathbb{P}(\sigma^{(2)} \le t_\alpha) \\
		\label{eq_the_third} &\quad + \mathbb{P}(\min(\sigma^{(1)},\sigma^{(2)}) > t_\alpha,\; Y_{s} = 1 \text{ for } 0 \le s \le t_\alpha).
	\end{align}
		We now bound the three terms on the right-hand side separately.

	Let us first consider the probability in~\eqref{eq_the_third}. For any~$t$, if~$\min(\sigma^{(1)},\sigma^{(2)}) > t$ and~$Y_t = 1$, then a split in any of the edges
	\[\{u_{\lfloor \ell/3\rfloor}, u_{\lfloor \ell/3 \rfloor + 1}\},\; \{u_{\lfloor \ell/3\rfloor+1}, u_{\lfloor \ell/3 \rfloor + 2}\}, \; \ldots,\; \{u_{\lfloor 2\ell /3\rfloor - 1},\;u_{\lfloor 2\ell/3\rfloor} \}\]
	separates the two types permanently, causing~$Y$ to drop to zero. This observation gives
	\begin{align*}
		\mathbb{P}(\min(\sigma^{(1)},\sigma^{(2)}) > t_\alpha,\; Y_s = 1 \text{ for } 0 \le s \le t_\alpha) &\le \exp\{-\mathsf{v}\cdot t_\alpha \cdot (\lfloor 2\ell/3\rfloor - \lfloor \ell/3 \rfloor) \}\\
		&\le \alpha^{-2\ell},
	\end{align*}
	where the second inequality follows from the definition of~$t_\alpha$.

	We now turn to the two probabilities in~\eqref{eq_first_two_terms}. We only bound the first one; by a symmetry argument, the same bound will then apply to the second. Let~$(W_t)_{t \ge 0}$ be a growth process on~$(\mathbb{N}_0)^{\mathbb{T}^d}$ defined as follows. We let~$W_0(o) = 1$ and~$W_0(u) = 0$ for~$u \ne o$. We interpret~$W_t(v) = m$ as saying that there are~$m$ particles at~$v$ at time~$t$. Then, we define the dynamics by prescribing that independently, for any~$v \sim w$, a particle at~$v$ gives birth at a particle at~$w$ with rate~$\lambda$ (and particles never die). In particular,~$(\sum_v W_t(v))_{t \ge 0}$ is a pure-birth process in which the birth rate is~$d\lambda$. It is easy to see that the set-valued process~$(\{u\in \mathbb{T}^d:\;W_t(u) > 0\})_{t \ge 0}$ stochastically dominates~$(K^{(1)}_t)_{t \ge 0}$, and in particular,
	\begin{equation}\label{eq_bound_by_expectation}
	\mathbb{P}(\sigma^{(1)} \le t_\alpha) \le \mathbb{P}(W_{t_\alpha}(u') > 0) \le \mathbb{E}[W_{t_\alpha}(u')].
	\end{equation}

	 We now claim that, for any~$t \ge 0$ and~$v \in \mathbb{T}^d$,
	\begin{equation}\label{eq_ode_g}
		\mathbb{E}[W_t(v)] = e^{d\lambda t}\cdot p_t(v),
	\end{equation}
	where~$p_t(v) := \mathbb{P}(\mathcal{Z}_t = v)$, with~$(\mathcal{Z}_t)_{t \ge 0}$ the  continuous-time random walk on~$\mathbb{T}^d$ which starts at the root at time zero, and jumps from any vertex to any neighboring vertex with rate~$\lambda$. It is simple to verify~\eqref{eq_ode_g} using the observations that~$(t,v) \mapsto \mathbb{E}[W_t(v)]$ is the solution to
	\[\left\{ \begin{array}{l}\frac{\mathrm{d}}{\mathrm{d}t} g(t,v) = \lambda \sum_{w \sim v} g(t,w),\; t \ge 0,\; v \in \mathbb{T}^d,\\[.2cm]
	g(0,\cdot) = \delta_o(\cdot),\end{array}\right.\] and that the right-hand side of~\eqref{eq_ode_g} solves this equation, by direct computation.
	
	Putting together~\eqref{eq_bound_by_expectation} and~\eqref{eq_ode_g}, we have
	\[\mathbb{P}(\sigma^{(1)} \le t_\alpha) \le e^{d\lambda t_\alpha}\cdot p_{t_\alpha}(u') \le e^{d \lambda t_\alpha} \cdot \mathbb{P}(\mathrm{Poi}(d\lambda t_\alpha) > \lfloor \ell /3 \rfloor),\]
	where~$\mathrm{Poi}(d\lambda t_\alpha)$ represents a random variable with the Poisson distribution with parameter~$d\lambda t_\alpha$, which is the law of the number of jumps of~$(\mathcal{Z}_t)$ until time~$t_\alpha$. Since the tail of the Poisson distribution is lighter than exponential, the right-hand side above is smaller than~$C\alpha^{-2\ell}$ for some~$C > 0$, uniformly in~$\ell$. This concludes the proof.
\end{proof}

\begin{lemma}\label{lem_separate_two_again}
	For any~$p \ge 1$, there exists~$C_p' > 0$ (depending on~$\lambda$,~$\mathsf{v}$ and~$p$) such that the following holds. Fix~$B_0 \subseteq \mathbb{T}^d \backslash \{o\}$ and let~$(\widetilde{\Xi}_t)_{t \ge 0}$ be a two-type herds process with rates~$\lambda$,~$\mathsf{v}$ started from~$\widetilde{\Xi}_0 = \delta_{\{o\},B_0}$. Also let~$T_p := 12p\log(2d)/\mathsf{v}$. Then, (uniformly over the choice of~$B_0$),
	\begin{equation*}
		\E\left[\left(\sum_{(A,B):A \neq \varnothing} \widetilde{\Xi}_{T_p}(A,B)\cdot (|A|+|B|) \right)^p\; \right] \le C_p'.
	\end{equation*}
\end{lemma}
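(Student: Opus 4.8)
Write $\|Z\|_p := \E[Z^p]^{1/p}$ for a nonnegative random variable $Z$. The plan is to split the quantity of interest into a type-$1$ part and a type-$2$ part via Minkowski's inequality,
\[
\left\|\sum_{(A,B):A\neq\varnothing}\widetilde\Xi_{T_p}(A,B)(|A|+|B|)\right\|_p \le \left\|\sum_{(A,B):A\neq\varnothing}\widetilde\Xi_{T_p}(A,B)|A|\right\|_p + \left\|\sum_{(A,B):A\neq\varnothing}\widetilde\Xi_{T_p}(A,B)|B|\right\|_p,
\]
and to bound each term by a constant depending only on $\lambda$, $\mathsf{v}$, $p$ (recall that $d$ is fixed and that $T_p$ is a function of $\mathsf{v}$, $p$, $d$). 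The first term equals $\|X(\pi_1(\widetilde\Xi_{T_p}))\|_p$, the $L^p$-norm of the total number of type-$1$ particles; by Lemma~\ref{lem_projections}(a) the process $(\pi_1(\widetilde\Xi_t))_{t\ge 0}$ is an ordinary herds process started from $\delta_{\{o\}}$, so Corollary~\ref{cor_explosion} (applied with $T=T_p$ and $k=p$) bounds its $p$-th moment by a constant of the required form.

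For the type-$2$ term I would use the multi-type herds process of Remark~\ref{rem.ktype}. Write $B_0=\{u_1,\dots,u_\ell\}$ (a finite set, since $\{o\}\cup B_0$ must be finite) and consider the $(\ell+1)$-type herds process started with a type-$0$ particle at $o$ and a type-$i$ particle at $u_i$ for $i=1,\dots,\ell$. Merging the types $1,\dots,\ell$ into a single species while keeping type $0$ apart produces a two-type process that — by the natural analogue of the domination in Lemma~\ref{lem_projections}(b) — stochastically dominates, for the partial order $\preceq$, the original two-type process started from $\delta_{\{o\},B_0}$: the two share the same splitting dynamics, but the merged process creates and retains type-$2$ particles at least as fast, because of over-counting. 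Since the functional $\widetilde\xi\mapsto\sum_{(A,B):A\neq\varnothing}\widetilde\xi(A,B)|B|$ is $\preceq$-monotone, and within each merged herd the number of type-$2$ particles is at most the sum over $i$ of the numbers of type-$i$ particles, this gives
\[
\sum_{(A,B):A\neq\varnothing}\widetilde\Xi_{T_p}(A,B)|B| \;\le\; \sum_{i=1}^{\ell}M_i,
\]
where $M_i$ denotes the number of type-$i$ particles that lie, at time $T_p$, in a herd of the multi-type process that also contains a type-$0$ particle. On the other hand, discarding all types except $0$ and $i$ is an \emph{exact} operation — the discarded types only produce harmless splits that leave the type-$0$/type-$i$ content of each herd unchanged — so it turns the multi-type process into an ordinary two-type herds process started from $\delta_{\{o\},\{u_i\}}$; hence $M_i$ has the same law as $\sum_{(A,B):A\neq\varnothing}\widetilde\Xi'_{T_p}(A,B)|B|$ for such a process.

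It then remains to apply Lemma~\ref{lem_separate_two} with $\alpha:=(2d)^p$, which is admissible and satisfies $t_\alpha=12\log(\alpha)/\mathsf{v}=T_p$; this gives $\|M_i\|_p\le C_{\alpha,p}^{1/p}\,(2d)^{-\mathrm{dist}(o,u_i)}$. Since $\mathbb{T}^d$ has $d(d-1)^{k-1}\le d^k$ vertices at distance $k\ge 1$ from $o$, Minkowski's inequality yields
\[
\left\|\sum_{i=1}^{\ell}M_i\right\|_p \le \sum_{i=1}^{\ell}\|M_i\|_p \le C_{\alpha,p}^{1/p}\sum_{v\in\mathbb{T}^d\setminus\{o\}}(2d)^{-\mathrm{dist}(o,v)} \le C_{\alpha,p}^{1/p}\sum_{k\ge 1}2^{-k} = C_{\alpha,p}^{1/p},
\]
uniformly in $B_0$. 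Combined with the bound on the type-$1$ part, this proves the lemma (with $C_p'$ depending only on $\lambda$, $\mathsf{v}$, $p$). The step I expect to be the main obstacle is the clean formulation and proof of the two multi-type analogues of Lemma~\ref{lem_projections} used above — the $\preceq$-domination under merging of types, and the exactness under restriction to a sub-collection of types — each of which requires a careful comparison of jump rates in the spirit of the proofs of Lemmas~\ref{lem_projections} and~\ref{lem_dominate}; granting these, the remainder is Minkowski's inequality and a geometric series.
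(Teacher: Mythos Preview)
Your proof is correct and follows essentially the same route as the paper's. Both arguments pass to the multi-type herds process (one type per particle of $\{o\}\cup B_0$), invoke the analogue of Lemma~\ref{lem_projections} to dominate the original two-type process by the merged multi-type one, restrict to the pair of types $\{o\},\{u_i\}$ to apply Lemma~\ref{lem_separate_two} with $\alpha=(2d)^p$, and sum via Minkowski and the geometric series $\sum_{k\ge 1}d^k(2d)^{-k}=1$. The only cosmetic difference is that you first split off the $|A|$-part with Minkowski and bound it directly via $\pi_1$ and Corollary~\ref{cor_explosion}, whereas the paper keeps $|A|+|B|$ together and recovers the $|A|$-contribution as the $j=1$ term $R_1'=X_{T_p}$; the remaining terms and their bounds coincide.
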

\begin{proof}
	Let
	\[{R} := \sum_{(A,B):A \neq \varnothing} \widetilde{\Xi}_{T_p}(A,B)\cdot (|A|+|B|);\]
	note that~${R}$ is the total number of particles in~$\widetilde{\Xi}_{T_p}$ that belong to herds that contain type-1 particles. 

	We enumerate~$\{o\} \cup B_0 = \{u_1,\ldots,u_{m}\}$, with~$u_1 = o$ and~$m = |B_0|+1$. We now define a multi-type herds process, as described in Remark~\ref{rem.ktype}. This new process, denoted~$(\doublewidetilde{\Xi}_t)_{t \ge 0}$, is taken in the same probability space that we have been considering, has rates~$\lambda$,~$\mathsf{v}$, and~$m$ types. It starts with a single herd, with a type-$1$ particle at~$u_1 = o$, a type-2 particle at~$u_2$,~$\ldots$, and a type~$m$ particle at~$u_{m}$. In analogy with~${R}$, we let ${R}'$ denote the total number of particles in $\doublewidetilde{\Xi}_{T_p}$ that belong to herds that contain type-1 particles. That is, if we use an~$m$-tuple~$(A_1,\ldots,A_{m})$ to represent a multi-type herd shape, then
	\[{R}' := \sum_{(A_1,\ldots,A_{m}): A_1 \neq \varnothing} \doublewidetilde{\Xi}_{T_p}(A_1,\ldots,A_m)\cdot (|A_1|+ \cdots + |A_m|).\]
	With similar reasoning as in the proof of Lemma~\ref{lem_projections}, we see that~${R}$ is stochastically dominated by~${R}'$; in particular,
	\[\E[{R}^p]^{1/p} \le \E[({R}')^p]^{1/p} \le \sum_{j=1}^m \E[(R'_j)^p]^{1/p},\]
	where
	\[R_j':= \sum_{(A_1,\ldots,A_{m}): A_1 \neq \varnothing} \doublewidetilde{\Xi}_{T_p}(A_1,\ldots,A_m)\cdot |A_j|.\]
	Note that~$R_1'$ is just the total number of type-1 particles in~$\doublewidetilde{\Xi}_{T_p}$. If we ignore all types except for type~$1$ in~($\doublewidetilde{\Xi}_t)$, we obtain a (one-type) herds process started from~$\delta_{\{o\}}$; hence,~$\mathbb{E}[(R'_1)^p] = \E[X_{T_p}^p] < \infty$ by Corollary~\ref{cor_explosion}. Next, for~$j \neq 1$, if we ignore all types except for types~$1$ and~$j$ in~$(\doublewidetilde{\Xi}_t)$, we see a two-type herds process started from~$\delta_{\{o\},\{u_j\}}$, and Lemma~\ref{lem_separate_two} (with~$\alpha = (2d)^p$) and the definition of~$T_p$ imply that~$\E[(R_j')^p] \le C_p (2d)^{-p\cdot \mathrm{dist}(o,u_j)}$. We then have
	\[\sum_{j=1}^m \E[(R_j')^p]^{1/p} \le \E[X_{T_p}^p]^{1/p} + C_p^{1/p} \sum_{j=1}^m (2d)^{-\mathrm{dist}(o,u_j)}.\]
	The second sum on the right-hand side is smaller than
	\[\sum_{u \in \mathbb{T}^d} (2d)^{-\mathrm{dist}(o,u)} \le \sum_{i=1}^\infty d^i\cdot (2d)^{-i} =1.\]
	We have thus proved that~$\E[R^p] \le (\E[X_{T_p}^p]^{1/p} + C_p^{1/p})^p$, so the proof is complete.
\end{proof}

\begin{lemma}\label{lem_last_in_chain}
	Let~$p \ge 1$ and let~$C_p'':= \max(C_p',\; \E[X_{T_p}^p])$, where~$C_p'$ and~$T_p$ are as in Lemma~\ref{lem_separate_two_again}. Then, for any~$\xi \in \mathcal{S}$, the herds process~$(\Xi_t)_{t \ge 0}$ with rates~$\lambda$,~$\mathsf{v}$ satisfies
	\begin{equation*}
		\P\left(\exists A:\; |A| \ge x, \;\Xi_{T_p}(A) > 0 \mid \Xi_0 = \xi \right) \le C_p''\cdot \frac{X(\xi)}{x^p},\quad x > 0.
	\end{equation*}
\end{lemma}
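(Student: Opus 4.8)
The plan is to reduce the statement about the herds process started from a general configuration $\xi$ to the two-type (and multi-type) estimates established in Lemma~\ref{lem_separate_two} and Lemma~\ref{lem_separate_two_again}. First I would use Lemma~\ref{lem_decompose} to decompose the process started from $\xi$ into independent copies started from the single-particle herds obtained by splitting every herd of $\xi$ into its constituent particles; more precisely, enumerate the particles of $\xi$ as $u_1,\ldots,u_\ell$ with $\ell = X(\xi)$, and by Lemma~\ref{lem_dominate} (attractiveness) the herds process started from $\xi$ is stochastically dominated by the superposition of $\ell$ independent herds processes, the $i$-th started from $\delta_{\{u_i\}}$. Then the event $\{\exists A:\;|A|\ge x,\;\Xi_{T_p}(A)>0\}$ is contained in the event that, in the dominating process, some herd at time $T_p$ has at least $x$ particles; via a union bound over which of the $\ell$ single-particle processes contributed the ``seed'' particle, it suffices to bound the expected number of particles at time $T_p$ that lie in the same herd as a tagged particle, started from $\delta_{\{u_i\}}$.

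The second, and central, step is to realize the superposition of the $\ell$ independent single-particle processes as a projection of an $\ell$-type herds process (Remark~\ref{rem.ktype}): give particle $u_i$ type $i$, run the $\ell$-type process, and note that the herd containing the tagged type-$1$ particle at time $T_p$ is exactly described by the first coordinate together with the other coordinates of the multi-type herd shape containing a type-$1$ particle. Then I would estimate, for the $\ell$-type process started from a single herd with one particle of each type at $u_1,\ldots,u_\ell$, the quantity $R = \sum_{(A_1,\ldots,A_\ell):\,A_1\neq\varnothing}\doublewidetilde{\Xi}_{T_p}(A_1,\ldots,A_\ell)\cdot(|A_1|+\cdots+|A_\ell|)$, which is precisely the total number of particles sharing a herd with a type-$1$ particle; by translation-invariance (Lemma~\ref{lem_auto}) one may as well take $u_1 = o$. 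This is exactly the kind of sum controlled in Lemma~\ref{lem_separate_two_again}: ignoring all but types $1$ and $j$ gives a two-type process started from $\delta_{\{o\},\{u_j\}}$, so Minkowski's inequality together with Lemma~\ref{lem_separate_two} (taken with $\alpha = (2d)^p$, matching the choice $T_p = 12p\log(2d)/\mathsf{v}$) yields $\E[R^p]^{1/p}\le \E[X_{T_p}^p]^{1/p} + C_p^{1/p}\sum_{u}(2d)^{-\mathrm{dist}(o,u)}$, and the geometric sum over $\T^d$ is at most $1$. Hence $\E[R^p]\le C_p''$ for each tagged particle, uniformly in the positions of the other particles.

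The final step is a Markov-inequality argument. If some herd $A$ with $|A|\ge x$ has $\Xi_{T_p}(A)>0$, then in the dominating superposition the tagged particle corresponding to whichever $u_i$ seeded that herd lies in a herd of size at least $x$; but the number of particles sharing a herd with that tagged particle is then $\ge x$, so $R^{(i)}\ge x$, where $R^{(i)}$ is the analogue of $R$ for the process with type $1$ assigned to $u_i$. Taking a union bound over $i\in\{1,\ldots,\ell\}$ and applying Markov's inequality at order $p$,
\[
\P\left(\exists A:\;|A|\ge x,\;\Xi_{T_p}(A)>0\mid\Xi_0=\xi\right)\le \sum_{i=1}^{\ell}\P(R^{(i)}\ge x)\le \sum_{i=1}^\ell \frac{\E[(R^{(i)})^p]}{x^p}\le \frac{\ell\cdot C_p''}{x^p} = C_p''\cdot\frac{X(\xi)}{x^p},
\]
as desired.

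The main obstacle I anticipate is making the reduction ``a herd of size $\ge x$ in the $\xi$-process forces some tagged-particle herd to have $\ge x$ particles'' fully rigorous through the stochastic domination $\preceq$: one must check that the domination of Lemma~\ref{lem_dominate} is compatible not only with $X(\cdot)$ but with the finer statement ``some herd has $\ge x$ particles,'' which requires recalling that $\xi\preceq\xi'$ means there is an enumeration-matching with $A_i\subseteq A_i'$, so a large herd in the dominated configuration is contained in a large herd of the dominating one. Tracking which copy seeds a given large herd, and hence the union-bound bookkeeping, is the only delicate point; everything else is an assembly of Lemma~\ref{lem_separate_two_again}, Remark~\ref{rem.ktype}, and a one-line Markov bound.
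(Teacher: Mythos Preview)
Your argument has a genuine gap at the very first step. The claimed domination ``the herds process started from~$\xi$ is stochastically dominated by the superposition of~$\ell$ independent herds processes, the $i$-th started from~$\delta_{\{u_i\}}$'' does \emph{not} follow from Lemma~\ref{lem_dominate}. Recall that~$\xi \preceq \xi'$ requires enumerations with~$A_i \subseteq A_i'$; for a single herd~$A$ with~$|A|\ge 2$, the configuration~$\delta_A$ cannot be~$\preceq \sum_{u\in A}\delta_{\{u\}}$, since~$A$ is not contained in any singleton. This is exactly the subtlety flagged in the remark following Lemma~\ref{lem_general_p}: even the weaker statement that the \emph{number of particles} started from~$\delta_A+\delta_B$ dominates that started from~$\delta_{A\cup B}$ is not known, and the two-type process was introduced precisely to circumvent it. Your second step compounds the confusion: the superposition of independent single-particle processes is \emph{not} a projection of the~$\ell$-type herds process, because in the latter the types share splitting events while independent processes do not.

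The paper avoids these issues. After the (correct) reduction to a single starting herd~$\delta_{A_0}$ via Lemma~\ref{lem_decompose} and a union bound, it proceeds by induction on~$|A_0|$: peel off one particle~$u$, run the two-type process started from~$\delta_{(\{u\},A_0\setminus\{u\})}$, and use Lemma~\ref{lem_projections}(b) to dominate the one-type process by its~$\pi$-projection. A large herd either contains a type-1 particle (handled by Lemma~\ref{lem_separate_two_again} and Markov, contributing~$C_p'/x^p$) or it does not (handled by the induction hypothesis applied to the type-2 marginal, contributing~$C_p''(|A_0|-1)/x^p$). Your tagging idea can be repaired along these lines: after reducing to~$\delta_{A_0}$, dominate directly by the~$|A_0|$-type process via the multi-type analogue of Lemma~\ref{lem_projections}(b), and then union-bound over which type is present in the large herd; but this replaces the false attractiveness step, it does not use it.
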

\begin{proof}
	By using Lemma~\ref{lem_decompose} and a union bound, it suffices to prove that for any~$A_0 \in P_\mathsf{f}(\mathbb{T}^d)$,
	\begin{equation}\label{eq_the_term}
		\P\left(\exists A:\; |A| \ge x, \;\Xi_{T_p}(A) > 0 \mid \Xi_0 = \delta_{A_0} \right) \le C_p''\cdot \frac{|A_0|}{x^p},\quad x > 0.
	\end{equation}
	We will prove this by induction on~$|A_0|$. For the case where~$|A_0| = 1$, we bound the left-hand side above by
	\begin{align*} \P(X_t \ge x \mid \Xi_0 = \delta_{\{o\}}) \le \frac{\E[X_{T_p}^p]}{x^p},
	\end{align*}
	by Markov's inequality.

	Now assume that~$|A_0| \ge 2$ and let~$u \in A_0$. Let~$B_0 := A_0 \backslash \{u\}$. We consider a two-type herds process~$(\widetilde{\Xi}_t)_{t \ge 0}$ with rates~$\lambda$,~$\mathsf{v}$ started from~$\delta_{\{u\},B_0}$.   Using Lemma~\ref{lem_projections}(a), the left-hand side of~\eqref{eq_the_term} is smaller than
	\begin{align*}
		& \P\left( \exists (A,B):\; |A|+|B| \ge x,\; \widetilde{\Xi}_{T_p}(A,B) > 0\right).
	\end{align*}
	In turn, this is smaller than
	\begin{align}
		\label{eq_numbers1}&\P\left( \exists (A,B):\;  A \neq \varnothing,\;|A|+|B| \ge x,\; \widetilde{\Xi}_{T_p}(A,B) > 0\right)\\
		\label{eq_numbers2}&+\P\left( \exists (A,B):\;  |B| \ge x,\; \widetilde{\Xi}_{T_p}(A,B) > 0\right).
	\end{align}
	The probability in~\eqref{eq_numbers1} is smaller than
	\[\P\left( \sum_{(A,B):A \neq \varnothing} \Xi_{T_p}(A,B)\cdot (|A|+|B|) \ge x \right) \le \frac{C_p'}{x^p},\]
	by  Markov's inequality and Lemma~\ref{lem_separate_two_again}. By Lemma~\ref{lem_projections}(b), the probability in~\eqref{eq_numbers2} can be expressed using a one-type herds process; it equals
	\[\P\left( \exists B:\; |B| \ge x,\; \Xi_{T_p}(B) > 0 \mid \Xi_0 = \delta_{B_0}\right),\]
	which is smaller than~$C_p''|B_0|/x^p$ by the induction hypothesis (since~$|B_0| = |A_0|-1$). We have then proved that
	\[\P\left(\exists A:\; |A| \ge x, \;\Xi_{T_p}(A) > 0 \mid \Xi_0 = A_0 \right) \le \frac{C_p'}{x^p} + \frac{C_p''(|A_0|-1)}{x^p} \le \frac{C_p''|A_0|}{x^p}.\]
\end{proof}

\begin{proof}[Proof of Lemma~\ref{lem_more_refin}]
	Fix~$s \ge T_p = 12p\log(2d)/\mathsf{v}$. Let~$E_1$ be the event that there is some herd at time~$s$ whose number of particles is larger than~$X_{s-T_p}^{\frac{1}{2p}}$, that is,
	\[E_1 := \left\{ \text{ there exists } A \in P_\mathsf{f}(\T^d) \text{ with } |A| \ge X_{s - T_p}^{\frac{1}{2p}} \text{ and } \Xi_s(A) > 0 \right\}.\]
	On~$E_1^c$, we bound
	\[\sum_{A \in P_\mathsf{f}(\mathbb{T}^d)} \Xi_s(A) \cdot |A|^p \le X_{s-T_p}^{1/2} \cdot \sum_{A \in P_\mathsf{f}(\T^d)} \Xi_s(A) \le X_{s-T_p}^{1/2} \cdot X_s.\]
	On~$E_1$, we bound
	\[\sum_{A \in P_\mathsf{f}(\mathbb{T}^d)} \Xi_s(A) \cdot |A|^p \le X_s^p.\]
	These bounds give
	\begin{equation}
		\label{eq_two_bounds_p}
		\E\left[\sum_{A \in P_\mathsf{f}(\mathbb{T}^d)} \Xi_s(A) \cdot |A|^p\right] \le \E[X_s \cdot X_{s-T_p}^{1/2}] + \E[X_s^p \cdot \mathds{1}_{E_1}].
	\end{equation}
We treat the two expectations on the right-hand side separately. For the first one, we start by bounding
	\begin{equation}
		\label{eq_aux_two_exps}
		\E[X_s \cdot X_{s-T_p}^{1/2}] = \E[\E[X_s \mid \mathcal{F}_{s-T_p}] \cdot X_{s-T_p}^{1/2}] \le \E[X_{s-T_p}^{3/2}] \cdot \E[X_{T_p}],
	\end{equation}
	where the inequality follows from~\eqref{eq_subadd0} and the Markov property. We now claim that there exists~$C > 0$ (not depending on~$s$) such that
	\begin{equation}
		\label{eq_aux_two_exps2}
		\E[X_{s-T_p}^{3/2}] \le C \E[X_s^{3/2}].
	\end{equation}
	To see this, first note that each particle that is alive at time~$s-T_p$ will stay alive until time~$s$ with probability~$q:= \mathrm{e}^{-T_p}$. Then, defining the event
	\[E_2 := \{X_s \ge \tfrac{q}{2} \cdot X_{s-T_p}\},\]
	we have, for any~$m \in \N$,
	\[\P(E_2 \mid \mathcal{F}_{s-T_p}) \cdot \mathds{1}\{X_{s-T_p} = m\} \ge  \P(\mathrm{Bin}(m,q) \ge \tfrac{q}{2}m)\]
	(and in case~$m = 0$, the left-hand side is trivially equal to 1). Hence,
	\[\P(E_2 \mid \mathcal{F}_{s-T_p}) \ge  \beta:= \inf_{m \ge 1} \P(\mathrm{Bin}(m,q) \ge \tfrac{q}{2} m) > 0.\]
	Then,
	\begin{align*}
		\E[X_s^{3/2}] \ge \E[X_s^{3/2} \cdot \mathds{1}_{E_2}] &\ge \tfrac{q}{2} \cdot \E[X_{s-T_p}^{3/2} \cdot \mathds{1}_{E'}]\\[.2cm]
		&= \tfrac{q}{2} \cdot \E[X_{s-T_p}^{3/2} \cdot \P(E_2 \mid \mathcal{F}_{s-T_p})] \ge \tfrac{q}{2}\cdot \beta \cdot \E[X_{s-T_p}^{3/2}],
	\end{align*}
	proving~\eqref{eq_aux_two_exps2} with~$C:=\frac{2}{\beta q}$.  With~\eqref{eq_aux_two_exps} and~\eqref{eq_aux_two_exps2}, and putting together all the terms that do not depend on~$s$ in a sufficiently large constant~$C$, we have proved that
	\[
		\E[X_s \cdot X_{s-T_p}^{1/2}] \le C\E[X_{s}^{3/2}].
	\]

	We now turn to the second term on the right-hand side of~\eqref{eq_two_bounds_p}. We first bound the conditional expectation given~$\mathcal{F}_{s-T_p}$ with Cauchy-Schwarz:
	\begin{align*}
		\E[X_s^p \cdot \mathds{1}_{E_1} \mid \mathcal{F}_{s-T_p}] &\le \E[X_s^{2p}\mid \mathcal{F}_{s-T_p}]^{1/2} \cdot \P(E_1 \mid \mathcal{F}_{s-T_p})^{1/2}.
	\end{align*}
	Using~\eqref{eq_subadd0} and the Markov property, 
	\[\E[X_s^{2p}\mid \mathcal{F}_{s-T_p}] \le X_{s-T_p}^{2p}\cdot \E[X_{T_p}^{2p}].\]
	Using Lemma~\ref{lem_last_in_chain}  (and again the Markov property), we have
	\[\mathbb{P}(E_1 \mid \mathcal{F}_{s-T_p}) \le C_{4p^2}''\cdot \frac{X_{s-T_p}}{(X_{s-T_p}^{1/(2p)})^{4p^2}} = C_{4p^2}'' \cdot X_{s-T_p}^{1-2p}.\]
	Then,
	\[\E[X_s^p \cdot \mathds{1}_{E_1} \mid \mathcal{F}_{s-T_p}] \le C_{4p^2}'' \cdot \E[X_{T_p}^{2p}] \cdot  X_{s-T_p},\]
	so
	\[\E[X_s^p \cdot \mathds{1}_{E_1} ] \le C_{4p^2}'' \cdot \E[X_{T_p}^{2p}] \cdot \E[X_{s-T_p}] \le C_{4p^2}'' \cdot \E[X_{T_p}^{2p}] \cdot C\varphi^{s-T_p},\]
	where the second inequality follows from~\eqref{eq_const}. Putting together all the constants that do not depend on~$s$, this gives
	\[\E[X_s^p \cdot \mathds{1}_{E_1} ] \le C\varphi^{s}.\]
	%Since we are assuming that~$\lambda < \bar{\lambda}(\mathsf{v})$, we have~$\varphi < 1$ by Lemma~\ref{lem_surv_cond} and Proposition~\ref{prop_strict_mon}. 
\end{proof}

\section{Extinction time on a random $d$-regular graph with switching} \label{s_finite_graph}

The goal of this section is to prove Theorem~\ref{theo.1}. As mentioned in the Introduction, the first part of the theorem was already proved in~\cite{da2021contact}, so we only prove the second part here.

This section is organized as follows. In Section~\ref{ss_prelim_graph}, we give a detailed construction of the dynamic random graph and the contact process which co-evolves on this graph. After doing so, we argue that a version of the usual self-duality relation of the contact process is satisfied here. Due to this relation, in proving quick extinction, it suffices to study the process started from a single infection. We start this study in Section~\ref{ss_exploration}, where we introduce an exploration process, which reveals the contact process but only reveals partial information about the graph.  In Section~\ref{ss_Markov_chain}, we prove a general Markov chain lemma which allows us to couple this exploration process with (a projection of) the herds process. Finally, in Section~\ref{ss_concluding_proof}, we take advantage of this coupling to give the proof of Theorem~\ref{ss_concluding_proof}.

\subsection{Preliminaries: dynamic graph, joint evolution, duality}\label{ss_prelim_graph}

Let us define the class of graphs in which our dynamic graph process takes values. Fix~$n \in \N$ and let~$V_n := [n] = \{1,\ldots,n\}$. The set~$\{(u,a):\;u \in V_n,\: a \in \{1,\ldots,d\}\}$ is called the \textit{set of half-edges}. Given a perfect matching of the set of half-edges (that is, a bijection~$\sigma$ from this set to itself with no fixed points and equal to its own inverse), we can obtain a (multi-)graph by setting~$V_n$ as the set of vertices and prescribing that each pair~$\{(u,a),(u',a')\}$ with~$(u',a') = \sigma(u,a)$ corresponds to an edge between~$u$ and~$u'$. Let~$\mathcal{G}_n$ denote the set of all multi-graphs that can be obtained in this way. Deterministic elements of~$\mathcal{G}_n$ will typically be denoted by~$\mathsf{g}$, whereas random elements of~$\mathcal{G}_n$ will be denoted by~$G$ or~$G_t$ (in the case of a process).

Fix~$\mathsf{g} \in \mathcal{G}_n$. A \textit{switch code} for~$\mathsf{g}$ is a triple~$\mathsf{m}=(e_1,e_2,\eta)$, where~$e_1,e_2$ are distinct edges of~$\mathsf{g}$ and~$\eta \in \{-,+\}$. Fix such a switch code, with~$e_1 = \{(u,a),(u',a')\}$ and~$e_2 = \{(v,b),(v',b')\}$ so that~$(u,a)< (u',a')$ and~$(v,b) < (v',b')$ in lexicographic order. In case~$\eta = +$, we let~$\Gamma^{\mathsf{m}}(\mathsf{g})$ be the graph obtained from~$\mathsf{g}$ by replacing~$e_1$ and~$e_2$ by the edges $\{(u,a),(v,b)\}$ and $\{(u',a'),(v',b')\}$ (and keeping all other edges intact). In case~$\eta=-$, we instead replace~$e_1,e_2$ by~$\{(u,a),(v',b')\}$ and~$\{(u',a'),(v,b)\}$. 

The random graph process~$(G_t)_{t \ge 0}$ is the continuous-time Markov chain on~$\mathcal{G}_n$ which jumps from~$\mathsf{g}$ to~$\mathsf{g}'$ with rate~$\upupsilon_n:= \frac{\mathsf{v}}{nd}$ if~$\mathsf{g}' = \Gamma^{\mathsf{m}}(\mathsf{g})$ for some switch code~$\mathsf{m}$ of~$\mathsf{g}$ (and rate 0 otherwise). This chain is reversible with respect to the uniform measure on~$\mathcal{G}_n$. We will always start the graph dynamics from this distribution.

We now fix~$\lambda > 0$ and define the contact process~$(\xi_t)_{t \ge 0}$ with infection rate~$\lambda$ on the dynamic graph~$(G_t)$. Although we could do so by describing the jump rates of the joint Markov chain~$(G_t,\xi_t)_{t \ge 0}$, we will instead use a Poisson graphical construction.  We take a probability space with probability measure~$\mathbb{P}$ in which the process~$(G_t)$ is defined, and also (independently of~$(G_t)$), the following Poisson point processes (all independent) are defined:
\begin{itemize}
\item for each vertex~$u$, a Poisson point process~$R^u$ on~$[0,\infty)$ with intensity 1 (recovery times);
\item for each half-edge~$(u,a)$, a Poisson point process~$R^{(u,a)}$ with intensity~$\lambda$ (transmission times).
\end{itemize}
 Naturally, when~$t \in R^u$, vertex~$u$ goes to state 0 (if not already there) at time~$t$. Moreover, when~$t \in R^{(u,a)}$, there is a transmission from~$u$ to the vertex~$v$ that owns the half-edge to which~$(u,a)$ is matched in~$G_t$ (so that, if~$u$ was in state 1 just before~$t$, then~$v$ goes to state 1, if not already there, at~$t$). For each~$A \subset V_n$, we let~$(\xi^A_t)_{t \ge 0}$ be the contact process on~$(G_t)$ with~$\xi^A_0 = \mathds{1}_A$ and obtained from this graphical construction (as usual, the graphical construction gives us contact processes started from all possible initial configurations, all coupled in a single probability space and respecting the monotonicity of set inclusion).

The usual duality relation
\begin{equation}
\label{eq_duality_relation}
\mathbb{P}(\xi^A_t \cap B \neq \varnothing) = \mathbb{P}(\xi^B_t \cap A \neq \varnothing) \quad \text{for all }t \ge 0,\;A, B \subset V_n
\end{equation}
holds in this context, but it is important to note that the above probabilities are annealed in the graph environment. Let us briefly prove~\eqref{eq_duality_relation}. Fix~$t \ge 0$ and~$A,B \subset V_n$. Letting~$(\mathsf{g}_s)_{0 \le s \le t}$ be a possible realization of the trajectory of~$(G_s)_{0 \le s \le t}$, we have
\begin{align*}
&\mathbb{P}(\xi^A_t \cap B \neq \varnothing \mid (G_s)_{0 \le s \le t} = (\mathsf{g}_s)_{0 \le s \le t}) \\
&\quad = \mathbb{P}(\xi^B_t \cap A \neq \varnothing \mid (G_s)_{0 \le s \le t} = (\mathsf{g}_{t-s})_{0 \le s \le t}).
\end{align*}
This is verified using a standard argument involving infection paths and time-reversibility of Poisson processes. Integrating this equality over the choice of~$(\mathsf{g}_s)_{0 \le s \le t}$ and using the fact that~$(G_s)_{0 \le s \le t}$ has the same law as~$(G_{t-s})_{0 \le s \le t}$ gives~\eqref{eq_duality_relation}.

Letting~$\bar{u} \in V_n$ be arbitrary (and deterministic) and writing~$\xi^{\bar{u}}_t$ instead of~$\xi^{\{\bar{u}\}}_t$, we have
\begin{equation}
\label{eq_apply_duality}
\P(\xi^{V_n}_t \neq \varnothing) \le \sum_{u \in V_n}\P(\xi^{V_n}_t(u) = 1) = n\cdot \P(\xi^{V_n}_t(\bar{u}) = 1) = n \cdot \mathbb{P}(\xi^{\bar{u}}_t \neq \varnothing),
\end{equation}
where the equalities follow from symmetry and duality, respectively. Due to~\eqref{eq_apply_duality}, the analysis of the extinction time of the contact process started from all vertices infected can be reduced to the analysis of the extinction time of the contact process from a single infection at the arbitrary vertex~$\bar{u}$. For the rest of this section,~$\bar{u}$ remains fixed.

Next, we describe an \textit{exploration process}, which only reveals partial information about~$(G_t)_{t \ge 0}$, namely, only the matching of half-edges at certain points in time on a need-to-know basis imposed by the transmission times of the contact process.

\subsection{Exploration process} \label{ss_exploration}
Let
\begin{equation*}
\mathcal{P}:= \{ \{(u,a),(u',a')\}:\; u, u' \in V_n,\; a,a' \in \{1,\ldots, d\},\; (u,a) \neq (u',a')\}
\end{equation*}
be the set of all potential edges of our random graph.
A set~$\mathcal{E} \subset \mathcal{P}$ is called \textit{independent} if any two elements of~$\mathcal{E}$ have no half-edge in common. Let
\begin{equation}\label{eq_def_of_scrP}
\mathscr{P}:= \{\mathcal{E} \subset \mathcal{P}:\; \mathcal{E} \text{ is independent}\}.
\end{equation}
In the same probability space where~$(G_t)$ and the graphical construction of the contact process are defined, we now define a process~$(\mathscr{E}_t)_{t \ge 0}$ taking values in~$\mathscr{P}$. Intuitively,~$\mathscr{E}_t$ represents a set of edges that are known to be part of~$G_t$, having been revealed by an exploration induced by the contact process activity. This process will have the following features:
\begin{itemize}
\item[(P1)] it starts from~$\mathscr{E}_0 = \varnothing$;
\item[(P1)] for any~$t$, every element of~$\mathscr{E}_t$ is an edge of~$G_t$;
\item[(P3)] the pair~$(\xi^{\bar{u}}_t,\mathscr{E}_t)_{t \ge 0}$ is a Markov chain;
\item[(P4)] for any~$t$, conditionally on~$(\xi^{\bar{u}}_t, \mathscr{E}_t)$, the distribution of the edges of~$G_t$ apart from those in~$\mathscr{E}_t$ is uniform. More precisely, the pairing in~$G_t$ of the half-edges in the set~$\{(u,a):(u,a) \text{ not in any edge of }\mathscr{E}_t\}$ is uniformly distributed among all possibilities.
\end{itemize}

In order to define the exploration, we will need auxiliary times. Let~$T_1$ be the first time in which there is a transmission mark at a half-edge emanating from~$\bar{u}$; let~$\mathscr{E}_t = \varnothing$ for all~$t \in [0,T_1)$.  In case~$T_1 < \infty$, say, due to a transmission mark at the half-edge~$(\bar{u},a)$, we reveal the half-edge~$(v,b)$ to which~$(\bar{u},a)$ is paired in~$G_{T_1}$, and include the edge~$\{(\bar{u},a),(v,b)\}$ in~$\mathscr{E}_{T_1}$.

Now assume that we have already defined stopping times (with respect to the filtration of~$(G_t)$ and the graphical construction)~$T_1 \le T_2 \le \cdots  \le T_k$, and that we have defined~$\mathscr{E}_t$ for~$0 \le t \le T_k$. In case~$T_k = \infty$, set~$T_{k+1} = \infty$; from now on, assume that~$\{T_k < \infty\}$ occurs. Let~$T_{k+1}$ be the first time~$t > T_k$ when:
\begin{itemize}
\item either a switch occurs involving at least one edge of~$\mathscr{E}_{T_k}$ (call this \textit{Case~1}),
\item or ``the contact process tries to use an unexplored edge'', that is, a transmission mark appears at a half-edge emanating from some vertex of~$\xi_t$, and this half-edge is not part of an edge of~$\mathscr{E}_{T_k}$ (\textit{Case 2});
\end{itemize}
We set~$\mathscr{E}_t = \mathscr{E}_{T_k}$ for~$t \in (T_k, T_{k+1})$, and~$\mathscr{E}_{T_{k+1}}$ is defined as follows. 
\begin{itemize}
\item In Case 1, there are two sub-cases. First assume that the switch at time~$T_{k+1}$ involves an edge~$e$ of~$\mathscr{E}_{T_k}$ and another edge outside~$\mathscr{E}_{T_k}$. Then, we let~$\mathscr{E}_{T_{k+1}} = \mathscr{E}_{T_k} \backslash \{e\}$. Now assume that the switch at time~$T_{k+1}$ involves two edges~$e,e'$ of~$\mathscr{E}_{T_k}$, transforming them into the two new edges~$e'',e'''$. We then set~$\mathscr{E}_{T_{k+1}} = (\mathscr{E}_{T_k} \backslash \{e,e'\}) \cup \{e'',e'''\}$.
\item In Case 2, we reveal the half-edge that is matched at time~$t$ to the half-edge having the transmission mark at that time; letting~$e$ be the edge formed by these two half-edges, we let~$\mathscr{E}_{T_{k+1}} = \mathscr{E}_{T_k} \cup \{e\}$.
\end{itemize}
This completes the description of the exploration, and it should be clear that properties (P1), (P2), (P3) and (P4) listed earlier are indeed satisfied.

Our next step is to use the exploration process as a tool to couple the contact process~$(\xi^{\bar{u}}_t)$ with a herds process. 

\subsection{A Markov chain lemma and its application} \label{ss_Markov_chain}
We now prove a general result about coupling two continuous-time Markov chains.

\begin{lemma}
\label{lem_coupling_two_MCs}
Let~$\mathcal{X}_1$ and~$\mathcal{X}_2$ be countable sets, and let~$r_1: \mathcal{X}_1 \times \mathcal{X}_1 \to [0,\infty)$ and~$r_2: \mathcal{X}_2 \times \mathcal{X}_2 \to [0,\infty)$ be functions defining the jump rates for continuous-time (non-explosive) Markov chains on~$\mathcal{X}_1$ and~$\mathcal{X}_2$, respectively. Assume that there is a subset~$\mathcal{X}_1' \subseteq \mathcal{X}_1$ and functions~$\Psi: \mathcal{X}_1' \to \mathcal{X}_2$ and~$f: \mathcal{X}_1' \to [0,\infty)$ such that the following two conditions hold:
	\begin{equation}\label{eq_1_coupling_ineq}
\sum_{y \in \mathcal{X}_1 \backslash \mathcal{X}_1'} r_1(x,y) \le f(x) \quad \text{for all } x \in \mathcal{X}_1'
\end{equation}
and
\begin{equation}\label{eq_2_coupling_ineq}
	\sum_{\substack{z \in \mathcal{X}_2,\\ z \neq \Psi(x)}} \left| r_2(\Psi(x),z) - \sum_{y \in \Psi^{-1}(z)} r_1(x,y)   \right| \le f(x)\quad \text{for all } x \in \mathcal{X}_1'.
\end{equation}
Fix~$\bar{x} \in \mathcal{X}_1'$. Then, there exists a coupling~$(\mathcal{A}_t,\mathcal{B}_t)_{t \ge 0}$ on~$\mathcal{X}_1 \times \mathcal{X}_2$ with the following properties:
\begin{itemize}
\item[\textnormal{(a)}] $\mathcal{A}_0 = \bar{x}$ and~$(\mathcal{A}_t)_{t \ge 0}$ is a Markov chain on~$\mathcal{X}_1$ with jump rates~$r_1$;
\item[\textnormal{(b)}] $\mathcal{B}_0 = \Psi(\bar{x})$ and~$(\mathcal{B}_t)_{t \ge 0}$ is a Markov chain on~$\mathcal{X}_2$ with jump rates~$r_2$;
\item[\textnormal{(c)}] letting
\[\sigma:= \inf\{t: \mathcal{B}_t \neq \Psi(\mathcal{A}_t)\}\]
and, for any~$a > 0$,
\[T_a := \inf\{t: f(\mathcal{A}_t) > a\},\]
we have, for any~$t  > 0$,
\begin{equation*}
	\P(\sigma \le t \wedge T_a) \le 2a t.
\end{equation*}
\end{itemize}
\end{lemma}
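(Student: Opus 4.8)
The plan is to construct the coupling explicitly, as a single (non-explosive) continuous-time Markov chain on $\mathcal{X}_1\times\mathcal{X}_2$, designed so that it stays on the ``diagonal'' set $D:=\{(x,\Psi(x)):x\in\mathcal{X}_1'\}$ for as long as possible, and to read off the bound in (c) from the rate at which it can leave $D$. The key preliminary notation: for $x\in\mathcal{X}_1'$ and $z\in\mathcal{X}_2$, let $R_1(x,z):=\sum_{y\in\Psi^{-1}(z)}r_1(x,y)$ be the total $r_1$-rate of jumping from $x$ into the fibre $\Psi^{-1}(z)$ (note $\Psi^{-1}(z)\subseteq\mathcal{X}_1'$), and set $c(x,z):=\min\big(r_2(\Psi(x),z),R_1(x,z)\big)$. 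The whole construction rests on the elementary identities $\min(r,R)+(R-r)^+=R$ and $\min(r,R)+(r-R)^+=r$ applied fibrewise, which make it possible to split each jump into a ``synchronised'' part and a ``leakage'' part while keeping both marginal rate functions exactly equal to $r_1$ and $r_2$.

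Concretely, from a state $(x,\Psi(x))\in D$ I would let the chain jump as follows: (i) for each $z\neq\Psi(x)$, at rate $c(x,z)$, a synchronised jump with $\mathcal{B}\to z$ and $\mathcal{A}\to y$, where $y\in\Psi^{-1}(z)$ is chosen with probability $r_1(x,y)/R_1(x,z)$; (ii) for each $z\neq\Psi(x)$, at rate $(R_1(x,z)-r_2(\Psi(x),z))^+$, a move of $\mathcal{A}$ alone into $\Psi^{-1}(z)$ with the same proportional law, $\mathcal{B}$ unchanged; (iii) for each $z\neq\Psi(x)$, at rate $(r_2(\Psi(x),z)-R_1(x,z))^+$, a move of $\mathcal{B}$ alone to $z$, $\mathcal{A}$ unchanged; (iv) for each $y\in\Psi^{-1}(\Psi(x))\setminus\{x\}$, at rate $r_1(x,y)$, a move of $\mathcal{A}$ to $y$, $\mathcal{B}$ unchanged (these keep the chain in $D$ since $\Psi(y)=\Psi(x)$); (v) for each $y\in\mathcal{X}_1\setminus\mathcal{X}_1'$, at rate $r_1(x,y)$, a move of $\mathcal{A}$ to $y$, $\mathcal{B}$ unchanged. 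Off $D$ the two coordinates evolve independently with rates $r_1$ and $r_2$. Summing, for each fixed target state, the contributions of types (i)–(v) and using the two identities above shows that the $\mathcal{A}$-marginal has jump rates $r_1$ and the $\mathcal{B}$-marginal has jump rates $r_2$ throughout; the total outgoing rate at $(x,\Psi(x))$ is at most $\sum_{y}r_1(x,y)+f(x)<\infty$ (the $+f(x)$ coming from type (iii) via~\eqref{eq_2_coupling_ineq}), each coupled jump is a jump of at least one of the marginals, and a standard argument then shows the coupled chain is well-defined and non-explosive. Taking $\mathcal{A}_0=\bar x$, $\mathcal{B}_0=\Psi(\bar x)$ yields (a) and (b).

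For (c), observe that types (i) and (iv) keep the chain in $D$, whereas types (ii), (iii), (v) are exactly the transitions that leave $D$, i.e. that can realise $\sigma$ (here we read ``$\mathcal{B}_t\neq\Psi(\mathcal{A}_t)$'' as automatically holding whenever $\mathcal{A}_t\notin\mathcal{X}_1'$). Their combined rate out of $(x,\Psi(x))$ is
\begin{align*}
&\sum_{z\neq\Psi(x)}\big[(R_1(x,z)-r_2(\Psi(x),z))^+ + (r_2(\Psi(x),z)-R_1(x,z))^+\big]+\sum_{y\notin\mathcal{X}_1'}r_1(x,y)\\
&\qquad=\sum_{z\neq\Psi(x)}\big|R_1(x,z)-r_2(\Psi(x),z)\big|+\sum_{y\notin\mathcal{X}_1'}r_1(x,y)\;\le\;2f(x),
\end{align*}
using~\eqref{eq_2_coupling_ineq} for the first sum (which compares the pushforward of $r_1(x,\cdot)$ with $r_2(\Psi(x),\cdot)$) and~\eqref{eq_1_coupling_ineq} for the second. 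Hence, while the chain is still in $D$ and $t<T_a$, the instantaneous rate of a $\sigma$-realising transition is $\le 2f(\mathcal{A}_t)\le 2a$. Since $\mathds{1}\{\sigma\le t\wedge T_a\}$ equals the number of $\sigma$-realising transitions in $(0,\sigma\wedge t\wedge T_a]$, Dynkin's formula gives
\[
\P(\sigma\le t\wedge T_a)=\widehat{\E}\Big[\int_0^{\sigma\wedge t\wedge T_a} b(\mathcal{A}_s)\,\mathrm{d}s\Big]\le 2a\,\widehat{\E}[\sigma\wedge t\wedge T_a]\le 2at,
\]
where $b(x)\le 2f(x)$ denotes the rate of $\sigma$-realising transitions at $(x,\Psi(x))$; alternatively, one can couple these transitions to be a thinning of a rate-$2a$ Poisson process and bound the probability by $1-\mathrm{e}^{-2at}\le 2at$.

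I do not anticipate a genuine obstacle: the lemma is essentially a bookkeeping statement. The one point needing care is to tune the coupled jump rates so that both marginals come out \emph{exactly} right \emph{and}, simultaneously, the total leakage rate off $D$ is controlled by $2f$; the fibrewise split $r=\min(r,R)+(r-R)^+$ is precisely the device reconciling these two demands, and one must remember that $\mathcal{A}$ leaving $\mathcal{X}_1'$ also counts towards $\sigma$ (this is what makes~\eqref{eq_1_coupling_ineq} enter the budget alongside~\eqref{eq_2_coupling_ineq}). The remaining technicalities — the standard construction/non-explosiveness of the hand-built chain and the applicability of the compensator argument — are routine.
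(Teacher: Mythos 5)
Your construction is the paper's construction: the fibrewise split $\min(r_2,R_1)+(r_2-R_1)^+=r_2$, $\min(r_2,R_1)+(R_1-r_2)^+=R_1$ reproduces exactly the paper's five jump types (the paper writes the leakage rates as $r_1(x,y)\bigl(1-\tfrac{r_1(x,[y])\wedge r_2(\Psi(x),\Psi(y))}{r_1(x,[y])}\bigr)$, etc., and carries an explicit third coordinate $\mathcal{W}_t$ marking whether the coupling has broken, rather than your convention that $\mathcal{A}_t\notin\mathcal{X}_1'$ realises $\sigma$), the total rate off the diagonal is bounded by $2f(x)$ via the same identity $|R_1-r_2|=(R_1-r_2)^++(r_2-R_1)^+$, and the concluding compensator/martingale estimate is the one in the paper. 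The proposal is correct and essentially identical in approach.
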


\begin{proof}
	We will define a continuous-time Markov chain~$(\mathcal{A}_t,\mathcal{B}_t,\mathcal{W}_t)_{t \ge 0}$ taking values in the set
	\begin{equation}
		\label{eq_two_sets}
	\{(x,\Psi(x),1): \; x \in \mathcal{X}_1'\} \cup \{(x,y,0):\;x \in \mathcal{X}_1,\;y\in \mathcal{X}_2\},
	\end{equation}
	starting from~$(\bar{x},\Psi(\bar{x}),1)$. The pair~$(\mathcal{A}_t,\mathcal{B}_t)$ will satisfy the properties in the statement. The third coordinate process~$(\mathcal{W}_t)$ will be a non-decreasing process (it jumps at most once, from 1 to 0) with the property that for all~$t \le \inf\{s: \mathcal{W}_s = 0\}$, we have~$\mathcal{B}_t = \Psi(\mathcal{A}_t)$. So, we interpret~$\mathcal{W}_t$ as the indicator of the event that ``the coupling still works at time~$t$''.

	In order to define this chain, we need to specify the jump rates. When the third coordinate equals zero (meaning that the coupling is already broken), the first and second coordinates move independently, according to the chains defined by~$r_1$ (on~$\mathcal{X}_1$) and~$r_2$ (on~$\mathcal{X}_2$), respectively. More precisely, from any triple of the form~$(x,y,0)$, the chain jumps as follows:
	\begin{itemize}
		\item for each~$x' \in \mathcal{X}_1$, it jumps to~$(x',y,0)$ with rate~$r_1(x,x')$;
		\item for each~$y' \in \mathcal{X}_2$, it jumps to~$(x,y',0)$ with rate~$r_2(y,y')$.
	\end{itemize}

	We now need to specify the jump rates from points in the first set in the union in~\eqref{eq_two_sets}. In order to do so, we first introduce some notation. For each~$x \in \mathcal{X}_1'$, we let
	\[[x] := \Psi^{-1}(\Psi(x)) \subset \mathcal{X}_1'.\]
	For each~$x \in \mathcal{X}_1$ and each~$S \subset \mathcal{X}_1$, we write
	\[r_1(x,S):= \sum_{y \in S} r_1(x,y).\] 

	Now, fix~$x \in \mathcal{X}_1'$. The following list describes all the possible jumps that the chain can take from~$(x,\Psi(x),1)$ (this starting location is kept fixed throughout the list), and their respective rates:
	\begin{itemize}
		\item \textit{$\mathcal{A}$ and $\mathcal{B}$ jump together, stay coupled:} for each~$y \in \mathcal{X}_1' \backslash [x]$, jump to~$(y,\Psi(y),1)$ with rate
			\[r_1(x,y) \cdot \frac{r_1(x,[y])\wedge r_2(\Psi(x),\Psi(y))}{r_1(x,[y])};\]
		\item \textit{$\mathcal{A}$ jumps alone inside $[x]$, stay coupled:} for each~$y \in [x]\backslash \{x\}$, jump to $(y,\Psi(y),1) = (y,\Psi(x),1)$ with rate~$r_1(x,y)$;
		\item \textit{$\mathcal{A}$ jumps alone leaving $\mathcal{X}_1'$, break coupling:} for each~$y \in \mathcal{X}_1 \backslash \mathcal{X}_1'$, jump to~$(y,\Psi(x),0)$ with rate~$r_1(x,y)$;
		\item \textit{$\mathcal{A}$ jumps alone inside $\mathcal{X}_1'$, break coupling:} for each~$y \in \mathcal{X}_1' \backslash [x]$, jump to~$(y,\Psi(x),0)$ with rate
			\[r_1(x,y) \cdot  \left(1- \frac{r_1(x,[y])\wedge r_2(\Psi(x),\Psi(y))}{r_1(x,[y])}\right);\]
		\item \textit{$\mathcal{B}$ jumps alone, breaks coupling:} for each~$z \in \mathcal{X}_2$, jump to~$(x,z,0)$ with rate
			\[r_2(\Psi(x),z) - (r_1(x,\Psi^{-1}(z))\wedge r_2(\Psi(x),z)).\]
	\end{itemize}
	It is straightforward to check that the marginal rates for~$(\mathcal{A}_t)$ and~$(\mathcal{B}_t)$ are correct, so that items (a) and (b) in the statement of the lemma hold.
	
	For each~$x \in \mathcal{X}_1'$, let~$\mathcal{R}(x)$ denote the rate at which~$(\mathcal{A}_t,\mathcal{B}_t,\mathcal{W}_t)$ jumps from~$(x,\Psi(x),1)$ to the set~$\mathcal{X}_1 \times \mathcal{X}_2 \times \{0\}$, where the coupling is broken. From the above rates, and then using~\eqref{eq_1_coupling_ineq} and~\eqref{eq_2_coupling_ineq}, it can be seen that
	\begin{equation}
		\label{eq_bound_R}
	\mathcal{R}(x) = r_1(x,\mathcal{X}_1\backslash \mathcal{X}_1') + \sum_{\substack{z \in \mathcal{X}_2,\\ z \neq \Psi(x)}} |r_1(x,\Psi^{-1}(z)) -r_2(\Psi(x),z) | \le 2 f(x). 
	\end{equation}
Next, let~$\sigma' := \inf\{t: \mathcal{W}_t = 0\}$, and recall that~$T_a:= \inf\{t: f(\mathcal{A}_t) > a\}$. The process
	\[M_t:= \mathds{1}\{ \sigma' \le t \wedge T_a\} - \int_0^{t \wedge \sigma' \wedge T_a} \mathcal{R}(\mathcal{A}_s)\;\mathrm{d}s,\qquad t \ge 0\]
	is easily seen to be a martingale. Then, for any~$t \ge 0$,
	\begin{align*}
		0 = M_0 = \mathbb{E}[M_t] &= \mathbb{P}(\sigma' \le t \wedge T_a) - \mathbb{E}\left[ \int_0^{t \wedge \sigma' \wedge T_a} \mathcal{R}(\mathcal{A}_s)\;\mathrm{d}s\right] \\[.2cm]&\stackrel{\eqref{eq_bound_R}}{\ge} \mathbb{P}(\sigma' \le t \wedge T_a) - 2at.
	\end{align*}
 	Now, recalling that~$\sigma:= \inf\{t: \mathcal{B}_t \neq \Psi(\mathcal{A}_t)\}$, we have that~$\sigma' \le \sigma$, so
	\[\P(\sigma \le t \wedge T_a) \le \P(\sigma' \le t \wedge T_a) \le 2at.\]
\end{proof}

In the application we have in mind for this lemma, the first Markov chain is the pair~$(\xi_t,\mathscr{E}_t)$ consisting of the contact process and the exploration process in the random dynamic graph, as described in the previous subsection (recall that this pair is a Markov chain). The second Markov chain is a certain function of the herds process. We will need to give some definitions for both, as well as for the mapping~$\Psi$ between them.

\subsubsection{First Markov chain: contact and exploration process}
The process~$(\xi_t,\mathscr{E}_t)_{t \ge 0}$ (contact process and exploration process on the dynamic random graph~$(G_t)$) takes values in the state space
\begin{equation*}
	\label{eq_X1_for_application}
	\mathcal{X}_1 := \{(A,\mathcal{E}):\; A \subset [n],\; \mathcal{E} \in \mathscr{P}\},
\end{equation*}
where we recall the definition of~$\mathscr{P}$ in~\eqref{eq_def_of_scrP}. We denote by~$r_1(\cdot,\cdot)$ the function giving the jump rates of this chain.

Given~$(A,\mathcal{E}) \in \mathcal{X}_1$, we define the \textbf{graph induced by~$(A,\mathcal{E})$}, denoted by~$\mathrm{Graph}(A,\mathcal{E})$, as follows. First enumerate~$\mathcal{E} = \{e_1,\ldots,e_m\}$, with
\[e_1=  \{(u_1,a_1),(u_1',a_1')\},\quad \ldots,\quad e_m = \{(u_m,a_m),(u_m',a_m')\}.\]
Then,~$\mathrm{Graph}(A,\mathcal{E})$ is the graph with vertex set~$A \cup \{u_1,u_1',\ldots, u_m,u_m'\}$ and edge set~$\mathcal{E}$.

\subsubsection{Second Markov chain: herds process modulo automorphisms}
Recall the definition of the set~$P_\mathsf{f}(\mathbb{T}^d)$ of herd shapes from Definition~\ref{def_shapes}. 
For~$A \in P_\mathsf{f}(\mathbb{T}^d)$, define
\begin{equation*}
	[A] := \left\{\begin{array}{r} A' \in P_\mathsf{f}(\mathbb{T}^d):\; \text{there is a graph isomorphism $\psi:\mathbb{T}^d \to \mathbb{T}^d$} \\ \text{such that } \psi(A) = A'\end{array}\right\}.
\end{equation*}
This decomposes~$P_\mathsf{f}(\mathbb{T}^d)$ into equivalence classes.

Recall the definition of the set~$\mathcal{S}$ of herd configurations~$\mathcal{S}$ from Definition~\ref{def_configurations}. Given~$\xi \in \mathcal{S}$, define~$[\xi]:\{[A]:A \in P_\mathsf{f}(\mathbb{T}^d)\}\to \mathbb{N}_0$ by setting
\[[\xi]([A]) = \sum_{A' \in [A]} \xi(A'). \]
We then define
\begin{equation*}
	\label{eq_X2_for_application}
\mathcal{X}_2:= \{[\xi]: \xi \in \mathcal{S}\},
\end{equation*}
the \textbf{set of herd configurations modulo automorphisms}. Letting~$(\Xi_t)_{t \ge 0}$ be the herds process, we note that, by Lemma~\ref{lem_auto}, the process~$([\Xi_t])_{t \ge 0}$ is a Markov chain on~$\mathcal{X}_2$. We let~$r_2(\cdot,\cdot)$ denote the function giving the jump rates of this chain.

\subsubsection{The mapping~$\Psi$ and the error bound~$f$}
Now  that we have defined the pairs~$(\mathcal{X}_1,r_1)$ and~$(\mathcal{X}_2,r_2)$ that we will use in our application of Lemma~\ref{lem_coupling_two_MCs}, we will also define the sets~$\mathcal{X}_1'$ and the functions~$\Psi:\mathcal{X}_1' \to \mathcal{X}_2$ and~$f: \mathcal{X}_1' \to [0,\infty)$ that appear in the assumptions of that lemma. 

We start with
\begin{equation}\label{eq_choice_of_X_prime}
\mathcal{X}_1':= \{(A,\mathcal{E}) \in \mathcal{X}_1:\; \mathrm{Graph}(A,\mathcal{E}) \text{ is a forest}\}.   
\end{equation}

The mapping~$\Psi$ is easy to understand (Figure~\ref{fig_mapping}  provides an instant explanation) but somewhat clumsy to define. Fix $(A,\mathcal{E}) \in \mathcal{X}_1'$. Let~$\mathscr{C}_1,\ldots,\mathscr{C}_m$ be the connected components of~$\mathrm{Graph}(A,\mathcal{E})$ that contain at least one vertex of~$A$. For~$i \in \{1,\ldots, m\}$, let~$A_i$ be the set of vertices of~$A$ that intersect~$\mathscr{C}_i$. Since~$\mathscr{C}_i$ is a tree in which all vertices have degree at most~$d$, there exists an isomorphism~$\psi_i$ between~$\mathscr{C}_i$ and some connected subgraph of~$\mathbb{T}^d$ (in fact there are infinitely many such isomorphisms, but we choose one in some arbitrary way). Then,~$\xi(A,\mathcal{E}):=\sum_{i=1}^m \delta_{\psi_i(A_i)}$ is a herd configuration, and we let
\[\Psi(A,\mathcal{E}):=[\xi] \in \mathcal{X}_2.\]

\begin{figure}[H]
\begin{center}
\setlength\fboxsep{0cm}
\setlength\fboxrule{0.01cm}
\fbox{\includegraphics[width=0.7\textwidth]{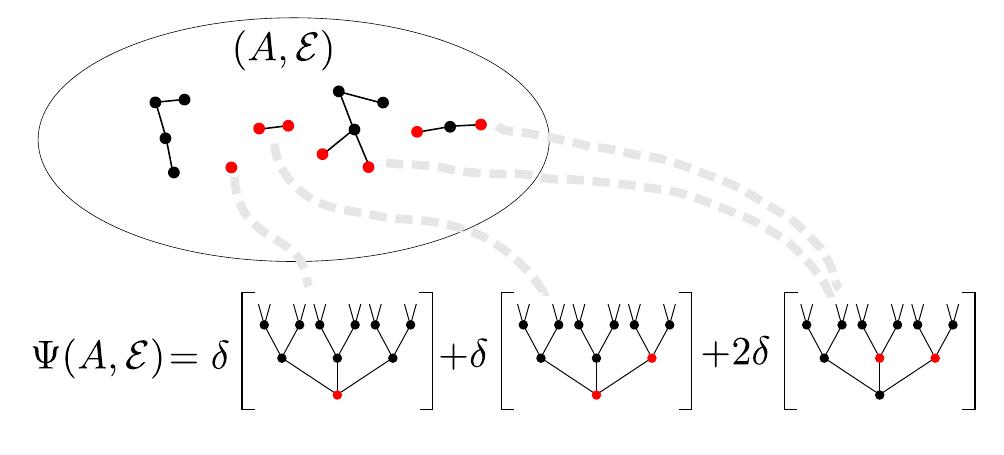}}
\end{center}
	\caption{\label{fig_mapping} Illustration of the mapping $\Psi$. Above, the pair $(A,\mathcal{E})$ is depicted (red vertices are those that belong to $A$, and black vertices are those that belong to $\mathrm{Graph}(A,\mathcal{E})$ but not to $A$). Below, the herd configuration modulo automorphisms $\Psi(A,\mathcal{E})$ is shown. Note that one of the connected components of $\mathrm{Graph}(A,\mathcal{E})$ has no counterpart in~$\Psi(A,\mathcal{E})$, because none of its vertices belongs to $A$.}
\end{figure}

It is now straightforward to verify that there exists a constant~$C_f > 0$ such that conditions~\eqref{eq_1_coupling_ineq} and~\eqref{eq_2_coupling_ineq} are satisfied with the choice
\[f(A,\mathcal{E}) := C_f\frac{(|A|+|\mathcal{E}|)^2}{n}.\]
We omit the details.

We now have all the ingredients to apply Lemma~\ref{lem_coupling_two_MCs}. Given an initial condition~$(A,\mathcal{E}) \in \mathcal{X}_1'$ for the exploration process (which will often, but not always, be equal to~$(\{\bar{u}\},\varnothing)$), we can obtain the coupling~$(\mathcal{A}_t,\mathcal{B}_t)_{t \ge 0}$ started from~$((A,\mathcal{E}),\Psi(A,\mathcal{E}))$ and satisfying the properties guaranteed by that lemma.

\subsection{Proof of Theorem~\ref{theo.1}} \label{ss_concluding_proof}
For the rest of this section, we assume that~$\lambda< \bar{\lambda}(\mathsf{v})$.
By~\eqref{eq_apply_duality}, it suffices to prove that, for~$C > 0$ large enough, we have
\begin{equation*}
    n \cdot \P(\xi^{\bar{u}}_{C \log n} \neq \varnothing) \xrightarrow{n \to \infty} 0.
\end{equation*}

Let us explain our strategy to prove this. We take advantage of the coupling with the herds process from the previous section. 
The probability that the contact process started from~$\{\bar{u}\}$ survives until time~$C \log n$ (with~$C$ some large constant), and moreover the coupling remains good (meaning that~$\mathcal{B}_t = \Psi(\mathcal{A}_t)$) for time~$C' \log n$ (with~$C' < C$ but still large) is~$o(1/n)$, since this would imply survival of the herds process, which is subcritical, until~$C' \log n$. However, the probability that the coupling turns bad before extinction is not~$o(1/n)$, so we have to deal with that event. The most problematic case is that the coupling turns bad  due to the exploration process finding an edge that causes the explored graph to no longer be a forest. In that case, apart from events of probability~$o(1/n)$,  this problematic edge is deleted after a short amount of time due to a switch (with no other problematic edges appearing in the meantime), and the explored region goes back to being a forest. At this moment when a forest reappears, we can start a brand new coupling between the exploration process (starting from its current state~$(\xi_t,\mathscr{E}_t)$) and a herds process (starting from~$\Psi(\xi_t,\mathscr{E}_t)$). Now, this second  coupling also turning bad has too low probability (when we consider this event together with the already low probability of the breaking of the first attempt). It is also unlikely that this second coupling stays active for a long time without turning bad, for the same reason as for the first one.

The above explanation shows that the argument is naturally structured in three stages (of course, not all of them necessarily occur): the first coupling attempt, then the period until a problematic edge is removed, and then the second coupling attempt. We encapsulate Stages~2 and~3 in two lemmas, in reverse order: Lemma~\ref{lem_stage_3} below deals with Stage 3, and Lemma~\ref{lem_from_not_forest} with Stage 2. Having these two lemmas in place, we are able to tell the full story from the beginning of Stage 1, concluding the proof.

\begin{lemma}\label{lem_stage_3}
    Let~$(A,\mathcal{E}) \in \mathcal{X}_1'$, where~$\mathcal{X}_1'$ is as in~\eqref{eq_choice_of_X_prime}. Assume that~$|A| + |\mathcal{E}| \le n^{1/{6}}$. Let~$(\xi_t,\mathscr{E}_t)_{t \ge 0}$ be a contact process and exploration process started from~$(\xi_0,\mathscr{E}_0) = (A,\mathcal{E})$. Then, letting~$\tau$ denote the extinction time of the contact process, and~$\varphi = \varphi(\lambda, \mathsf{v})$ be the growth index of the herds process (as in~\eqref{def_varphi}), for~$n$ large enough we have
\begin{equation}
    \label{eq_tau_using_coupling}
    \P\left(\tau > \frac{2}{|\log \varphi|} \log n \right)\le \frac{1}{\sqrt{n}}.    
\end{equation}

\end{lemma}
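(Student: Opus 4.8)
The plan is to apply Lemma~\ref{lem_coupling_two_MCs} with the first Markov chain being $(\xi_t,\mathscr{E}_t)$, the second being $([\Xi_t])$ (the herds process modulo automorphisms), and with $\Psi$, $f$, $\mathcal{X}_1'$ as set up in Section~\ref{ss_Markov_chain}. Starting from $(A,\mathcal{E}) \in \mathcal{X}_1'$ with $|A|+|\mathcal{E}| \le n^{1/6}$, we obtain a coupling $(\mathcal{A}_t,\mathcal{B}_t)_{t \ge 0}$ on $\mathcal{X}_1 \times \mathcal{X}_2$ with $\mathcal{A}_0 = (A,\mathcal{E})$ and $\mathcal{B}_0 = \Psi(A,\mathcal{E})$. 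Write $t_n := \frac{2}{|\log\varphi|}\log n$. We split the event $\{\tau > t_n\}$ according to whether the coupling stays good up to $t_n$, i.e. according to $\sigma := \inf\{t : \mathcal{B}_t \neq \Psi(\mathcal{A}_t)\}$. Thus
\[
\P(\tau > t_n) \le \P(\tau > t_n,\; \sigma > t_n) + \P(\sigma \le t_n).
\]

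For the first term: when $\sigma > t_n$, the first coordinate $\mathcal{A}_t = (\xi_t,\mathscr{E}_t)$ has $\mathcal{B}_t = \Psi(\mathcal{A}_t)$, so if the contact process is still alive at time $t_n$ then $\mathcal{A}_{t_n}$ has a nonempty infected set $A'$, hence $\Psi(\mathcal{A}_{t_n})$ (the herds configuration modulo automorphisms) is nonempty, hence $X_{t_n} \ge 1$ for the coupled herds process $\mathcal{B}$. Since $(\mathcal{B}_t)$ is distributed as the herds process modulo automorphisms started from $\Psi(A,\mathcal{E})$, and $X(\Psi(A,\mathcal{E})) = |A|$ (the map $\Psi$ preserves the number of particles), we can bound via the submultiplicativity inequality~\eqref{eq_subadd0}, Lemma~\ref{lem_revineq}, and a union/Markov argument:
\[
\P(\tau > t_n,\; \sigma > t_n) \le \P(X_{t_n} \ge 1 \mid \Xi_0 = \xi(A,\mathcal{E})) \le \E[X_{t_n}\mid \Xi_0 = \xi(A,\mathcal{E})] \le |A|\cdot C\varphi^{t_n} \le n^{1/6}\cdot C n^{-2} = Cn^{-11/6}.
\]
Here I used $\varphi^{t_n} = \varphi^{\frac{2}{|\log\varphi|}\log n} = n^{-2}$ and $|A| \le n^{1/6}$.

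For the second term, I would use part (c) of Lemma~\ref{lem_coupling_two_MCs}: for any $a > 0$, $\P(\sigma \le t_n \wedge T_a) \le 2 a t_n$, where $T_a = \inf\{t : f(\mathcal{A}_t) > a\}$ and $f(A',\mathcal{E}') = C_f(|A'|+|\mathcal{E}'|)^2/n$. So I need a bound of the form $\P(T_a \le t_n) \le \text{(something small)}$ for a suitable choice of $a$; equivalently, a bound on the probability that $|\xi_t| + |\mathscr{E}_t|$ gets large before $t_n$. The key observation is that, on the event that the coupling is still good, $|\mathcal{A}_t|$ related quantities are controlled by the herds process: more precisely $|\xi_t| \le X(\text{herd config})$ and $|\mathscr{E}_t|$ is controlled by the number of active edges plus births; even after the coupling breaks one can dominate the exploration by a pure-birth process (as in Lemma~\ref{lem_explosion} and Corollary~\ref{cor_explosion}) — every transmission mark adds at most one edge to $\mathscr{E}$ and at most one vertex to $\xi$, and switches only remove edges. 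So $|\xi_t| + |\mathscr{E}_t| \le |A| + |\mathcal{E}| + 2 N_t^{\mathrm{cp}}$ where $N_t^{\mathrm{cp}}$ is the number of transmission events acting on infected vertices up to time $t$, which is stochastically dominated by a pure-birth chain started from $|A|$ with rate $d\lambda$. A pure-birth chain started from $k \le n^{1/6}$ reaches $n^{1/3}$ before time $t_n = O(\log n)$ only with probability $o(n^{-2})$ (a standard large-deviation / moment bound on Yule processes, or using the finiteness of all moments of $N_\infty$ from Corollary~\ref{lem.sub.totsize}). Choosing $a := C_f n^{2/3}/n = C_f n^{-1/3}$ so that $\{T_a \le t_n\} \subseteq \{|\xi_t|+|\mathscr{E}_t| > n^{1/3}\text{ for some }t \le t_n\}$, we get $\P(T_a \le t_n) = o(n^{-2})$ and $2at_n = 2C_f n^{-1/3} \cdot O(\log n) = o(n^{-1/4})$. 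Hence $\P(\sigma \le t_n) \le \P(\sigma \le t_n \wedge T_a) + \P(T_a \le t_n) \le 2at_n + o(n^{-2}) = o(n^{-1/4})$.

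Combining, $\P(\tau > t_n) \le Cn^{-11/6} + o(n^{-1/4}) \le n^{-1/2}$ for $n$ large, which is~\eqref{eq_tau_using_coupling}. The main obstacle I anticipate is the bookkeeping in bounding $\P(T_a \le t_n)$: one must carefully argue that $|\xi_t| + |\mathscr{E}_t|$ is dominated by a pure-birth process regardless of whether the coupling is good or bad (the switch moves never increase it, only transmissions do, and each by at most a bounded amount), and then get the right tail estimate for a Yule process to stay below $n^{1/3}$ over a window of length $O(\log n)$ when started from at most $n^{1/6}$ — this needs the stretched-exponential control on the Yule process (its value at time $t$ has exponential moments up to the critical one, and one can also simply use $\E[(N_\infty)^p]<\infty$ from Corollary~\ref{lem.sub.totsize} together with a Markov bound with $p$ large enough), which is where the constant $1/6$ in the hypothesis $|A|+|\mathcal{E}| \le n^{1/6}$ gets used to create enough slack between the starting size and the threshold $n^{1/3}$.
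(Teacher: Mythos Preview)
Your treatment of the first term~$\P(\tau > t_n,\;\sigma > t_n)$ is fine and matches the paper. The gap is in your handling of~$\P(\sigma \le t_n)$, specifically in the bound on~$\P(T_a \le t_n)$.

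The pure-birth domination you propose cannot give~$\P(T_a \le t_n) = o(n^{-2})$. A Yule process with per-particle rate~$d\lambda$ started from~$n^{1/6}$ has mean~$n^{1/6}\exp\{d\lambda t_n\} = n^{1/6 + 2d\lambda/|\log\varphi|}$ at time~$t_n$, and the exponent~$2d\lambda/|\log\varphi|$ is not small (it diverges as~$\lambda \uparrow \bar\lambda(\mathsf v)$). So the pure-birth chain typically far exceeds~$n^{1/3}$ by time~$t_n$, and the event~$\{T_a \le t_n\}$ is not rare under that domination. Your alternative suggestion of using Corollary~\ref{lem.sub.totsize} does not apply either: that result controls~$N_\infty$ for the \emph{herds} process, whereas~$N_t^{\mathrm{cp}}$ is a functional of the contact process on the graph. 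The two are linked only while the coupling is good, i.e.\ before~$\sigma$; you have no control on~$N_t^{\mathrm{cp}}$ for~$t \in (\sigma,t_n]$.

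The paper's fix is to decompose differently: instead of bounding~$\P(T_a \le t_n)$, bound~$\P(T_{a_*} < \sigma \le t_*)$. On this event the threshold is crossed \emph{while the coupling is still good}, so every contact-process birth up to time~$T_{a_*}$ is mirrored by a birth in the herds process~$(\mathcal B_t)$, and Corollary~\ref{lem.sub.totsize} (together with Lemma~\ref{lem_for_p_infinity} to handle the initial size~$\le n^{1/6}$) gives a polynomial tail. Concretely the paper takes~$a_* = \frac{1}{4t_*\sqrt n}$, so that~$2a_* t_* = \tfrac{1}{2\sqrt n}$, and then shows the crossing of the resulting threshold forces~$\mathcal N_\infty \ge n^{1/5}/2$, which by Markov with~$p>15$ gives~$C_p' n^{-p/30} < \tfrac{1}{4\sqrt n}$.

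A secondary issue: your choice~$a = C_f n^{-1/3}$ already gives~$2a t_n \asymp n^{-1/3}\log n$, which is \emph{larger} than the target~$n^{-1/2}$, so the final inequality~$Cn^{-11/6} + o(n^{-1/4}) \le n^{-1/2}$ is false as stated. You need~$a$ of order~$1/(t_n\sqrt n)$ to make that term fit; but then the threshold for~$|\xi_t|+|\mathscr E_t|$ drops to roughly~$n^{1/4}/\sqrt{\log n}$, which again cannot be controlled over time~$t_n$ by pure-birth domination and requires the paper's~$\{T_{a_*} < \sigma\}$ argument.
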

\begin{proof}  
Let~$(\mathcal{A}_t,\mathcal{B}_t)_{t \ge 0}$ be the coupling obtained from Lemma~\ref{lem_coupling_two_MCs}, started from~$(\mathcal{A}_0,\mathcal{B}_0) = ((A,\mathcal{E}),\Psi(A,\mathcal{E}))$.
    Recalling from the statement of Lemma~\ref{lem_coupling_two_MCs} that
    \[\sigma:= \inf\{t: \mathcal{B}_t \neq \Psi(\mathcal{A}_t)\}, \qquad T_a:= \inf\{t: f(\mathcal{A}_t) \ge a\}\]
    and abbreviating
    \[t_*:= \frac{2\log n}{|\log \varphi|},\qquad a_*:= \frac{1}{4 t_* \sqrt{n}} = \frac{|\log \varphi|}{8\sqrt{n} \log n},\]
    we bound the probability on the left-hand side of~\eqref{eq_tau_using_coupling} by
      \begin{align}
        &\P(\tau > t_*, \; \sigma > t_* )+ \P(\sigma \le t_* \wedge T_{a_*} ) + \P(T_{a_*} < \sigma \le t_* ).
    \end{align}
    
    By Lemma~\ref{lem_coupling_two_MCs}, we have
    \[\P(\sigma \le t_* \wedge T_{a_*}) \le 2t_*a_* = \frac{1}{2\sqrt{n}}.\]
    By the definition of~$\sigma$, on the event~$\{\tau > t_*,\; \sigma > t_*\}$ we have that~$\mathcal{B}_{t_*}$ is not empty. Then,~$\P(\tau > t_*,\; \sigma > t_*)$ is smaller than the probability that a herds process started with fewer than~$n^{1/6}$ particles is still alive by time~$t_*$. By~\eqref{eq_subadd0} and~\eqref{eq_const}, we obtain
    \begin{equation}
        \label{eq_will_use_later}
    \P(\tau > t_*,\; \sigma > t_*) \le Cn^{1/6} \cdot \varphi^{t_*} = Cn^{1/6} \cdot n^{-2} < \frac{1}{4\sqrt{n}}    
    \end{equation}
    if~$n$ is large enough.

    It remains to bound~$\P(T_{a_*} < \sigma \le t_*)$. Recalling that~$f(\mathcal{A}_t) = C_f(|\xi_t|+|\mathscr{E}_t|)^2/n$, if~$T_{a_*} < \infty$ we have
    \[ |\xi_{T_{a_*}}| + |\mathscr{E}_{T_{a_*}}| \ge (n a_*/C_f)^{1/2}. \]
    Since~$|\xi_0|+|\mathscr{E}_0| \le n^{1/6}$, we obtain that
    \[\text{on } \{T_{a_*} < \infty\}, \quad |\xi_{T_{a_*}}| + |\mathscr{E}_{T_{a_*}}| - (|\xi_0| + |\mathscr{E}_0|) \ge (n a_*/C_f)^{1/2} - n^{1/6}> n^{1/5}.\]
    Now, the process~$(|\xi_t|+|\mathscr{E}_t|)$ only changes at times when~$(\mathcal{A}_t) = (\xi_t,\mathscr{E}_t)$ changes. If~$\mathcal{A}_t$ has a new infection appearing at time~$t$, then~$|\xi_t|+|\mathscr{E}_t|$ may increase by at most 2 at that time. If, on the other hand,~$\mathcal{A}_t$ performs a jump of any other kind, then~$|\xi_t|+|\mathscr{E}_t|$ stays the same or decreases. Hence, for any~$t$, we have
    \[|\xi_t|+|\mathscr{E}_t| - (|\xi_0| + |\mathscr{E}_0|) \le 2 |\{s \le t:\; \xi_s = \xi_{s-}+1\}|.\]
    Putting these observations together, we see that
    \[\text{on } \{T_{a_*} < \sigma \le t_*\}, \quad  |\{s < \sigma:\; \xi_s = \xi_{s-}+1\}| \ge n^{1/5}/2. \]
    Moreover, before time~$\sigma$, whenever a new infection appears in~$(\mathcal{A}_t)$, a new particle is also born in~$(\mathcal{B}_t)$. Letting~$\mathcal{N}_\infty$ denote the number of particles ever born in~$(\mathcal{B}_t)$ (even after time~$\sigma$), we obtain the bound
    \[\P(T_{a_*} < \sigma \le t_*) \le \P(\mathcal{N}_\infty \ge n^{1/5}/2) \le \frac{\E[\mathcal{N}_\infty^p]}{(n^{1/5}/2)^p},\]
    for any~$p \ge 1$, by Markov's inequality. Recalling that~$\mathcal{B}_0$ has at most~$n^{1/6}$ particles, and using Lemma~\ref{lem_for_p_infinity} and Corollary~\ref{lem.sub.totsize}, the right-hand side is smaller than
    \[\frac{C_p (n^{1/6})^p}{(n^{1/5}/2)^p} = C_p' \cdot n^{-p/30},\]
for some constants~$C_p,C_p' > 0$. Taking~$p>15$ and then~$n$ large enough, this is smaller than~$\frac{1}{4\sqrt{n}}$, completing the proof.
\end{proof}

\begin{lemma}\label{lem_from_not_forest}
    There exist~$\varepsilon > 0$ and~$\delta > 0$ such that the following holds. Let~$(A,\mathcal{E}) \in \mathcal{X}_1 \backslash \mathcal{X}_1'$ (so that~$\mathrm{Graph}(A,\mathcal{E})$ is not a forest). Assume that~$|A| + |\mathcal{E}| \le n^\varepsilon$. Also assume that there is an edge~$e$ in~$\mathcal{E}$ such that~$(A,\mathcal{E}\backslash \{e\}) \in \mathcal{X}_1'$, that is,~$\mathrm{Graph}(A,\mathcal{E})$ would become a forest if~$e$ were removed from~$\mathcal{E}$.  Letting~$(\xi_t,\mathscr{E}_t)_{t \ge 0}$ denote the contact and exploration process started from~$(A,\mathcal{E})$, and letting~$\tau$ denote the extinction time of~$(\xi_t)$, for~$n$ large enough we have
    
\begin{equation}
    \label{eq_delta_and_phi}
\P\left( \tau >  \left( \delta + \frac{2}{|\log \varphi|} \right) \log n \right) < n^{-4\varepsilon}.
\end{equation}
\end{lemma}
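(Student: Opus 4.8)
The plan is to follow the narrative already sketched in Section~\ref{ss_concluding_proof}: split the evolution into two stages. In Stage~2, we run the contact and exploration process from the non-forest state $(A,\mathcal{E})$ and wait until the problematic edge $e$ is removed by a switch; in Stage~3, once a forest reappears, we invoke Lemma~\ref{lem_stage_3}. First I would control the length of Stage~2. The edge $e$ is involved in a switch at rate $\upupsilon_n \cdot 2(|E|-1) \approx \mathsf{v}/d$ per unit time (bounded below by a positive constant not depending on $n$), so the time $\rho$ at which $e$ is first hit by a switch is stochastically dominated by an exponential random variable with parameter of order $1$; in particular, $\P(\rho > \tfrac{\delta}{2}\log n) \le n^{-c}$ for any desired power, provided $\delta$ is chosen large enough. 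The point to be careful about is that removal of $e$ must actually restore the forest structure: after a switch involving $e$, either $e$ is simply deleted from $\mathscr{E}$ (Case~1, first sub-case), which by hypothesis makes $\mathrm{Graph}$ a forest, or $e$ is involved in a switch with another explored edge $e'$, producing two new edges — one would then need to argue that, up to an event of small probability, this does not happen, or if it does, that a forest still reappears quickly. The cleanest route is to wait for the first switch that involves $e$ and exactly one unexplored edge; the rate of this is still bounded below by a constant (the number of unexplored half-edges is $\ge nd - 2|\mathcal{E}| \ge nd/2$ for $n$ large since $|\mathcal{E}|\le n^\varepsilon$), while the competing "bad" events (a switch of $e$ with another explored edge, or a new infection/new explored edge being created, that disrupts things) happen at rate $O(n^\varepsilon)$, so over a window of length $\tfrac{\delta}{2}\log n$ the probability that anything bad happens before $e$ is cleanly removed is $O(n^\varepsilon \log n / n)$ or controlled via a union bound; I would also need the herds/pure-birth domination (Lemma~\ref{lem_explosion} transported to the graph setting, or simply Corollary~\ref{lem.sub.totsize} applied to the dominating herds process) to bound the number of new infections in $[0,\tfrac{\delta}{2}\log n]$, ensuring $|\xi_t| + |\mathscr{E}_t|$ stays below $n^{1/6}$ throughout Stage~2 with probability $1 - O(n^{-c})$.

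Next I would splice the two stages together. On the complement of the small-probability bad events above, at some random time $\rho \le \tfrac{\delta}{2}\log n$ we reach a state $(\xi_\rho,\mathscr{E}_\rho) \in \mathcal{X}_1'$ with $|\xi_\rho| + |\mathscr{E}_\rho| \le n^{1/6}$. By the strong Markov property of the pair $(\xi_t,\mathscr{E}_t)$ (this is a Markov chain, as recorded in property (P3)), we may now apply Lemma~\ref{lem_stage_3} with initial condition $(\xi_\rho,\mathscr{E}_\rho)$: it gives that the extinction time measured from $\rho$ exceeds $\tfrac{2}{|\log\varphi|}\log n$ with probability at most $n^{-1/2}$. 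Combining, on the good event the total extinction time is at most $\rho + (\text{Stage 3 duration}) \le \tfrac{\delta}{2}\log n + \tfrac{2}{|\log\varphi|}\log n$ — and here I would just take $\delta$ slightly larger than the $\delta/2$ needed above so that the final bound reads $(\delta + \tfrac{2}{|\log\varphi|})\log n$ as in the statement (with room to spare). So
\[
\P\left(\tau > \left(\delta + \tfrac{2}{|\log\varphi|}\right)\log n\right) \le \P(\text{bad event in Stage 2}) + n^{-1/2} = O(n^{-c}),
\]
and choosing $\varepsilon$ small enough (say $\varepsilon < 1/8$) makes this $< n^{-4\varepsilon}$ for $n$ large.

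The main obstacle I anticipate is the bookkeeping in Stage~2: making sure that the explored graph really does return to a forest, and does so before any new "defect" is created. One has to track that (i) the edge $e$ is removed cleanly (switch with an unexplored edge), (ii) no new edge is added during $[0,\rho]$ that creates a second cycle (each newly revealed edge closes a cycle only with probability $O((|A|+|\mathcal{E}|)/n) = O(n^{\varepsilon - 1})$, so over $O(n^\varepsilon)$ such events in a logarithmic window this is still $O(n^{2\varepsilon-1}\log n)$), and (iii) the process does not grow too large, which follows from pure-birth domination. Each of these is individually routine but they must be combined via a union bound with the right powers of $n$, which is why the hypothesis $|A|+|\mathcal{E}| \le n^\varepsilon$ with small $\varepsilon$ is exactly what is needed. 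A secondary but minor point is that in the definition of Stage~2 one should stop at the \emph{first} switch involving $e$ and an unexplored edge, rather than the first switch involving $e$ at all, to avoid the awkward two-explored-edges sub-case; alternatively, argue that the latter sub-case has rate $O(n^\varepsilon \cdot \upupsilon_n) = O(n^{\varepsilon-1})$ and is therefore negligible over the relevant time window, so either formulation works.
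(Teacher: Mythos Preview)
Your approach is essentially the same as the paper's: wait for the problematic edge~$e$ to be removed by a switch, control that nothing bad happens in the meantime (no new cycle is created, no blow-up in size), and then invoke Lemma~\ref{lem_stage_3}. The paper implements exactly this decomposition, with a four-term union bound and a martingale argument for the ``bad jump'' term.

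There is, however, one genuine confusion in your parameter tuning that would cause trouble if carried out as written. You say that $\P(\rho > \tfrac{\delta}{2}\log n) \le n^{-c}$ ``for any desired power, provided $\delta$ is chosen large enough'', and then separately appeal to pure-birth domination to keep $|\xi_t|+|\mathscr{E}_t|$ below $n^{1/6}$ over the window $[0,\tfrac{\delta}{2}\log n]$. These two requirements are in tension: the pure-birth bound gives $\P(\max_{t\le \delta\log n}(|\xi_t|+|\mathscr{E}_t|)>n^{1/6}) \lesssim n^{\varepsilon + c'\delta - 1/6}$ for some $c'=c'(\lambda,d)>0$, which forces $\delta$ to be \emph{small}. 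With $\delta$ small, $\P(\rho>\tfrac{\delta}{2}\log n)\approx n^{-\mathsf{v}\delta/2}$ is only a \emph{fixed small} power of $n$, not an arbitrarily large one. The resolution (and this is what the paper does) is to choose $\delta$ small first so that the growth bound works, and then choose $\varepsilon$ much smaller so that $4\varepsilon$ sits below all of the powers $\mathsf{v}\delta$, $1/6 - \varepsilon - 2\lambda d\delta$, $1/2$, $2/3$ that arise. Your final sentence (``choosing $\varepsilon$ small enough'') is the right instinct, but your earlier ``$\delta$ large enough'' is backward and would break the argument.

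A second, smaller point: your option of using Corollary~\ref{lem.sub.totsize} (the herds bound on total births) during Stage~2 is not directly available, since the explored graph is \emph{not} a forest there and no herds coupling exists; you are forced to use the crude pure-birth comparison, which is exactly why the $\delta$-small constraint appears. Relatedly, your count of ``$O(n^\varepsilon)$ new-edge events in a logarithmic window'' implicitly assumes the process stays of size $O(n^\varepsilon)$, which again only holds if $\delta$ is small. Once the order of choices is fixed, your sketch and the paper's proof coincide.
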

\begin{proof}
Let~$U$ denote the time when the edge~$e$ disappears, due to being involved in a switch. The rate at which this happens equals~$\upupsilon_n = \frac{\mathsf{v}}{nd}$ times the number of other edges in the graph, which is~$\frac{nd}{2}-1$, times 2 (switches can be positive or negative). So this rate is~$\mathsf{v}(1 - \frac{\mathsf{1}}{2nd})$. Hence,~$U$ has exponential distribution with parameter~$\mathsf{v}(1 - \frac{\mathsf{1}}{2nd})$.

Denote by~$B$ the event that:
\begin{itemize}
    \item for all~$t \in [0,U)$ we have~$(\xi_t,\mathscr{E}_t \backslash \{e\}) \in \mathcal{X}_1'$, that is, it stays the case that the removal of~$e$ from the set of edges turns~$\mathrm{Graph}(\xi_t,\mathscr{E}_t)$ into a forest;
    \item $\mathrm{Graph}(\xi_U,\mathscr{E}_U)$ is a forest.
    \end{itemize}

We fix~$\delta > 0$ and~$\varepsilon > 0$ for now; their values will be chosen at the end of the proof.
We define
\[T:= \inf\{t:\; |\xi_t| + |\mathscr{E}_t| > n^{1/6}\}.\]
We bound the probability in~\eqref{eq_delta_and_phi} by
\begin{align}
    &\P(U > \delta \log n) \\
   \label{eq_second_term4} & + \P\left(U \le \delta \log n,\; T \le U\right)\\
    \label{eq_third_term4}& + \P\left(\left\{ U \le \delta \log n,\; T > U\right\} \cap B^c\right)\\
    & + \P\left(\left\{ U \le \delta \log n,\; T > U\right\} \cap B \cap \left\{\tau > \left(\delta + \frac{2}{|\log \varphi|} \right)\log n \right\}\right).\label{eq_fourth_term4}
\end{align}
We bound these four terms separately, starting with the first and last, which are the easiest. We have
\[\P(U > \delta \log n) = \exp\left\{ - \mathsf{v}\left(1-\frac{1}{2nd}\right) \delta \log n \right\} = n^{-\mathsf{v}\left(1-\frac{1}{2nd}\right) \delta}.\]
Next, letting~$B':=\left\{ U \le \delta \log n,\; T > U\right\} \cap B$, and letting~$(\mathcal{F}_t)_{t \ge 0}$ be the natural filtration of~$(\xi_t,\mathscr{E}_t)$, the probability in~\eqref{eq_fourth_term4} is
\begin{align*}
    &\P\left(B' \cap \left\{\tau > \left(\delta + \frac{2}{|\log \varphi|} \right)\log n \right\} \right) \\[.2cm]
    &= \mathbb{E}\left[ \mathds{1}_{B'} \cdot \E \left[ \left. \tau > \left(\delta + \frac{2}{|\log \varphi|} \right)\log n \; \right| \mathcal{F}_U\right]\right]\le \P(B') \cdot \frac{1}{\sqrt{n}} \le \frac{1}{\sqrt{n}},
\end{align*}
where the first inequality follows from Lemma~\ref{lem_stage_3}.

To bound~\eqref{eq_second_term4}, we note that~$|\xi_t|+|\mathscr{E}_t|$ can increase by at most two units at a given jump time, and a jump that causes such an increase happens with rate at most~$\lambda d|\xi_t|$. We can thus stochastically dominate~$(|\xi_t|+|\mathscr{E}_t|)_{t \ge 0}$ by a pure-birth process~$(Z_t)_{t \ge 0}$ on~$\mathbb{N}$ which starts from~$Z_0 = \lceil n^\varepsilon \rceil$ and jumps from~$k$ to~$k+2$ with rate~$\lambda d k$ (and has no other kind of jump). Then, the probability in~\eqref{eq_second_term4} is smaller than
\begin{equation*}\begin{split}
&\P\left( \max_{0 \le t \le \delta \log n} (|\xi_t| + |\mathscr{E}_t|) > n^{1/6}\right) \\
&\le \P(Z_{\delta \log n} > n^{1/6}) \le \frac{\mathbb{E}[Z_{\delta \log n}]}{n^{1/6}} = \frac{\lceil n^\varepsilon \rceil \cdot \exp\{2\lambda d \delta \log n\}}{n^{1/6}} \le 2n^{\varepsilon + 2\lambda d \delta - \frac16}.    
\end{split}\end{equation*}

We now turn to~\eqref{eq_third_term4}. For the process~$(\xi_t,\mathscr{E}_t)_{t \ge 0}$, let us say that a ``bad jump'' is a jump time when either (a) a contact process transmission occurs which causes the inclusion in the exploration process of an edge between two vertices that were already present in~$\mathrm{Graph}(\xi_t,\mathscr{E}_t)$, or (b) a switch involving two edges that were already in~$\mathscr{E}_t$. The point is that, as long as there are no bad jumps, it remains true that~$\mathrm{Graph}(\xi_t,\mathscr{E}_t)$ would become a forest if~$e$ were removed. In particular, letting~$S$ denote the time at which the first bad jump occurs, we have
\[
\P(\{U \le \delta \log n,\; T > U\} \cap B^c) \le \P(U \le \delta \log n,\; T > U,\; S \le U).
\]
Now let~$\mathcal{R}_t$ denote the rate at which a bad jump occurs from the present state~$(\xi_t,\mathscr{E}_t)$. It is straightforward to check that there is~$C>0$ such that~$\mathcal{R}_t \le C\frac{(|\xi_t| + |\mathscr{E}_t|)^2}{n}$, and that the process
\[Y_t:= \mathds{1}\{S \le t,\; S \le T\} - \int_0^{t \wedge S \wedge T} \mathcal{R}_s\;\mathrm{d}s,\quad t \ge 0\]
is a martingale. Then,
\[0 = \E[Y_0] = \E[Y_{\delta \log n}] \ge \P(S \le \delta \log n,\; S \le T) - \delta \log n \cdot C \frac{(n^{1/6})^2}{n}.\]
Then, we have
\[
\P(U \le \delta \log n,\; T > U,\; S \le U) \le \P(S \le \delta \log n,\; S \le T) \le C\delta \log n \cdot n^{-2/3}.
\]

Putting now all our bounds together, we have proved that the probability in~\eqref{eq_delta_and_phi} is smaller than
\[
n^{-\mathsf{v}\left(1-\frac{1}{2nd}\right) \delta} + n^{-1/2} + 2n^{\varepsilon + 2\lambda d \delta - \frac16} + C\delta \log n \cdot n^{-2/3}.
\]
By first choosing~$\delta$ small and then choosing~$\varepsilon$ much smaller, this expression is smaller than~$n^{-4\varepsilon}$ when~$n$ is large enough.
\end{proof}

\begin{proof}[Proof of Theorem~\ref{theo.2}]
Let~$\varepsilon$ and~$\delta$ be as in Lemma~\ref{lem_from_not_forest}. Define
\[\beta:= \frac{4}{|\log \varphi|} + \delta.\]
We will prove that
\[\P(\xi^{V_n}_{\beta \log n} \neq \varnothing) \xrightarrow{n \to \infty} 0.\]
By~\eqref{eq_apply_duality}, this will follow from proving that
\[\lim_{n \to \infty} n \cdot \P(\xi^{\bar{u}}_{\beta \log n} \neq \varnothing) = 0,\]
where~$\bar{u}$ is a deterministic vertex. In order to prove this, we take the coupling~$(\mathcal{A}_t,\mathcal{B}_t)_{t \ge 0}$ from Lemma~\ref{lem_coupling_two_MCs}, with~$\mathcal{A}_0 = (\xi_0,\mathscr{E}_0) = (\{\bar{u}\},\varnothing)$ and~$\mathcal{B}_0 = \Psi(\mathcal{A}_0)$. Recall that~$\sigma := \inf\{t: \mathcal{B}_t \neq \Psi(\mathcal{A}_t)\}$. 

Let~$\tau$ be the extinction time of~$(\xi_t)$ (which starts from~$\{\bar{u}\}$), and also define
\[\beta_0:= \frac{2}{|\log \varphi|}\]
and
\[T:= \inf\{t:\;|\xi_t|+|\mathscr{E}_t| \ge n^\varepsilon\}.\]
We bound:
\begin{align}
\nonumber    &\P(\tau > \beta \log n) \\
\label{eq_final_three1}&\le \P(\tau > \beta \log n,\; \sigma > \beta_0 \log n)\\
\label{eq_final_three2}&\quad + \P(\sigma \le \beta_0 \log n,\; T \le \sigma)\\
\label{eq_final_three3}&\quad + \P(\tau > \beta \log n,\; \sigma \le \beta_0 \log n,\; T > \sigma).
\end{align}

To bound~\eqref{eq_final_three1}, we give the same argument as we have used to bound~\eqref{eq_will_use_later}; here it gives:
\[
\P(\tau > \beta \log n,\; \sigma > \beta_0 \log n) \le C\varphi^{\beta_0 \log n} < n^{-2}.
\]

To bound~\eqref{eq_final_three2}, we observe again that~$|\xi_t| + |\mathscr{E}_t|$ can only increase by 2 at any given jump time, and it only increases when there are contact transmissions generating new births. Moreover, before time~$\sigma$, any time when there is a birth for~$(\xi_t,\mathscr{E}_t)$, there is also a particle birth for~$\mathscr{B}_t$. These considerations allow us to bound:
\begin{align*}
    &\P(\sigma \le \beta_0 \log n,\; T \le \sigma) \\
    &\le \P(\sigma \le \beta_0 \log n,\; |\{t \le \sigma:\; |\xi_t| = |\xi_{t-}| + 1\}| \ge n^\varepsilon/2)\\
    &\le \P\left(\sigma \le \beta_0 \log n,\; |\{t < \sigma:\; |\xi_t| = |\xi_{t-}| + 1\}| \ge \frac{n^\varepsilon}{2}-2\right)\\
    &\le \P\left((\mathscr{B}_t) \text{ has more than $\frac{n^\varepsilon}{2}-2$ births}\right).
\end{align*}
By Corollary~\ref{lem.sub.totsize}, this is smaller than
\[{C}_p\left( \frac{n^{\varepsilon}}{2}-2 \right)^{-p}\]
for any~$p \ge 1$. Hence, by taking~$p > 1/(2\varepsilon)$ and~$n$ large enough, it is smaller than~$1/n^2$.

We now turn to~\eqref{eq_final_three3}. Let us first bound, using Lemma~\ref{lem_coupling_two_MCs}:
\begin{equation}\label{eq_again_with_Cf}\P(\sigma \le \beta_0 \log n,\; T > \sigma) \le C_f \cdot \frac{(n^\varepsilon)^2}{n} \cdot \beta_0 \log n = C_f \beta_0 n^{-1+2\varepsilon} \log n.\end{equation}
Letting~$(\mathcal{F}_t)_{t \ge 0}$ be the natural filtration for~$(\mathcal{A}_t,\mathcal{B}_t)_{t \ge 0}$, we write
\begin{align}
\nonumber&\P(\tau > \beta \log n,\; \sigma \le \beta_0 \log n,\; T > \sigma) \\
\nonumber&\le \E\left[\mathds{1}\{\sigma \le \beta_0 \log n,\; T > \sigma\}\cdot \P(\tau > \beta \log n \mid \mathcal{F}_\sigma) \right] \\
 &=\E\left[\mathds{1}\{\sigma \le \beta_0 \log n,\; T > \sigma,\; \mathcal{A}_\sigma \notin \mathcal{X}_1' \}\cdot \P(\tau > \beta \log n \mid \mathcal{F}_\sigma) \right] \label{eq_final_final1}\\
&\quad +\E\left[\mathds{1}\{\sigma \le \beta_0 \log n,\; T > \sigma,\;\mathcal{A}_\sigma \in \mathcal{X}_1'\}\cdot \P(\tau > \beta \log n \mid \mathcal{F}_\sigma) \right].\label{eq_final_final2}
\end{align}
To bound~\eqref{eq_final_final1}, we note that on the event~$\{\sigma \le \beta_0 \log n,\; T > \sigma,\; \mathcal{A}_\sigma \notin \mathcal{X}_1'\}$, we have that~$\mathcal{A}_\sigma = (\xi_\sigma,\mathscr{E}_\sigma)$ satisfies the assumptions of Lemma~\ref{lem_from_not_forest}, and then that lemma implies that, on this event,~$\P(\tau > \beta \log n \mid \mathcal{F}_\sigma) \le n^{-4\varepsilon}$. Together with~\eqref{eq_again_with_Cf}, this implies that~\eqref{eq_final_final1} is smaller than
\[C_f \beta_0 n^{-1+2\varepsilon} \log n \cdot n^{-4\varepsilon} < n^{-1-\varepsilon}\]
if~$n$ is large enough.

    Next, Lemma~\ref{lem_stage_3} implies that on~$\{\sigma \le \beta_0 \log n,\; T > \sigma,\;\mathcal{A}_\sigma \in \mathcal{X}_1'\}$, we have~$\P(\tau > \beta \log n \mid \mathcal{F}_\sigma) < n^{-1/2}$; combining this with~\eqref{eq_again_with_Cf} shows that~\eqref{eq_final_final2} is smaller than  
    \[C_f \beta_0 n^{-1+2\varepsilon} \log n \cdot n^{-1/2} < n^{-5/4}\]
for~$n$ large. This completes the proof.
    \end{proof}

\section{Appendix}
\subsection{Proofs of Lemma~\ref{lem_integral_g} and Lemma~\ref{lem_derivative_A}}

\begin{proof}[Proof of Lemma~\ref{lem_integral_g}]
	The proof is the same for the two functions, so we only treat the first. We use the simple bounds, that come from comparison with a pure birth process,
	\begin{align*}&\mathbb{E}[X_t \mid \Xi_0 = \delta_A] \le |A|\cdot e^{d\lambda t}, \quad \mathbb{E}[X_t \mid \Xi_0 = \delta_{A_{e,1}} + \delta_{A_{e,2}}] \le |A|\cdot e^{d\lambda t},\end{align*}
together with the expression~\eqref{eq_def_g}, to obtain
	\begin{align*}
		g_\mathsf{v}(\xi,t) &\le e^{d\lambda t} \cdot \sum_A \xi(A)\cdot |A|\cdot |\{\text{active edges of $A$}\}|\\
	&\le e^{d\lambda t} \cdot \left(\sum_A \xi(A)\cdot |A|\right) \left(\sum_A \xi(A)\cdot |\{\text{active edges of $A$}\}|\right)\\ &\le e^{d\lambda t} \cdot (X(\xi) + \mathscr{E}(\xi))^2.
	\end{align*}
	The statement now readily follows from Corollary~\ref{cor_explosion}.
\end{proof}

The following preliminary result will allow us to perform the exchange of limit and expectation in~\eqref{eq_exchange}.

\begin{lemma}
	\label{lem_bound_dct} There exist~$c_1,\;c_2> 0$ (depending on~$\lambda,\mathsf{v},\varepsilon)$ such that for any~$0 \le t < t+s <T$, on~$\{\tau_{\mathrm{sep}} > t\}$ we have
	\begin{equation*}
		|\widehat{\E}[\mathcal{A}_{t+s} \mid \mathcal{F}_t]| \le c_1(X(\mathcal{V}_t) + \mathscr{E}(\mathcal{V}_t))^{c_2}\cdot s.
	\end{equation*}
\end{lemma}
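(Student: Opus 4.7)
The starting point is that on $\{\tau_\mathrm{sep} > t\}$ we have $\mathcal{A}_t = 0$, so by Definition~\ref{def_of_A} and the tower property,
\[\widehat{\E}[\mathcal{A}_{t+s} \mid \mathcal{F}_t] = \widehat{\E}\bigl[\mathds{1}\{\tau_\mathrm{sep} \in (t, t+s]\} \cdot (\mathcal{Y}_T - \mathcal{X}_T) \mid \mathcal{F}_t\bigr].\]
I will first bound the integrand $\mathcal{Y}_T - \mathcal{X}_T$ by conditioning further on $\mathcal{F}_{\tau_\mathrm{sep}}$, and then disintegrate the event $\{\tau_\mathrm{sep} \in (t,t+s]\}$ against the intensity of $\tau_\mathrm{sep}$.

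For the first bound, observe that conditionally on $\mathcal{F}_{\tau_\mathrm{sep}}$, the processes $(\mathcal{V}_u)_{u\ge \tau_\mathrm{sep}}$ and $(\mathcal{W}_u)_{u\ge \tau_\mathrm{sep}}$ are independent herds processes with parameters $(\lambda,\mathsf{v})$ and $(\lambda,\mathsf{v}+\varepsilon)$ started from $\mathcal{V}_{\tau_\mathrm{sep}}$ and $\mathcal{W}_{\tau_\mathrm{sep}}$, respectively. Since the rejected splitting instruction at $\tau_\mathrm{sep}$ only reshapes herds in $\mathcal{W}$ without changing the total particle count, we have $X(\mathcal{W}_{\tau_\mathrm{sep}}) = X(\mathcal{V}_{\tau_\mathrm{sep}})$. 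Applying Corollary~\ref{cor_explosion} with $k=1$ to each process (the constant depends only on $\lambda$ and $T$, so it works for both split rates~$\mathsf{v}$ and~$\mathsf{v}+\varepsilon$) yields
\[|\widehat{\E}[\mathcal{Y}_T - \mathcal{X}_T \mid \mathcal{F}_{\tau_\mathrm{sep}}]| \le \widehat{\E}[\mathcal{X}_T + \mathcal{Y}_T \mid \mathcal{F}_{\tau_\mathrm{sep}}] \le c \cdot X(\mathcal{V}_{\tau_\mathrm{sep}}).\]

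For the second step, by construction of the coupling, the rate at which a rejected splitting instruction arises at time $u$ conditional on $\tau_\mathrm{sep} > u$ is exactly $\varepsilon \mathscr{E}(\mathcal{V}_u)$. Since $\mathcal{V}$ itself does not jump at $\tau_\mathrm{sep}$, the process $u \mapsto X(\mathcal{V}_u)$ is continuous there, so writing $\tau_\mathrm{sep}$ as the first point of a point process with predictable intensity $\varepsilon \mathscr{E}(\mathcal{V}_u)\mathds{1}\{\tau_\mathrm{sep} > u\}$ gives
\[\widehat{\E}\bigl[\mathds{1}\{\tau_\mathrm{sep} \in (t,t+s]\} X(\mathcal{V}_{\tau_\mathrm{sep}}) \mid \mathcal{F}_t\bigr] \le \varepsilon \, \widehat{\E}\Bigl[\int_t^{t+s} \mathscr{E}(\mathcal{V}_u)\, X(\mathcal{V}_u)\;\mathrm{d}u \;\Big|\; \mathcal{F}_t\Bigr].\]
Bounding the integrand by its supremum, applying Cauchy-Schwarz, and then using the Markov property together with Corollary~\ref{cor_explosion} for $k=2$ (applied to both $X$ and $\mathscr{E}$), the right-hand side is at most $C \varepsilon s (X(\mathcal{V}_t) + \mathscr{E}(\mathcal{V}_t))^2$. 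Combining the two steps proves the lemma with~$c_2 = 2$ and~$c_1$ a suitable multiple of~$C\varepsilon$.

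The main obstacle is the compensator identity in the second step: strictly speaking, one should verify that $\tau_\mathrm{sep}$ is indeed the first point of a Cox-type process with the claimed predictable intensity on the filtration $(\mathcal{F}_t)$, which follows directly from the construction of the coupling (the rejections of splitting instructions form an independent thinning, with parameter~$\varepsilon/(\mathsf{v}+\varepsilon)$, of the total Poisson splitting clocks attached to each edge), but deserves a brief justification before the identity can be applied.
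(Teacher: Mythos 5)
Your argument is correct, but it takes a genuinely different route from the paper's. The paper conditions on the number $N$ of jumps of $(\mathcal{V},\mathcal{W})$ in $[t,t+s]$ and treats $\{N=1\}$ and $\{N\ge 2\}$ separately: on $\{N=1\}$ it uses $X(\mathcal{V}_{\tau_\mathrm{sep}})=X(\mathcal{V}_t)$ and bounds $\widehat{\P}(N\ge 1\mid\mathcal{F}_t)$ by (total jump rate)$\times s$, while on $\{N\ge2\}$ it applies H\"older together with a quadratic-in-$s$ bound on $\widehat{\P}(N\ge2\mid\mathcal{F}_t)$ and the third-moment bound of Corollary~\ref{cor_explosion}. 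You instead write $\tau_\mathrm{sep}$ as the first point of the thinned split-instruction point process, whose intensity is $\varepsilon\,\mathscr{E}(\mathcal{V}_{u-})$, and apply the compensator identity with the predictable integrand $X(\mathcal{V}_{u-})$ (legitimate precisely because $\mathcal{V}$ does not jump at $\tau_\mathrm{sep}$, so $X(\mathcal{V}_{\tau_\mathrm{sep}})=X(\mathcal{V}_{\tau_\mathrm{sep}-})$). Your route is shorter, avoids the two-case analysis entirely, and yields the clean exponent $c_2=2$; what it costs is the stochastic-intensity machinery, which — as you rightly flag — requires a brief justification that the thinning construction produces a point process with the claimed predictable intensity, and that the compensator identity is applicable (the needed integrability of $\int_t^{t+s}\mathscr{E}(\mathcal{V}_u)X(\mathcal{V}_u)\,\mathrm{d}u$ is supplied by Corollary~\ref{cor_explosion}, exactly as in your final Cauchy--Schwarz step). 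The paper's version is more elementary but pays with a larger, unspecified exponent coming from the H\"older step. Both arguments rest on the same two structural facts: the uniform first-moment bound $|\widehat{\E}[\mathcal{Y}_T-\mathcal{X}_T\mid\mathcal{F}_{\tau_\mathrm{sep}}]|\le c\,X(\mathcal{V}_{\tau_\mathrm{sep}})$ (you get it from Corollary~\ref{cor_explosion}, the paper from direct pure-birth domination — equivalent) and the observation that the separation event has probability of order $s$ times a polynomial in $X(\mathcal{V}_t)+\mathscr{E}(\mathcal{V}_t)$.
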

\begin{proof}
	We fix~$s,t$ as in the statement, and let~$N$ denote the number of jumps of the process~$(\mathcal{V},\mathcal{W})$ in~$[t,t+s]$. We have
	\begin{align}
		\nonumber &\mathds{1}_{\{\tau_\mathrm{sep} > t\}}\cdot |\widehat{\E}\left[\mathcal{A}_{t+s} \mid \mathcal{F}_t\right]|\\
		\label{eq_first_withN}&\le \mathds{1}_{\{\tau_\mathrm{sep} > t\}}\cdot \left|\widehat{\E}\left[\left.\mathds{1}_{\{\tau_\mathrm{sep} \le t+s,\; N = 1\}}\cdot \widehat{\E}[\mathcal{Y}_T - \mathcal{X}_T \mid \mathcal{F}_{\tau_\mathrm{sep}}] \;\right|\; \mathcal{F}_t\right]\right| \\
		\label{eq_second_withN}&\quad+\mathds{1}_{\{\tau_\mathrm{sep} > t\}}\cdot \left|\widehat{\E}\left[\left.\mathds{1}_{\{\tau_\mathrm{sep} \le t+s,\; N \ge 2\}}\cdot \widehat{\E}[\mathcal{Y}_T - \mathcal{X}_T \mid \mathcal{F}_{\tau_\mathrm{sep}}] \;\right|\;\mathcal{F}_t\right]\right|.
	\end{align}
	We will treat the terms~\eqref{eq_first_withN} and~\eqref{eq_second_withN} separately.
For both, it will be useful to bound, using domination by a pure birth process,
	\[\widehat{\E}[\mathcal{X}_T \mid \mathcal{F}_{\tau_\mathrm{sep}}] \le e^{d\lambda T} X(\mathcal{V}_{\tau_\mathrm{sep}}),\quad \widehat{\E}[\mathcal{Y}_T \mid \mathcal{F}_{\tau_\mathrm{sep}}] \le e^{d\lambda T} X(\mathcal{W}_{\tau_\mathrm{sep}})= e^{d\lambda T} X(\mathcal{V}_{\tau_\mathrm{sep}}),\]
	which gives
	\begin{equation}\label{eq_for_two_bounds}
	|\widehat{\E}[\mathcal{Y}_T - \mathcal{X}_T \mid \mathcal{F}_{\tau_\mathrm{sep}}]| \le e^{d\lambda T}X(\mathcal{V}_{\tau_\mathrm{sep}}).
	\end{equation}
	
	We start bounding~\eqref{eq_first_withN}. On the event~$\{t < \tau_\mathrm{sep} \le t+s,\; N = 1\}$, we have~$X(\mathcal{V}_t) = X(\mathcal{W}_t) =X(\mathcal{V}_{\tau_\mathrm{sep}}) =  X(\mathcal{W}_{\tau_\mathrm{sep}})$ (since in this event the only jump of~$(\mathcal{V},\mathcal{W})$ in~$[t,t+s]$ is a split which causes the separation of the processes, so there is no change in the number of particles). Then, using~\eqref{eq_for_two_bounds}, we obtain that~\eqref{eq_first_withN} is smaller than
	\begin{align*}
	\mathds{1}_{\{\tau_\mathrm{sep} > t\}}\cdot  e^{d\lambda T} X(\mathcal{V}_t) \cdot \widehat{\P}( \tau_\mathrm{sep} \le t+s,\; N\ge 1 \mid \mathcal{F}_t ).
	\end{align*}
	The rate with which the process jumps away from the state~$(\mathcal{V}_t,\mathcal{W}_t)$ is at most
	\[\mu_1(\mathcal{V}_t):= (d\lambda+1) X(\mathcal{V}_t) + (\mathsf{v}+\varepsilon)\mathscr{E}(\mathcal{V}_t),\]
so the probability above is at most
	\[1-\exp\{-\mu_1(\mathcal{V}_t)\cdot s\}\le \mu_1(\mathcal{V}_t)\cdot s. \]
	We have thus proved that~\eqref{eq_first_withN} is bounded by the desired expression.

	We now turn to~\eqref{eq_second_withN}. We start using~\eqref{eq_for_two_bounds} to bound~\eqref{eq_second_withN} by
	\begin{align*}
		&\mathds{1}_{\{\tau_\mathrm{sep} > t\}}\cdot e^{d\lambda T}\cdot  \widehat{\E}\left[\left.\mathds{1}_{\{\tau_\mathrm{sep} \le t+s,\; N \ge 2\}}\cdot X(\mathcal{V}_{\tau_\mathrm{sep}})  \;\right|\;\mathcal{F}_t\right]\\[.2cm]
		&\le \mathds{1}_{\{\tau_\mathrm{sep} > t\}}\cdot e^{d\lambda T}\cdot \widehat{\E}\left[\left.\mathds{1}_{\{ N \ge 2\}}\cdot \max_{u \in [t,T]} X(\mathcal{V}_u)  \;\right|\;\mathcal{F}_t\right].
	\end{align*}
	By H\"older's inequality, the expectation on the right-hand side is smaller than
	\begin{align}\label{eq_after_holder}
		\mathds{1}_{\{\tau_\mathrm{sep} > t\}}\cdot e^{d\lambda T}\cdot	\widehat{\P}\left(N \ge 2 \mid \mathcal{F}_t\right)^{2/3} \cdot \widehat{\E}\left[\left. \max_{u \in [t,T]}  X(\mathcal{V}_u)^3   \;\right|\;\mathcal{F}_t\right]^{1/3}.
	\end{align}
	Corollary~\ref{cor_explosion} and the Markov property imply that
	\begin{equation}\label{eq_just_corollary}
	\widehat{\E}\left[\left. \max_{u \in [t,T]} X(\mathcal{V}_u)^3   \;\right|\;\mathcal{F}_t\right]^{1/3} \le cX(\mathcal{V}_t). 
	\end{equation}
	Let
	\[\mu_2(\mathcal{V}_t):= 2(d\lambda+1)(X(\mathcal{V}_t)+1) + 2(\mathsf{v}+\varepsilon)(\mathscr{E}(\mathcal{V}_t)+1).\]
	In the event~$\{\tau_\mathrm{sep} > t\}$, the process~$(\mathcal{V},\mathcal{W})$ jumps away from~$(\mathcal{V}_t,\mathcal{W}_t)$ with rate smaller than~$\mu_1(\mathcal{V}_t)$ (as previously observed), and after performing a first jump, it performs a second jump with a rate that is smaller than~$\mu_2(\mathcal{V}_t)$. Indeed, after the first jump the number of particles or active edges of~$\mathcal{V}$ and~$\mathcal{W}$ increase by at most 1, and the factors~$2$ in the definition of~$\mu_2(\mathcal{V}_t)$ account for the possibility that the first jump is the separation of the two processes.
	Letting~$({Z}_t)_{t \ge 0}$ be a Poisson process with constant rate~$\mu_2(\mathcal{V}_t)$, we bound (on the event~$\{\tau_\mathrm{sep} > t\}$):
	\begin{align*}
		\widehat{\P}(N \ge 2 \mid \mathcal{F}_t) \le \P({Z}_s \ge 2) &= 1-\mathrm{e}^{-\mu_2(\mathcal{V}_t) s} - \mu_2(\mathcal{V}_t) s\cdot  \mathrm{e}^{-\mu_2(\mathcal{V}_t) s} \\&\le \mu_2(\mathcal{V}_t) s - \mu_2(\mathcal{V}_t) s \cdot \mathrm{e}^{-\mu_2(\mathcal{V}_t) s} \\
		&\le (\mu_2(\mathcal{V}_t) s)^2.
	\end{align*}
	Plugging this bound and~\eqref{eq_just_corollary} back in~\eqref{eq_after_holder} gives the desired bound.
\end{proof}

\begin{proof}[Proof of Lemma~\ref{lem_derivative_A}]
	Fix~$t \in [0,T)$. For any~$s \in (0,T-t]$, we have
	\[\mathds{1}\{\tau_\mathrm{sep} > t\} \cdot \frac{\widehat{\E}[\mathcal{A}_{t+s} \mid \mathcal{F}_t]}{s} \le \mathds{1}\{\tau_\mathrm{sep} > t\} \cdot c_1(X(\mathcal{V}_t) + \mathscr{E}(\mathcal{V}_t))^{c_2}.\]
	The random variable on the right-hand side is integrable, by Corollary~\ref{cor_explosion}. This and the Dominated Convergence Theorem justify the exchange of limit in~\eqref{eq_exchange}. As explained before~\eqref{eq_exchange}, this implies that
	\begin{align*}
		\lim_{s \to 0+} \frac{\widehat{\E}[\mathcal{A}_{t+s}] - \widehat{\E}[\mathcal{A}_t]}{s} &= \widehat{\E}\left[\mathds{1}\{\tau_{\mathrm{sep}} > t\} \cdot \lim_{s \to 0+} \frac{\widehat{\E}[\mathcal{A}_{t+s}\mid \mathcal{F}_t]}{s}\right]\\[.2cm]
		&= \varepsilon \cdot \widehat{\E}[\mathds{1}\{\tau_{\mathrm{sep}} > t\}\cdot g_{\mathsf{v},\varepsilon}(\mathcal{V}_t, T-t)].
	\end{align*}
	Finally, any function~$g:[0,\infty) \to \R$ which is continuous and has a continuous derivative from the right is necessarily differentiable (with derivative equal to the derivative from the right), so the proof is complete.
\end{proof}

\renewcommand{\baselinestretch}{1}
\setlength{\parskip}{0pt}
\small

\bibliography{refs}
\bibliographystyle{alpha}

\end{document}